\numberwithin{equation}{section}
\newtheorem{thm}{Theorem}[section]
\newtheorem{cor}[thm]{Corollary}
\newtheorem{lem}[thm]{Lemma}
\newtheorem{prop}[thm]{Proposition}
\newtheorem{defn}[thm]{Definition}
\newtheorem{rem}[thm]{Remark}
\newtheorem{expl}[thm]{Example}
\newcommand{\lra}{\longrightarrow}
\newcommand{\vlra}[1]{\xrightarrow{\makebox[#1]{}}}
\newcommand{\xvlra}[2]{\xrightarrow{\makebox[#1]{\footnotesize #2}}}
\newcommand{\vlla}[1]{\xleftarrow{\makebox[#1]{}}}
\newcommand{\co}{\colon\!}
\newcommand{\smin}{\smallsetminus}
\newcommand{\heq}{\textup{he}}
\newcommand{\id}{\textup{id}}
\newcommand{\im}{\textup{im}}
\newcommand{\holim}{\textup{holim}}
\newcommand{\hocolim}{\textup{hocolim}}
\newcommand{\colim}{\textup{colim}}
\newcommand{\hofiber}{\textup{hofiber}}
\newcommand{\mor}{\textup{mor}}
\newcommand{\ob}{\textup{ob}}
\newcommand{\map}{\textup{map}}
\newcommand{\imap}{\textup{imap}}
\newcommand{\rmap}{\mathbb R\textup{map}}
\newcommand{\TOP}{\textup{TOP}}
\newcommand{\Or}{\textup{O}}
\newcommand{\emb}{\textup{emb}}
\newcommand{\config}{\mathsf{con}} 
\newcommand{\conloc}[1]{\config^\loc(#1)}
\newcommand{\loc}{\textup{loc}}
\newcommand{\imm}{\textup{imm}}
\newcommand{\hodim}{\textup{hdim}}
\newcommand{\man}{\mathsf{Man}}
\newcommand{\disc}{\mathsf{Disc}}
\newcommand{\fin}{\mathsf{Fin}}
\newcommand{\finplus}{{\fin_*}}
\newcommand{\fino}{{\mathsf{Fino}_*}}
\newcommand{\finplusk}{\fin^k_*}
\newcommand{\finok}{\mathsf{Fino}^k_*}
\newcommand{\dend}{\mathsf{Tree}}
\newcommand{\dendca}{\dend^c}
\newcommand{\dendroca}{\dend^{rc}}
\newcommand{\rel}{\textup{ver}}
\newcommand{\simp}{\mathsf{simp}}
\newcommand{\nsimp}{\mathsf{nsimp}}
\newcommand{\holink}{\textup{holink}}
\newcommand{\sA}{\mathcal A}
\newcommand{\sB}{\mathcal B}
\newcommand{\sC}{\mathcal C}
\newcommand{\sD}{\mathcal D}
\newcommand{\sE}{\mathcal E}
\newcommand{\sS}{\mathcal S}
\newcommand{\op}{\textup{op}}
\renewcommand{\sphat}{\hat{\rule{1.5mm}{0mm}}}
\newcommand{\ms}{\mathcal}
\newcommand{\LL}{\mathbb L}
\newcommand{\NN}{\mathbb N}
\newcommand{\RR}{\mathbb R}
\newcommand{\ZZ}{\mathbb Z}
\newcommand{\xlongleftarrow}[1]{\overset{#1}{\longleftarrow}}
\newcommand{\xlongrightarrow}[1]{\overset{#1}{\longrightarrow}}
\newcommand{\twosub}[2]{\begin{array}{cc}
\scriptstyle{#1} \\  [-1mm] \scriptstyle{#2}  \end{array}}
\newcommand{\colimsub}[1]{\begin{array}[t]{cc} \textup{colim} \\
[-1.7mm] \scriptstyle{#1} \end{array}}
\newcommand{\holimsub}[1]{\begin{array}[t]{cc} \textup{holim} \\ [-1mm]
\scriptstyle{#1} \end{array}}
\newcommand{\hocolimsub}[1]{\begin{array}[t]{cc} \textup{hocolim} \\
[-1.2mm] \scriptstyle{#1} \end{array}}
\newcommand{\uli}{\underline}
\newcommand{\cyl}{\textup{Cyl}}
\newcommand{\PSh}{\mathsf{PSh}}
\newcommand{\Hom}{\textup{Hom}}
\newcommand{\sSp}{s\mathcal S}
\newcommand{\LK}[1]{{#1}_!}
\newcommand{\RK}[1]{{#1}_*}
\newcommand{\hLK}[1]{\LL \LK{#1}}
\newcommand{\hRK}[1]{\RR \RK{#1}}
\title[Embeddings and configuration categories]{Spaces of smooth embeddings and configuration categories}
\author{Pedro Boavida de Brito and Michael Weiss}%
\address{Mathematics Institute, WWU M\"{u}nster, Einsteinstrasse 62, 48149 M\"{u}nster \\
   Germany } \email{m.weiss@uni-muenster.de}
\address{Dept. of Mathematics, Instituto Superior Tecnico, U Lisboa, Av. Rovisco Pais, 1049-001 Lisboa \\
   Portugal } \email{pedrobbrito@tecnico.ulisboa.pt}
\thanks{Both authors gratefully acknowledge the support of the Bundesministerium f\"ur Bildung und
Forschung through the A.v.Humboldt foundation (Humboldt professorship for Michael Weiss, 2012-2017). The first author was also partly supported
by FCT grants SFRH/BD/61499/2009 and SFRH/BPD/99841/2014.}
\begin{document}
\maketitle
\begin{abstract} In the homotopical study of spaces of smooth embeddings,
the functor calculus method (Goodwillie-Klein-Weiss manifold calculus) has
opened up important connections to operad theory.
Using this and a few simplifying observations, we arrive at an operadic description
of the obstructions to deforming smooth immersions into
smooth embeddings. We give an application which in some respects improves on recent
results of Arone-Turchin and Dwyer-Hess concerning high-dimensional variants
of spaces of long knots.
\end{abstract}

\setcounter{tocdepth}{1}
\tableofcontents


\section{Introduction}
 \label{sec-intro}
 \subsection{Embeddings and configuration categories}

Let $M^m$ and $N^n$ be smooth manifolds and let $\emb(M,N)$ be the space of smooth embeddings from $M$ to $N$.
In the standard formulation of manifold calculus applied to spaces of smooth embeddings, $\emb(M,N)$ is approximated by
(the homotopy limit of) a diagram made up of spaces $\emb(S\times\RR^m,N)$, one for each embedding $e\co S\times \RR^m\to M$
where $S$ is a variable finite set. We refer the reader to \cite{WeissEmb} and \cite{BoavidaWeiss} for more details.

Our first main result, Theorem \ref{thm-mainintro} below, builds on this model but in addition separates the linear algebra (i.e., differential) data from the configuration space data, and also reveals the mechanism by which they are held together.

More precisely, let $\config(M)$ denote the $\infty$-category whose objects are configurations of points in $M$ and whose morphisms are paths of configurations where collisions are allowed but cannot be undone. This category, which we call the \emph{configuration category} of $M$, comes equipped with a forgetful reference functor to $\fin$, essentially the category of finite sets.

\begin{thm}\label{thm-mainintro}
If $n - m \geq 3$, there is a homotopy cartesian square
\begin{equation} \label{eq:squaremain}
	\begin{tikzpicture}[descr/.style={fill=white}, baseline=(current bounding box.base)] ]
	\matrix(m)[matrix of math nodes, row sep=2.5em, column sep=2.5em,
	text height=1.5ex, text depth=0.25ex]
	{
	\emb(M,N) & \RR \map_{\fin}(\config(M), \config(N)) \\
	\imm(M,N) & \Gamma \\
	};
	\path[->,font=\scriptsize]
		(m-1-1) edge node [auto] {} (m-1-2);
	\path[->,font=\scriptsize]
		(m-2-1) edge node [auto] {} (m-2-2);
	\path[->,font=\scriptsize]
		(m-1-1) edge node [left] {} (m-2-1);
	\path[->,font=\scriptsize] 		
		(m-1-2) edge node [auto] {} (m-2-2);
	\end{tikzpicture}
\end{equation}
where $\Gamma$ denotes the space of sections of a fiber bundle with base $M$ and fiber over $x \in M$ given by
the space of pairs $(y,F)$ with $y\in N$ and $F$ a derived map from $\config(T_x M)$ to $\config(T_{y} N)$ over $\fin$.
\end{thm}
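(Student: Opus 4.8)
The plan is to translate everything into the language of the Goodwillie--Klein--Weiss manifold calculus tower and then isolate the role of the tangential (derivative) data. Since $n-m\ge 3$, the embedding tower converges: the canonical map $\emb(M,N)\to T_\infty\emb(M,N):=\holim_k T_k\emb(M,N)$ is a weak equivalence, by the convergence estimates of Goodwillie and Klein. Also $T_1\emb(M,N)\simeq\imm(M,N)$: the first stage of the embedding tower is the Smale--Hirsch immersion-theoretic approximation, which, since $m<n$, realizes $\imm(M,N)$ as the section space of the bundle over $M$ with fiber at $x$ the space of pairs $(y,\varphi)$, $y\in N$ and $\varphi\co T_xM\to T_yN$ a linear monomorphism. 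So the left-hand column of \eqref{eq:squaremain} is $T_\infty\emb(M,N)\to T_1\emb(M,N)$. Alongside the embedding tower I would consider the ``derivative-free'' tower $\overline T_\bullet\emb(M,N)$, the analogous polynomial approximation of $U\mapsto\emb(U,N)$ that remembers only the underlying configurations of points and the (compatible) paths between them, not the $1$-jet data; there is an evident map of towers $T_\bullet\emb(M,N)\to\overline T_\bullet\emb(M,N)$, since an embedding does in particular move configurations around.

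The two objects on the right of \eqref{eq:squaremain} should be identified with the top and bottom of this second tower. First, $\overline T_\infty\emb(M,N)\simeq\RR\map_\fin(\config(M),\config(N))$: this is essentially the defining property of $\config(-)$, namely that $\config(M)$ over $\fin$ is built as a homotopy sheaf on $M$ out of the configuration spaces of the disk-subsets of $M$, so that derived maps over $\fin$ out of it compute exactly $\holim_k\overline T_k\emb(M,N)$. (I would expect this identification to be available from the earlier sections of the paper.) Second, $\overline T_1\emb(M,N)\simeq\Gamma$: the first stage of this tower is the section space over $M$ whose fiber at $x$ is the space of derived maps over $\fin$ from $\config$ of a small disk neighbourhood of $x$ to $\config(N)$; a disk is diffeomorphic to $T_xM$, so that fiber is the space of pairs $(y,F)$ with $F\co\config(T_xM)\to\config(T_yN)$ over $\fin$ and $y$ its value on the one-point configuration --- precisely the fiber defining $\Gamma$. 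The right-hand vertical map of \eqref{eq:squaremain} is then the projection $\overline T_\infty\to\overline T_1$, concretely ``restrict a map of configuration categories to the infinitesimal configurations near each point'', and the lower horizontal map $\imm(M,N)\to\Gamma$ assigns to a bundle monomorphism the family of maps of configuration categories it induces on tangent spaces. The square commutes up to coherent homotopy because, for a genuine embedding, the infinitesimal configuration-category map at $x$ obtained by first differentiating and then looking at configurations agrees with the one obtained by first passing to configurations and then restricting near $x$.

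It then remains to prove that the square of towers $T_\bullet\to\overline T_\bullet$ over $T_1\to\overline T_1$ is homotopy cartesian at each finite stage, i.e.\ that $T_k\emb(M,N)\simeq\overline T_k\emb(M,N)\times_{\overline T_1\emb(M,N)}T_1\emb(M,N)$ for all $k$; taking $\holim_k$ and combining with the convergence statements then gives \eqref{eq:squaremain}. By induction on $k$ this reduces to the comparison of the $k$-th layers for $k\ge 2$: one must show that the homotopy fibre of $T_k\to T_{k-1}$ and that of $\overline T_k\to\overline T_{k-1}$ become equivalent after base change along $T_1\to\overline T_1$ --- equivalently, that all tangential dependence of the embedding tower is concentrated in its bottom stage and everything above it is purely configuration-theoretic. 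Concretely, the layers of the embedding tower are known explicitly (Goodwillie--Weiss) as section spaces over the unordered configuration spaces $\binom{M}{k}$ of bundles built from configuration spaces of $N$ and twisted by $TM$ (an iterated loop space, the looping being along the tangent directions of the configuration), and the claim is that the matching layer of $\overline T_\bullet$, now living over $\Gamma$ instead of over $\imm(M,N)$, is obtained from the same configuration spaces of $N$ with the tangential twist replaced by the intrinsic data of $\config$ of a disk; the two agree because, over $\fin$, the configuration category of $\RR^m$ encodes exactly the $\GL_m$-equivariant looping data that the twist supplies. This layer-by-layer comparison, together with convergence of the relative towers of $T_\infty$ over $T_1$ and of $\overline T_\infty$ over $\Gamma$ (again using $n-m\ge 3$), is the technical heart of the argument and the step I expect to be the main obstacle; the rest is bookkeeping with homotopy sheaves and the $\fin$-indexing.
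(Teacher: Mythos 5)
Your scaffolding agrees with the paper's in outline: convergence of the embedding tower in codimension $\ge 3$, $T_1\emb(M,N)\simeq\imm(M,N)$, and identification of the two right-hand corners with (derived) mapping spaces of configuration categories, the lower one via local configuration categories and the operadic description of section~\ref{sec-oper}. But the statement you defer to the end --- that for each $k$ the square with corners $T_k\emb(M,N)$, $\rmap_\fin(\config(M;k),\config(N))$, $T_1\emb(M,N)$, $\rmap_\fin(\config^\loc(M;k),\config^\loc(N))$ is homotopy cartesian --- is precisely theorem~\ref{thm-main}, i.e.\ the entire content of the result, and your proposed route to it (a layer-by-layer comparison of the embedding tower with a ``derivative-free'' tower) is left unproved at its only load-bearing point. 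The sentence ``the two agree because, over $\fin$, the configuration category of $\RR^m$ encodes exactly the $\GL_m$-equivariant looping data that the twist supplies'' is an assertion, not an argument; making it one would require (a) first showing that $M\mapsto\rmap_\fin(\config(M;k),\config(N))$ is polynomial of degree $\le k$, so that your $\overline T_k$ exists and coincides with it (this is theorem~\ref{thm-configsheaf}, which you implicitly assume), (b) an actual computation of the layers of that functor and a match with the Goodwillie--Weiss layers, which is nowhere begun and is not easier than the theorem itself, and (c) care with $\pi_0$ and basepoints when passing from fiberwise equivalences of layers to cartesianness of the square. So there is a genuine gap exactly at the step you yourself flag as the ``technical heart''.

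For comparison, the paper needs no layer analysis at all. Every corner of the truncated square is a homotopy sheaf: the left column because $T_k\emb(-,N)$ is the $\ms J_k$-homotopy sheafification of $\emb(-,N)$, the right column by theorem~\ref{thm-configsheaf} and proposition~\ref{prop-localconfigsheaf}. Hence each corner, as a functor of $M$, is the homotopy right Kan extension of its restriction to the objects $\uli\ell\times\RR^m$ with $\ell\le k$, and it suffices to verify the square for $M=\uli\ell\times\RR^m$. In that case the proof of theorem~\ref{thm-main} is direct: replace $\imm$ by the immersions embedding each component, enlarge the square by evaluation at the core configuration with values in $\emb(\uli\ell,N)\to\map(\uli\ell,N)$, and compare horizontal homotopy fibers using a tubular neighbourhood of the image configuration together with the locality of $\config$ (proposition~\ref{prop-operad}); corollary~\ref{cor-main} then takes the limit over $k$ and invokes Goodwillie--Klein convergence, and the section-space description of the lower right corner comes from proposition~\ref{prop-localconfigsheaf}, lemma~\ref{lem-locisloc} and section~\ref{sec-oper}. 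If you want to complete your argument, the realistic fix is to adopt this multi-local reduction in place of the layer comparison; otherwise you must actually identify the layers of $\rmap_\fin(\config(-;k),\config(N))$, which would be a substantial separate undertaking.
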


The vertical maps are forgetful maps. The horizontal maps are essentially restriction maps. To understand the lower one, we have the
theorem of Smale-Hirsch which says that $\imm(M,N)$ is weakly equivalent to the space of sections of a fiber bundle
with base $M$ and fiber over $x\in M$ consisting of pairs $(y,A)$ where $y\in N$ and $A$ is a linear embedding $T_x M \to T_yN$. The codimension restriction in Theorem \ref{eq:squaremain} comes from the deep multiple disjunction lemmas of Goodwillie \cite{GoThes} and Goodwillie-Klein \cite{GK1}, \cite{GK2} as explained in Goodwillie-Weiss \cite{GoWeEmb}.

\subsection{Configuration categories and the little disks operad}
In the last ten years or so, a connection to operad theory has emerged in the study of spaces of smooth embeddings. A key
step in our investigation is a description of the derived mapping space between little disks operads in terms of configuration categories:

\begin{thm}\label{thm-operintro}
$\rmap(E_m, E_n) \xrightarrow{\simeq} \rmap_{\fin}(\config(\RR^m), \config(\RR^n))$
\end{thm}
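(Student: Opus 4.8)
The plan is to realise the configuration category $\config(\RR^n)$ as a construction applied functorially to the little discs operad $E_n$, and then to show that this construction is homotopically fully faithful. To a reduced operad $\sO$ --- one with $E_n$-like unary and nullary parts, $\sO(0)\simeq\sO(1)\simeq\pt$ --- I would associate an $\infty$-category over $\fin$, say $U\sO$, whose fibre over a finite set $A$ is the space $\sO(A)$ of $A$-ary operations, whose morphisms lying over a bijection are the reindexing isomorphisms, and whose space of morphisms lying over a general $f\co A\to B$ with prescribed target $t\in\sO(B)$ is $\prod_{b\in B}\sO(f^{-1}(b))$, the source attached to such insertion data $(\theta_b)_b$ being the operadic composite $\gamma(t;(\theta_b)_b)\in\sO(A)$. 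Associativity of $\gamma$ makes $U\sO$ an $\infty$-category, and a map of operads induces a functor over $\fin$, so $U$ is a functor from operads to $\infty$-categories over $\fin$. The first step is to identify $UE_n$ with $\config(\RR^n)$ over $\fin$; this uses only the equivalences between ordered configuration spaces $\mathrm{Conf}_A(\RR^n)$ and the spaces $E_n(A)$, together with the identification of a collision of particles near a point of $\RR^n$ with an insertion of little discs there, the homotopy coherence being organised exactly as in the construction of $\config(-)$, cf.\ \cite{BoavidaWeiss}. The map of the theorem is then the map on derived mapping spaces induced by $U$.

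It remains to prove that $\rmap(E_m,E_n)\to\rmap_{\fin}(UE_m,UE_n)$ is an equivalence, i.e.\ that $U$ is homotopically fully faithful on these objects. For injectivity I would reconstruct the operad from $U\sO$: for $\sC$ in the image of $U$ one recovers $(R\sC)(A)$ as the fibre of $\sC\to\fin$ over $A$, the symmetric group actions from the automorphisms of $A$ in $\fin$, and $\gamma$ as the operation ``take the source of the morphism over the fold map $\bigsqcup_b A_b\to B$ with given target and given insertion data''. This $R$ is a functor with $RU\sO\simeq\sO$ naturally, so it splits $U$ on derived mapping spaces and the comparison map is a split injection. For surjectivity one must show $U\circ R\simeq\id$ on $\rmap_{\fin}(UE_m,UE_n)$, i.e.\ that any functor $UE_m\to UE_n$ over $\fin$ is, up to coherent homotopy, $U$ of an operad map; equivalently, that $UE_n$ satisfies a Segal-type condition over $\fin$ --- a morphism over an arbitrary $f\co A\to B$ factors coherently as a reindexing, then a product of fold morphisms, then a morphism over an injection (forgetting the points outside the image), each factor being governed by the operad structure. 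The efficient way to see this and to manage all the coherences at once is by resolution: present $\rmap(E_m,E_n)$ as the $\Tot$ of the cosimplicial space coming from a cofibrant, tree-indexed (Boardman--Vogt type) resolution of $E_m$, and $\rmap_{\fin}(UE_m,UE_n)$ as the $\Tot$ of the analogous cosimplicial space coming from the complete Segal space presentation of these categories over $\fin$; the identification $UE_n\simeq\config(\RR^n)$ and the structure of $U\sO$ then match the two cosimplicial spaces level by level, compatibly with faces and degeneracies.

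The main obstacle is precisely this last matching of resolutions: one must identify the category of trees governing derived maps of operads with the indexing category governing functors over $\fin$ between the configuration categories, and verify that they agree cofinally with matching simplicial structure. Bound up with this is the careful treatment of the unary and nullary parts: the contractible space $E_n(1)$ has to correspond to the identity morphisms of $\config(\RR^n)$ and to its morphisms over bijections, and the contractible space $E_n(0)$ to the empty configuration and to the ``forget a point'' morphisms over the inclusions $\underline{k-1}\hookrightarrow\underline k$. It is this bookkeeping that forces one to work with reduced operads on the left rather than merely non-unital ones, and with all of $\fin$ on the right rather than its subcategory of injections, over which $\config(\RR^n)$ records only restrictions of configurations and so does not recover the operad composition. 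Everything else --- the existence and naturality of $U$, its compatibility with composition in $m$ and $n$, and the reconstruction functor $R$ --- is formal.
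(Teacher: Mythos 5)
Your construction $U\sO$ is identical to the paper's $\ms C_P$, and the identification $UE_n\simeq\config(\RR^n)$ over $\fin$ is the paper's Example~\ref{expl-conoper}; so the opening moves agree. You also correctly isolate the delicate point: the theorem is false without the contractibility of $E_n(0)$ and $E_n(1)$, and the comparison genuinely needs all of $\fin$ (not just injections) so that the operad composition is visible. All of that is right, and matches the paper's framing of Theorem~\ref{thm-passage}.

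The full-faithfulness argument, however, does not go through as you propose. The claim that the tree category governing derived operad maps and the indexing category $\simp(\fin)$ ``agree cofinally with matching simplicial structure'' is not correct, and it cannot be, because a cofinality statement would be insensitive to the reduction hypotheses $\sO(0)\simeq\sO(1)\simeq\pt$ which you yourself recognize as essential. The paper's comparison functor is $\varphi\co\simp(\fin)\to\dend$, and the point of Proposition~\ref{prop-othertree} (reduced, via Lemma~\ref{lem-rocared} and diagram~(\ref{eqn-betapsi}), to Proposition~\ref{prop-baad}) is precisely that the derived unit $N_d^{rc}Q\to\hRK{\psi}\psi^*N_d^{rc}Q$ is an equivalence \emph{for reduced} $Q$. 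This is a homotopy Kan extension statement, not a cofinality statement; it is proved by induction on the number of nodes of a tree $T$, using the product formula for $N_dQ$ on trees, the contractibility of $Q(\uli0)$ and $Q(\uli1)$, and a fairly involved analysis of homotopy limits indexed by the poset $\ms B^T$ of independent edge-sets (Lemmas~\ref{lem-diago} and~\ref{lem-stem}). Nothing in your resolution-matching sketch reproduces this; it stops exactly where the work starts.

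The retraction functor $R$ with $RU\sO\simeq\sO$ is a reasonable observation and would give split injectivity on derived mapping spaces, but it leaves the essential surjectivity unaddressed: showing that every functor $UE_m\to UE_n$ over $\fin$ is coherently in the image of $U$ is the whole content of the theorem, and ``$UE_n$ satisfies a Segal-type condition over $\fin$'' is a property of the target object, not a proof that maps rigidify. The paper avoids the injectivity/surjectivity split entirely: it factors the comparison map through dendroidal nerves, invokes the Cisinski--Moerdijk / Bergner--Hackney theorem (Theorem~\ref{thm-CisMoe}) to replace operad maps by maps of dendroidal Segal spaces, identifies $N\ms C_P$ with $\varphi^*(N_dP)$ (Lemma~\ref{lem-recog}), and then carries out the technical Kan-extension argument described above. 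You would need to supply an argument of comparable depth in place of the cofinality claim for your proof to close.
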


This theorem and its proof are discussed in section \ref{sec-oper}.

\subsection{Long knots and higher variants} There is a variant of theorem \ref{thm-mainintro} for smooth manifolds
$M$, $N$ with boundary and smooth embeddings $M\to N$ which extend a prescribed embedding $\partial M\to \partial N$. In the special case when $M$ and $N$ are disks, we obtain a sharper statement which epitomizes the relation between the
little disks operad and spaces of smooth embeddings.
\begin{thm}\label{thm-main2intro}
For $n-m\ge 3$, there is a homotopy fiber sequence
\begin{equation}\label{eqn-op}
\emb_{\partial}(D^m,D^n) \to \imm_{\partial}(D^m,D^n) \to \Omega^m \rmap(E_m, E_n)
\end{equation}
of $m$-fold loop spaces, where $E_i$ denotes the little $i$-disks operad.
\end{thm}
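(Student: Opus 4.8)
The plan is to specialise the homotopy cartesian square \eqref{eq:squaremain} of Theorem~\ref{thm-mainintro} to the pairs $(D^m,D^n)$ and $(\partial D^m\times[0,1),D^n)$, to impose the boundary condition by passing to vertical homotopy fibres, and then to simplify the four resulting corners using that $D^m$ and $D^n$ are parallelisable and contractible together with the operadic identification of Theorem~\ref{thm-operintro}. \emph{Step 1.} Each of the four functors in \eqref{eq:squaremain} --- $\emb(-,D^n)$, $\imm(-,D^n)$, $\RR\map_{\fin}(\config(-),\config(D^n))$ and the section-space functor $\Gamma(-)$ --- is contravariant in open subsets of $D^m$, and \eqref{eq:squaremain} is natural in that variable. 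Fix an open collar $C\cong\partial D^m\times[0,1)$ of the boundary; restriction from $D^m$ to $C$ gives a map from the square \eqref{eq:squaremain} at $(D^m,D^n)$ to the one at $(C,D^n)$. Both squares are homotopy cartesian (instances of Theorem~\ref{thm-mainintro}, using $n-m\geq 3$), so taking vertical homotopy fibres over the point determined by the standard inclusion $j\co D^m\hookrightarrow D^n$ yields a homotopy cartesian square
\[
\begin{array}{ccc}
\emb_\partial(D^m,D^n) & \longrightarrow & \RR\map_{\fin}^{\partial}\bigl(\config(D^m),\config(D^n)\bigr)\\[1mm]
\downarrow & & \downarrow\\[1mm]
\imm_\partial(D^m,D^n) & \longrightarrow & \Gamma_\partial,
\end{array}
\]
where the decoration $\partial$ records the rel-boundary versions (the left-hand column being the usual pair of rel-boundary spaces).

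\emph{Step 2: the bottom row.} The fibre bundle over $D^m$ whose section space is $\Gamma$ has contractible base, and since $D^m$ is parallelisable and $D^n$ contractible its fibre is equivalent to $\RR\map_{\fin}(\config(\RR^m),\config(\RR^n))\simeq\rmap(E_m,E_n)$ by Theorem~\ref{thm-operintro}; hence the bundle is trivial, $\Gamma\simeq\map(D^m,\rmap(E_m,E_n))$, and passing to sections rel $C$ identifies $\Gamma_\partial$ with the space of pointed maps $D^m/\partial D^m=S^m\to\rmap(E_m,E_n)$, that is
\[
\Gamma_\partial\ \simeq\ \Omega^m\rmap(E_m,E_n),
\]
based at the class of $j$. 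The same computation applied to the Smale--Hirsch description recalled after Theorem~\ref{thm-mainintro} gives $\imm_\partial(D^m,D^n)\simeq\Omega^m(\Or(n)/\Or(n-m))$, and identifies the bottom map with $\Omega^m$ of the natural linearisation map from the space of linear embeddings $\RR^m\to\RR^n$ into $\rmap(E_m,E_n)$.

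\emph{Step 3: the upper-right corner --- the main obstacle.} I claim that $\RR\map_{\fin}^{\partial}(\config(D^m),\config(D^n))$ is weakly contractible; equivalently, that precomposition with $\config(C)\to\config(D^m)$ is a weak equivalence $\RR\map_{\fin}(\config(D^m),\config(D^n))\to\RR\map_{\fin}(\config(C),\config(D^n))$. The heuristic is that a map of configuration categories over $\fin$ into $\config(D^n)$ is controlled by its restriction to arbitrarily small configurations, and that every finite configuration in $D^m$ can be displaced --- coherently in parametrised families --- into the collar $C$, so that enlarging $C$ to all of $D^m$ imposes no further choices. Turning this into a proof is where the real work lies: it should come down to identifying the germ of $\config(D^m)$ along a configuration contained in $C$ and exploiting the contractibility of $D^m$, and I expect to import the relevant displacement/recognition statement about configuration categories of discs from the analysis in the body of the paper and in \cite{BoavidaWeiss}, rather than anything purely formal. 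This is the one genuinely non-formal step.

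\emph{Step 4: conclusion and loop structures.} Granting Step~3, the homotopy cartesian square of Step~1 collapses to a weak equivalence $\emb_\partial(D^m,D^n)\xrightarrow{\ \simeq\ }\hofiber(\imm_\partial(D^m,D^n)\to\Gamma_\partial)$, which after the identifications of Step~2 is precisely the homotopy fibre sequence \eqref{eqn-op}. Since \eqref{eqn-op} then realises $\emb_\partial(D^m,D^n)$ as the homotopy fibre of an $m$-fold loop map between $m$-fold loop spaces (the linearisation map of Step~2, looped $m$ times), it is itself an $m$-fold loop space --- concretely $\Omega^m$ of the homotopy fibre of that linearisation map --- so \eqref{eqn-op} is a fibre sequence of $m$-fold loop spaces and $m$-fold loop maps, as claimed. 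These $E_m$-structures also admit the expected geometric description, via juxtaposition of embeddings and immersions of discs along the $m$ coordinate directions.
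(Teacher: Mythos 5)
Your overall route is the one the paper itself takes: theorem~\ref{thm-main2intro} is deduced from a rel-boundary form of the main cartesian square, from the identification $\rmap_{\fin}(\config(\RR^m),\config(\RR^n))\simeq\rmap(E_m,E_n)$ of theorem~\ref{thm-operintro} (theorem~\ref{thm-passage}), and from the contractibility of the rel-boundary derived mapping space of configuration categories. Your Steps~1, 2 and 4 match this, the only real difference being that you manufacture the boundary condition by restricting to a collar and taking vertical homotopy fibres, whereas the paper sets up a boundary formalism in section~\ref{sec-bdry} (theorem~\ref{thm-mainbdry}, corollary~\ref{cor-mainbdry}) and defines $\rmap^{\partial}_{\finplus}(\config(M),\config(N))$ essentially as the homotopy fibre you describe; either packaging works, though some care is needed because the square of theorem~\ref{thm-mainintro} is stated for manifolds without boundary, so one should work over $\finplus$ or with the open disc and a germ/isotopy-extension argument. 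Your identification of the local/immersion column and the $m$-fold loop-space argument are also in line with the paper's.

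The problem is Step~3, which you correctly single out as the crux but do not prove. The statement you need is exactly theorem~\ref{thm-alex} (equivalently theorem~\ref{thm-alexanderk}), and it cannot be ``imported'' from \cite{BoavidaWeiss}: it is a new result of this paper, and its proof occupies the whole of section~\ref{sec-knottynew}. Your displacement heuristic (push configurations coherently into the collar, so that enlarging the collar to $D^m$ imposes no further choices) is not a proof, and turning it into one is where all the difficulty sits. The paper instead shows that restriction along $\config(D^m\smin 0)\to\config(D^m)$ induces an equivalence on derived mapping spaces into $\config(D^n)$, and to do so it has to reconstruct $\config(D^m)$ functorially from its restriction; this requires the auxiliary category $\fino$, the functors $\Thetanought,\Thetaone,\Thetatwo$, the face-localization $\Lambda$, and a geometric decomposition of configuration spaces in the style of Randal-Williams (proposition~\ref{prop-Gamma}) --- the last of these is indeed a precise incarnation of your displacement idea, but it is only one ingredient among several, and the homotopy-coherence bookkeeping is the hard part. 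So: if you are content to quote theorem~\ref{thm-alex} as an input, your argument is essentially the paper's proof of theorem~\ref{thm-main2intro}; if Step~3 was meant to be established by the heuristic alone, that is a genuine gap.
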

The symbol $\partial$ signals the constraint that embeddings (and immersions) agree with the standard inclusion near the boundary.
This theorem is more a corollary of other theorems, as will be explained in a moment. It improves on some corollaries of related theorems in the work of Dwyer-Hess \cite{DwyerHess1}, \cite{DwyerHess2}, Arone-Turchin \cite{AroneTurchin} and Turchin \cite{Turchin} (see also remark \ref{aroneturchin}).

\medskip
We deduce theorem \ref{thm-main2intro} from a version of theorem \ref{thm-mainintro} for manifolds with boundary, plus
theorem \ref{thm-operintro} and the following Alexander-trick type result whose proof occupies
sections~\ref{sec-face}, \ref{sec-shadow} and~\ref{sec-knottynew}.

\begin{thm}\label{thm-alex}
$\rmap_{\finplus}^{\partial}(\config(D^m), \config(D^n)) \simeq *$\,.
\end{thm}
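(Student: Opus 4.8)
The plan is to establish the statement by an Alexander trick carried out inside the configuration category formalism. Write $\iota\co D^m\hookrightarrow D^n$ for the standard inclusion, let $\iota_*\co\config(D^m)\to\config(D^n)$ be the induced map over $\finplus$, and set $X:=\rmap_{\finplus}^{\partial}(\config(D^m),\config(D^n))$. Since $\iota_*$ obeys the boundary constraint tautologically, it is a point of $X$, and the goal is to construct a contraction of $X$ onto $\iota_*$.

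The geometric ingredient is a family of shrinking self-embeddings. Fix a boundary point $\star\in\partial D^m$, put $\star':=\iota(\star)\in\partial D^n$, and fix a collar $U$ of $\partial D^m$ (the locus on which the boundary constraint is imposed). For $t\in(0,1]$ choose smooth embeddings $\rho_t\co D^m\to D^m$ with $\rho_1=\id$, each $\rho_t$ equal to the standard inclusion on a neighbourhood of $\star$, each $\rho_t$ joined to $\id$ through embeddings of the same type, and with $\rho_t(D^m)$ shrinking into arbitrarily small half-disc neighbourhoods of $\star$ as $t\to 0$; a radial contraction towards $\star$ inside a collar supplies such a family. Functoriality of $\config(-)$ in embeddings turns $\rho_t$ into a self-map $\rho_{t*}$ of $\config(D^m)$ over $\finplus$, and precomposition defines $t\mapsto\Phi\circ\rho_{t*}$. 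The boundary constraint says precisely that $\Phi$ agrees with $\iota_*$ on the subcategory $\config(U)$; hence there is a threshold $t_0>0$ such that for all $t\le t_0$ one has $\rho_t(U)\subseteq U$ and $\rho_t(D^m)\subseteq U$, so that $\Phi\circ\rho_{t*}=\iota_*\circ\rho_{t*}$ --- a formula that no longer mentions $\Phi$. Meanwhile the chosen isotopy from $\rho_{t_0}$ to $\id$ gives a canonical path from $\iota_*\circ\rho_{t_0*}$ to $\iota_*$. Splicing the path $t\in[t_0,1]\mapsto\Phi\circ\rho_{t*}$ to this $\Phi$-independent tail produces the desired contraction, once one checks that these paths are continuous in $\Phi$ and that the endpoints match.

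Two points require genuine work. The first is a coherence issue: for $t\in(t_0,1]$ the map $\Phi\circ\rho_{t*}$ (and likewise $\iota_*\circ\rho_{t*}$) does not satisfy the boundary constraint literally, since on $\config(U)$ it equals $\iota_*\circ\rho_{t*}$ rather than $\iota_*$; it lands in $X$ only after the isotopies $\rho_t\simeq\id$ are used to correct it, and these corrections --- together with the splicing --- must be organized in a homotopy-coherent fashion, i.e.\ carried out in a suitable model of the derived, $\finplus$-relative, $\partial$-relative mapping space. This is the technical heart of the argument and, I expect, the main obstacle; once the shrinking action and its accompanying isotopies are made to act coherently on $X$, the contraction is formal. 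The second, milder, point is that the family $\rho_t$ degenerates at $t=0$, where $\rho_0$ is not an embedding; the threshold $t_0$ is introduced exactly so that the contraction never needs $\rho_0$ itself. (Alternatively one could reduce, via a pointed and relative analogue of Theorem~\ref{thm-operintro}, to the contractibility of the derived mapping space between the \emph{unital} little discs operads and deduce that from the operadic scaling contraction; but the geometric route above keeps the constructions closest to the objects at hand.)
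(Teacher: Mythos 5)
Your route is genuinely different from the paper's. Section~\ref{sec-knottynew} never contracts $\rmap_{\finplus}^{\partial}$ directly; it proves that the restriction map $\rmap_{\finplus}(\config(D^m),\config(D^n))\to \rmap_{\finplus}(\config(D^m\smin 0),\config(D^n))$ is a weak equivalence (Theorem~\ref{thm-alexanderk}) and reads off the contractibility of the homotopy fiber. The elaborate machinery ($\fino$, $\Thetanought$, $\Thetaone$, $\Thetatwo$, face localization) is there to reconstruct $\config(D^m)$ from $\config(D^m\smin 0)$, and the single geometric input is a Randal-Williams-style scanning decomposition of configuration spaces (proof of Proposition~\ref{prop-Gamma}), which pushes configurations off a chosen interior point. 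Your proposal is instead a direct Alexander-trick contraction of the mapping space by precomposition with a shrinking self-embedding $\rho_t$ of the source. If it could be carried out it would be shorter and more geometric; the paper's route is heavier but also yields the sharper statement about the restriction map.

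The gap, though, is not a tidying-up step, and I would push back on the phrasing ``once the shrinking action \ldots acts coherently on $X$, the contraction is formal'': that coherent action \emph{is} the theorem. Concretely, a point of $\rmap_{\finplus}^{\partial}$ carries, beside the derived map $\Phi$, a homotopy $h$ from $\Phi\circ c_*$ to $\iota_*\circ c_*$ defined only along boundary-respecting collar embeddings $c\in Z_M=\emb_\partial(W,D^m)$ (see diagram~(\ref{eqn-nastybdry})). Your composites $\rho_t\circ c$ fail to lie in $Z_M$ for $t<1$ (the shrinking moves $\partial D^m$), so the boundary datum for $\Phi\circ\rho_{t*}$ must be manufactured afresh by concatenating the isotopy $s\mapsto\rho_s\circ c$ with $h$. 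At the splice point $t_0$, where your contraction switches from shrinking $\Phi$ to reinflating $\iota_*$, the two ways of producing that datum (isotopy-then-$h$ versus $h$-then-isotopy) disagree by an interchange square, so a genuine contraction of the homotopy pullback requires organizing a two-parameter family of such corrections. None of that is addressed. A more minor wrinkle: the radial contraction towards $\star$ that you propose does not keep the collar inside itself, $\rho_t(U)\not\subseteq U$ for $t$ close to $1$; this is fixable by a better $\rho_t$, but it illustrates that the innocuous-looking phrase ``a radial contraction \ldots supplies such a family'' already glosses over conditions that the correction step leans on.

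Your parenthetical ``alternative route'' is actually closer to what the paper does: Theorem~\ref{thm-operintro} reduces to operads, and Theorem~\ref{thm-alexanderk} is morally the statement that passing from the unital to the nonunital little $m$-discs operad does not change derived maps into $\config(D^n)$. But the paper does not invoke a known ``operadic scaling contraction''; that statement is what it proves, from scratch, which is why Section~\ref{sec-knottynew} is so long.
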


\subsection{Further applications}

\subsubsection*{Truncations and low codimensions.}
Theorem \ref{thm-main2intro} has a sharper version in terms of the Taylor tower for the embedding functor, which holds even when
the codimension is less than three. Namely, for $k\ge 1$ there is a homotopy fiber sequence
\begin{equation}\label{eqn-truncated}
T_k \emb_{\partial}(D^m,D^n) \to \imm_{\partial}(D^m,D^n) \to \Omega^m \rmap_\fin(\config(\RR^m;k),\config(\RR^n))
\end{equation} 
where $\imm_{\partial}(D^m,D^n)\simeq T_1\emb_{\partial}(D^m,D^n)$ and where $\config(\RR^m;k)$ is the version of $\config(\RR^m)$ truncated
at cardinality $k$. (There is also a corresponding truncated version of Theorem \ref{thm-mainintro}.) Therefore the homotopy
fiber of the left-hand map in (\ref{eqn-truncated}), also commonly denoted
\[ T_k \overline{\emb}_\partial(D^m, D^n), \]
is an $(m+1)$-fold loop space.

Let us now concentrate on the case $m = 1$. For $m=1$ and $k=2$ the right-hand map in (\ref{eqn-truncated}) is a weak equivalence. The source $\imm_\partial(D^1,D^n)$ can be identified with $\Omega S^{n-1}$ by means of the Smale-Hirsch
$h$-principle. As for the target, by restriction to the subcategory of $\config(\RR;2)$ consisting of the configurations of cardinality two and morphisms
without collisions, we obtain a map from $\rmap_\fin(\config(\RR;2),\config(\RR^n))$ to the space of $\Sigma_2$-equivariant maps from $S^0$ to $\emb(\{1,2\}, \RR^n)$. This map is a weak homotopy equivalence and the target space is identified with $S^{n-1}$.

Therefore if $m=1$ and $k\ge 2$, the right-hand map in (\ref{eqn-truncated}), call it $f$,
has a left homotopy inverse given by $\Omega g$ where
\[  g\co\rmap_\fin(\config(\RR^m;k),\config(\RR^n))\to \rmap_\fin(\config(\RR^m;2),\config(\RR^n)) \]
is the restriction. It follows that
\[
T_k \emb_{\partial}(D^1,D^n) \simeq \hofiber(f)\simeq\Omega\hofiber(\Omega g)=\Omega^2\hofiber(g).
\]
Hence
\begin{cor}
For $k > 1$ and $n > 0$, the space $T_k \emb_{\partial}(D^1,D^n)$ is a double loop space, with double delooping given by $\hofiber(g)$.
\end{cor}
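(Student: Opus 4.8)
The plan is to deduce the statement formally from the truncated homotopy fiber sequence (\ref{eqn-truncated}) with $m=1$, using the fact --- already isolated in the discussion above --- that for $m=1$ the right-hand map of (\ref{eqn-truncated}) admits a left homotopy inverse. All of the work has, in effect, been done in setting up (\ref{eqn-truncated}); what remains is to assemble it.

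First I would unwind what (\ref{eqn-truncated}) says when $m=1$: being a homotopy fiber sequence, it identifies $T_k\emb_\partial(D^1,D^n)$ with $\hofiber(f)$, where
\[ f\co\imm_\partial(D^1,D^n)\lra\Omega\,\rmap_\fin(\config(\RR^1;k),\config(\RR^n)) \]
is the right-hand map. Next I would exploit naturality of these maps in the truncation degree: if $g\co\rmap_\fin(\config(\RR^1;k),\config(\RR^n))\to\rmap_\fin(\config(\RR^1;2),\config(\RR^n))$ is the restriction, then $\Omega g\circ f$ is the $k=2$ instance $f_2$ of the right-hand map of (\ref{eqn-truncated}), and $f_2$ is a weak equivalence because its source is $\Omega S^{n-1}$ by Smale--Hirsch and its target is $\Omega S^{n-1}$ in a transparent way (this is where $n>0$ enters). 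Composing with a homotopy inverse of $f_2$ then produces a genuine left homotopy inverse $r\co\Omega\,\rmap_\fin(\config(\RR^1;k),\config(\RR^n))\to\imm_\partial(D^1,D^n)$ of $f$, with $\hofiber(r)\simeq\hofiber(\Omega g)$.

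Then I would invoke the soft fact that a map with a left homotopy inverse has a homotopy fiber that is a loop space: for composable maps $A\xrightarrow{f}B\xrightarrow{r}A$ there is a homotopy fiber sequence $\hofiber(f)\to\hofiber(rf)\to\hofiber(r)$, and when $rf\simeq\id_A$ the middle term is contractible, so $\hofiber(f)\simeq\Omega\,\hofiber(r)$. Combining this with $\hofiber(r)\simeq\hofiber(\Omega g)\simeq\Omega\,\hofiber(g)$ (homotopy fibers commute with based loops) yields
\[ T_k\emb_\partial(D^1,D^n)\;\simeq\;\hofiber(f)\;\simeq\;\Omega\,\hofiber(\Omega g)\;\simeq\;\Omega^2\,\hofiber(g), \]
which exhibits the space as a double loop space with explicit second delooping $\hofiber(g)$.

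Everything after (\ref{eqn-truncated}) is formal manipulation of fiber sequences, so I expect the only genuine content to be the two inputs quoted above: that the right-hand maps of (\ref{eqn-truncated}) for varying $k$ are compatible with the restrictions $g$ (essentially the naturality, in the truncation parameter, of the construction producing (\ref{eqn-truncated})), and that $f_2$ is a weak equivalence, which amounts to matching the Smale--Hirsch model of $\imm_\partial(D^1,D^n)$ with the evident description of $\rmap_\fin(\config(\RR^1;2),\config(\RR^n))$ as a copy of $\Omega S^{n-1}$. That identification is the one place where geometry, rather than bookkeeping with homotopy fibers, intervenes; once it is in hand the corollary is immediate.
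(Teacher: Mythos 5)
Your proposal is correct and is essentially the argument the paper gives: identify $T_k\emb_\partial(D^1,D^n)$ with $\hofiber(f)$ via (\ref{eqn-truncated}), use the Smale--Hirsch identification to see that the $k=2$ instance is an equivalence so that $\Omega g$ furnishes a left homotopy inverse of $f$, and conclude $\hofiber(f)\simeq\Omega\hofiber(\Omega g)=\Omega^2\hofiber(g)$. The only difference is that you spell out the formal step (the fiber sequence $\hofiber(f)\to\hofiber(rf)\to\hofiber(r)$ with $rf\simeq\id$) which the paper leaves implicit.
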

In particular, if $n > 0$, we see that $\pi_0 T_{k} \emb_{\partial}(D^1, D^n)$ is an abelian group (cf.~\cite{BCSK}). If $n \geq 4$ we also recover the result of Salvatore \cite{Salvatore} that $\emb_{\partial}(D^1,D^n)$ is a
double loop space.

\subsubsection*{Homeomorphisms.}
Let $\TOP(n)$ be the topological group of homeomorphisms of $\RR^n$ and $\TOP(n,m)$ the subgroup of those homeomorphisms fixing $\RR^m$ pointwise. Write $\TOP(n) / \TOP(n, m)$ for the homotopy fiber of the map $B\TOP(n,m) \to B\TOP(n)$.
\begin{thm} Suppose $n - m \geq 3$ and $n \geq 5$. Fix the standard inclusion of $\RR^m$ in $\RR^n$ (and hence a map of operads from $E_m$ to $E_n$). There is a based map
\[
\TOP(n) / \TOP(n, m) \to \rmap(E_m, E_n)
\]
which induces an isomorphism on $\pi_j$ for $j > m$.
\end{thm}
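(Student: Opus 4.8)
The plan is to connect the embedding calculus picture of $\emb(\RR^m,\RR^n)$ to the homeomorphism-theoretic model by identifying the "manifold structure" sheaf governing both sides. First I would invoke the $h$-principle of Lashof–Rourke (or the immersion-theoretic perspective of smoothing theory) which says that for $n\ge 5$ the space of locally flat topological embeddings $\RR^m\to\RR^n$ extending the standard one near $\infty$, up to concordance, is controlled by $\TOP(n)/\TOP(n,m)$; more precisely there is a fibration sequence relating $B\TOP(n,m)\to B\TOP(n)$ to the space of such topological embedding germs. The key is that after stabilizing the codimension data, the difference between smooth embeddings and topological embeddings of $\RR^m$ in $\RR^n$ measured by Theorem~\ref{thm-main2intro} lives in a range of dimensions: the smoothing theory obstruction to going from topological to smooth is built from $\TOP/\Or$, whose homotopy groups vanish below degree $m+1$ in the relevant stable range when $n-m\ge 3$ and $n\ge 5$.

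The main steps, in order: (1) Use Theorem~\ref{thm-main2intro} to identify $\Omega^m\rmap(E_m,E_n)$ with (a delooping of) $\hofiber(\imm_\partial(D^m,D^n)\to\emb_\partial(D^m,D^n))$, equivalently with the homotopy fiber of a forgetful map between spaces of structures on $\RR^m\subset\RR^n$. (2) Construct the comparison map: a homeomorphism of $\RR^n$ fixing $\RR^m$ gives, by restriction to a standard disc and smoothing in the complement, an element of the relevant embedding space; passing to classifying spaces and using the section-space description in Theorem~\ref{thm-mainintro} produces the desired based map $\TOP(n)/\TOP(n,m)\to\rmap(E_m,E_n)$. (3) Analyze the induced map on homotopy groups by comparing two fibration sequences — the topological one $\TOP(n,m)\to\TOP(n)\to\TOP(n)/\TOP(n,m)$ and the smooth embedding one — fitting them into a diagram whose vertical maps are the forgetful/smoothing comparisons; the snake in this diagram reduces the claim to the connectivity of the fibers of $\Or(n)\to\TOP(n)$ and of the local smoothing sheaf, i.e. to the Morlet-type statement that $\TOP(n)/\Or(n)$ and $\widetilde{\TOP}/\Or$ are $(m+1)$-connected enough in this range.

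I expect the main obstacle to be step (3): controlling precisely in which degrees the smoothing-theory correction terms ($\TOP/\Or$, or the difference between the topological and smooth "tangential structure" sheaves on configurations in $\RR^m$) are negligible, so that the comparison is an isomorphism on $\pi_j$ for \emph{exactly} $j>m$ and not merely in a cruder stable range. This is where the hypotheses $n-m\ge 3$ (needed for convergence of the Taylor tower and for Theorem~\ref{thm-main2intro}) and $n\ge 5$ (needed for topological handle-straightening and the Kirby–Siebenmann machinery, hence for the $\TOP$ $h$-principle) are both essential, and one must check they combine to give the stated range. A secondary, more bookkeeping-type difficulty is making the comparison map of step (2) genuinely basepoint-preserving and natural enough that it is compatible with the loop-space structures, so that the degree shift by $m$ coming from $\Omega^m$ in Theorem~\ref{thm-main2intro} lines up with the degree shift implicit in passing from $\TOP(n,m)\to\TOP(n)$ to its classifying-space fiber.
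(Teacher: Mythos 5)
Your overall framing---relate the right-hand side to spaces of structures, use Theorem~\ref{thm-main2intro} and the topological $h$-principle for the left-hand side, compare---captures the spirit of the argument, but your step (3) goes wrong in a way that would block the proof. You propose to reduce the comparison to a connectivity estimate on $\TOP(n)/\Or(n)$ and $\TOP/\Or$, asking them to be ``$(m+1)$-connected enough''. That is not available: $\TOP/\Or$ is only $2$-connected ($\pi_3(\TOP/\Or)\cong\ZZ/2$ by Kirby--Siebenmann), so for $m\ge 3$ the needed connectivity fails outright, and there is no stable range in which $\TOP/\Or$ becomes $(m+1)$-connected as $m$ grows. If there were a genuine correction term living in $\TOP/\Or$ your approach would therefore not establish the isomorphism for $j>m$.

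The paper sidesteps smoothing theory entirely. One forms the commutative $2\times 3$ diagram whose columns are spaces of smooth embeddings/immersions $\emb_\partial(D^m,D^n)\to\imm_\partial(D^m,D^n)$, spaces of locally flat topological embeddings/immersions $\emb^\TOP_\partial(D^m,D^n)\to\imm^\TOP_\partial(D^m,D^n)$, and the configuration-category mapping spaces $\rmap^\partial_\fin(\config(D^m),\config(D^n))\to\Omega^m\rmap(E_m,E_n)$. Lashof's theorem says the left square (smooth vs.\ topological) is homotopy cartesian --- this is the correct way to handle the smooth/topological comparison: the obstructions cancel between the embedding and immersion rows rather than having to individually vanish. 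The outer rectangle is homotopy cartesian by the boundary form of the main theorem, so the right square is too. Then one feeds in two contractibility facts: $\emb^\TOP_\partial(D^m,D^n)\simeq *$ when $n-m\ge 3$, $n\ge 5$ (the topological Alexander trick in this range), and $\rmap^\partial_\fin(\config(D^m),\config(D^n))\simeq *$ (Theorem~\ref{thm-alex}). With both corners of the top row contractible and the right square homotopy cartesian, the bottom map $\imm^\TOP_\partial(D^m,D^n)\to\Omega^m\rmap(E_m,E_n)$ has contractible homotopy fiber, and Lees--Lashof identifies its source with $\Omega^m(\TOP(n)/\TOP(n,m))$. Delooping $m$ times gives the stated isomorphism on $\pi_j$ for $j>m$. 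So the two inputs you are missing are (a) Lashof's homotopy cartesian square replacing any $\TOP/\Or$ connectivity estimate, and (b) the contractibility of the locally flat topological embedding space, which is where $n\ge 5$ and $n-m\ge 3$ enter and which is what actually supplies the range $j>m$.
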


\begin{proof} (Compare \cite{Sakai}). There is a commutative diagram
\[
\begin{tikzpicture}[descr/.style={fill=white}, baseline=(current bounding box.base)] ]
	\matrix(m)[matrix of math nodes, row sep=2.5em, column sep=2.5em,
	text height=1.5ex, text depth=0.25ex]
	{
	\emb_\partial(D^m,D^n) & \emb_\partial^\TOP(D^m,D^n) & \rmap^\partial_{\fin}(\config(D^m),\config(D^n))  \\
	\imm_\partial(D^m,D^n) & {\imm^\TOP_\partial(D^m,D^n)} & \Omega^m\rmap(E_m, E_n) \\
	};
	\path[->,font=\scriptsize]
		(m-1-1) edge node [auto] {} (m-1-2)
		(m-1-2) edge node [auto] {} (m-1-3)
		(m-2-1) edge node [auto] {} (m-2-2)
		(m-2-2) edge node [auto] {} (m-2-3)
		(m-1-1) edge node [auto] {} (m-2-1)
		(m-1-2) edge node [auto] {} (m-2-2)
		(m-1-3) edge node [auto] {} (m-2-3);
	\end{tikzpicture}
\]
Here $\emb_\partial^\TOP(D^m,D^n)$ and $\imm_{\partial}^{\TOP}(D^m, D^n)$ denote the spaces of locally flat topological
embeddings and locally flat topological immersions, respectively. By the topological form
of the Smale-Hirsch $h$-principle (Lees-Lashof), the immersion space is weakly equivalent to
$\Omega^m \TOP(n)/\TOP(n,m)$. The outer rectangle is homotopy
cartesian (special case of theorem~\ref{thm-mainintro} in a formulation for manifolds with boundary).
By \cite{Lashof}, the left-hand square is homotopy cartesian and $\emb_\partial^\TOP(D^m,D^n)$ is contractible. Therefore, if we modify the lower middle term $\imm^\TOP_\partial(D^m,D^n)$ by deleting the path components which do not contain any smooth immersions, then Lemma \ref{lem-hococomp} applies
and we conclude that the right-hand lower horizontal arrow induces isomorphisms in $\pi_j$ for $j > 0$.
\end{proof}

\subsection*{Outline}
We now give a brief outline of the paper. Throughout this text, we make extensive use of $\infty$-categories in the framework of Rezk, that of complete Segal spaces. We briefly recall all the necessary background in section \ref{sec-topcat} with a focus on examples. The reader familiar with $\infty$-categories may wish to jump directly to section \ref{sec-topcatFC} where we give a mild generalization of Rezk's model which we call \emph{fiberwise} complete Segal spaces.

The core of the paper begins in section \ref{sec-conman}, where we introduce the notion of \emph{configuration category} and discuss several equivalent models for it. We then show that the configuration category of a manifold $M$ has a multi-local to global property, i.e. it is a homotopy cosheaf as a functor in the variable $M$.

In section \ref{sec-loc}, we define a local variant of a configuration category and prove a (stronger) local to global property for these. In section \ref{sec-mainthm}, we state and prove the main theorem. The boundary version of the previous sections is spelled out in section \ref{sec-bdry}.

The operadic interpretations and the relationship with fiberwise complete Segal spaces and configuration categories, of which theorem \ref{thm-operintro} is the high point, are explained in section \ref{sec-oper}.

In the long sections~\ref{sec-face} and~\ref{sec-shadow} and the short section~\ref{sec-knottynew}, we prove
theorem \ref{thm-alex} from scratch. We believe that the ideas developed in sections~\ref{sec-face} and~\ref{sec-shadow}
may also have independent interest.

We also include four appendices. Appendix A collects a few known results about derived mapping spaces and
homotopy Kan extensions which are used throughout the text. Appendix B contains new results. Namely, we provide a simplicial
model structure on simplicial spaces over a fixed simplicial space which models fiberwise complete Segal spaces; the key result
is the identification of the fibrations between Segal spaces (not necessarily complete) in Rezk's model structure for complete Segal spaces.
Appendix C collects some results which are useful in the computation of derived mapping spaces.
In Appendix D, we include proofs for some postponed technical lemmas.

\section{Topological categories versus complete Segal spaces} \label{sec-topcat}
\subsection{Segal spaces} By the nerve of a discrete small category $\ms C$ we mean the simplicial set $N\ms C$ whose set of $n$-simplices
is the set of strings of composable morphisms
\[   c_0\leftarrow c_1\leftarrow \cdots \leftarrow c_n \]
in $\ms C$. In particular the set of $0$-simplices is the set of objects of $\ms C$,
the set of $1$-simplices is the set of all morphisms
in $\ms C$, and $d_0$ on $1$-simplices is the operator \emph{source} while $d_1$ on $1$-simplices is
the operator \emph{target}. With these conventions we follow Bousfield-Kan \cite{BousfieldKan}.

Grothendieck observed that the \emph{nerve} functor from the category of (small, discrete) categories to
the category of simplicial sets is fully faithful. That is to say, a small discrete category can be
viewed as a simplicial set $X$ with the extra property that for each $n\ge 2$ the following map of sets
is a bijection:
\begin{equation} \label{eqn-nerve}
(u_1^*,u_2^*,\dots,u_n^*)\co X_n \lra X_1\times_{X_0} X_1\times_{X_0}\cdots\times_{X_0} X_1~.
\end{equation}
The $u_i^*$ are iterated face operators corresponding to
the weakly order-preserving maps $u_i\co\{0,1\}\to\{0,1,2,\dots,n\}$ defined by $u_i(0)=i-1$ and $u_i(1)=i$.

Following Grothendieck
we might take the view that a small topological category is by definition the same thing as a simplicial \emph{space} $X$
with the extra property that for each $n\ge 2$ the map of spaces
\begin{equation} \label{eqn-spacenerve}
(u_1^*,u_2^*,\dots,u_n^*)\co X_n \lra X_1\times_{X_0} X_1\times_{X_0}\cdots\times_{X_0} X_1
\end{equation}
which generalizes the one in~(\ref{eqn-nerve})
is a homeomorphism. In this situation the target of $(u_1^*,u_2^*,\dots,u_n^*)$ is of course
the topological limit (alias iterated pullback) of the diagram
\[
X_1 \xrightarrow{ \; d_0 \;} X_0 \xleftarrow{ \; d_1 \;} X_1  \xrightarrow{ \; d_0 \;} \cdots \xrightarrow{ \; d_0 \;} X_0 \xleftarrow{ \; d_1 \;} X_1
\]
Unfortunately, topological categories defined like that come with many
pitfalls, especially for those who expect reasonable homotopy behavior. It is natural to look for alternative conditions which guarantee good homotopy behavior. Such a set of conditions has been proposed by Charles Rezk \cite{Rezk}. The resulting concept, of a simplicial space satisfying additional conditions of a homotopy theoretic nature, is what he calls a \emph{complete Segal space}.

\begin{defn} \label{defn-Segalspace} {\rm A \emph{Segal space} is a simplicial space $X$ satisfying condition ($\sigma$) below. If condition ($\kappa$) below is also satisfied, then $X$ is a \emph{complete Segal space}.}
\end{defn}

\begin{itemize}
\item[($\sigma$)] For each $n\ge 2$ the map $(u_1^*,u_2^*,\dots,u_n^*)$ from $X_n$ to the homotopy limit of
the diagram
\[
X_1 \xrightarrow{ \; d_0 \;} X_0 \xleftarrow{ \; d_1 \;} X_1  \xrightarrow{ \; d_0 \;} \cdots \xrightarrow{ \; d_0 \;} X_0 \xleftarrow{ \; d_1 \;} X_1
\]
 is a weak homotopy equivalence.
\end{itemize}
In order to formulate condition ($\kappa$) we introduce some vocabulary based on ($\sigma$). We call an element
$z$ of $\pi_0X_1$ \emph{homotopy left invertible} if there is an element $x$ of $\pi_0X_2$ such that $d_0x=z$
and $d_1x$ is in the image of $s_0\co \pi_0X_0\to \pi_0 X_1$. (In such a case $d_2x$ can loosely be thought of as
a left inverse for $z=d_0x$. Indeed $d_1x$ can loosely be thought of as the composition $d_2x\circ d_0x$, and by
assuming that this is in the image of $s_0$ we are saying that it is in the path component of an identity morphism.
We have written $d_0$, $d_1$, $s_0$ etc.~for maps induced on $\pi_0$ by the face and degeneracy operators.)
We call $z$ \emph{homotopy right invertible} if there is
an element $y$ of $\pi_0X_2$ such that $d_2y=z$
and $d_1x$ is in the image of $s_0\co \pi_0X_0\to \pi_0 X_1$.
Finally $z\in \pi_0X_1$ is \emph{homotopy invertible} if it is both homotopy left invertible and homotopy right invertible.
Let $X_1^{\heq}$ be the union of the homotopy invertible path components of $X_1$\,. It is a subspace of $X_1$\,.

\smallskip
\begin{itemize}
\item[($\kappa$)] The map $d_0$ restricts to a weak homotopy equivalence from $X_1^{\heq}$ to $X_0$\,.
\end{itemize}

\smallskip
The meaning of this condition will be illustrated later on.

\medskip
\emph{Examples and non-examples of complete Segal spaces.} (i) Let $\ms C$ be a small (discrete) category. The nerve of $\ms C$ is a simplicial set which
we can also view as a simplicial (discrete) space. It satisfies ($\sigma$). It satisfies ($\kappa$) \emph{if and only if}
every isomorphism in $\ms C$ is an identity morphism. \newline\newline
 (ii) Let $\ms C$ be a small category enriched over the category of topological spaces. That is,
we assume that $\ms C$ has a set (discrete space) of objects, but every morphism set $\mor(x,y)$ in $\ms C$
is equipped with the structure of a topological space, and composition of morphisms is continuous. Then the nerve $N\ms C$,
formed in the usual way, satisfies ($\sigma$). \newline\newline
(iii) More generally, let $\ms C$ be a topological category (category object in the category of topological spaces). Suppose
that at least one of the maps \emph{source, target} from the space of morphisms of $\ms C$ to the space of objects of $\ms C$ is a fibration. Then the nerve $N\ms C$, formed in the usual way, satisfies ($\sigma$). Indeed, the fibration property ensures that the map from the limit of the diagram
\[
(N\ms C)_1 \xrightarrow{ \; d_0 \;} (N\ms C)_0 \xleftarrow{ \; d_1 \;} (N\ms C)_1  \xrightarrow{ \; d_0 \;} \cdots \xrightarrow{ \; d_0 \;} (N\ms C)_0 \xleftarrow{ \; d_1 \;} (N\ms C)_1
\]
to its homotopy limit is a homotopy equivalence.\newline\newline
(iv) Let $Y$ be any space. Make a simplicial space $Z$ by setting $Z_n=Y$
for all $n$. This $Z$ is a complete Segal space. We can also view it
as (the nerve of) a topological category with space of objects $Y$ and space of morphisms $Y$. \newline\newline
(v) (See also \cite{Woolf}, \cite{Treumann}) Let $Z$ be a stratified space; that is, $Z$ is a space equipped
with a locally finite partition into locally closed subsets $Z_\alpha$ (the strata) such
that the closure in $Z$ of each stratum is a union of strata. An \emph{exit path} in $Z$ is a pair consisting of
a non-negative $a\in\RR$ and a continuous map
$\gamma\co [0,a]\to Z$
such that for all $s,t\in[0,a]$ with $s\le t$, the stratum containing $\gamma(s)$ is contained in the
closure of the stratum containing $\gamma(t)$. The \emph{exit path category} $\ms C_Z$ of $Z$ has object space
\[  \coprod_{\alpha} Z_\alpha  \]
(the topological disjoint union of the strata, each stratum being equipped with the topology that it has as a subspace of $X$)
while the morphism space is the topological disjoint union, over all $(\alpha,\beta)$, of the space of exit paths starting somewhere
in $Z_\alpha$ and ending somewhere in $Z_\beta$. Composition of morphisms is Moore composition of exit paths. It is evident that the
maps \emph{source} and \emph{target} from the space of all morphisms to the space of all objects are Serre fibrations.
Therefore the (topological) nerve $N\ms C_Z$ of $\ms C_Z$ has property ($\sigma$). Furthermore, an exit path $\gamma$ in $Z$
is in a homotopy invertible component of the space of morphisms of $\ms C_Z$ if and only if it proceeds in a single stratum
$Z_\alpha$ of $Z$~; this implies immediately that property ($\kappa$) is satisfied by $N\ms C_Z$. Therefore
$N\ms C_Z$ is a complete Segal space. Miller \cite{Miller2} investigates this example, discussing specifically whether and how the stratified homotopy type of $Z$ can be reconstructed from the exit path category of $Z$.  \newline \newline
(vi) Let $\ms D$ be the category determined by the poset $\{0 < 1\}$, and let $\ms C$
be the subcategory determined by the sub-poset $\{0\}$. Let $F\co \ms C\to\ms D$ be the inclusion functor.
Then $N\ms C$ and $N\ms D$ are complete Segal spaces.
The functor $F$ induces a simplicial map $N\ms C \to N\ms D$ which is not a degreewise weak equivalence but whose induced map on
geometric realizations is a weak equivalence. We mention this in order to stress that the weak homotopy type of a complete Segal space typically carries
much more information than the weak homotopy type of its geometric realization.

\begin{defn} \label{defn-Rezkmorphism}
Let $X$ be a Segal space. Given $a$ and $b$ in $X_0$, form the homotopy fiber of
\[  (d_0,d_1)\co X_1 \lra X_0\times X_0 \]
over $(a,b)$. We call this the \emph{space of morphisms from $a$ to $b$} in $X$ and denote it by $\mor^h_X(a,b)$.
\end{defn}

Given $a$ and $b$ in $Y_0$\,, their path components $[a],[b]\in \pi_0Y_0$ are considered \emph{weakly equivalent}
if there is an element of $\pi_0(Y_1^{\heq})$ mapped to $[a]$ by $d_0$ and to $[b]$ by $d_1$.

\medskip

In order to shed some more light on condition ($\kappa$) we suppose that $f\co X\to Y$
is a map between simplicial spaces both of which satisfy ($\sigma$) but not necessarily ($\kappa$).

Following Rezk, we say that $f\co X\to Y$ is a \emph{Dwyer-Kan equivalence} if
\begin{itemize}
\item the map $\pi_0X_0\to \pi_0Y_0/\!\!\sim$ induced by $f$ is onto, where $\sim$ denotes the relation
of weak equivalence;
\item for every pair of points $x,x'$ in $X_0$ with images $y,y'$ in $Y_0$, the map $f$
induces a weak homotopy equivalence from the space of morphisms $x\to x'$ to the space of morphisms $y\to y'$.
\end{itemize}
Clearly $f$ can be a Dwyer-Kan equivalence without being a degreewise weak homotopy equivalence
of simplicial spaces. (For example, a functor between categories which induces an equivalence of categories does not have to be bijective on objects.) However, if both $X$ and $Y$ are complete Segal spaces, then it does hold that $f$ is a Dwyer-Kan equivalence if and only if it is a degreewise weak equivalence.

In \cite{Rezk}, Rezk provides a simplicial model structure on the category of simplicial spaces in which fibrant objects are the complete Segal spaces and the weak equivalences between Segal spaces are the Dwyer-Kan equivalences. A morphism in this model structure is a cofibration if it is a cofibration in the underlying (degreewise) model structure of simplicial spaces.

\begin{defn}
Let $X$ be a Segal space. A (Rezk) \emph{completion} of $X$ is a complete Segal space $X\sphat \,$ together with a Dwyer-Kan equivalence (or possibly a zigzag of them) relating $X$ and $X\sphat \,$.
\end{defn}

Rezk constructs an explicit completion \emph{functor}, $X \mapsto X\sphat \,$ (a fibrant replacement in the model structure of complete Segal spaces) with good formal properties. This highlights 
one essential feature of the completeness property ($\kappa$), namely that a map $X \to Y$ between Segal spaces is a Dwyer-Kan equivalence if and only if the induced map $X\sphat \, \to Y\sphat \,$ between completions is a degreewise weak equivalence.

Beware, however, that the completion process can have a drastic effect in all degrees, not just degree $0$. We illustrate this with an
example below.

\medskip
\emph{Examples.}
(vii) Let $G$ be a topological group which we view as a category in the usual manner, with one object $e$ whose
space of endomorphisms is $G$, and whose nerve is a Segal space $X$. It is easy to see that $X$ satisfies ($\kappa$) only if the group $G$ is weakly contractible. Indeed $X_0$ is a singleton while $X_1=X_1^{\heq}$
is the underlying space of  $G$.

It turns out that the completion of $X$ is a constant simplicial space as in example (iv), up to degreewise homotopy equivalence. In order to see this we assume for simplicity that the group $G$ was discrete. Let $Y$ be the constant simplicial space such that $Y_n=BG$ for all $n\ge 0$, where $BG$ is the geometric realization of $X$.
Let $\bar Y$ be the simplicial space such that $\bar Y_n$ is the space of maps from the geometric simplex $\Delta^n$ to $BG$. Then $Y$ and $\bar Y$
also satisfy ($\sigma$).
There are simplicial maps
\[  X \lra \bar Y \longleftarrow Y~. \]
One of these simplicial maps, $X\to \bar Y$, uses the adjoints
of characteristic maps $\Delta^n \times X_n\to BG=Y_n$. The other is, in degree $n$, the inclusion of the constant maps from $\Delta^n$
to $BG$. The map $X\to \bar Y$ is a fine example of a Dwyer-Kan equivalence. Indeed, there is only one element $*\in X_0$,
with image in $Y_0$ equal to the base point (again denoted $*$). The space of endomorphisms of $*$ in
$\bar Y$ is $\simeq\Omega(BG)\simeq G$,
which is in good agreement with the space of endomorphisms of $*$ in $X$. The inclusion map from $Y$ to $\bar Y$ is a
degreewise equivalence. So we are justified in saying that the Rezk completion of $X$ is $Y$.

\begin{rem} \label{rem-Deltas} {\rm We make an effort to distinguish in notation between the geometric $n$-simplex $\Delta^n$ and the simplicial discrete space $\Delta[n]$ whose space of $k$-simplices is the set of monotone maps $[k]\to[n]$. With our conventions for nerves, $\Delta[n]$ is also canonically isomorphic to the nerve of the poset $[n]^\op$.
}
\end{rem}

\subsection{Fiberwise complete Segal spaces}\label{sec-topcatFC}
We finish the section with a mild generalization of the concept of complete Segal space.
Let $Y$ and $B$ be simplicial spaces, both satisfying ($\sigma$).  Let $f\co Y\to B$
be a simplicial map. The following condition is an obvious variation on condition ($\kappa$) above.
\begin{itemize}
\item[($\kappa_{f}$)] The square
\[
	\begin{tikzpicture}[descr/.style={fill=white}]
	\matrix(m)[matrix of math nodes, row sep=2.5em, column sep=2.5em,
	text height=1.5ex, text depth=0.25ex]
	{
	Y_1^{\heq} & B_1^{\heq} \\
	Y_0 & B_0 \\
	};
	\path[->,font=\scriptsize]
		(m-1-1) edge node [auto] {} (m-1-2);
	\path[->,font=\scriptsize]
		(m-2-1) edge node [auto] {} (m-2-2);
	\path[->,font=\scriptsize]
		(m-1-1) edge node [left] {$d_1$} (m-2-1);
	\path[->,font=\scriptsize] 		
		(m-1-2) edge node [auto] {$d_1$} (m-2-2);
	\end{tikzpicture}
\]
is homotopy cartesian.
\end{itemize}

\begin{defn} \label{defn-rezkfunctorover} {\rm A map between Segal spaces $f\co Y\to B$ is
a \emph{fiberwise complete Segal space over $B$} if it satisfies ($\kappa_{f}$).
}
\end{defn}

In Theorem \ref{thm-fiberwisecpl}, we give a simplicial model structure describing the homotopy theory of fiberwise complete Segal spaces. The underlying category is the category of simplicial spaces over $B$; a fibrant object is a fiberwise complete Segal space over $B$ and a weak equivalence $X \to Y$ between Segal spaces over $B$ is a Dwyer-Kan equivalence (forgetting the reference maps to $B$). If $X$ and $Y$ are fiberwise complete Segal spaces then a map is a Dwyer-Kan equivalence if and only if it is a degreewise weak equivalence. 

\medskip
A map $X\to Z$ between Segal spaces
has a fiberwise Rezk completion over $B$. More precisely there is a commutative diagram (or a zigzag) of
simplicial spaces
\[
	\begin{tikzpicture}[descr/.style={fill=white}]
	\matrix(m)[matrix of math nodes, row sep=1.25em, column sep=1.25em,
	text height=1.5ex, text depth=0.25ex]
	{
	X & & Y \\
	 & B & \\
	};
		\path[->,font=\scriptsize]
		(m-1-1) edge node [auto] {} (m-1-3);
	\path[->,font=\scriptsize]
		(m-1-1) edge node [auto] {} (m-2-2);
	\path[->,font=\scriptsize]
		(m-1-3) edge node [left] {} (m-2-2);

	\end{tikzpicture}
\]
where $Y$ is fiberwise complete over $B$ and the map $X\to Y$ is a
Dwyer-Kan equivalence. For more details, see Appendix \ref{sec-model}.

\medskip
\emph{Example.} (ix)
Let $\ms C$ be a small category enriched over spaces (with a set of objects).
Let $F$ be an enriched functor from $\ms C$ to spaces. Let $\int F$ be the Grothendieck construction
alias transport category of $F$. Then the projection functor $\int F\to \ms C$ induces a map
of nerves $N\int F\to N\ms C$ which is a fiberwise complete Segal space over $N\ms C$.

\section{The configuration category of a manifold} \label{sec-conman}
\subsection{Descriptions of the configuration category}
\begin{defn}
{\rm For an integer $m\ge 0$ let $\uli m=\{1,\dots,m\}$. In particular $\uli 0$ is the empty set.
Let $\fin$ be the category with objects $\uli m$ for $m\ge 0$,
where a morphism from $\uli m$ to $\uli n$ is a map from $\uli m$ to $\uli n$.}
\end{defn}

The configuration category of a manifold $M$ is, for us, a fiberwise complete Segal space $\config(M)$ over $N\fin$. It is well
known in several incarnations, most of them related to operad theory. We try to keep the operad theory at arm's length, for as long as we can,
but even so there are several equivalent descriptions. One of the descriptions we give (we call it the
\emph{particle model}) is due to Andrade \cite{Andrade}. A similar construction was introduced by Lurie at about the same time in the context of factorization homology \cite{Lurie}.

\medskip
The manifold $M$ can be smooth or just topological; in some models of $\config(M)$ we need a smooth structure.
We assume for a start that $M$ has empty boundary. It is not required to be compact.

\bigskip
\emph{The multipatch model.}
Suppose for simplicity that $M$ is smooth and $m$-dimen\-sional.
Let $\disc$ be the category enriched over topological spaces with objects $\uli k\times\RR^m$ for $k$ a non-negative integer, where
the space of morphisms from $U=\uli k\times\RR^m$ to $V=\uli\ell\times\RR^m$ is
the space of smooth embeddings from $U$
to $V$ with the weak $C^\infty$ topology. See \cite{BoavidaWeiss} for more details.
Let $E=E_M$ be the contravariant functor on $\disc$ defined by $E(U)=\emb(U,M)$,
again with the weak $C^\infty$ topology. Form $\smallint E$, the Grothendieck construction. 
To clarify, $\smallint E$ is a category object in spaces (alias internal category). The space of objects of $\smallint E$ is the space of pairs $(k,f)$ where $k\ge 0$ and $f\co\uli k\times\RR^m\to M$ is a (codimension zero) smooth embedding. The space of \emph{all} morphisms is the coproduct
\[
\coprod_{k, \ell \geq 0} \emb(\uli k \times \RR^m, \uli \ell \times \RR^m) \times \emb(\uli \ell \times \RR^m, M)
\]
The source and target maps to the object space are composition and projection, respectively.

As explained in the previous section, the projection
functor $\smallint E\to \disc$ induces a map of nerves
\[ N\smallint E \lra N\disc \]
which is a fiberwise complete Segal space over $N\disc$. Taking path components gives a functor $\pi_0$ from $\disc$ to $\fin$
and we compose $N\smallint E \lra N\disc \lra  N\fin$. The composition
\[ N\smallint E \lra N\fin \]
is typically not a fiberwise complete Segal space over $N\fin$.

\begin{defn}
$\config(M)$ is the fiberwise completion of $N\smallint E$ over $N \fin$.
\end{defn}

For ease of notation, write $X:=N\smallint E$. It is easy to give an explicit description of fiberwise Rezk completion (over $N\fin$) applied to the simplicial space $X$.
Clearly $X_r$ is a coproduct of spaces $X(k_0, \dots, k_r)$ defined by the formula
\begin{equation} \label{eqn-stringy}
\emb(\uli k_r \times \RR^m, \uli k_{r-1} \times \RR^m) \times \cdots \times \emb(\uli k_1 \times \RR^m, \uli k_{0} \times \RR^m) \times \emb(\uli k_{0} \times \RR^m, M)
\end{equation}
The Lie group
$\Or(m)^{\sum k_i}$
acts on the space~(\ref{eqn-stringy}) as follows. Given $(g_0, \dots, g_r) \in \prod_{i = 0}^r O(m)^{k_i}$ and $(f_r, \dots, f_0) \in X(k_0, \dots, k_r)$, the action product is
\[
(g_{r-1} f_r g_r^{-1}, \dots, g_0 f_1 g_{1}^{-1}, f_0 g_0^{-1}) \in X(k_0, \dots, k_r) \; .
\]
We form the homotopy orbit space of this action. Let $Y_r$ be the coproduct of these homotopy orbit spaces.
Then $Y=(Y_r)$ is a simplicial space over $N\fin$ which satisfies ($\sigma$) and ($\kappa_{f}$). The inclusion $X\to Y$ is a Dwyer-Kan equivalence. Therefore $Y$ so defined is a fiberwise Rezk completion of $X$ over the nerve of $\fin$ -- we refer to $Y$ as the \emph{multipatch model} of the configuration category of $M$.

\medskip
Based on the explicit description $Y$ of $\config(M)$, we can make some observations on objects and morphisms,
that is, the spaces $Y_0$ and $Y_1$. It is clear that
\begin{equation} \label{eqn-multipatchob} Y_0~\simeq\coprod_{k\ge 0} \emb(\uli k\,,M),
\end{equation}
that is to say, $Y_0$ has the homotopy type of the disjoint union of the
ordered configuration spaces of $M$ for the various finite cardinalities. Now fix a point in $Y_0$
represented by some embedding $f\co \uli k\times\RR^m\to M$.
The (homotopy type of the) homotopy fiber $\Phi_f$ of the map \emph{target} (alias $d_1$) from $Y_1$ to $Y_0$
over that point \emph{depends only on the integers $k$ and $m$}, not on $M$. Indeed we have
\begin{equation} \label{eqn-multipatchmor}
\Phi_f~~\simeq~~ \coprod_{i \ge 0} \, \emb(\uli i, \uli k \times \RR^m) \cong \coprod_{i\ge 0}~\coprod_{g\co \uli i\to\uli k}~\prod_{x=1}^k \emb(g^{-1}(x),\RR^m)~.
\end{equation}
There is a more coordinate-free way to write this. Instead of expressing $\Phi_f$ in terms of the source
of $f$, we could express it in terms of $\im(f)$ which is a standard tubular neighborhood of an ordered
configuration of $k$ points in $M$.

\bigskip
\emph{The particle model.} See also \cite{Andrade}, \cite{Woolf}, \cite{Treumann}, \cite{Lurie} and \cite{AFT1}\footnote{The last two references use a similar but non-equivalent definition.}. Here we assume only that $M$ is a topological manifold.

Let $k\in\NN$. The space of maps from $\uli k$ to $M$
comes with an obvious stratification. There is one stratum for each equivalence relation $\eta$ on $\uli k$~. The
points of that stratum are precisely the maps $\uli k\to M$ which can be factorized as projection from $\uli k$ to
$\uli k\,/\eta$ followed by an injection of $\uli k\,/\eta$ into $M$.

Now we construct a topological category whose object space is
\begin{equation} \label{eqn-appfact1}  X_0:=\coprod_{k\ge 0} \emb(\uli k\,,M), \end{equation}
that is, the topological disjoint union of the ordered configuration spaces of $M$ for each cardinality $k\ge 0$.
By a morphism from $f\in \emb(\uli k\,,M)$ to $g\in\emb(\uli\ell\,,M)$ we mean a pair consisting of a map $v\co \uli k\to \uli\ell$
and a (reverse) exit path $\gamma$ from $f$ to $gv$ in the stratified space of \emph{all} maps from $\uli k$ to $M$. See section~\ref{sec-topcat},
example (v). The space
of all morphisms is therefore a coproduct
\begin{equation} \label{eqn-appfact2} X_1:=\coprod_{\twosub{k,\ell\ge 0}{v\co\uli k\to\uli\ell}} X_1(v) \end{equation}
where $X_1(v)$ consists of triples $(f,g,\gamma)$ as above: $f\in\emb(\uli k\,,M)$,
$g\in\emb(\uli\ell\,,M)$ and $\gamma$ is a (reverse) exit path from $f$ to $gv$. Composition of morphisms is obvious.
The nerve of this category, which we denote by $X$\,, is a fiberwise complete Segal space
over $N\fin$. No further completion is necessary.

For clarification we mention a technical result on stratified spaces (which we apply here
to the stratified space of all maps from $\uli k$ to $M$). Let $Z$ be a stratified
space, $Z_j$ a stratum of $Z$ and $z$ a point in the closure of $Z_j$. Let $\wp(z,Z,Z_j)$ be the space of exit paths in $Z$ starting at $z$ and ending somewhere in $Z_j$. Let $\holink(z,Z,Z_j)$ be the subspace of $\wp(z,Z,Z_j)$ consisting of those exit paths $\gamma\co[0,a]\to Z$ having $\gamma(0)=z$ and $\gamma(t)\in Z_j$ for all other $t\in[0,a]$.

\begin{prop} \label{prop-miller} \cite[Theorem 4.9]{Miller} If $Z$ is a homotopically stratified space in the sense of Quinn, then the inclusion of $\holink(z,Z,Z_j)$ in $\wp(z,Z,Z_j)$ is a homotopy equivalence.
\end{prop}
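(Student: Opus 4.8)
The plan is to prove the equivalence by induction on the number of strata of $Z$, peeling off the lowest stratum at each step, and to feed in the two structural properties built into Quinn's notion of a homotopically stratified space: among them, every stratum $Z_k$ is \emph{forward tame} in $Z$ (there is a neighbourhood $N$ of $Z_k$ and a homotopy $N\times[0,1]\to Z$, stationary on $Z_k$, from the inclusion to a map with image in $Z_k$), and the source evaluation $e_0\co\holink(Z,Z_k)\to Z_k$ is a fibration, where $\holink(Z,Z_k)$ is the unbased homotopy link, i.e.\ the space of exit paths $\lambda\co[0,a]\to Z$ with $\lambda(0)\in Z_k$ and $\lambda(t)\notin Z_k$ for all $t>0$. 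Informally these say that near $Z_k$ the space $Z$ looks like the homotopy mapping cylinder of $e_0$.

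First I would reduce. Since the stratum index does not decrease along an exit path, an exit path from $z$ to $Z_j$ meets only strata $Z_k$ with $Z_i\subseteq\overline{Z_k}\subseteq\overline{Z_j}$, where $Z_i$ is the stratum of $z$; discarding the remaining strata changes neither $\wp(z,Z,Z_j)$ nor $\holink(z,Z,Z_j)$, so one may assume $Z$ has finitely many strata with $Z_i$ minimal, $z\in Z_i$ and $Z_j$ maximal. (Being minimal, $Z_i$ is closed.) Now induct on the number of strata. In the base case $i=j$ every exit path from $z$ to $Z_j=Z_i$ remains in $Z_i$, so both $\wp(z,Z,Z_j)$ and $\holink(z,Z,Z_j)$ equal the Moore path space of $Z_i$ based at $z$, and the inclusion is the identity.

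For the inductive step, with $i<j$, put $W:=Z\smin Z_i$; this is open in $Z$, is again homotopically stratified, and has one fewer stratum. Let $L\subseteq\wp(z,Z,Z_j)$ be the subspace of exit paths $\gamma$ with $\gamma(t)\in W$ for all $t>0$, so that $\holink(z,Z,Z_j)\subseteq L\subseteq\wp(z,Z,Z_j)$. Step (a): show that $L$ is a deformation retract of $\wp(z,Z,Z_j)$. Heuristically, cutting $\gamma$ at the last instant $t_0$ with $\gamma([0,t_0])\subseteq Z_i$ exhibits $\gamma$ as a path in $Z_i$ starting at $z$ concatenated with a path in $L$; as the space of the first factor is contractible and the assignment of the second is a fibration over $Z_i$ pulled back from $e_0$, this ``ought to'' retract $\wp(z,Z,Z_j)$ onto $L$. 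The catch, which is the substance of step (a), is that $t_0$ is only upper semicontinuous in $\gamma$, so the cut is merely a continuous bijection; one repairs this by running the retraction inside the mapping-cylinder neighbourhood of $Z_i$ provided by forward tameness together with the holink fibration, where the straightening can be carried out continuously. Step (b): restricting a path of $L$ to a short initial segment presents $L$, up to homotopy, as the homotopy pullback of the evaluation, from the space of short exit paths that start at $z$ and leave $Z_i$ at once to $W$, against the start-point map $\wp(-,W,Z_j)\to W$. The inductive hypothesis, applied inside $W$ (fibrewise over $W$), says $\holink(-,W,Z_j)\hookrightarrow\wp(-,W,Z_j)$ is an equivalence, so $L$ is equivalent to the analogous homotopy pullback with $\holink(-,W,Z_j)$ replacing $\wp(-,W,Z_j)$; and a further homotopy shrinking the initial segment to zero length, again powered by forward tameness of the strata strictly between $Z_i$ and $Z_j$, identifies that pullback with $\holink(z,Z,Z_j)$. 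Tracking the maps, so that every equivalence used respects the inclusions $\holink\subseteq L\subseteq\wp$, then gives that $\holink(z,Z,Z_j)\hookrightarrow\wp(z,Z,Z_j)$ is a homotopy equivalence.

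The hard part is the continuity issue in step (a) and its analogue at the end of step (b): the operations ``delete the dwell time in the lower strata'' and ``shrink an initial segment to nothing'' are not literally continuous --- the relevant cut times are only semicontinuous --- so they cannot be implemented as bare reparametrisations and must be performed inside the collar/homotopy-mapping-cylinder structure that Quinn's hypotheses guarantee. This is exactly the point at which ``homotopically stratified'' is used in an essential way rather than merely ``stratified''. A subsidiary but unavoidable chore is the bookkeeping with Moore parametrisations, so that concatenations are strictly associative and the equivalences built at successive inductive stages match up. The complete argument appears in \cite{Miller, Miller2}.
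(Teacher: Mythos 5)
The paper does not actually prove this proposition: it is quoted from Miller's work \cite{Miller, Miller2} and used as a black box (applied to the stratified mapping spaces $\map(\uli k, M)$), so there is no in-paper argument to compare yours against. Judged on its own, your outline follows the same general strategy as Miller's published proof --- exploit Quinn's two conditions (forward tameness of the lower strata and the fibration property of the holink evaluation) to deform exit paths so that they leave the lower strata immediately, peeling off one stratum at a time. That is the right skeleton.

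However, as a proof your steps (a) and (b) are assertions rather than arguments, and they are assertions of exactly the hard content. You correctly identify that the cut time $t_0$ is only semicontinuous and say the retraction can be ``repaired'' by working inside the mapping-cylinder-like neighbourhood furnished by tameness and the holink fibration; but producing such a deformation of \emph{path spaces} --- a nearly stratum-preserving deformation that pushes a whole family of exit paths off $Z_i$ continuously in the family, without breaking the exit condition or the endpoint constraints --- is precisely the technical machinery (nearly stratum-preserving homotopies, strong/stratified holinks) that Miller develops, and nothing in your sketch indicates how it would be constructed. Likewise, step (b) invokes ``the inductive hypothesis, applied inside $W$ (fibrewise over $W$)'': the statement you are proving is pointwise at $z$, so a fibrewise version over a variable base point is a strictly stronger statement; to run the induction you would need to formulate and prove that strengthened, base-point-parametrised claim (with compatible fibration properties of the evaluation maps), which your write-up does not do. Since you ultimately defer to \cite{Miller, Miller2} for ``the complete argument'', your text is best read as a plausible road map to Miller's theorem rather than an independent proof; in that respect it is consistent with how the paper itself treats the statement, but the two acknowledged gaps above are where the entire difficulty lives.
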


\smallskip We use this in the case $Z=\map(\uli k\,,M)$ which is a homotopically stratified space in the sense of Quinn \cite{Quinn}. This is so because the condition of being \emph{homotopically stratified} can be verified \emph{locally} \cite[Lemma 2.5]{Quinn}. In the smooth case, this is enough; in the topological case, use that a topological manifold can be smoothed \emph{locally}.

The remarkable thing in Proposition \ref{prop-miller} is that $\holink(z,Z,Z_j)$ is a local concept. That is,
for any open neighborhood $U$ of $z$ in $Z$ the inclusion
\begin{equation}\label{eqn-strat}
\holink(z,U,Z_j\cap U) \to \holink(z,Z,Z_j)
\end{equation}
is a homotopy equivalence. To see this note that $\holink(z,Z,Z_j)$ is metrizable, therefore paracompact. Then use partitions of unity to construct a function $\varphi$ from $\holink(z,Z,Z_j)$ to $(0,1]$ such that for every $\gamma : [0,a] \to Z$ in $\holink(z,Z,Z_j)$, its restriction to $[0, \varphi(\gamma) \cdot a)$ is in $\holink(z,U,Z_j \cap U)$. The corresponding locality statement is not easy to prove for $\wp(z,Z,Z_j)$ directly, without appealing to Proposition~\ref{prop-miller}.

\bigskip
\emph{The Riemannian model.} In this description we assume that $M$ is smooth and equipped with a
Riemannian metric. Define a topological category with objects the pairs
$(S,\rho)$ where $S$ is a finite subset of $M$ with a total ordering, and $\rho$
is a function from $S$ to the set of positive real numbers, subject to two conditions.
\begin{itemize}
\item For each $s\in S$, the exponential map $\exp_s$ at $s$ is defined and regular on the open ball
$B_{\rho(s)}(0_s)$ of radius $\rho(s)$ about the origin $0_s$ in $T_sM$;
\item The images $\exp_s(B_{\rho(s)}(0_s))$ for $s\in S$ are disjoint. We write $U_\rho(S)$ for their union, an
open subset of $M$ which is diffeomorphic to $S\times\RR^m$.
\end{itemize}
Given objects $(S,\rho)$ and $(R,\omega)$, we declare there to be exactly one morphism from $(S,\rho)$ to $(R,\omega)$ if $U_\rho(S)\subset U_\omega(R)$. (Beware that the category so defined is not a
topological poset because there are isomorphisms which are not identity morphisms. Namely,
objects $(S,\rho)$ and $(R,\rho)$ are isomorphic, but not equal, if $S$ and $R$ agree as subsets of $M$ but their total
orderings differ.) Let $X$ be the nerve of that category, a simplicial space.
It is easy to verify that $X$ is a fiberwise complete Segal space over $N\fin$.
We will prove later in this section that it is another incarnation of $\config(M)$.

\bigskip
\emph{The screen completion model of Fulton-MacPherson, Axelrod-Singer and Sinha.}
This is a beautiful edition of $\config(M)$ with many
fascinating differential-geometric features. Since we do not need all of that, we only give a superficial description which
emphasizes the categorical aspects. For more details, see \cite{Sinha} and \cite[\S5]{AxSing94}. In the words of
Axelrod and Singer, their construction was made by taking the definitions
in the algebro-geometric context of \cite{FultonMacPherson} and replacing algebraic-geometric blowups
with differential geometric blowups.

Let $M$ be an $m$-dimensional Riemannian manifold without boundary. Axelrod and Singer \cite{AxSing94} construct a \emph{properification} of the inclusion
\[   \emb(\uli k,M) \lra \map(\uli k, M), \]
that is to say a factorization
\[ \emb(\uli k\,,M) \to M[k] \to \map(\uli k\,, M) \]
where the second arrow is a proper map, $M[k]$
is a smooth manifold with boundary and multiple corners, and the first arrow identifies
$\emb(\uli k\,,M)$ with $M[k]\smin\partial(M[k])$. (In \cite{AxSing94},
the manifold $M$ is assumed to be closed and 3-dimensional. One of the chief merits of \cite{Sinha}
is to clarify that it can be an arbitrary Riemannian manifold. Sinha frequently describes the inclusion
$\emb(\uli k\,,M)\to M[k]$ as a compactification,
but he does point out that $M[k]$ for $k>0$ is compact only if $M$ is compact.) The manifold $M[k]$ is stratified with manifold strata $M(k,\Phi)$, where $\Phi$ is a \emph{nest} of subsets of $\uli k$~. More precisely,
$\Phi$ is a set of subsets of $\uli k$ such that
\begin{itemize}
\item every $S\in\Phi$ has cardinality $>1$~;
\item $S,T\in\Phi$ implies that $S\cap T=\emptyset$ or $S\subset T$ or $T\subset S$.
\end{itemize}
The view taken here is that distinct strata in a stratification must be disjoint; in particular $M[k]$ as a set
is the \emph{disjoint} union of the subsets $M(k,\Phi)$.
Let $M[k,\Phi]$ be the closure of the stratum $M(k,\Phi)$
in $M[k]$. It is the union of the strata $M(k,\Phi^\prime)$ where $\Phi\supset\Phi^\prime$. 
It is also a smooth manifold with boundary
(and a complicated corner structure) whose interior is precisely $M(k,\Phi)$. \emph{Examples}: For $\Phi=\emptyset$, the minimal choice,
we have $M(k,\Phi)=\emb(\uli k\,,M)$ and $M[k,\Phi]=M[k]$. For $k=2$ and $\Phi=\{\uli 2\}$, the stratum
$M(2,\Phi)$ is diffeomorphic to the total space of the unit tangent bundle of $M$.

There is the following description of the stratum $M(k,\Phi)$ in general. For $S\in\Phi$ let $Q_{S,\Phi}$ be the
quotient of the set $S$ by the smallest equivalence relation such that every $T\in\Phi$ which is a proper subset of $S$
is contained in an equivalence class. Let $Q_{\Phi}$ be the quotient of $\uli k$ by the smallest
equivalence relation such that every $T\in\Phi$ is contained in an equivalence class. Then $M(k,\Phi)$ is
the total space of a bundle on the space $\emb(Q_\Phi,M)$
such that the fiber over $f\co Q_\Phi\hookrightarrow M$ is
\[  \prod_{S\in\Phi} \Big(\emb(Q_{S,\Phi}\,,T_{f(S)}M)/\sim\Big) \]
Here $f(S)$ is short for the value of $f$ on the element of $Q_\Phi$ represented by any element of $S$, and
$T_{f(S)}M$ is the tangent space of $M$ at $f(S)$. The equivalence relation indicated by $\sim$
is the orbit relation under an obvious action of the group of self-maps of $T_{f(S)}M$ of the form $v\mapsto av+w$,
where $a$ is a positive real number and $w\in T_{f(S)}M$.
That action is free and so passage to orbits reduces dimension by $m+1$. ---
An element of $\emb(Q_{S,\Phi}\,,T_{f(S)}M)/\sim$ as in the above description of $M(k,\Phi)$
is called a \emph{screen} in \cite{AxSing94}, following \cite{FultonMacPherson}
where the same word is used for the analogous concept in the complex setting. Sinha \cite{Sinha}
has developed a rather nice way to represent elements of $M(k,\Phi)$ by pictures.

The spaces $M[k]$ and their strata depend functorially on $k$ in two ways. An \emph{injective} map $g\co\uli k\to\uli\ell$
induces contravariantly a map
\[ g^*\co M[\ell]\to M[k] \]
which extends the map $\emb(\uli \ell\,,M)\to \emb(\uli k\,,M)$ given by pre-composition with $g$. This map $g^*$ takes the
stratum $M(\ell,\Phi)$ to $M(k,g^*\Phi)$, in self-explanatory notation. For a \emph{surjective} map $g\co \uli k\to\uli\ell$
let $\Phi_g$ be the nest of subsets of $\uli k$ whose elements are all sets $g^{-1}(j)$ of cardinality at least $2$,
where $j\in\uli\ell$\,. For a nest $\Psi$ of subsets of $\uli k$ which contains $\Phi_g$~, let $g_*\Psi$ be the nest
of subsets of $\uli\ell$ consisting of all subsets of $\uli\ell$ which have cardinality $>1$ and whose preimage
under $g$ belongs to $\Psi$. There is a map
\[  g_*\co M[k,\Phi_g] \lra M[\ell] \]
which maps a stratum $M(k,\Psi)$, where $\Psi\supset\Phi_g$~, to the stratum $M(\ell,g_*\Psi)$.

We use this two-way naturality to construct a topological category with object space equal to $\coprod_{k\ge 0} M[k]$. Let $g\co \uli k\to \uli\ell$ be any map. There is a
unique factorization of $g$ as
\[
\uli k \xrightarrow{ \; p \;} \uli r \xrightarrow{ \; e \; } \uli \ell
\]
where $p$ is surjective while $e$ is injective and order-preserving. For $x\in M[k]$
and $y\in M[\ell]$, we regard the triple $(x,g,y)$ as a morphism from $x$ to $y$ if $x\in M[k,\Phi_p]$ and
$p_*x=e^*y\in M[r]$. Composition is straightforward: For $w\in M[j]$, $x\in M[k]$, $y\in M[\ell]$ and maps
$h\co\uli j\to \uli k$~, $g\co \uli k\to\uli\ell$ such that the triples $(w,h,x)$ and $(x,g,y)$ are morphisms,
the triple $(w,gh,y)$ is also a morphism, by inspection.

\subsection{Equivalence of models} For this section, we denote by $X$ the Grothendieck construction of $\emb(-,M)$
(formerly denoted $\smallint E$) and $X^{m}$ (multipatch), $X^{p}$ (particle), $X^{R}$ (Riemannian) and $X^{s}$ (screen)
the different proposed models for $\config(M)$ from the previous section. By construction, $X^m$ is a model for $\config(M)$.
In this section, we show that $X^p$, $X^R$ and $X^s$ are models for $\config(M)$ too, i.e., they are fiberwise completions
of $X$ over $\fin$.

\emph{Particle model $X^p$.}
We construct a zigzag $X^p \xleftarrow{i} W \xrightarrow{j} X$ of Dwyer-Kan equivalences over $\fin$ (in fact, $j$ will be a degreewise equivalence). Let $W$  be the nerve of the category with the same objects as $X$; the space of morphisms from an embedding $f: \uli k \times \RR^m \to M$ to an embedding $g : \uli \ell \times \RR^m \to M$ is empty if $\im(f)$ is not contained in $\im(g)$ and, otherwise, it is the space of morphisms
in $\config(\im(g))$ from $f_0$ to $g_0$ (where $f_0$ is the composition of $f$ with the inclusion $\uli k\cong \uli k\times 0\to \uli k\times\RR^m$).

The maps $i:W \rightarrow{} X^p$ and $j\co W\to X$ are both forgetful; $i$ restricts embeddings $\uli k \times \RR^m \to M$
to $\uli k\cong \uli k\times 0$, and $j$ forgets the
reverse exit paths. Both $i$ and $j$ are essentially surjective on objects
because they are actually surjective on objects. We have to show that both $i$ and $j$ are fully faithful.

We begin with $i$. Let $W^{\sharp}$ be the fiberwise completion of $W$ over $N\fin$. We have an explicit model for $W^\sharp$ similar to the explicit model for $X^\sharp$, the fiberwise completion of $X$ over $N\fin$. So there is a pullback square
\[
	\begin{tikzpicture}[descr/.style={fill=white}, baseline=(current bounding box.base)] ]
	\matrix(m)[matrix of math nodes, row sep=2.5em, column sep=2.5em,
	text height=1.5ex, text depth=0.25ex]
	{
 	W & W^{\sharp}  \\
	X & X^\sharp \\
	};
	\path[->,font=\scriptsize]
		(m-1-1) edge node [auto] {$$} (m-1-2);
	\path[->,font=\scriptsize]
		(m-2-1) edge node [auto] {$$} (m-2-2);
	\path[->,font=\scriptsize]
		(m-1-1) edge node [left] {$$} (m-2-1);
	\path[->,font=\scriptsize] 		
		(m-1-2) edge node [auto] {$$} (m-2-2);
	\end{tikzpicture}
\]
The map $i$ from $W$ to $X^p$ evidently factors through $W^\sharp$. (Such a factorization exists for abstract reasons since $X^p$ is fiberwise complete, but we also have an explicit description of this factorization.) Now we check that the map from $W^\sharp$ to $X^p$ is an equivalence in degree $0$ and $1$. This implies that $i$ is a Dwyer-Kan equivalence since $W^\sharp$ is by definition Dwyer-Kan equivalent to $W$.  In degree $0$, the situation is that $(X^p)_0$ is the space of all ordered configurations in $M$ whereas $W_0$ is weakly equivalent to the space of all ordered, framed configurations in $M$. Passing from $W_0$ to $(W^\sharp)_0$ we lose the framing. In degree $1$: a point in $W^{\sharp}_1$ consists of an inclusion of a tubular neighborhood $U_x$ of an ordered configuration $x$ in a tubular neighborhood $V_y$ of another ordered configuration $y$, together with a reverse exit path from $x$ to $y$ which proceeds in $V_y$. The map from $(W^\sharp)_1$ to $(X^p)_1$ forgets the tubular neighborhoods, and it is a weak equivalence.

It remains to show that $j : W \to X$ is fully faithful, i.e. that the square
\[
	\begin{tikzpicture}[descr/.style={fill=white}, baseline=(current bounding box.base)] ]
	\matrix(m)[matrix of math nodes, row sep=2.5em, column sep=2.5em,
	text height=1.5ex, text depth=0.25ex]
	{
	W_1 & W_0 \times W_0  \\
	X_1 & X_0 \times X_0 \\
	};
	\path[->,font=\scriptsize]
		(m-1-1) edge node [auto] {$(d_0, d_1)$} (m-1-2);
	\path[->,font=\scriptsize]
		(m-2-1) edge node [auto] {$(d_0, d_1)$} (m-2-2);
	\path[->,font=\scriptsize]
		(m-1-1) edge node [left] {$j_1$} (m-2-1);
	\path[->,font=\scriptsize] 		
		(m-1-2) edge node [auto] {$j_0$} (m-2-2);
	\end{tikzpicture}
\]
is homotopy cartesian. The vertical maps are fibrations (note that $j_0$ is the identity). We have to show that the fibers of $j_1$ are contractible.
Let $\mathcal{W}$ denote the fiber of $j_1$ over $f \to g$ in $X_1$, i.e. the space of morphisms in $\config(\im(g))$ from
$f_0$ to $g_0$. It is easy to see that this is a product of factors indexed by the connected components of
$\im(g)$. That is, we can assume that $g$ has the form $g\co \uli\ell\times\RR^m\to M$ where $\ell=1$.
Then we use the lemma below.

\begin{lem} \label{lem-exicontr}
Let $f$ be a configuration of $k$ points in $\RR^m$. The space of exit paths starting at $f$ and ending at the origin is contractible.
\end{lem}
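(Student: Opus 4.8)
The plan is to exhibit an explicit deformation retraction of the space of exit paths (starting at a configuration $f$ of $k$ points in $\RR^m$ and ending at the origin, i.e. at the constant map $\uli k\to\RR^m$ with value $0$) onto a point. By ``exit path ending at the origin'' we mean a pair $(a,\gamma)$ with $a\ge 0$ and $\gamma\co[0,a]\to\map(\uli k,\RR^m)$ such that $\gamma(0)=f$, the value $\gamma(a)$ is the constant configuration at $0$, and the stratum (equivalence relation on $\uli k$) can only coarsen as $t$ increases. Reparametrising Moore paths to the interval $[0,1]$ in the usual way, we may work with honest paths $\gamma\co[0,1]\to\map(\uli k,\RR^m)$ with $\gamma(0)=f$ and $\gamma(1)=0$, remembering that at the very end one must check the Moore-path bookkeeping is compatible with the homotopy.

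First I would handle the case $f=0$ (the constant configuration already at the origin): here the space of exit paths is the space of paths that stay in the single deepest stratum, which is the based loop-like path space $\{\gamma\co[0,1]\to\RR^m \mid \gamma(0)=\gamma(1)=0\}^{\times k}$ up to the stratification constraint $\gamma(t)=0$ for all $t$ forced only at the endpoints --- actually any path through $\map(\uli k,\RR^m)$ that begins and ends at $0$ is allowed, since once you are at the origin you are in the coarsest stratum and may move freely; so this space is contractible (it is a product of path spaces of the contractible space $\RR^m$ with fixed endpoints). For general $f$, the idea is: given an exit path $(a,\gamma)$ from $f$ to $0$, build a new exit path by first scaling the whole configuration down toward $0$ linearly and then concatenating with $\gamma$, and to contract the whole space use a two-stage homotopy. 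Concretely, define for $s\in[0,1]$ the rescaled path $\gamma^s$ by $\gamma^s(t) = s\cdot\gamma(t)$ for the ``straight-line to origin'' part and note that multiplication by $s\in[0,1)$ is a diffeomorphism of $\RR^m$ while multiplication by $0$ lands everything at the origin; since scaling by a fixed $s$ does not change which points of $\uli k$ collide, the path $t\mapsto s\,\gamma(t)$ is still an exit path from $s\cdot f$ to $0$. Combining this with a linear path from $f$ to $s\cdot f$ (again a diffeo for each fixed parameter, hence stratum-preserving, hence trivially an exit path run in reverse-direction-allowed time) glued at the front, and then letting $s\to 0$, collapses every exit path to the constant exit path at $0$; a final small homotopy contracts that residual path space as in the case $f=0$.

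The step I expect to be the main obstacle is making the concatenations and reparametrisations continuous \emph{as a function of the path $(a,\gamma)$}, including the variable length $a$ of the Moore path, so that what I have described is genuinely a homotopy of the whole mapping space and not just a pointwise statement. In particular one must be careful that ``glue a linear segment from $f$ to $sf$ in front of $\gamma$'' produces an exit path (the gluing point $sf$ has a possibly strictly coarser stratum than $f$ but never finer, which is exactly what is required) and that the assignment $s\mapsto(\text{new Moore path})$ is continuous at $s=0$, where the scaled portion degenerates; choosing the scaling to act on the \emph{target} $\RR^m$ rather than reparametrising time avoids any collapse of the domain interval and keeps everything continuous. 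Once continuity is arranged, the stratification conditions are immediate because every map used (scalar multiplication by a nonneg real, translation) takes a stratum of $\map(\uli k,\RR^m)$ into a stratum that is at least as coarse, which is precisely the exit-path condition. Alternatively, one can phrase the whole argument via Proposition \ref{prop-miller}: the holink of the origin in $\map(\uli k,\RR^m)$ relative to the deepest stratum $\{0\}$ is, by the local nature of the holink, computed in a small ball, where the linear scaling contraction above is manifestly available; I would use whichever of the two formulations the surrounding text makes cleanest.
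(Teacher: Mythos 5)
Your core idea --- contracting the path space via scalar dilation of $\RR^m$ --- is exactly the paper's. The paper's piecewise formula sidesteps the Moore-path gluing and $s\to 0$ continuity worries you flag: after reparametrising all paths to domain $[0,1]$ (the paper's proof uses the time-direction convention $\alpha(0)=0$, $\alpha(1)=f$, opposite to yours) and writing $\gamma$ for the scalar path $\gamma(t)(j)=t\cdot f(j)$, one sets $h_s(\alpha)(t)(j)=s\cdot\alpha(t/s)(j)$ for $t<s$ and $h_s(\alpha)(t)(j)=\gamma(t)(j)$ for $t\ge s$. The two branches agree at $t=s$ (both give $s\cdot f(j)$), $h_0$ is constant at $\gamma$, $h_1=\id$, and scaling by $s>0$ is a diffeomorphism of $\RR^m$ so preserves strata; hence each $h_s(\alpha)$ is an exit path and the family is continuous --- no concatenation of variable-length Moore paths is ever needed.

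Two slips in your write-up, neither fatal. First, in the case $f=0$: under the convention you set up (strata can only coarsen over time), a path that starts at the deepest stratum can never leave it, so the space is $\{\gamma\co[0,1]\to\RR^m : \gamma(0)=\gamma(1)=0\}$ with all $k$ points forced to coincide throughout --- not the larger space of arbitrary loops in $\map(\uli k,\RR^m)$ at $0$ that you describe; the claim that ``once you are at the origin \ldots\ you may move freely'' misreads the exit-path condition (contractibility still holds, but the identification of the space is wrong). Second, your homotopy does not collapse everything to ``the constant exit path at $0$'' but to the linear path from $f$ to $0$; in the Moore setting the only residual parameter after the first stage is the original path length, which must then be contracted separately, so your two-stage plan was right in spirit but the description of what the first stage produces was imprecise.
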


\begin{proof}
Let us denote the space of these exit paths by $Y$ (which, for convenience, we reparametrize so that their domain is the interval $[0,1]$). Then $Y$ has a distinguished element $\gamma\co[0,1]\to \map(\uli k\,,\RR^m)$ given by $\gamma(t)(j)=t\cdot f(j)$ for $j\in\uli k$ and $t\in[0,1]$, where we use the action of the positive real numbers on $\RR^m$ by scalar multiplication. A homotopy
$(h_s\co Y\to Y)_{s\in[0,1]}$ from the constant map with value $\gamma$ to the identity map of $Y$
can be defined as follows: $h_s(\alpha)$ is given by
\[  (t,j)\mapsto \left\{ \begin{array}{rl} s\cdot \alpha(t/s)(j) & \textup{ if }t< s \\
\gamma(t)(j) & \textup{ if }t\ge s
\end{array} \right.
\]
for $t\in[0,1]$ and $j\in\uli k$. Note that for $s=t>0$ we have $s\cdot \alpha(t/s)(j)=s\cdot \alpha(1)(j)=s\cdot f(j)
=t\cdot f(j)=\gamma(t)(j)$.
\end{proof}

\emph{Riemannian model $X^R$.}
 Let $W$ be the category defined as follows: $\ob(W)=\ob(X^R)$, a morphism from $(S,\rho)$ to $(T,\tau)$ in $W$ exists only if $U_\rho(S)\subset U_\tau(T)$, and in that case it is a
morphism in $X^p$ from the object $\uli k\cong S\hookrightarrow M$ to
the object $\uli\ell \cong T\hookrightarrow M$ whose track is contained in $U_\tau(T)$.
(Here we need to choose order-preserving bijections $\uli k\to S$ and $\uli\ell\to T$.)
There are obvious forgetful functors $W\to X^p$ and $W \to X^{R}$ over $\fin$. Both induce degreewise weak equivalences of the nerves. (In the more difficult case $W\to X^p$, show first that it is a Dwyer-Kan equivalence by reasoning as in the
previous comparison of models; but then note that the underlying map of object spaces is also a weak equivalence.)

\medskip
\emph{Screen completion model $X^s$. } Recall the category $X^s$ of the screen completion model, with
\[ \ob(X^s)=\coprod_{k\ge 0} M[k]~. \]
We need an auxiliary category $\ms A$ containing $X^s$ as a subcategory.
The object space of $\ms A$ agrees with the object space of $X^s$; a morphism from $x\in M[k]$ to
$y\in M[\ell]$ in $\ms A$ is a morphism $(x',g,y)$ in $X^s$ from some $x'\in M[k]$ to $y$,
together with a reverse exit path $\gamma\co [0,a]\to M[k]$ from $x$ to $x'$.
(Since $\gamma$ is a reverse exit path, it proceeds in the closure of the stratum of $M[k]$ which contains $x=\gamma(0)$. This makes it easy to define the composition of morphisms in $\ms A$.) The inclusion $X^s \hookrightarrow \ms A$ induces a degreewise weak equivalence of the nerves, which are fiberwise complete Segal spaces over $N\fin$. There is also an inclusion
$X^p \hookrightarrow \ms A$, as the full
subcategory of $\ms A$ obtained by keeping only the objects in
\[ \coprod_{k\ge 0} \emb(\uli k\,,M)\subset \coprod_{k\ge 0} M[k]. \]
Recall that $\emb(\uli k,M)$, also known as $M[k,\emptyset]$, is the open (top) stratum of $M[k]$.
The inclusion of nerves induced by $X^p \hookrightarrow \ms A$ is also a degreewise weak equivalence
of fiberwise complete Segal spaces over $N\fin$.

\medskip
We end this section with two observations which will be important later on.

\begin{prop} \label{prop-operad} For any open subset $U$ of $M$, the following square is homotopy cartesian:
\[
	\begin{tikzpicture}[descr/.style={fill=white}, baseline=(current bounding box.base)] ]
	\matrix(m)[matrix of math nodes, row sep=2.5em, column sep=2.5em,
	text height=1.5ex, text depth=0.25ex]
	{
	\config(U)_1 & \config(M)_1  \\
	\config(U)_0 & \config(M)_0 \\
	};
	\path[->,font=\scriptsize]
		(m-1-1) edge node [auto] {} (m-1-2);
	\path[->,font=\scriptsize]
		(m-2-1) edge node [auto] {\textup{incl.}} (m-2-2);
	\path[->,font=\scriptsize]
		(m-1-1) edge node [left] {$d_1=\textup{target}$} (m-2-1);
	\path[->,font=\scriptsize] 		
		(m-1-2) edge node [auto] {$d_1$} (m-2-2);
	\end{tikzpicture}
\]
\end{prop}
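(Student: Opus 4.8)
The plan is to apply the standard recognition principle for homotopy cartesian squares: the square is homotopy cartesian if and only if, for every point $b$ of the lower-left corner $\config(U)_0$, the comparison map from the homotopy fiber of $d_1\co\config(U)_1\to\config(U)_0$ over $b$ to the homotopy fiber of $d_1\co\config(M)_1\to\config(M)_0$ over the image of $b$ is a weak equivalence. I would carry this out in the multipatch model, which is the most convenient one here; the particle model would do just as well, the argument amounting to a repackaging of the locality statement~(\ref{eqn-strat}) and Proposition~\ref{prop-miller}.

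So fix a point of $\config(U)_0$ represented by a codimension-zero smooth embedding $g\co\uli\ell\times\RR^m\to U$, and set $V:=\im(g)$, an open subset of $U$ (hence of $M$) carrying a standard tubular-neighborhood structure. The crux is that the homotopy fiber of $d_1=\textup{target}$ over $[g]$ depends only on $V$, not on the ambient manifold. Indeed, in the Grothendieck construction $\smallint E_M$ a morphism with target $g$ exists only from embeddings $f\co\uli i\times\RR^m\to M$ whose image lies in $\im(g)=V$; such an $f$ is precisely an object of $\smallint E_V$, so the objects of $\smallint E_M$ over $g$, together with the (unique) morphisms to $g$, form a category that depends only on $V$. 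This is preserved by the fiberwise Rezk completion, which is computed degreewise as a coproduct of homotopy orbit spaces of the ``stringy'' spaces~(\ref{eqn-stringy}) built from these same morphism spaces, and it is preserved by passage to homotopy fibers, since for an equivariant map $E\to B$ the homotopy fiber of $E_{hG}\to B_{hG}$ over a point coming from $b\in B$ is the homotopy fiber of $E\to B$ over $b$. The explicit outcome is the formula~(\ref{eqn-multipatchmor}). Applying this to the inclusions $V\subseteq U$ and $V\subseteq M$, both homotopy fibers over $[g]$ --- the one computed in $\config(U)$ and the one computed in $\config(M)$ --- are canonically identified with the homotopy fiber computed in $\config(V)$, compatibly with the comparison map $\config(U)_1\to\config(M)_1$; hence the comparison map is a weak equivalence on homotopy fibers over $[g]$. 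As $[g]$ was arbitrary, the square is homotopy cartesian.

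The step needing the most care is the claim that ``morphisms into $g$ see only $\im(g)$'' survives the fiberwise completion and the passage from fibers to homotopy fibers of maps between homotopy orbit spaces; this is exactly why one wants the explicit homotopy-orbit description of the completion at hand --- making the whole construction manifestly natural and local in the ambient manifold --- rather than an opaque fibrant-replacement functor. In the particle model the analogous subtle point is the \emph{locality} of the homotopy link~(\ref{eqn-strat}): the fiber of $d_1$ over $g$ is a coproduct of spaces of reverse exit paths ending at configurations $gv$ whose image lies in $\im(g)\subseteq U$, and~(\ref{eqn-strat}) together with Proposition~\ref{prop-miller} lets one compute each of these inside $\map(\uli i,U)$, again yielding that the comparison with $\config(M)$ is an equivalence on fibers.
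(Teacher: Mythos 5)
Your proposal is correct and follows essentially the same route as the paper: work in the multipatch model and use the fact recorded in~(\ref{eqn-multipatchmor}) that the homotopy fiber of $d_1$ over an object depends only on its standard tubular neighborhood (i.e.\ on $k$ and $m$), not on the ambient manifold, so the vertical homotopy fibers over each point of $\config(U)_0$ agree in $\config(U)$ and $\config(M)$. The paper simply cites~(\ref{eqn-multipatchmor}) directly, whereas you additionally spell out why that formula is local (morphisms into $g$ only see $\im(g)$, and this persists through the homotopy-orbit description of the fiberwise completion), which is a faithful expansion of the same argument.
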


\begin{proof} Since we have established that all models are equivalent we are free to choose any of them.
If $M$ is smooth, we can use the multipatch model in which case the statement is a direct consequence
of (\ref{eqn-multipatchmor}). If $M$ is not smooth, we use the particle model and identify the map of vertical fibers over a configuration $z : \uli k \to U$ as
\[
\holink(z, \map(\uli k, U), \emb(\uli k, U)) \to \holink(z, \map(\uli k, M), \emb(\uli k, M)) \; .
\]
This map is a homotopy equivalence by the remarks after Proposition \ref{prop-miller}.
\end{proof}

\begin{cor} \label{cor-operad} For any open subset and any $r\ge 0$, the following is a homotopy pullback square:
\[
	\begin{tikzpicture}[descr/.style={fill=white}, baseline=(current bounding box.base)] ]
	\matrix(m)[matrix of math nodes, row sep=2.5em, column sep=2.5em,
	text height=1.5ex, text depth=0.25ex]
	{
	\config(U)_r & \config(M)_r  \\
	\config(U)_0 & \config(M)_0 \\
	};
	\path[->,font=\scriptsize]
		(m-1-1) edge node [auto] {} (m-1-2);
	\path[->,font=\scriptsize]
		(m-2-1) edge node [auto] {} (m-2-2);
	\path[->,font=\scriptsize]
		(m-1-1) edge node [left] {$d_1d_2\dots d_r$} (m-2-1);
	\path[->,font=\scriptsize] 		
		(m-1-2) edge node [auto] {$d_1d_2\dots d_r$} (m-2-2);
	\end{tikzpicture}
\]
where $d_1d_2\dots d_r$ picks the ultimate target.
\end{cor}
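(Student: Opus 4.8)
The plan is to induct on $r$, reducing an $r$-simplex to the composite of the $(r-1)$-dimensional face on vertices $\{0,\dots,r-1\}$ and the final edge $\{r-1,r\}$ by means of the Segal condition. The base cases are immediate: for $r=0$ the square is the identity square on $\config(U)_0\to\config(M)_0$, and for $r=1$ it is precisely the square of Proposition~\ref{prop-operad}, since $d_1d_2\cdots d_r$ reduces to $d_1=\textup{target}$ when $r=1$.

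For the inductive step I would first assemble two ingredients. First, since $\config(M)$ and $\config(U)$ are Segal spaces, condition ($\sigma$) has the standard consequence that, for $X$ equal to either of them, the square with corners $X_r,X_1,X_{r-1},X_0$ --- in which $X_r\to X_{r-1}$ is the face operator $d_r$ (restriction to the face $\{0,\dots,r-1\}$), $X_r\to X_1$ is restriction to the final edge $\{r-1,r\}$, and both maps to $X_0$ are restriction to the vertex $r-1$ --- is homotopy cartesian. Second, the bookkeeping identity $d_1d_2\cdots d_r=(d_1\cdots d_{r-1})\circ d_r$, together with the fact that every restriction-to-a-face operator is natural in the open inclusion $\config(U)\hookrightarrow\config(M)$, so that all of the squares appearing below commute strictly.

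Now I would chase homotopy cartesian squares. Pasting the Segal square for $\config(U)$ horizontally with the $r=1$ case of Proposition~\ref{prop-operad} (glued along the common edge $\config(U)_1\to\config(U)_0$) produces a homotopy cartesian rectangle with corners $\config(U)_r,\config(M)_1,\config(U)_{r-1},\config(M)_0$. By naturality, the very same rectangle is the horizontal paste of the square $(T)$ with corners $\config(U)_r,\config(M)_r,\config(U)_{r-1},\config(M)_{r-1}$ and both vertical maps $d_r$, followed by the Segal square for $\config(M)$; as the latter and the whole rectangle are homotopy cartesian, the pasting lemma for homotopy cartesian squares shows that $(T)$ is homotopy cartesian. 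Finally, using $d_1\cdots d_r=(d_1\cdots d_{r-1})\circ d_r$, the square of the corollary for $r$ is the vertical paste of $(T)$ on top of the corollary's square for $r-1$; the bottom square is homotopy cartesian by the inductive hypothesis and the top one by the previous step, so their composite --- the desired square --- is homotopy cartesian.

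I expect no real difficulty here; the only thing to watch is the combinatorial bookkeeping: making sure that the two ``directions'' of the two-step Segal decomposition are the ones that keep the ultimate target (vertex $0$) inside the $(r-1)$-face rather than in the final edge, and checking that all the restriction operators commute with the inclusion $\config(U)\hookrightarrow\config(M)$, so that the squares in the pasting diagrams commute on the nose. Since being homotopy cartesian and satisfying ($\sigma$) are invariant under levelwise equivalence of simplicial spaces, one may also verify the Segal decomposition square in whichever model of the configuration category is most convenient, e.g.\ the multipatch model, exactly as in the proof of Proposition~\ref{prop-operad}.
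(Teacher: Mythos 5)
Your argument is correct, and it is essentially the paper's own proof spelled out in detail: the paper simply says to deduce the statement from Proposition~\ref{prop-operad} using the Segal condition ($\sigma$), and your induction with the two pasting/cancellation steps for homotopy cartesian squares is a careful implementation of exactly that deduction. The bookkeeping is right in the paper's conventions (vertex $0$ is the ultimate target, $d_1$ is ``target'' on $1$-simplices), so the decomposition $d_1\cdots d_r=(d_1\cdots d_{r-1})\circ d_r$ and the Segal square you use fit together as claimed.
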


\begin{proof} Use the property ($\sigma$), section~\ref{sec-topcat}, to deduce this from proposition~\ref{prop-operad}. \end{proof}

\subsection{The configuration category as a functor} \label{sec-confunc}
Let $u:M\to N$ be an injective continuous map between topological
manifolds.
Then $u$ induces a simplicial map $u_*\co\config(M)\to \config(N)$ over the nerve of $\fin$.
This is immediately clear if we use
the particle model. \newline
Assuming now that $M$ and $N$ are smooth manifolds, we obtain a map
\[  \emb(M,N) \lra \rmap_{\fin}(\config(M),\config(N)) \]
where the source is the space of \emph{smooth} embeddings from $M$ to $N$.
(Slight abuse of notation: we write $\rmap_\fin(-,-)$ to mean $\rmap_{N\fin}(-,-)$, where $N\fin$ is the nerve of $\fin$.)

\medskip
For fixed $k\ge 1$, let $\config(M;k)$ be the simplicial subspace of $\config(M)$ determined by the configurations
of cardinality $\le k$. This is self-explanatory if we use the particle model. In the
multipatch model, we would allow as objects only those (codimension zero) embeddings $\uli\ell\times\RR^m\to M$
where $\ell\le k$. If we use the particle model, it is clear
that an embedding $u\co M\to N$ (in fact any injective continuous map
$u\co M\to N$) induces a simplicial map $u_*\co\config(M;k)\to \config(N;k)$ over the nerve of $\fin$.

Let $\man$ be the category of smooth manifolds without boundary \emph{of fixed dimension } $m$. The morphisms in $\man$ are the smooth codimension zero embeddings; morphism sets are viewed as spaces (with the compact-open $C^\infty$ topology),
as in \cite{BoavidaWeiss}, so that $\man$ is enriched over topological spaces. Then
\[ M\mapsto \config(M;k) \]
is a covariant (enriched) functor on $\man$, for any fixed $k$. Consequently, for fixed smooth $N$, the rule
\begin{equation}  \label{eqn-configfunctor}
M\mapsto \rmap_\fin(\config(M;k),\config(N))
\end{equation}
is a contravariant (enriched) functor on $\man$.

Recall from \cite{BoavidaWeiss} and \cite{WeissEmb} that $\man$ comes with a (basis for a) Grothendieck topology $\ms J_k$
where a nonempty family of morphisms $f_\alpha:V_\alpha\to M$ qualifies as a covering if and only if every subset of $M$ of cardinality $\le k$ is
contained in the image of one of the maps $f_\alpha$. (We do not allow the empty family as a covering. In the cases where $M=\emptyset$ or $k=0$,
the unique map $\emptyset\to M$ constitutes a covering of $M$.)

\begin{thm} \label{thm-configsheaf} For fixed $N$, the contravariant functor {\rm (\ref{eqn-configfunctor})}
is a homotopy sheaf for the Grothendieck topology $\ms J_k$.
\end{thm}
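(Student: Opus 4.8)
The plan is to prove the dual cosheaf statement and transport it through $\rmap_\fin(-,\config(N))$. Since $\config(N)$ is a fiberwise complete Segal space over $N\fin$, the functor $\rmap_\fin(-,\config(N))$ carries levelwise weak equivalences of simplicial spaces over $N\fin$ to weak equivalences and converts homotopy colimits of simplicial spaces over $N\fin$ into homotopy limits of spaces (Appendix~A). So it suffices to show that the covariant functor $M\mapsto\config(M;k)$, from $\man$ to simplicial spaces over $N\fin$, is a homotopy \emph{cosheaf} for $\ms J_k$: for every $\ms J_k$-covering $\ms V=\{f_\alpha\co V_\alpha\to M\}$ the canonical map
\[
\hocolim_{[n]\in\Delta^{\op}}\ \coprod_{\alpha_0,\dots,\alpha_n}\config\big(V_{\alpha_0}\times_M\cdots\times_M V_{\alpha_n};k\big)\ \lra\ \config(M;k),
\]
out of the homotopy colimit over the \v{C}ech nerve of $\ms V$, is a levelwise weak equivalence. (A levelwise weak equivalence into a Segal space over $N\fin$ forces its source to be one as well, and is automatically a Dwyer--Kan equivalence, so we need not check separately that the homotopy colimit is a Segal space.)

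Since homotopy colimits of simplicial spaces are formed degreewise, the next step is to show that $M\mapsto\config(M;k)_r$ is a homotopy cosheaf of spaces for $\ms J_k$, for every $r\ge 0$. For $r=0$ I would use the particle model, in which $\config(M;k)_0\simeq\coprod_{\ell\le k}\emb(\uli\ell\,,M)$ is the disjoint union of the ordered configuration spaces of cardinality at most $k$. The defining property of a $\ms J_k$-covering is exactly that every configuration of at most $k$ points of $M$ lies in the image of some $f_\alpha$; hence, for each fixed $\ell\le k$, the open sets $f_\alpha(\emb(\uli\ell\,,V_\alpha))$ cover $\emb(\uli\ell\,,M)$, compatibly with finite intersections, and the usual homotopy-descent statement for open covers (cf.\ Appendix~A) identifies $\emb(\uli\ell\,,M)$ with the homotopy colimit over the \v{C}ech nerve of the corresponding intersections. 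This settles $r=0$.

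For general $r$ the idea is to propagate the case $r=0$ along the ``ultimate target'' map $d_1d_2\cdots d_r$. By Corollary~\ref{cor-operad} (and its evident truncated variant), for any open $U\subseteq M$ the square relating $\config(U;k)_r\to\config(M;k)_r$ to $\config(U;k)_0\to\config(M;k)_0$ via $d_1d_2\cdots d_r$ is homotopy cartesian; so $\config(-;k)_r$ is the homotopy pullback of $\config(-;k)_0$ along $d_1d_2\cdots d_r$, a map whose homotopy fibers over a configuration depend only on an arbitrarily small (tubular) neighborhood of that configuration, by~(\ref{eqn-multipatchmor}). Writing $\config(M;k)_0$, via the $r=0$ case, as the homotopy colimit over the \v{C}ech nerve of $\ms V$ of the spaces $\config(V_{\alpha_0}\times_M\cdots\times_M V_{\alpha_n};k)_0$ and invoking universality (descent) of homotopy colimits of spaces, one gets that $\config(M;k)_r$, being the homotopy pullback of this decomposition along $d_1d_2\cdots d_r$, is the homotopy colimit over the same \v{C}ech nerve of the termwise homotopy pullbacks, and Corollary~\ref{cor-operad} identifies these with the spaces $\config(V_{\alpha_0}\times_M\cdots\times_M V_{\alpha_n};k)_r$. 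This is the desired homotopy cosheaf property in degree $r$, and the theorem follows.

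I expect the genuine work to sit in two places. First, the reduction step relies on the assertion that $\rmap_\fin(-,\config(N))$ exchanges homotopy colimits over $N\fin$ with homotopy limits of spaces; this rests on the fiberwise complete Segal space formalism of Appendices~A and~B, and one must be careful that the \v{C}ech-nerve homotopy colimit is taken in the appropriate model structure. Second, in the step from $r=0$ to general $r$, the exchange of the homotopy pullback along $d_1d_2\cdots d_r$ with the homotopy colimit over the \v{C}ech nerve --- i.e.\ the descent property of homotopy colimits of spaces --- together with the identification of the fibers supplied by Corollary~\ref{cor-operad}, is the point that needs to be assembled with care. By contrast the $r=0$ input is just the nerve theorem for open covers applied to configuration spaces.
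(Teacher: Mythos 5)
Your proposal is correct and follows essentially the same route as the paper: reduce to the homotopy cosheaf property of $M\mapsto\config(M;k)$ over $N\fin$, check it degreewise, handle degree $0$ by covering the configuration spaces $\emb(\uli\ell\,,M)$ for $\ell\le k$, and handle higher degrees by combining the locality statement (Proposition~\ref{prop-operad}/Corollary~\ref{cor-operad}) with stability of homotopy colimits of spaces under homotopy base change, then transport through $\rmap_\fin(-,\config(N))$ via the properties in Appendix~A. The only differences are cosmetic: you index the descent diagram by the \v{C}ech nerve rather than by finite intersections $U_S$, and you invoke Corollary~\ref{cor-operad} uniformly for all $r$ where the paper treats $r=1$ via Proposition~\ref{prop-operad} and then cites the Segal condition for $r>1$ (which is exactly how Corollary~\ref{cor-operad} is proved).
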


\begin{proof}
We show that the functor which to a manifold $M$ associates $\config(M;k)$ is a homotopy $\ms J_k$-cosheaf. Take a $\ms J_k$-cover $\{U_i \hookrightarrow M ~ |~ {i \in I}\}$. For a finite non-empty subset $S$ of $I$, let $U_S$ denote the intersection of all $U_i$ over $i \in S$. We need to show that the map
$$\hocolimsub{S \subset I} \config(U_S; k)_r \rightarrow \config(M; k)_r$$
is a weak equivalence for each $r \geq 0$. Here we have used the fact that homotopy colimits in the overcategory are calculated in the category of simplicial spaces, and those are calculated degreewise.

The case $r = 0$ is clear since, for each $j \leq k$, the collection of maps
$$\{\emb(\uli j, U_i) \to \emb(\uli j, M) ~ |~ i \in I \}$$
is a cover in the usual sense, and so
\[
\hocolimsub{S \subset I} \emb(\uli j, U_S) \to \emb(\uli j, M)
\]
is a weak equivalence (c.f. the proof of \cite[thm.7.2]{BoavidaWeiss}).

For $r = 1$ proceed as follows. Everything is $k$-truncated so we suppress $k$ from the notation. Because of Proposition \ref{prop-operad}, we know that $\config(U_S)_1$ is the homotopy pullback $$\config(U_S)_0 \times^h_{\config(M)_0} \config(M)_1$$

Then,
\[
\begin{array}{rcl}
\hocolimsub{S \subset I} \config(U_S)_1  & \simeq & (\hocolimsub{S}\config(U_S)_0) \times^h_{\config(M)_0} \config(M)_1  \\
				  & \simeq & \config(M)_0 \times^h_{\config(M)_0} \config(M)_1 \\
				  & \simeq & \config(M)_1\\
\end{array}
\]
The first equivalence is the derived version of the fact that in spaces colimits are stable under base change.
The second equivalence follows from the case $i = 0$. The Segal condition establishes the result for $i > 1$.
\end{proof}

\begin{cor} \label{cor-configsheaf} For fixed $N$ there is a commutative square (commutative up to preferred homotopy)
\[
	\begin{tikzpicture}[descr/.style={fill=white}, baseline=(current bounding box.base)] ]
	\matrix(m)[matrix of math nodes, row sep=2.5em, column sep=2.5em,
	text height=1.5ex, text depth=0.25ex]
	{
	\emb(M,N) & \rmap_\fin(\config(M),\config(N)) \\
	T_k\emb(M,N) & \rmap_\fin(\config(M;k),\config(N)) \\
	};
	\path[->,font=\scriptsize]
		(m-1-1) edge node [auto] {$u\mapsto u_*$} (m-1-2);
	\path[dotted,->,font=\scriptsize]
		(m-2-1) edge node [auto] {} (m-2-2);
	\path[->,font=\scriptsize]
		(m-1-1) edge node [left] {$$} (m-2-1);
	\path[->,font=\scriptsize] 		
		(m-1-2) edge node [auto] {$\textup{restriction}$} (m-2-2);
	\end{tikzpicture}
\]
\end{cor}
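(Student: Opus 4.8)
The plan is to obtain the square, and in particular the dotted arrow, formally from Theorem \ref{thm-configsheaf} together with the universal property of the embedding-calculus approximation $T_k$.

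First I would record that the top horizontal map of the square is natural in $M$, so that the square is one of contravariant functors on $\man$. Indeed, by the discussion of Section \ref{sec-confunc}, $M\mapsto\config(M;k)$ is a covariant enriched functor on $\man$, the inclusions $\config(M;k)\hookrightarrow\config(M)$ are natural in $M$, and hence $M\mapsto\rmap_\fin(\config(M;k),\config(N))$ is a contravariant enriched functor on $\man$ receiving a natural transformation $\theta$ from $\emb(-,N)$, obtained by composing $u\mapsto u_*$ with restriction along $\config(-;k)\hookrightarrow\config(-)$:
\[
\theta\co\emb(-,N)\lra\rmap_\fin(\config(-;k),\config(N)).
\]

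Next I would invoke two facts. By Theorem \ref{thm-configsheaf}, the target of $\theta$ is a homotopy sheaf on $\man$ for the Grothendieck topology $\ms J_k$. By manifold calculus (\cite{WeissEmb}, \cite{GoWeEmb}, \cite{BoavidaWeiss}), the functor $T_k\emb(-,N)$ is itself a homotopy $\ms J_k$-sheaf and the tautological map $\emb(-,N)\to T_k\emb(-,N)$ exhibits it as the homotopy $\ms J_k$-sheafification of $\emb(-,N)$; equivalently, $T_k$ is (naturally weakly equivalent to) localization at the $\ms J_k$-covering sieves, so that for every homotopy $\ms J_k$-sheaf $G$ the map $\emb(-,N)\to T_k\emb(-,N)$ induces a weak equivalence on derived spaces of natural transformations into $G$. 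Applying this with $G=\rmap_\fin(\config(-;k),\config(N))$, the transformation $\theta$ factors, uniquely up to a contractible space of choices, as
\[
\emb(-,N)\lra T_k\emb(-,N)\lra\rmap_\fin(\config(-;k),\config(N)).
\]
The second arrow, evaluated at $M$, is the desired dotted map, and the chosen factorization of $\theta$ is exactly a preferred homotopy making the square commute.

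The only point that needs genuine care is the precise form of the universal property of $T_k$: one must use that $T_k$ realizes homotopy sheafification for $\ms J_k$ in the sense of \cite{BoavidaWeiss}, so that maps from $\emb(-,N)$ into a homotopy $\ms J_k$-sheaf factor through $\emb(-,N)\to T_k\emb(-,N)$ up to a contractible choice --- this is what makes the homotopy witnessing commutativity of the square \emph{preferred} rather than merely existent. One should also confirm that the Grothendieck topology and the notion of homotopy sheaf appearing in Theorem \ref{thm-configsheaf} are literally those governing the Taylor tower, and that both functors are considered on the same enriched category $\man$ of $m$-manifolds and codimension-zero embeddings, so that ``natural in $M$'' means naturality of enriched functors on $\man$. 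Granting these bookkeeping matters, the corollary is formal.
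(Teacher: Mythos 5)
Your proposal is correct and matches the paper's argument essentially verbatim: the paper also deduces the corollary from Theorem~\ref{thm-configsheaf} together with the characterization of $T_k\emb(-,N)$ as the $\ms J_k$-homotopy sheafification of $\emb(-,N)$ (homotopy sheafification being a homotopy left adjoint), which is precisely the universal property you invoke to produce the dotted arrow and the preferred homotopy. The extra bookkeeping you spell out (enriched naturality over $\man$, contractibility of the space of factorizations) is the right content behind the paper's one-line proof.
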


\begin{proof} This is a consequence of theorem~\ref{thm-configsheaf} and
the fact that $M\mapsto T_k\emb(M,N)$ can be characterized as the $\ms J_k$
homotopy sheafification of $M\mapsto \emb(M,N)$ (homotopy sheafification is a homotopy left adjoint). \end{proof}

\medskip
There is another, more descriptive, construction of the dotted arrow in the above corollary. We consider the case $k = \infty$;
the finite case is similar. Given a manifold $N$, let $\smallint E_M$ be the Grothendieck construction applied to the presheaf
$\emb(-,M)$ on $\disc$ as in section \ref{sec-conman}. It induces a map
\[
T_{\infty} \emb(M,N) \to \RR \map_{\disc}(\smallint E_M, \smallint E_N)
\]
(which, incidentally, is a weak equivalence) where we omitted the nerve symbol for a reason. Composing further with $\pi_0 : \disc \to \fin$, we get a map
$$
\RR \map_{\disc}(\smallint E_M, \smallint E_N) \to \rmap_\fin((\pi_0)_* \smallint E_M, (\pi_0)_* \smallint E_N)
$$
Here the derived mapping space in the target is taken with respect to the fiberwise complete Segal space model structure (see Appendix \ref{thm-fiberwisecpl}), and is none other than
\[
\rmap_\fin(\config(M),\config(N))
\]
by the discussion of the multipatch model in section \ref{sec-conman}.

\section{Immersions and local configuration categories} \label{sec-loc}
\subsection{Local configuration categories} In order to make a connection with the space of smooth immersions $\imm(M,N)$, we introduce
comma type constructions $\config^\loc(M)$ and $\config^\loc(N)$, the \emph{local configuration
categories} of $M$ and $N$. Loosely speaking, the objects of $\config^\loc(M)$ are the morphisms
in $\config(M)$ whose target is a singleton configuration. In more detail, writing
$X$ for the specific model of $\config(M)$ that we wish to use, we have
\[  X_0 =\coprod_{\ell\ge 0} X_0(\uli\ell) \]
where $X_0(\uli\ell)$ is the fiber of $X_0$ over $\uli\ell\in (N\fin)_0$.
Define $\config^\loc(M)$ as the simplicial space which in degree $r$ is the part of $X_{r+1}$ which under the
operator \emph{zero-th vertex} (=ultimate target) from $X_{r+1}$ to $X_0$ is mapped to $X_0(\uli 1)$. Note that there is projection map from $\config^\loc(M)$ to $M$ (viewed as constant simplicial space) given by the ultimate target map.

\medskip
\emph{Example}: in the particle model, $\config(M)=X$ is the
nerve of a certain category whose objects are the ordered configurations in $M$. Consequently $\config^\loc(M)$ is the nerve of the
comma type construction whose objects are morphisms $\gamma\co f\to g$ in that category of ordered configurations,
with injective $f\co \uli k\to M$, where $k\ge 0$ is arbitrary, and $g\co\uli 1\to M$. \newline
\emph{Example}: in the multipatch model, $\config(M)=X$ is the nerve of a category whose objects are
certain pairs $(S,\rho)$. So $\config^\loc(M)$ is the nerve of the comma type construction whose objects are
pairs $((T,\tau),(S,\rho))$ where $U_\rho(S)$ is contained in $U_\tau(T)$ and $T$ has cardinality 1.

\medskip
There is a map of simplicial spaces $\config^\loc(M) \lra \config(M)$
which can be regarded as a forgetful functor. In the particle model, this is the map
of nerves induced by an obvious forgetful functor which takes an object $\gamma\co f\to g$ in $\config^\loc(M)_0$,
where $f:\uli k\to M$ and $g\co\uli 1\to M$, to the object $(f\co\uli k\to M)\in \config(M)_0$.

\begin{lem}\label{lem-fiberconf}
For an open subset $U$ of $M$, the square of simplicial spaces
\[
	\begin{tikzpicture}[descr/.style={fill=white}, baseline=(current bounding box.base)] ]
	\matrix(m)[matrix of math nodes, row sep=2.5em, column sep=2.5em,
	text height=1.5ex, text depth=0.25ex]
	{
	 \config^\loc(U) & \config^\loc(M) \\
	U & M \\
	};
	\path[->,font=\scriptsize]
		(m-1-1) edge node [auto] {} (m-1-2);
	\path[->,font=\scriptsize]
		(m-2-1) edge node [auto] {} (m-2-2);
	\path[->,font=\scriptsize]
		(m-1-1) edge node [left] {} (m-2-1);
	\path[->,font=\scriptsize] 		
		(m-1-2) edge node [auto] {} (m-2-2);
	\end{tikzpicture}
\]
is (degreewise) homotopy cartesian.
\end{lem}
\begin{proof}
This follows from Corollary \ref{cor-operad}.
\end{proof}

\medskip
By restricting cardinalities, we have for every $k\ge 0$ a truncated local configuration category $\config^\loc(M;k)$,
which is again a fiberwise complete Segal space over $N\fin$. In the spirit and notation of section~\ref{sec-confunc},
we wish to fix a smooth manifold $N$ of arbitrary dimension and view
\begin{equation} \label{eqn-locoalconfigfunctor} M ~\mapsto~ \rmap_\fin(\config^\loc(M;k),\config^\loc(N)) \end{equation}
as a contravariant functor on $\man$~. The following makes
an interesting contrast with theorem~\ref{thm-configsheaf} and justifies our use of the word local (in \emph{local
configuration category}).

\begin{prop} \label{prop-localconfigsheaf} For fixed $N$, the contravariant functor {\rm (\ref{eqn-locoalconfigfunctor})}
is a homotopy sheaf for the Grothendieck topology $\ms J_1$~.
\end{prop}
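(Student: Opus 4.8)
The plan is to deduce the proposition from the assertion that the covariant functor $M\mapsto\config^\loc(M;k)$ is a homotopy $\ms J_1$-cosheaf with values in simplicial spaces over $N\fin$. Granting that, the proposition follows formally, just as Corollary \ref{cor-configsheaf} follows from Theorem \ref{thm-configsheaf}: since $\config^\loc(N)$ is a fiberwise complete Segal space over $N\fin$, the contravariant functor $\rmap_\fin(-,\config^\loc(N))$ carries homotopy colimits to homotopy limits, and homotopy colimits in simplicial spaces over $N\fin$ (and in the fiberwise complete Segal model structure) are computed as in simplicial spaces, hence degreewise (Lemma \ref{lem:hocolims}). So it suffices to prove that for each $r\ge 0$ the functor $M\mapsto\config^\loc(M;k)_r$ is a homotopy $\ms J_1$-cosheaf of spaces: for an open cover $\{U_i\hookrightarrow M\}_{i\in I}$ (which is precisely a $\ms J_1$-cover), writing $U_S=\bigcap_{i\in S}U_i$ for finite non-empty $S\subset I$, the map $\hocolimsub{S\subset I}\config^\loc(U_S;k)_r\lra\config^\loc(M;k)_r$ must be a weak equivalence.

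First I would treat $r=0$. By definition $\config^\loc(M;k)_0$ is the part of $\config(M;k)_1$ lying over $\uli 1\in N_0\fin$ under the ultimate-target operator $d_1$, so the ultimate target makes it a space over $\config(M)_0(\uli 1)\simeq M$ (see (\ref{eqn-multipatchob})). The map $d_1\co\config(M;k)_1\to\config(M;k)_0$ is a fibration (particle model, section \ref{sec-topcat}), and the cardinality-$1$ part of $\config(M;k)_0$ is a union of path components, so $\config^\loc(M;k)_0\to M$ is a fibration. Its fibre over $y\in M$ is a local invariant of $M$ near $y$: in the multipatch model this is immediate from (\ref{eqn-multipatchmor}) with target cardinality $1$, which identifies the fibre with $\coprod_{i\le k}\emb(\uli i,\RR^m)$ (the bundle associated to the tangent bundle of $M$); in the particle model it expresses the locality of holinks, i.e. Proposition \ref{prop-miller} together with (\ref{eqn-strat}). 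The same locality gives, for open $U\subset M$, that the restriction map $\config^\loc(U;k)_0\to\config^\loc(M;k)_0|_U$ is a weak equivalence. Using that homotopy colimits are stable under base change in spaces and that $\hocolimsub{S}U_S\to M$ is a weak equivalence (it recovers $M$; cf. the case $j=1$ in the proof of Theorem \ref{thm-configsheaf}),
\[
\hocolimsub{S}\config^\loc(U_S;k)_0\;\simeq\;\hocolimsub{S}\big(\config^\loc(M;k)_0\times^h_M U_S\big)\;\simeq\;\config^\loc(M;k)_0\times^h_M\big(\hocolimsub{S}U_S\big)\;\simeq\;\config^\loc(M;k)_0.
\]

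For $r\ge 1$ I would imitate the inductive part of the proof of Theorem \ref{thm-configsheaf}. The Segal condition for $\config$, applied to $2$-simplices whose zeroth vertex has cardinality $1$, gives a natural equivalence $\config^\loc(M;k)_1\simeq\config^\loc(M;k)_0\times^h_{\config(M)_0}\config(M;k)_1$ (a local form of Proposition \ref{prop-operad}), and likewise over each $U_S$. Substituting the identity $\config(U_S;k)_1\simeq\config(U_S)_0\times^h_{\config(M)_0}\config(M;k)_1$ of Proposition \ref{prop-operad} and cancelling the base change along $\config^\loc(U_S;k)_0\to\config(U_S)_0$, one gets $\config^\loc(U_S;k)_1\simeq\config^\loc(U_S;k)_0\times^h_{\config(M)_0}\config(M;k)_1$; hence
\[
\hocolimsub{S}\config^\loc(U_S;k)_1\;\simeq\;\big(\hocolimsub{S}\config^\loc(U_S;k)_0\big)\times^h_{\config(M)_0}\config(M;k)_1\;\simeq\;\config^\loc(M;k)_0\times^h_{\config(M)_0}\config(M;k)_1\;\simeq\;\config^\loc(M;k)_1
\]
by stability of homotopy colimits under base change and the case $r=0$. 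For $r>1$ the Segal condition writes $\config^\loc(M;k)_r$, and each $\config^\loc(U_S;k)_r$, as an iterated homotopy fibre product over $\config(M)_0$ of one copy of $\config^\loc(-;k)_1$ and $r-1$ copies of $\config(-;k)_1$; since homotopy colimits commute with these base changes, the case $r>1$ follows from the cases $r\le 1$.

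I expect the crux to be the $r=0$ case, and within it the claim that the fibre of $\config^\loc(M;k)_0\to M$ over $y$ — equivalently the comparison $\config^\loc(U;k)_0\to\config^\loc(M;k)_0|_U$ for open $U\subset M$ — depends only on the germ of $M$ at $y$. This is exactly where one must use the special feature stressed around (\ref{eqn-strat}): the holink, unlike the full space of (reverse) exit paths, is a genuinely local concept, so that for a coordinate neighbourhood $V\cong\RR^m$ of $y$ the space of reverse exit paths from a configuration to the constant map at $y$, computed inside $\map(\uli i,V)$, agrees with the one computed inside $\map(\uli i,M)$; the explicit multipatch formula (\ref{eqn-multipatchmor}) makes the same point in a different model. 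Everything downstream — the cases $r\ge 1$ and the passage from cosheaf to sheaf — is formal, using only the Segal condition, homotopy pullbacks, and the interchange of homotopy colimits with base change, exactly in the spirit of the proof of Theorem \ref{thm-configsheaf}.
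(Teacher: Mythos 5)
Your proposal is correct and matches the paper's proof in all essentials: both reduce to the degreewise cosheaf statement for $M\mapsto\config^\loc(M;k)$ and prove it using the locality of morphisms (Proposition \ref{prop-operad} / Corollary \ref{cor-operad}), stability of homotopy colimits under homotopy base change, and the equivalence $\hocolim_S U_S\simeq M$, identifying $\config^\loc(U_S;k)_r$ with the homotopy pullback of $\config^\loc(M;k)_r$ along $U_S\to M$. The only difference is organizational: the paper treats all $r$ at once via a single homotopy cartesian square over the cardinality-one part of $\config(M)_0\simeq M$ (Corollary \ref{cor-operad} in degree $r+1$), whereas you settle $r=0$ first and then induct using the Segal condition, which in effect re-derives that corollary.
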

\begin{proof}
Let $\{U_i \to M\}$ be a good $\ms J_1$-cover. We need to prove that the map
\begin{equation}\label{eqn-cosheafloc}
\hocolimsub{S \subset I} \conloc{U_S; k}_r \to \conloc{M; k}_r
\end{equation}
is a weak equivalence for each $r \ge 0$. The argument is independent of $k$, so we suppress $k$ from the notation. In the commutative diagram,
\begin{equation*}
	\begin{tikzpicture}[descr/.style={fill=white}, baseline=(current bounding box.base)] ]
	\matrix(m)[matrix of math nodes, row sep=2.5em, column sep=2.5em,
	text height=1.5ex, text depth=0.25ex]
	{
 	\conloc{U_S}_r & \conloc{M}_r \\
	\config(U_S)_{r+1} & \config(M)_{r+1} \\
	\config(U_S)_0 & \config(M)_0 \\
	};
	\path[->,font=\scriptsize]
		(m-1-1) edge node [auto] {$$} (m-1-2);
	\path[->,font=\scriptsize]
		(m-2-1) edge node [auto] {$$} (m-2-2);
	\path[right hook->,font=\scriptsize]
		(m-1-1) edge node [left] {$$} (m-2-1);
	\path[right hook->,font=\scriptsize]	
		(m-1-2) edge node [auto] {$$} (m-2-2);
	\path[->,font=\scriptsize]
		(m-2-1) edge node [left] {\textup{ultimate target}} (m-3-1);
	\path[->,font=\scriptsize]
		(m-3-1) edge node [auto] {$$} (m-3-2);
	\path[->,font=\scriptsize]
		(m-2-2) edge node [auto] {\textup{ultimate target}} (m-3-2);
\end{tikzpicture}
\end{equation*}
the bottom square is homotopy cartesian by Corollary \ref{cor-operad} and by inspection so is the top square. Since homotopy colimits are stable under homotopy base change it follows that the source of (\ref{eqn-cosheafloc}) maps by a weak equivalence to the homotopy pullback of
\[
\hocolimsub{S \subset I} \config(U_S)_0 \rightarrow \config(M)_0 \leftarrow \conloc{M}_r\,.
\]
By construction the right-hand arrow lands in the cardinality one part of $\config(M)_0$, which we identify with $M$ itself.
So that homotopy pullback is also the homotopy pullback of
\[
\hocolimsub{S \subset I} U_S \xrightarrow{\simeq} M \leftarrow \conloc{M}_r\,,
\]
which completes the proof.
\end{proof}

\begin{lem} \label{lem-locisloc} The map
\[ \rmap_\fin(\config^\loc(M;k),\config^\loc(N)) \lra \rmap_\fin(\config^\loc(M;k),\config(N)) \]
given by composition with the forgetful functor $\config^\loc(N)\to \config(N)$ is a weak equivalence.
\end{lem}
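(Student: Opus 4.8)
The plan is to produce an explicit homotopy inverse to the map in the statement, which I abbreviate $q\colon \rmap_\fin(\conloc{M;k},\config^\loc(N))\to \rmap_\fin(\conloc{M;k},\config(N))$; here $q=p_*$, where $p\colon \config^\loc(N)\to\config(N)$ is the forgetful functor, sending a morphism $c\to s$ of $\config(N)$ with $s$ a singleton to its source $c$. The one structural fact about the source that I need is that $\conloc{M;k}$ carries an \emph{extra degeneracy}. Recall that $\conloc{M;k}$ is, by construction, the sub-simplicial space of the shifted simplicial space (d\'ecalage) $r\mapsto\config(M;k)_{r+1}$ cut out by the requirement that the zeroth vertex, i.e.\ the ultimate target, lie over $\uli 1\in N\fin$. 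The operator $s_0$ of $\config(M;k)$ raises no cardinalities and fixes the zeroth vertex, hence restricts to operators $\hbar_r\colon \conloc{M;k}_r\to\conloc{M;k}_{r+1}$ obeying the extra-degeneracy identities. On objects, $\hbar_0$ sends an object $a=(\phi\xrightarrow{\alpha}\psi)$ of $\conloc{M;k}$ (so $\psi$ is a singleton) to the evident morphism $\eta_a$ of $\conloc{M;k}$ from $(\phi\xrightarrow{\alpha}\psi)$ to $(\psi\xrightarrow{\id}\psi)$ given by $\alpha$ itself, viewed in the over-category structure over the singleton $\psi$. Thus $\hbar$ encodes a natural transformation $\eta\colon \id\Rightarrow\pi$ of endofunctors of $\conloc{M;k}$, where $\pi(\phi\to\psi)=(\psi\xrightarrow{\id}\psi)$; the essential point is that $\pi$ takes values in the part of $\conloc{M;k}$ lying over $\uli 1$.

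Given a map $G\colon \conloc{M;k}\to\config(N)$ over $\fin$, set $\widetilde G_r:=G_{r+1}\circ\hbar_r\colon \conloc{M;k}_r\to\config(N)_{r+1}$. A short computation with the reference maps to $N\fin$, using the simplicial identity $d_0s_0=\id$ there, shows that the zeroth vertex of $\widetilde G_r(\sigma)$ again lies over $\uli 1$; thus $\widetilde G$ is a map $\conloc{M;k}\to\config^\loc(N)$, since $\config^\loc(N)_r$ is exactly the part of $\config(N)_{r+1}$ over $\uli 1$ at the zeroth vertex --- equivalently the space of morphisms of $\config(N)$ into singleton configurations, which is where $G\circ\pi$ lands. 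The extra-degeneracy identities together with $d_1s_0=\id$ give that $\widetilde G$ is simplicial, is a map over $N\fin$, and satisfies $p\circ\widetilde G=G$. Since $G\mapsto\widetilde G$ is manifestly natural in $G$, it induces a map $s$ of derived mapping spaces with $q\circ s=\id$. (Conceptually, $s$ sends $G$ to the datum $(G,G\eta)$, exhibiting $G$ as the source of a canonical arrow $G\to G\pi$ whose target lands in the singletons.)

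It remains to verify $s\circ q\simeq\id$, i.e.\ $\widetilde{(p\circ H)}\simeq H$ naturally for every $H\colon \conloc{M;k}\to\config^\loc(N)$. For an object $a=(\phi\to\psi)$ of $\conloc{M;k}$, $H(a)$ is an arrow $c(a)\to s(a)$ of $\config(N)$ with $s(a)$ a singleton and $c(a)=p(H(a))$, while $\widetilde{(p\circ H)}(a)$ is the arrow $c(a)\to c(\pi a)$ obtained by applying $p\circ H$ to $\eta_a$; here $c(\pi a)=p(H(\pi a))$ is itself a singleton, since $\pi a$ lies over $\uli 1$ and $H$ is a map over $\fin$. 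Now $H(\eta_a)\colon H(a)\to H(\pi a)$ is a morphism of $\config^\loc(N)$ whose ``target component'' $s(a)\to s(\pi a)$ covers $\id_{\uli 1}$, as does the structure arrow $c(\pi a)\to s(\pi a)$ of $H(\pi a)$. But morphisms of $\config(N)$ covering $\id_{\uli 1}$ are homotopy invertible: the part of $\config(N)$ over $\uli 1$ is just $N$ regarded as an $\infty$-groupoid (in the particle model such a morphism is a path in $\map(\uli 1,N)=N$, an unstratified space). Chaining these two equivalences converts the commuting square furnished by $H(\eta_a)$ into an identification of the arrow $c(a)\to c(\pi a)$ with the arrow $c(a)\to s(a)$, i.e.\ of $\widetilde{(p\circ H)}(a)$ with $H(a)$, naturally in $a$. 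Hence $s$ is a left homotopy inverse of $q$ as well, and $q$ is a weak equivalence.

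The real work is the homotopy-coherent bookkeeping needed to make the last two paragraphs rigorous: one must promote the strict simplicial statements --- existence and naturality of $\eta$, naturality of $G\mapsto\widetilde G$, and the two invertibility inputs --- to statements about derived mapping spaces over $N\fin$, working in (or first replacing the objects in play by) fiberwise complete Segal spaces over $N\fin$ and invoking the model-categorical generalities of Appendix~A. Everything else is formal; the one idea is the coreflection $\eta$ onto the ``$\uli 1$-part'', together with the observation that a map over $\fin$ carries objects over $\uli 1$ to singletons and morphisms over $\id_{\uli 1}$ to equivalences.
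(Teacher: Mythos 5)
Your proposal is sound in outline and rests on the same two geometric inputs as the paper's proof (the d\'ecalage structure of $(-)^\loc$ with its extra degeneracy, and the homotopy invertibility of morphisms of $\config(N)$ lying over $\id\colon\uli 1\to\uli 1$), but it packages them differently. Writing $L=(-)^\loc$ and $u=d_0\colon L\Rightarrow\id$, the paper never constructs a section or a homotopy: it records that $u_{L(X)},L(u_X)\colon LL(X)\to L(X)$ are weak equivalences and then runs a purely formal diagram argument (the map ``apply $L$'' from $\rmap_\fin(L(X),Y)$ to $\rmap_\fin(LL(X),L(Y))$ acquires a homotopy left and a homotopy right inverse, hence is an equivalence, and two-out-of-three finishes). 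You instead build the explicit strict section $s\colon G\mapsto L(G)\circ h$, with $h=s_0\colon\conloc{M;k}\to L(\conloc{M;k})$ the extra degeneracy, which buys you $q\circ s=\id$ on the nose; the cost is the step you defer. That step closes quickly if phrased map-wise rather than object-wise: naturality of $u$ and $u\circ h=\id$ give $s(q(H))=L(u_Y)\circ L(H)\circ h$ and $H=u_{L(Y)}\circ L(H)\circ h$ for $Y=\config(N)$, so it suffices that the two ambient face operators $L(u_Y),u_{L(Y)}\colon LL(Y)\to L(Y)$ agree up to homotopy over $N\fin$. Both retract the common section $s_0\colon L(Y)\to LL(Y)$, and that section (equivalently either retraction) is a weak equivalence precisely because morphisms over $\id_{\uli 1}$ are invertible in the Segal, fiberwise complete space $\config(N)$; hence the two retractions coincide in the homotopy category and postcomposition with them agree on derived mapping spaces. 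At this crux your route and the paper's reconverge -- these are exactly the paper's ``key properties'' -- and the remaining descent-to-$\rmap$ bookkeeping (``apply $L$'' induces maps of derived mapping spaces since $L$ preserves levelwise equivalences) is no heavier than what the paper itself uses for the vertical arrows in its diagrams.
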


\begin{proof} We write $L(-)$ instead of $(-)^\loc$ and think of that as an endofunctor on a category
of certain fiberwise complete Segal
spaces $X$ over the nerve of $\fin$.
The condition on $X$ is that all elements of $X_1$ which cover $\id\co \uli 1\to\uli 1$
in $\fin$ are homotopy invertible, i.e., belong to $X_1^{\heq}$. Define $L(X)$
as the simplicial space which in degree $r$ is the part of $X_{r+1}$ which under the
operator \emph{zero-th vertex} (=ultimate target) from $X_{r+1}$ to $X_0$ is mapped to $X_0(\uli 1)$, the portion of
$X_0$ taken to $\uli 1$ by the reference functor. There is an obvious forgetful map $u_X\co L(X) \to X$,
which we could also describe as $d_0$\,.
The key properties are that
$u_{L(X)}\co L(L(X))\to L(X)$ is always a weak equivalence,
and furthermore, $L(u_X)\co L(L(X))\to L(X)$ is always a weak equivalence. ---
If $X$ and $Y$ both satisfy the condition, so that $L(X)$ and $L(Y)$ are defined, then there is a commutative diagram
\[
	\begin{tikzpicture}[descr/.style={fill=white}, baseline=(current bounding box.base)] ]
	\matrix(m)[matrix of math nodes, row sep=2.5em, column sep=2.5em,
	text height=1.5ex, text depth=0.25ex]
	{
	\rmap_\fin(L(X),L(Y)) & \rmap_\fin(L(X),Y) \\
	\rmap_\fin(L(L(X)),L(L(Y))) & \rmap_\fin(L(L(X)),L(Y)) \\
	};
	\path[->,font=\scriptsize]
		(m-1-1) edge node [auto] {$$} (m-1-2);
	\path[->,font=\scriptsize]
		(m-2-1) edge node [auto] {$\simeq$} (m-2-2);
	\path[->,font=\scriptsize]
		(m-1-1) edge node [left] {$\simeq$} (m-2-1);
	\path[->,font=\scriptsize] 		
		(m-1-2) edge node [auto] {$p$} (m-2-2);
	\end{tikzpicture}
\]
where the vertical arrows are given by applying $L$, while the horizontal ones are given
by composition with $u_Y\co L(Y)\to Y$ and $L(u_Y)\co L(L(Y))\to L(Y)$, respectively.
For the arrow labeled $p$ we have another commutative diagram
\[
	\begin{tikzpicture}[descr/.style={fill=white}, baseline=(current bounding box.base)] ]
	\matrix(m)[matrix of math nodes, row sep=2.5em, column sep=2.5em,
	text height=1.5ex, text depth=0.25ex]
	{
	\rmap_\fin(L(X),Y) & \rmap_\fin(L(X),Y) \\
	\rmap_\fin(L(L(X)),L(Y)) & \rmap_\fin(L(L(X)),Y) \\
	};
	\path[->,font=\scriptsize]
		(m-1-1) edge node [auto] {$=$} (m-1-2);
	\path[->,font=\scriptsize]
		(m-2-1) edge node [auto] {} (m-2-2);
	\path[->,font=\scriptsize]
		(m-1-1) edge node [left] {$p$} (m-2-1);
	\path[->,font=\scriptsize] 		
		(m-1-2) edge node [auto] {$\simeq$} (m-2-2);
	\end{tikzpicture}
\]
where the lower horizontal arrow is given by composition with $u_Y$ and the
right-hand vertical arrow by composition with $u_{L(X)}$~. This means that $p$ has both a homotopy
right inverse and a homotopy left inverse. Therefore $p$ is a weak equivalence and it follows that
\[ \rmap_\fin(L(X),L(Y)) \lra \rmap_\fin(L(X),Y) \]
given by composition with $u_Y$ is also a weak equivalence.
\end{proof}

\section{Calculus and configuration categories}\label{sec-mainthm}
\subsection{A homotopy pullback square} The following is one of our main results.

Consider the commutative square
\[
	\begin{tikzpicture}[descr/.style={fill=white}, baseline=(current bounding box.base)] ]
	\matrix(m)[matrix of math nodes, row sep=2.5em, column sep=2.5em,
	text height=1.5ex, text depth=0.25ex]
	{
	T_k\emb(M,N) & \rmap_\fin(\config(M;k),\config(N)) \\
	T_1\emb(M,N) & \rmap_\fin(\config^\loc(M;k),\config^\loc(N)) \\
	};
	\path[->,font=\scriptsize]
		(m-1-1) edge node [auto] {} (m-1-2);
	\path[->,font=\scriptsize]
		(m-2-1) edge node [auto] {} (m-2-2);
	\path[->,font=\scriptsize]
		(m-1-1) edge node [left] {} (m-2-1);
	\path[->,font=\scriptsize] 		
		(m-1-2) edge node [auto] {\textup{specialization}} (m-2-2);
	\end{tikzpicture}
\]
where the top arrow is from corollary~\ref{cor-configsheaf}; horizontal arrows due to theorem~\ref{thm-configsheaf} and proposition~\ref{prop-localconfigsheaf}, respectively; and the right-hand arrow is obtained by noting that a functor from $\config(M;k)$ to $\config(N)$ over $N\fin$ induces a functor from $\config^\loc(M;k)$ to $\config^\loc(N)$ over $N\fin$.

\begin{thm} \label{thm-main} This square is homotopy cartesian for any $k$ such that $1 \leq k \leq \infty$.
\end{thm}

\begin{proof}
We can assume that $k$ is finite (the case $k = \infty$ follows by applying $\holim_k$ to the square).
By \cite{BoavidaWeiss} and theorem~\ref{thm-configsheaf}, each term in the commutative square can be
described as a space of derived natural transformations between certain functors on the full subcategory of $\man$ spanned
by the objects $\uli \ell\times\RR^m$ for $\ell \le k$. To put it somewhat differently, each term in the square, viewed
as a contravariant enriched functor of the variable $M$ varying in $\man$~, is the homotopy right Kan extension (in an enriched sense)
of its restriction to the full subcategory of $\man$ spanned
by the objects $\uli \ell\times\RR^m$ for $\ell \le k$. Therefore it is enough to verify the claim when $M$ has the form
$\uli \ell\times\RR^m$ for some $\ell \le k$. We now assume this, fixing $\ell$ until further notice,
but continue to use the label $M$ where it is convenient.

Let $\imm'(M,N)\subset \imm(M,N)$ be the subspace consisting of those immersions which restrict to embeddings on each connected component
of $M$. The inclusion of $\imm'(M,N)$ in $\imm(M,N)\simeq T_1\emb(M,N)$ is a homotopy equivalence.
Therefore our commutative square simplifies to
\begin{equation}
	\begin{tikzpicture}[descr/.style={fill=white}, baseline=(current bounding box.base)] ]
	\matrix(m)[matrix of math nodes, row sep=2.5em, column sep=2.5em,
	text height=1.5ex, text depth=0.25ex]
	{
	\emb(M,N) & \rmap_\fin(\config(M;k),\config(N)) \\
	\imm'(M,N) & \rmap_\fin(\config^\loc(M;k),\config^\loc(N)) \\
	};
	\path[->,font=\scriptsize]
		(m-1-1) edge node [auto] {} (m-1-2);
	\path[->,font=\scriptsize]
		(m-2-1) edge node [auto] {} (m-2-2);
	\path[->,font=\scriptsize]
		(m-1-1) edge node [left] {} (m-2-1);
	\path[->,font=\scriptsize] 		
		(m-1-2) edge node [auto] {\textup{specialization}} (m-2-2);
	\end{tikzpicture}
\end{equation}
This fits into a larger commutative diagram
\begin{equation} \label{eqn-proofmain2}
	\begin{tikzpicture}[descr/.style={fill=white}, baseline=(current bounding box.base)] ]
	\matrix(m)[matrix of math nodes, row sep=2.5em, column sep=2.5em,
	text height=1.5ex, text depth=0.25ex]
	{
	\emb(M,N) & \rmap_\fin(\config(M;k),\config(N)) &  \emb(\uli\ell,N) \\
	\imm'(M,N) & \rmap_\fin(\config^\loc(M;k),\config^\loc(N)) & \map(\uli\ell,N) \\
	};
	\path[->,font=\scriptsize]
		(m-1-1) edge node [auto] {} (m-1-2);
	\path[->,font=\scriptsize]
		(m-2-1) edge node [auto] {} (m-2-2);
	\path[->,font=\scriptsize]
		(m-1-1) edge node [left] {} (m-2-1);
	\path[->,font=\scriptsize] 		
		(m-1-2) edge node [auto] {} (m-2-2);
	\path[->,font=\scriptsize] 		
		(m-1-2) edge node [auto] {} (m-1-3);
	\path[->,font=\scriptsize] 		
		(m-1-3) edge node [auto] {\textup{incl.}} (m-2-3);
	\path[->,font=\scriptsize] 		
		(m-2-2) edge node [auto] {} (m-2-3);
	\end{tikzpicture}
\end{equation}
The right-hand horizontal arrows are given by precomposition along the maps $\Delta[0] \to \config(M)$ and $\Delta[0] \times \uli \ell \to \config^\loc(M)$ which select the objects corresponding (in the multipatch model) to $M$ itself,
respectively, to connected components of $M$.

Since the outer square determined by diagram~(\ref{eqn-proofmain2})
is clearly homotopy cartesian, it is enough to show that the right-hand square in~(\ref{eqn-proofmain2})
is also homotopy cartesian. In that square, the lower left-hand term maps forgetfully and by a homotopy equivalence to
\[  \prod_{j=1}^\ell \rmap_\fin(\config(\RR^m;k),\config(N)). \]
Therefore it is also enough to show that
\begin{equation} \label{eqn-proofmain3}
	\begin{tikzpicture}[descr/.style={fill=white}, baseline=(current bounding box.base)] ]
	\matrix(m)[matrix of math nodes, row sep=2.5em, column sep=2.5em,
	text height=1.5ex, text depth=0.25ex]
	{
	\rmap_\fin(\config(M;k),\config(N)) & \emb(\uli\ell,N) \\
	\prod_{j=1}^\ell \rmap_\fin(\config(\RR^m;k),\config(N)) & \map(\uli\ell,N) \\
	};
	\path[->,font=\scriptsize]
		(m-1-1) edge node [auto] {} (m-1-2);
	\path[->,font=\scriptsize]
		(m-2-1) edge node [auto] {} (m-2-2);
	\path[->,font=\scriptsize]
		(m-1-1) edge node [left] {} (m-2-1);
	\path[->,font=\scriptsize] 		
		(m-1-2) edge node [auto] {} (m-2-2);
	\end{tikzpicture}
\end{equation}
is homotopy cartesian. Here the left-hand vertical arrow is induced by the inclusions
\[ \{j\}\times\RR^m\to \uli\ell\times\RR^m=M. \] 
In order to check this we need to choose a point $f$ in $\emb(\uli\ell,N)$ and compare the horizontal homotopy
fibers over $f$ in diagram~(\ref{eqn-proofmain3}), say $\Phi_1$ for the top row and $\Phi_2$ for the bottom row.

To investigate $\Phi_2$, choose a tubular neighborhood $U$ of the finite set $f(\ell)\subset N$;
more specifically let $U_j\subset U$ be the component of $U$ which contains $j\in\uli\ell$\,. Now
\[
	\begin{tikzpicture}[descr/.style={fill=white}, baseline=(current bounding box.base)] ]
	\matrix(m)[matrix of math nodes, row sep=2.5em, column sep=2.5em,
	text height=1.5ex, text depth=0.25ex]
	{
	\prod_{j=1}^\ell \rmap_\fin(\config(\RR^m;k),\config(U_j)) &  \\
	\prod_{j=1}^\ell \rmap_\fin(\config(\RR^m;k),\config(N)) & \map(\uli\ell,N) \\
	};
	\path[->,font=\scriptsize]
		(m-2-1) edge node [auto] {} (m-2-2);
	\path[->,font=\scriptsize]
		(m-1-1) edge node [left] {} (m-2-1);
	\end{tikzpicture}
\]
is a homotopy fiber sequence (where we take $f\in \map(\uli\ell,N)$ as the base point). For the proof of this observation, we can assume $\ell = 1$ as the general case follows easily. We then replace $\config(\RR^m;k)$ with $\config^\loc(\RR^m;k)$ since $\config(\RR^m; k)$ has a weakly terminal object and replace $\config$ by $\config^\loc$ throughout using Lemma \ref{lem-locisloc}. Then we apply Lemma \ref{lem-fiberconf}.

Therefore we may write
\[  \Phi_2=\prod_{j=1}^\ell \rmap_\fin(\config(\RR^m;k),\config(U_j))~. \]
Note that $\config(M; k)$ also has a weakly terminal object (which lives over $\uli \ell \in (N\fin)_0$). Using that we can get a description of $\Phi_1$ which makes it clear that the canonical map of horizontal homotopy fibers $v\co \Phi_1\to\Phi_2$ in (\ref{eqn-proofmain3}) is a weak equivalence.
\end{proof}

\bigskip
The handle dimension $\hodim(M)$ of a smooth manifold $M$ is the
minimum, taken over the handlebody decompositions of $M$, of the maximal index of handles in the decomposition.
See remark~\ref{rem-handle} for details.

\begin{cor} \label{cor-main} The following commutative square is homotopy cartesian if $\dim(N)$ exceeds $\hodim(M)$
by at least $3$:
\[
	\begin{tikzpicture}[descr/.style={fill=white}, baseline=(current bounding box.base)] ]
	\matrix(m)[matrix of math nodes, row sep=2.5em, column sep=2.5em,
	text height=1.5ex, text depth=0.25ex]
	{
	\emb(M,N) & \rmap_\fin(\config(M),\config(N)) \\
	\imm(M,N) & \rmap_\fin(\config^\loc(M),\config^\loc(N)) \\
	};
	\path[->,font=\scriptsize]
		(m-1-1) edge node [auto] {} (m-1-2);
	\path[->,font=\scriptsize]
		(m-2-1) edge node [auto] {} (m-2-2);
	\path[->,font=\scriptsize]
		(m-1-1) edge node [left] {} (m-2-1);
	\path[->,font=\scriptsize] 		
		(m-1-2) edge node [auto] {} (m-2-2);
	\end{tikzpicture}
\]
\end{cor}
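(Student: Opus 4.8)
The plan is to deduce Corollary~\ref{cor-main} from Theorem~\ref{thm-main} by passing to the limit $k\to\infty$ and then invoking convergence of the embedding Taylor tower. First I would observe that the square in Theorem~\ref{thm-main} is natural in $k$: the Taylor tower structure maps $T_{k+1}\emb(M,N)\to T_k\emb(M,N)$, the restriction maps $\rmap_\fin(\config(M;k+1),\config(N))\to\rmap_\fin(\config(M;k),\config(N))$ and $\rmap_\fin(\config^\loc(M;k+1),\config^\loc(N))\to\rmap_\fin(\config^\loc(M;k),\config^\loc(N))$ induced by the inclusions of simplicial subspaces $\config(M;k)\hookrightarrow\config(M;k+1)$, together with the identity on $T_1\emb(M,N)$, assemble into a tower of commutative squares, each of which is homotopy cartesian by Theorem~\ref{thm-main}. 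A sequential homotopy limit commutes with the finite homotopy limit that computes a homotopy pullback, and it preserves levelwise equivalences of towers, so $\holim_k$ of this tower of squares is again a homotopy cartesian square.

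Next I would identify the four corners of the limiting square. The bottom-left corner is a constant tower, with homotopy limit $T_1\emb(M,N)\simeq\imm(M,N)$ by the Smale--Hirsch $h$-principle (valid here since the codimension is positive --- this is the identification already used in the proof of Theorem~\ref{thm-main}). The top-left corner has homotopy limit $T_\infty\emb(M,N)$, by definition of the Taylor tower. For the right-hand column one uses that $\config(M)=\colim_k\config(M;k)$ and $\config^\loc(M)=\colim_k\config^\loc(M;k)$ --- filtered colimits whose transition maps are honest inclusions of subspaces, hence cofibrations of simplicial spaces over $N\fin$ --- so these colimits are homotopy colimits; since $\config(N)$ and $\config^\loc(N)$ are fibrant (fiberwise complete Segal spaces over $N\fin$, Appendix~\ref{sec-model}), the functor $\rmap_\fin(-,-)$ carries each of these homotopy colimits to the corresponding homotopy limit, giving
\[
\rmap_\fin(\config(M),\config(N))\xrightarrow{\ \simeq\ }\holim_k\rmap_\fin(\config(M;k),\config(N))
\]
and the analogous equivalence for the local configuration categories. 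Thus the homotopy cartesian square obtained above is the one with corners $T_\infty\emb(M,N)$, $\rmap_\fin(\config(M),\config(N))$, $\imm(M,N)$ and $\rmap_\fin(\config^\loc(M),\config^\loc(N))$.

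It remains only to replace $T_\infty\emb(M,N)$ by $\emb(M,N)$. The hypothesis that $\dim(N)$ exceeds $\hodim(M)$ by at least $3$ is exactly the range in which the convergence theorem of Goodwillie--Klein \cite{GK1, GK2} applies and asserts that the canonical map $\emb(M,N)\to T_\infty\emb(M,N)$ is a weak equivalence; substituting this finishes the argument.

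The step that I expect to need the most care is the identification of $\holim_k\rmap_\fin(\config(M;k),\config(N))$ (and its local analogue) with $\rmap_\fin(\config(M),\config(N))$: this is where one must be precise about the fiberwise complete Segal space model structure of Appendix~\ref{sec-model}, checking that the $\config(M;k)$ are cofibrant and their inclusions cofibrations, so that $\config(M)=\colim_k\config(M;k)$ is genuinely a homotopy colimit, and that $\config(N)$ and $\config^\loc(N)$ are fibrant, so that $\rmap_\fin(-,-)$ converts this homotopy colimit into a homotopy limit. Everything else --- the naturality in $k$, the interchange of the two homotopy limits, and the two classical inputs (Smale--Hirsch and Goodwillie--Klein) --- is routine.
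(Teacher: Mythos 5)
Your proposal is correct and follows essentially the same route as the paper, which proves the corollary by taking the homotopy inverse limit over $k$ in Theorem~\ref{thm-main} and invoking the convergence theorem for the embedding Taylor tower (the paper cites \cite{GoWeEmb}, which rests on Goodwillie--Klein). Your write-up simply makes explicit the standard details the paper leaves implicit: naturality of the square in $k$, Fubini for homotopy limits, identification of the two left-hand corners of the limiting tower, and the fact that $\config(M)\simeq\hocolim_k\config(M;k)$ together with the continuity of $\rmap_\fin(-,\config(N))$ identifies the two right-hand corners.
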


\begin{proof} This is obtained by taking the homotopy inverse limit over $k$ in theorem~\ref{thm-main}
and combining with the main theorem of the smooth embeddings branch
of manifold calculus \cite{GoWeEmb}. \end{proof}

\begin{rem} \label{rem-handle} {\rm  For a smooth manifold $M$ without boundary and an integer $s$, we say that $\hodim(M)\le s$ if
there exists a sequence of compact codimension zero smooth submanifolds (boundary allowed)
\[ \emptyset =M_0 \subset M_1\subset M_2\subset M_3\subset M_4\subset \dots \]
of $M$ such that $M_{j-1}\subset M_j\smin\partial M_j$ for all $j>0$ and $M_j$ is obtained from $M_{j-1}$ by attaching
finitely many handles, all of index $\le s$. This last condition needs to be made more precise; one way to do so is
to say that there exist neatly embedded smooth disks $A_1,\dots,A_r$ in $M_j\smin M_{j-1}$, all of codimension $\le s$
in $M_j$~, such that $M_j$ minus a standard open tubular neighborhood of $A_1\cup A_2\cup\dots\cup A_r$
is the union of $M_{j-1}$ and a closed collar
attached to the boundary of $M_{j-1}$. \emph{Example}: $M=\RR$ and $M_j=[-j,j\,]$ for $j>0$ shows that $\hodim(\RR)\le 0$,
for in this case $M_j$ is obtained from $M_{j-1}$ by attaching a single handle of index $0$ if $j=1$, and no handle at all (just a collar)
if $j>1$.

There is a similar definition for smooth manifolds $M$ with boundary which will be needed in the next section.
In this case we say that $\hodim(M)\le s$ relative to $\partial M$ if
there exists a
sequence of compact codimension zero smooth submanifolds with boundary
\[ M_0 \subset M_1\subset M_2\subset M_3\subset M_4\subset \dots \]
where $M_0$ is a closed collar on $\partial M$,
such that $M_{j-1}\subset M_j\smin(\partial M_j\smin\partial M)$ for all $j>0$ and $M_j$ is obtained from $M_{j-1}$ by attaching
finitely many handles, all of index $\le s$, to $\partial M_{j-1}\smin\partial M$. \emph{Example}: $M=[0,\infty)$ and
$M_j=[0,j+1]$ shows that $\hodim(M)\le s$ relative to $\partial M$ in this case, for every $s\in\ZZ$.
}
\end{rem}

\section{Boundary conditions}\label{sec-bdry}
\subsection{Configuration category of a manifold with boundary.}
In this section we define the configuration category and the local configuration category of a manifold with boundary. This is mainly
for the purpose of studying spaces of \emph{neat} smooth embeddings $(M,\partial M)\to (N,\partial N)$ where the embedding
of boundaries $\partial M\to \partial N$ is prescribed and fixed.
Among the models proposed above for the boundariless case, the multipatch models
survive the generalization quite well. So we take care of these first.
The particle model also admits a beautiful generalization to the setting with boundary, but that is less
obvious. (Proposition~\ref{prop-miller} is again quite important in making the
comparison with the other models.) Unfortunately we are not aware of a variant of the Fulton-McPherson-et.al.~model for manifolds
with boundary.

\medskip
As in \cite{BoavidaWeiss}, we fix a smooth $(m-1)$-manifold $L$ without boundary and re-define $\man$ so that the
objects are smooth $m$-manifolds $M$ with boundary, together with a diffeomorphism $L\to \partial M$.
The morphisms in $\man$ are the codimension zero smooth embeddings which take boundary to boundary
and respect the identification of the boundaries with $L$. We may write $\emb_\partial(M_0,M_1)$ for the space
of morphisms $M_0\to M_1$ in $\man$. In the same spirit, $\disc$ is now the
full subcategory of $\man$ spanned by the objects
\[   L\times[0,1)~\sqcup~ \uli k\times\RR^m  \]
for $k\ge 0$. Let $\finplus$ be the category whose objects are the pointed sets
$[k]=\{0,1,\dots,k\}$ for $k\ge 0$, with base point $0$, so that a morphism from $[k]$
to $[\ell]$ is a based map. Note the difference between $\uli k=\{1,2,\dots,k\}$ and $[k]=\{0,1,\dots,k\}$.
Indeed there is a functor $\fin \to \finplus$ which takes $\uli k$ to $[k]$ and $f\co \uli k\to \uli \ell$
to the based map $[k]\to[\ell]$ which extends $f$. Morphisms in $\finplus$ in the image of that functor
will be called \emph{proper}. There is an obvious functor $\disc\to \finplus$ which we use for bookkeeping.

\bigskip
\emph{The multipatch model.} For fixed $M$ in $\man$ let
$E=E_M$ be the contravariant functor on $\disc$ defined by $E(U)=\emb_{\partial}(U,M)$,
again with the weak $C^\infty$ topology. Here $\emb_{\partial}$ refers to embeddings which take boundary
to boundary and respect the identification of boundaries with the reference manifold $L$. Let $\smallint E$ be the Grothendieck construction (alias transport category) on $E$.
The projection functor $\smallint E\to \disc$ induces a map of nerves
\[ N\smallint E \lra N\disc \]
which is a fiberwise complete Segal space over $N\disc$~. We compose
\[ N\smallint E \lra N\disc \lra  N\finplus~. \]
The composition $N\smallint E \lra N\finplus$
is typically not a fiberwise complete Segal space over $N\finplus$. But we can apply
fiberwise completion and this yields $\config(M)$.

\medskip
In the multipatch model, it is easy to give an explicit description
of fiberwise Rezk completion (over $N\finplus$) applied to the simplicial space $X:=N\smallint E$.
For an integer $k\ge 0$ we have $U(k):= L\times[0,1)~\sqcup~\uli k\times\RR^m$, an object in $\man$.
Clearly $X_r$ is a coproduct of spaces
\begin{equation} \label{eqn-stringybd}
X(k_0,k_1,\dots,k_r):=
\emb_\partial(U(k_r), U(k_{r-1}))\times \cdots \times \emb_\partial(U(k_{0}), M)
\end{equation}
indexed by $r$-tuples of non-negative integers. The Lie group $\Or(m)^{\sum k_i}$
acts on the space in~(\ref{eqn-stringybd})
via the actions of $\Or(m)^{k_i}$ on $U(k_i)$; compare~(\ref{eqn-stringy}). We form the homotopy orbit space
of this action. Let $Y_r$ be the coproduct of these homotopy orbit spaces. The coproduct is still
indexed by $r$-tuples of non-negative integers. Then $Y$ is a
simplicial space over $N\finplus$ which satisfies ($\sigma$) and ($\kappa_{f}$). The inclusion
$X\to Y$ is a Dwyer-Kan equivalence.
Therefore $Y$ so defined can be identified with the fiberwise Rezk completion of $X$ over
the nerve of $\finplus$~.

\medskip
Based on the explicit description $Y$ of $\config(M)$, we can make some observations on objects and morphisms,
that is, the spaces $Y_0$ and $Y_1$. It is clear that
\begin{equation}  Y_0~\simeq\coprod_{k\ge 0} \emb(\uli k\,,M_-), \end{equation}
where $M_-=M\smin\partial M$.
That is to say, $Y_0$ has the homotopy type of the disjoint union of the
ordered configuration spaces of $M_-$ for the various finite cardinalities. Fix a point in $Y_0$
represented by some embedding $f\co L\times[0,1)\,\sqcup\,\uli k\times\RR^m\to M$.
The (homotopy type of the) homotopy fiber $\Phi_f$ of the map \emph{target} (alias $d_1$) from $Y_1$ to $Y_0$
over that point \emph{depends only on the integers $k$ and $m$}, and on $L\cong\partial M$, but not on $M$ itself.
Indeed we have
\begin{equation} \label{eqn-formulamorbd}
\Phi_f~~\simeq~~\coprod_{r\ge 0}~~\coprod_{g\co [r]\to [k]}~~\prod_{j\in [k]} \emb\big(\uli r\cap g^{-1}(j),V_j\big)
\end{equation}
where $V_j=\RR^m$ if $j>0$ and $V_j=L\times\RR$ if $j=0$\,.
In this expression, $g\co[r]\to [k]$ stands for a morphism in $\finplus$\,.
There is a more coordinate-free way to write this. Instead of expressing $\Phi_f$ in terms of the source
of $f$, we could express it in terms of $\im(f)$ which is a standard tubular neighborhood of $\partial M\cup S$,
where $S$ is an ordered configuration of $k$ points in $M_-$\,.

\bigskip
\emph{The Riemannian model.} In this description we assume that $M$ is smooth and equipped with a
Riemannian metric which is a product metric near the boundary. More precisely we assume that for some fixed $\epsilon>0$,
the set of all $x\in M$ whose geodesic distance from $\partial M$ is less than $\epsilon$ is an open collar
(of width $\epsilon$) on $\partial M$ where the Riemannian metric is a product of the restricted Riemannian
metric on the boundary and the standard metric on $[0,\epsilon)$. Let $X_0$ be the space of pairs
$(S,\rho)$ where $S$ is a finite subset of $M_-$ with a total ordering,
and $\rho$ is a function from $S\cup\{\infty\}$
to the set of positive real numbers, subject to three conditions.
\begin{itemize}
\item $\rho(\infty)\le \epsilon$.
\item For each $s\in S$, the exponential map $\exp_s$ at $s$ is defined and regular on the open ball
$B_{\rho(s)}(0_s)$ of radius $\rho(s)$ about the origin $0_s$ in $T_sM$;
\item The images $\exp_s(B_{\rho(s)}(0_s))$ for $s\in S$ and the collar of width $\rho(\infty)$
on $\partial M$ are pairwise disjoint. We write $U_\rho(S\cup\partial M)$ for their union, an
open subset of $M$ which is diffeomorphic to $L\times[0,1)~\sqcup~S\times\RR^m$.
\end{itemize}
We promote $X_0$ to the space of objects of a topological category as follows.
A morphism from $(S,\rho)$ to $(R,\omega)$ exists if and only if
\[ U_\rho(S\cup\partial M)\subset U_\omega(R\cup\partial M) \]
and in that case it is unique. Let $X$ be the topological nerve of that category, a simplicial space.
It is easy to verify that $X$ is a fiberwise complete Segal space over $N\finplus$.
As such it is another incarnation of $\config(M)$.

\bigskip
\emph{The particle model.}
Here we allow $M$ to be a topological manifold with compact boundary. (Unfortunately we do not have a convincing particle model
if the boundary is noncompact.) Let $k\in\{0,1,2,\dots\}$. The space of maps from $\uli k$ to $M/\partial M$ comes with a stratification.
There is one stratum for each pair $(S,\eta)$ where $S\subset\uli k$ and $\eta$ is an equivalence relation
on $\uli k$ such that $S$ is either empty or an equivalence class of $\eta$. The
points of that stratum are the maps $\uli k\to M/\partial M$
which can be factored as projection from $\uli k$ to $\uli k/\eta$
followed by an injection of $\uli k/\eta$ into $M/\partial M$ which
\begin{itemize}
\item[-] avoids the base point of $M/\partial M$ if $S$ is empty,
\item[-] takes the class $S$ to the base point if $S$ is nonempty.
\end{itemize}
Now we construct a topological category whose object space is
\begin{equation}  X_0:=\coprod_{k\ge 0} \emb(\uli k\,,M_-). \end{equation}
A morphism from $f\in \emb(\uli k\,,M_-)$ to $g\in\emb(\uli\ell\,,M_-)$ is a
pair consisting of a morphism $v\co[k]\to[\ell]$ in $\finplus$
and a Moore path $\gamma=(\gamma_t)_{t\in[0,a]}$ in $\map(\uli k\,,M/\partial M)$ which is an exit path in reverse. It is required to
satisfy $\gamma_0=f$ and $\gamma_a(x)=g(v(x))$ if $v(x)\in\uli\ell$\,, but $\gamma_a(x)=$ base point of $M/\partial M$
if $v(x)=0$. Composition of morphisms is obvious.
The nerve of this category is a fiberwise complete Segal space
over $N\finplus$ which we denote by $X$ for now, and which we can also regard as a definition of $\config(M)$.

Fix a point in $X_0$
given by some embedding $f\co \uli k\to M_-$.
The homotopy type of the homotopy fiber $\Phi_f$ of the map \emph{target} (alias $d_1$) from $X_1$ to $X_0$
over that point depends only on the integers $k$ and $m=\dim(M)$, and on $L\cong\partial M$.
We recover formula~(\ref{eqn-formulamorbd})
as in the multipatch model. This follows easily from proposition~\ref{prop-miller} applied to
the stratified spaces $\map(\uli \ell\,,M/\partial M)$ for $\ell\in \NN$.

\bigskip
We wish to generalize theorems~\ref{thm-configsheaf} and~\ref{thm-main} to the setting with boundary. In the boundariless
setting, we are looking at functors between configuration categories. In the setting with boundary, we will be dealing
with functors between configuration categories satisfying some boundary conditions.
For fixed $N$ of dimension $n$, with a preferred smooth embedding $L\to \partial N$, and for $M$ in $\man$, let
$\emb_\partial(M,N)$
be the space of smooth neat embeddings $M\to N$ which on $\partial M\cong L$ agree with the preferred embedding.
Let $W=L\times[0,1)$ and $Z_M=\emb_\partial(W,M)$, and similarly $Z_N=\emb_\partial(W,N)$. There is a diagram
\begin{equation} \label{eqn-nastybdry}
	\begin{tikzpicture}[descr/.style={fill=white}, baseline=(current bounding box.base)] ]
	\matrix(m)[matrix of math nodes, row sep=2.5em, column sep=2.5em,
	text height=1.5ex, text depth=0.25ex]
	{
	 & \rmap_{\finplus}(\config(M;k),\config(N)) \\
	\map(Z_M,Z_N) & \map(Z_M,\rmap_{\finplus}(\config(W;k),\config(N))) \\
	};
	\path[->,font=\scriptsize]
		(m-2-1) edge node [auto] {} (m-2-2);
	\path[->,font=\scriptsize] 		
		(m-1-2) edge node [auto] {} (m-2-2);
	\end{tikzpicture}
\end{equation}
where the vertical arrow is adjoint to a composition map
\[ Z_M\times\rmap_{\finplus}(\config(M;k),\config(N)) \lra
\rmap_{\finplus}(\config(W;k),\config(N)) \]
and the horizontal one is determined by an evaluation
map
\[ Z_N\lra \rmap_{\finplus}(\config(W;k),\config(N))). \]
We define $\rmap_{\finplus}^{\partial}(\config(M;k),\config(N))$
as the homotopy pullback of diagram~(\ref{eqn-nastybdry}). (In spirit it is a homotopy
fiber since $Z_M$, $Z_N$ and $\map(Z_M,Z_N)$ are contractible.)
Then there is a canonical map
\[ \emb_\partial(M,N) \lra \rmap_{\finplus}^{\partial}(\config(M;k),\config(N)). \]

As in the non-boundary case, let $\ms J_k$ (for a given non-negative integer $k$) be the Grothendieck topology on $\man$ with
coverings given by collections $\{U_i \hookrightarrow M\}$ subject
to the condition that every finite subset of cardinality $k$ in the interior of $M$ is contained in (the image of) $U_j$ for some $j$.

\begin{thm} \label{thm-configsheafbdry} For fixed $N$ with $L\hookrightarrow \partial N$, the contravariant functor
\[ M\mapsto \rmap_{\finplus}^{\partial}(\config(M;k),\config(N)) \]
is a homotopy sheaf for the Grothendieck topology $\ms J_k$ on $\man$~.
\end{thm}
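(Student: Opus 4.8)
The plan is to read the theorem off the homotopy-pullback presentation of $\rmap_{\finplus}^{\partial}(\config(M;k),\config(N))$ which is built into its definition (diagram~(\ref{eqn-nastybdry})). As contravariant functors of $M\in\man$, the three corners of that diagram are
\[
\begin{aligned}
F_1(M)&=\rmap_{\finplus}(\config(M;k),\config(N)),\\
F_2(M)&=\map\bigl(Z_M,\rmap_{\finplus}(\config(W;k),\config(N))\bigr),\\
F_3(M)&=\map(Z_M,Z_N),
\end{aligned}
\]
where $W=L\times[0,1)$ is fixed and $Z_M=\emb_\partial(W,M)$ depends covariantly on $M$. The homotopy sheaf condition for a functor $F$ on $\man$ asks that, for every $\ms J_k$-cover $\{U_i\hookrightarrow M\}_{i\in I}$, the comparison map $F(M)\to\holimsub{\emptyset\ne S\subseteq I}F(U_S)$ be a weak equivalence; this is a homotopy limit condition, and since a homotopy pullback is itself a homotopy limit and homotopy limits commute with one another, it suffices to prove that each of $F_1,F_2,F_3$ is a homotopy $\ms J_k$-sheaf. (As in the proof of Theorem~\ref{thm-configsheaf}, the homotopy (co)limits of simplicial spaces over $N\finplus$ occurring here are formed degreewise, by Lemma~\ref{lem:hocolims}.)

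The corners $F_2$ and $F_3$ are the easy ones. Each has the form $M\mapsto\map(Z_M,C)$ for a \emph{fixed} space $C$; since $\map(-,C)$ turns homotopy colimits into homotopy limits, it is enough to check that $M\mapsto Z_M$ is a homotopy $\ms J_k$-cosheaf. But $Z_M$ is the contractible space of open collars of $\partial M\cong L$ in $M$; in a good $\ms J_k$-cover every $U_i$ carries all of $\partial M$, so each intersection $U_S$ is an open neighbourhood of $\partial M$ in $M$, hence again a manifold with boundary $\partial M$, so $Z_{U_S}$ is contractible as well. Therefore $\hocolimsub{S}Z_{U_S}\simeq\hocolimsub{S}(\ast)\simeq\ast\simeq Z_M$, the middle equivalence because the nerve of the poset of nonempty finite subsets of $I$ is contractible.

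The substance is that $F_1$ is a homotopy $\ms J_k$-sheaf, which is the boundary analogue of Theorem~\ref{thm-configsheaf}. Applying $\rmap_{\finplus}(-,\config(N))$ (which carries homotopy colimits to homotopy limits), this follows, exactly as in the boundariless case, from the assertion that $M\mapsto\config(M;k)$ is a homotopy $\ms J_k$-cosheaf valued in simplicial spaces over $N\finplus$, i.e.\ that $\hocolimsub{S}\config(U_S;k)_r\to\config(M;k)_r$ is a weak equivalence for each $r\ge0$; one runs the proof of Theorem~\ref{thm-configsheaf} verbatim. For $r=0$ one has $\config(M;k)_0=\coprod_{j\le k}\emb(\uli j,M_-)$, a disjoint union of configuration spaces of the \emph{interior}; since a $\ms J_k$-cover covers every cardinality-$\le k$ subset of $\intr(M)$, and any such subset contained in some $U_i$ automatically lies in $\intr(U_i)=(U_i)_-$ (because $\partial U_i\subseteq\partial M$), the family $\{(U_i)_-\hookrightarrow M_-\}$ is an honest cover of $M_-$ in the required sense, for which $\emb(\uli j,-)$ is a homotopy cosheaf (cf.\ \cite[thm.7.2]{BoavidaWeiss}). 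For $r=1$ one uses the boundary version of Proposition~\ref{prop-operad}: for $U$ an open neighbourhood of $\partial M$ in $M$ the square with corners $\config(U)_1,\config(M)_1,\config(U)_0,\config(M)_0$ and vertical maps $d_1=\textup{target}$ is homotopy cartesian, because by formula~(\ref{eqn-formulamorbd}) the homotopy fiber of $d_1$ over a configuration depends only on $m=\dim M$, on $\partial M\cong L$ and on $k$, all of which are unchanged under passage to $U$. Stability of homotopy colimits under homotopy base change then reduces $r=1$ to $r=0$ as in Theorem~\ref{thm-configsheaf}, and the Segal condition~($\sigma$) --- equivalently, the boundary analogue of Corollary~\ref{cor-operad} --- promotes this to all $r>1$.

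The step I expect to need the most care --- and arguably the only place where the boundary genuinely intervenes --- is the boundary version of Proposition~\ref{prop-operad} just invoked, i.e.\ the locality in $M$, near the source configuration, of the morphism-fiber computation~(\ref{eqn-formulamorbd}). In the particle model this rests on Proposition~\ref{prop-miller} applied to the stratified spaces $\map(\uli\ell,M/\partial M)$, together with the fact (as in~(\ref{eqn-strat})) that the relevant holinks are a local concept, so that they do not distinguish $M$ from an open neighbourhood of $\partial M$ in $M$. Once that locality is in hand, the three-corners argument assembles the homotopy sheaf property of $M\mapsto\rmap_{\finplus}^{\partial}(\config(M;k),\config(N))$.
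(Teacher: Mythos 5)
The paper states this theorem without proof, presenting it as a slight generalization of Theorem~\ref{thm-configsheaf}; your proposal supplies the intended boundary analogue, and it is correct. The decomposition of $\rmap^\partial_{\finplus}(\config(M;k),\config(N))$ into the three corners of the defining homotopy pullback~(\ref{eqn-nastybdry}), the disposal of the collar corners by contractibility of $Z_M$ and of each $Z_{U_S}$ (which you justify correctly: morphisms in the redefined $\man$ respect the boundary identification with $L$, so every nonempty intersection $U_S$ is an open neighbourhood of $\partial M$ and contains a collar, while the indexing poset of nonempty finite subsets of $I$ has contractible nerve), and the reduction of the substantive corner to the homotopy $\ms J_k$-cosheaf property of $M\mapsto\config(M;k)$ via Lemma~\ref{lem:hocolims} and property (4) of Appendix~A are all as expected. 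For the cosheaf property you correctly reprise the proof of Theorem~\ref{thm-configsheaf}: $r=0$ reduces to configuration spaces of $M_-$ because $(U_i)_-=U_i\cap M_-$, so the induced family covers every finite subset of $M_-$ of cardinality $\le k$; $r=1$ rests on the boundary analogue of Proposition~\ref{prop-operad}, which is available because formula~(\ref{eqn-formulamorbd}) shows the homotopy fiber of $d_1\co Y_1\to Y_0$ depends only on $m$, $L\cong\partial M$ and the cardinality and not on $M$ (in the particle model this is Proposition~\ref{prop-miller} applied to $\map(\uli\ell,M/\partial M)$, as you say); and $r>1$ follows from the Segal condition. You rightly flag the boundary version of Proposition~\ref{prop-operad} as the one place where the boundary genuinely enters; everything else is formal once that locality is in hand.
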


There are analogous definitions in the local setting. Let $\config^\loc(M;k)$ be the
comma type construction whose objects are morphisms in $\config(M;k)$ which under the reference functor to
$\finplus$ are taken to \emph{proper} morphisms with target $[1]$. Clearly $\config^\loc(M;k)$ comes with a
reference map to $N\fin\subset N\finplus$~.

\begin{expl} {\rm We can use the particle model of $\config(M)$.
Then $\config^\loc(M;k)$ is exactly the same as $\config^\loc(M_-;k)$, which we defined previously in the
boundariless setting.
}
\end{expl}

\medskip
By analogy with the definition of $\rmap_{\finplus}^{\partial}(\config(M;k),\config(N))$,
we define
\[ \rmap_{\fin}^{\partial}(\config^\loc(M;k),\config^\loc(N)) \]
as the homotopy pullback of
\[
	\begin{tikzpicture}[descr/.style={fill=white}, baseline=(current bounding box.base)] ]
	\matrix(m)[matrix of math nodes, row sep=2.5em, column sep=2.5em,
	text height=1.5ex, text depth=0.25ex]
	{
	 & \rmap_\fin(\config^\loc(M;k),\config^\loc(N)) \\
	\map_\partial(Z_M,Z_N) & \map(Z_M,\rmap_\fin(\config^\loc(W;k),\config^\loc(N))) \\
	};
	\path[->,font=\scriptsize]
		(m-2-1) edge node [auto] {} (m-2-2);
	\path[->,font=\scriptsize] 		
		(m-1-2) edge node [auto] {} (m-2-2);
	\end{tikzpicture}
\]

\begin{prop} \label{prop-localconfigsheafbdry} For fixed $N$ with $L\hookrightarrow\partial N$, the contravariant functor
\[ M\mapsto \rmap_{\fin}^{\partial}(\config^\loc(M;k),\config^\loc(N)) \]
is a homotopy sheaf for the Grothendieck topology $\ms J_1$ on $\man$~.
\end{prop}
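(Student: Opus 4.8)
The plan is to reduce everything to Proposition~\ref{prop-localconfigsheaf}, using that $\rmap_{\fin}^{\partial}(\config^\loc(M;k),\config^\loc(N))$ is by construction a homotopy pullback and that a homotopy pullback of homotopy sheaves is again a homotopy sheaf (finite homotopy limits commute with the homotopy limits expressing the descent condition). Recall that $\rmap_{\fin}^{\partial}(\config^\loc(M;k),\config^\loc(N))$ is the homotopy pullback of the cospan whose two legs are $\rmap_\fin(\config^\loc(M;k),\config^\loc(N))$ and $\map_\partial(Z_M,Z_N)$, both mapping into $\map(Z_M,\rmap_\fin(\config^\loc(W;k),\config^\loc(N)))$, where $W=L\times[0,1)$ and $Z_M=\emb_\partial(W,M)$. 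It therefore suffices to show that each of these three functors of the variable $M$ is a homotopy sheaf for $\ms J_1$.

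The two functors built out of $Z_M$ cost almost nothing. By uniqueness of collars, $Z_M=\emb_\partial(W,M)$ is contractible for every $M$ in $\man$, so the map $Z_M\to\ast$ exhibits $M\mapsto\map(Z_M,\rmap_\fin(\config^\loc(W;k),\config^\loc(N)))$ as objectwise weakly equivalent to the constant functor with value $\rmap_\fin(\config^\loc(W;k),\config^\loc(N))$, and likewise $M\mapsto\map_\partial(Z_M,Z_N)$ is objectwise equivalent to the constant functor with value a point. A constant functor is a homotopy sheaf for any Grothendieck topology: the indexing category of a cover --- finite non-empty subsets of the index set, ordered by inclusion --- is filtered, hence has contractible nerve. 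Since being a homotopy sheaf is invariant under objectwise weak equivalence of functors, these two factors are $\ms J_1$-homotopy sheaves.

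It remains to treat $M\mapsto\rmap_\fin(\config^\loc(M;k),\config^\loc(N))$. The key point, recorded in the Example above, is that in the particle model $\config^\loc(M;k)=\config^\loc(M_-;k)$ with $M_-=M\smin\partial M$. Given a $\ms J_1$-cover $\{U_i\hookrightarrow M\}$ in $\man$, the interiors $\{(U_i)_-\}$ form an honest open cover of the boundariless manifold $M_-$, and $(U_S)_-=\bigcap_{i\in S}(U_i)_-$ for each finite non-empty $S$; so for every $r$ the map
\[
\hocolimsub{S\subset I}\conloc{U_S;k}_r=\hocolimsub{S\subset I}\conloc{(U_S)_-;k}_r\lra\conloc{M_-;k}_r=\conloc{M;k}_r
\]
is a weak equivalence by Proposition~\ref{prop-localconfigsheaf} applied to $M_-$ (the homotopy colimits in the overcategory of $N\fin$ being computed degreewise, as in Lemma~\ref{lem:hocolims}). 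Applying $\rmap_\fin(-,\config^\loc(N))$, which carries such homotopy colimits to homotopy limits, shows this functor is a $\ms J_1$-homotopy sheaf. Assembling the three pieces through the homotopy pullback finishes the argument.

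The one thing that warrants care --- and the only candidate for a genuine obstacle --- is the interchange of the homotopy pullback defining $\rmap_{\fin}^{\partial}$ with the descent homotopy limit, together with the small bits of bookkeeping that a $\ms J_1$-cover of $M$ restricts to an open cover of $M_-$ and that passage to interiors commutes with the intersections $U_S$ (which holds because every morphism in $\man$ restricts to a diffeomorphism on boundaries). None of this is deep, so I do not anticipate real difficulty.
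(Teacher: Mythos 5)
Your proof is correct, and it fills in precisely the argument the paper leaves implicit: the authors state the proposition with a closing \qed, trusting the reader to notice (via the Example immediately preceding it) that $\config^\loc(M;k)=\config^\loc(M_-;k)$ in the particle model, so the boundary case of the local sheaf property reduces to the boundariless Proposition~\ref{prop-localconfigsheaf}, while the two $Z_M$-dependent corners of the defining homotopy pullback are objectwise contractible hence trivially sheaves. The only cosmetic remark is that you cite ``Proposition~\ref{prop-localconfigsheaf} applied to $M_-$'' for the degreewise hocolim statement about $\config^\loc(-;k)_r$, but that cosheaf statement appears in the \emph{proof} of the proposition rather than its statement; you could skip the cosheaf detour entirely and apply the statement of Proposition~\ref{prop-localconfigsheaf} directly to $M_-$ with target $\config^\loc(N_-)=\config^\loc(N)$, since a $\ms J_1$-cover of $M$ in the boundary version of $\man$ passes to a $\ms J_1$-cover of $M_-$ with matching descent diagram, as you yourself verify.
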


The following is then a slight generalization of our main theorem~\ref{thm-main}.

\begin{thm} \label{thm-mainbdry} For any integer $k\ge 1$, the commutative square
\[
	\begin{tikzpicture}[descr/.style={fill=white}, baseline=(current bounding box.base)] ]
	\matrix(m)[matrix of math nodes, row sep=2.5em, column sep=2.5em,
	text height=1.5ex, text depth=0.25ex]
	{
	T_k\emb_{\partial}(M,N) & \rmap^\partial_\finplus(\config(M;k),\config(N)) \\
	T_1\emb_{\partial}(M,N) & \rmap^\partial_\fin(\config^\loc(M;k),\config^\loc(N)) \\
	};
	\path[->,font=\scriptsize]
		(m-1-1) edge node [auto] {} (m-1-2);
	\path[->,font=\scriptsize]
		(m-2-1) edge node [auto] {} (m-2-2);
	\path[->,font=\scriptsize]
		(m-1-1) edge node [left] {} (m-2-1);
	\path[->,font=\scriptsize] 		
		(m-1-2) edge node [auto] {\textup{specialization}} (m-2-2);
	\end{tikzpicture}
\]
is homotopy cartesian. \emph{(The horizontal arrows are due to theorem~\ref{thm-configsheafbdry} and
proposition~\ref{prop-localconfigsheafbdry},
respectively.)}
\end{thm}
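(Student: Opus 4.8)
The plan is to follow the proof of Theorem~\ref{thm-main} line by line, feeding in the boundary versions of each ingredient; the only genuinely new features are the collar component of $M$ and the bookkeeping over $N\finplus$ in place of $N\fin$, and I expect both to be harmless. First I would note that, regarded as contravariant enriched functors of $M\in\man$, each of the four corners of the square is the homotopy right Kan extension of its restriction to the full subcategory of $\man$ spanned by the objects $L\times[0,1)\sqcup\uli\ell\times\RR^m$ with $\ell\le k$: for $T_k\emb_\partial(-,N)$ and $T_1\emb_\partial(-,N)$ this is embedding (resp.\ immersion) calculus with boundary \cite{BoavidaWeiss}, and for $\rmap^\partial_\finplus(\config(-;k),\config(N))$ and $\rmap^\partial_\fin(\config^\loc(-;k),\config^\loc(N))$ it is precisely the homotopy sheaf statements of Theorem~\ref{thm-configsheafbdry} and Proposition~\ref{prop-localconfigsheafbdry} (a $\ms J_k$-, resp.\ $\ms J_1$-, homotopy sheaf on $\man$ is the homotopy right Kan extension of its restriction to that subcategory). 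So it suffices to treat $M=L\times[0,1)\sqcup\uli\ell\times\RR^m$ with $\ell\le k$, where $T_k\emb_\partial(M,N)=\emb_\partial(M,N)$ and $T_1\emb_\partial(M,N)\simeq\imm_\partial(M,N)$. The case $\ell=0$, where $M=W:=L\times[0,1)$ is the bare collar, is trivial: all four corners are contractible, since $\emb_\partial(W,N)$ is a (contractible) space of collars of $\partial N$ and the homotopy pullback~(\ref{eqn-nastybdry}) defining $\rmap^\partial$, and its local analogue, collapse for the same reason. So I would fix $\ell\ge1$ from now on.

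Next, exactly as in the proof of Theorem~\ref{thm-main}, I would replace $\imm_\partial(M,N)$ by the homotopy-equivalent subspace $\imm'_\partial(M,N)$ of neat immersions restricting to embeddings on each component, and enlarge the square to a three-column diagram whose rightmost column is $\emb(\uli\ell,N_-)\to\map(\uli\ell,N_-)$ (with $N_-=N\smin\partial N$), the maps into it being evaluation on the configuration of the $\ell$ distinguished interior points of $M$. The outer rectangle is homotopy cartesian: a point of its homotopy pullback amounts to a collar of $N$ together with $\ell$ embedded discs in $N_-$ having pairwise distinct centres, and shrinking the collar width together with the disc radii deformation-retracts this onto $\emb_\partial(M,N)$ -- the one point at which the collar and the discs interact, and they interact harmlessly. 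Hence it remains to prove the right-hand square is homotopy cartesian, and for that I would first identify its lower-left corner as $\prod_{j=1}^{\ell}\rmap_\fin(\config(\RR^m;k),\config(N))$ by the same chain of reductions as in the proof of Theorem~\ref{thm-main}: the $\ms J_1$-homotopy sheaf property (Proposition~\ref{prop-localconfigsheafbdry}) applied to the cover $\{\,W\sqcup(\{j\}\times\RR^m)\,\}_{j=1}^{\ell}$ of $M$ (whose members lie in $\man$ and all of whose multiple intersections equal $W$), Lemma~\ref{lem-locisloc}, the splitting of $\config^\loc$ over a disjoint union, and the contractibility of the collar contributions $\emb_\partial(W,N)$ and $\rmap^\partial_\fin(\config^\loc(W;k),\config^\loc(N))$.

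With this identification the right-hand square becomes the literal boundary analogue of the square~(\ref{eqn-proofmain3}) in the proof of Theorem~\ref{thm-main}, and I would finish by transcribing the argument given there: over a chosen $f\in\emb(\uli\ell,N_-)$, a tubular neighbourhood $U=\coprod_{j=1}^{\ell}U_j$ of $f(\uli\ell)$ exhibits the horizontal homotopy fibre of the bottom row as $\Phi_2\simeq\prod_{j}\rmap_\fin(\config(\RR^m;k),\config(U_j))$, the canonical map $v\co\Phi_1\to\Phi_2$ from the top row has a homotopy left inverse $w$ (view an object of $\config(M;k)$ lying over $[\ell]\in N\finplus$ as an $\ell$-tuple of objects of $\config(\RR^m;k)$), and $vw$ is a homotopy equivalence because $p_j\circ vw\simeq p_j$ for the projection $p_j$ onto the $j$-th factor; throughout, the collar $W$ contributes only contractible factors and is otherwise inert. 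The main obstacle I anticipate is purely organisational: carrying the homotopy-pullback definitions~(\ref{eqn-nastybdry}) of the two $\rmap^\partial$-constructions, and the distinction between the reference categories $\finplus$ and $\fin$, faithfully through the Kan-extension reduction, the passage to $\imm'_\partial$, and the enlarged diagram, while keeping the various evaluation maps straight; the underlying geometry is identical to that of the boundariless proof.
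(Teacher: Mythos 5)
Your proposal is correct and supplies exactly the argument the paper omits: the paper states Theorem~\ref{thm-mainbdry} with a bare \qed, presenting it as a ``slight generalization'' of Theorem~\ref{thm-main}, and your line-by-line transcription of the proof of Theorem~\ref{thm-main} --- Kan extension reduction to $M=L\times[0,1)\sqcup\uli\ell\times\RR^m$, separate dispatch of $\ell=0$, passage to $\imm'_\partial$, the three-column diagram over $\emb(\uli\ell,N_-)\to\map(\uli\ell,N_-)$, and the tubular-neighbourhood fibre comparison --- is precisely the boundary analogue the authors have in mind. The organisational point you flag (carrying the $\rmap^\partial$ homotopy-pullback definitions and the $\finplus$/$\fin$ bookkeeping through the reduction) is the only real work, and you handle it correctly.
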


\begin{cor} \label{cor-mainbdry} If $\hodim(M)\le n-3$ relative to $\partial M$, then the commutative square
\[
\begin{tikzpicture}[descr/.style={fill=white}, baseline=(current bounding box.base)] ]
	\matrix(m)[matrix of math nodes, row sep=2.5em, column sep=2.5em,
	text height=1.5ex, text depth=0.25ex]
	{
	\emb_{\partial}(M,N) & \rmap^\partial_\finplus(\config(M),\config(N)) \\
	\imm_{\partial}(M,N) & \rmap^\partial_\fin(\config^\loc(M),\config^\loc(N)) \\
	};
	\path[->,font=\scriptsize]
		(m-1-1) edge node [auto] {} (m-1-2);
	\path[->,font=\scriptsize]
		(m-2-1) edge node [auto] {} (m-2-2);
	\path[->,font=\scriptsize]
		(m-1-1) edge node [left] {} (m-2-1);
	\path[->,font=\scriptsize] 		
		(m-1-2) edge node [auto] {\textup{specialization}} (m-2-2);
	\end{tikzpicture}
\]
is homotopy cartesian.
\end{cor}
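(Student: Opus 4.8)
The plan is to deduce this from Theorem~\ref{thm-mainbdry} in exactly the way Corollary~\ref{cor-main} was deduced from Theorem~\ref{thm-main}. First I would observe that the lower-left corner causes no trouble: by the relative Smale--Hirsch immersion theorem, $\imm_\partial(M,N)$ is weakly equivalent to $T_1\emb_\partial(M,N)$, which is the term already appearing in Theorem~\ref{thm-mainbdry}.

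Next I would pass to the homotopy limit over $k$. For each $k\ge 1$, Theorem~\ref{thm-mainbdry} furnishes a homotopy cartesian square, and these squares are natural in $k$: the left-hand vertical terms are compatible with the structure maps $T_k\emb_\partial(M,N)\to T_{k-1}\emb_\partial(M,N)$ of the Taylor tower, while the right-hand terms are compatible with the restriction maps $\config(M;k)\to\config(M;k-1)$ and their local analogues. Since a homotopy limit of a tower of homotopy cartesian squares is again homotopy cartesian, the square obtained by applying $\holim_k$ is homotopy cartesian. The lower-left corner $T_1\emb_\partial(M,N)\simeq\imm_\partial(M,N)$ does not depend on $k$. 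The upper-right and lower-right corners become $\rmap^\partial_{\finplus}(\config(M),\config(N))$ and $\rmap^\partial_{\fin}(\config^\loc(M),\config^\loc(N))$ respectively, using that $\config(M)=\colim_k\config(M;k)$ and $\config^\loc(M)=\colim_k\config^\loc(M;k)$ are filtered colimits of simplicial spaces along degreewise cofibrations, so that $\rmap(-,Y)$ sends them to the corresponding homotopy limits over $k$ (a standard property of derived mapping spaces). Finally, $\holim_k T_k\emb_\partial(M,N)$ is, by definition, $T_\infty\emb_\partial(M,N)$. In short, $\holim_k$ applied to the squares of Theorem~\ref{thm-mainbdry} yields a homotopy cartesian square which agrees with the one in the statement of the corollary except that its upper-left corner is $T_\infty\emb_\partial(M,N)$ rather than $\emb_\partial(M,N)$.

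It then remains to invoke convergence of manifold calculus. When $\hodim(M)\le n-3$, the canonical map $\emb_\partial(M,N)\to T_\infty\emb_\partial(M,N)$ is a weak equivalence, by the Goodwillie--Klein convergence theorem for spaces of embeddings of manifolds with boundary in the relevant handle-codimension range \cite{GoWeEmb, GK1, GK2}. Substituting $\emb_\partial(M,N)$ for $T_\infty\emb_\partial(M,N)$ in the homotopy cartesian square obtained above produces exactly the square of the corollary.

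I do not anticipate a real obstacle: as in the closed case, the statement is a formal consequence of Theorem~\ref{thm-mainbdry}. The only non-formal input is the convergence theorem used in the last step, which is cited rather than proved here; the remaining ingredients --- stability of homotopy cartesian squares under homotopy limits, the identification $\imm_\partial(M,N)\simeq T_1\emb_\partial(M,N)$, and the description of $\config(M)$ as the filtered colimit of its cardinality truncations --- are standard or already established in the text.
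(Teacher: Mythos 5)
Your proposal is correct and follows the same route as the paper: the paper's proof of Corollary~\ref{cor-mainbdry} is left implicit (just a \qed), and the proof of its boundaryless analogue Corollary~\ref{cor-main} is exactly the two-step argument you give---take the homotopy inverse limit over $k$ in Theorem~\ref{thm-mainbdry} and then invoke convergence of manifold calculus in handle codimension $\geq 3$. Your additional remarks about $\imm_\partial(M,N)\simeq T_1\emb_\partial(M,N)$, about $\config(M)$ being the (homotopy) colimit of the $\config(M;k)$ along degreewise cofibrations, and about $\rmap(-,Y)$ converting such a colimit into a homotopy limit are the standard justifications the paper leaves unspoken, and they are sound.
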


\begin{rem} \label{rem-imap} {\rm The map
\[
\emb_\partial(M,N) \lra \rmap_{\finplus}^{\partial}(\config(M),\config(N))
\]
of corollary~\ref{cor-mainbdry}
factors through the space of injective continuous maps from $M$ to $N$ satisfying the usual boundary condition. The space of injective
continuous maps from $M$ to $N$ should not be confused with the space of locally flat topological embeddings $M\to N$. For example, the
space of continuous injective maps from $D^1$ to $D^3$ satisfying the standard boundary conditions is contractible. By contrast, the
space of locally flat topological embeddings $D^1\to D^3$ satisfying the standard boundary conditions
has many connected components, the study of which is called knot theory! However, in the high codimension situation the two agree up
to weak equivalence (see \cite{Lashof}, \cite{Sakai}).

In the case where $L=\emptyset$, using the particle models
for $\config(M)$ and $\config(N)$ the claimed factorization is obvious.
In the case of nonempty $L$, with our current definition of $\rmap_{\finplus}^{\partial}(\config(M),\config(N))$ some
tweaking is needed, which we leave to the reader.
}
\end{rem}

\section{Operads and fiberwise complete Segal spaces}  \label{sec-oper}
\subsection{Preview}
Let $V$ and $W$ be finite dimensional real vector spaces.
One of the main outcomes of this section is an identification (weak homotopy equivalence)
of
\[ \rmap_\fin(\config(V),\config(W)) \]
with the space of derived morphisms of operads from
the operad of little disks in $V$ to the operad of little disks in $W$. An immediate corollary is that
\[ \rmap_\fin(\config^\loc(M),\config^\loc(N)) \]
can be identified with the space
of sections of a fibration $E\to M$ defined as follows. The fiber over $x\in M$ is the space of pairs $(y,h)$ where $y\in N$
and $h$ is a derived morphism of operads, from the operad of little disks in $T_xM$ to the operad of little disks in $T_yN$.
This identification is intended to make theorem~\ref{thm-main} and corollary~\ref{cor-main}
more applicable. There is a variant for
the case with boundary.

\subsection{Operads and categories}
Let $P$ be an operad in the symmetric monoidal category of spaces. In this chapter, by the category of spaces we mean
the category of simplicial sets. Any other of the usual convenient categories of topological spaces would be equally valid, and we leave that routine translation to the interested reader.

We think of this in the following terms: $P$ is a functor from the category of finite sets and bijections to
spaces, and for every map $f:T\to S$ of finite sets there is an operation
\[  \lambda_f\co P(S)\times\prod_{i\in S} P(T_i) \lra P(T) \]
where $T_i=f^{-1}(i)\subset T$. Also $P(S)$ contains a distinguished unit element when $S$ is a singleton.
Sensible naturality, associativity and unital properties are satisfied. Note in particular that any
permutation $f\co S\to S$ induces a map $P(S)\to P(S)$ in two ways: firstly because
$P$ is a functor from the category of finite sets and bijections to spaces, and secondly by
\[ P(S)\ni x\mapsto \lambda_f(x,1,1,\dots,1)\in P(S)~. \]\
for $x\in P(S)$. These two maps agree as per definition. \newline
What we have described is also called a \emph{plain} operad in the category of spaces, in contradistinction
to \emph{colored operads}.

\begin{expl} \label{expl-opertomon} {\rm Let $P$ be an operad in spaces, as above, and let
$S$ be a singleton. The space $P(S)$ is a topological monoid with unit.
Indeed, let $f\co S\to S$ be the identity. This determines an operation
$\lambda_f\co P(S) \times P(S) \to P(S)$
which amounts to an associative multiplication on $P(S)$. }
\end{expl}

\medskip
To the operad $P$ we associate a topological category $\ms C_P$ (category object in the category of spaces), and a
functor $\nu\co \ms C_P\to \fin$, as follows. The space of objects is
\[  \coprod_{k\ge 0} P(\uli k) \]
which we map to the set of objects of $\fin$ by taking all of $P(\uli k)$ to $\uli k$\,.
The space of morphisms in $\ms C_P$ lifting a morphism $f:\uli k\to \uli\ell$ in $\fin$ is
\[ P(\uli\ell)\times\prod_{i\in \uli\ell} P(f^{-1}(i)). \]
Source and target of an element in that space are determined by applying to it $\lambda_f$
and the projection to $P(\ell)$, respectively. Composition and identity morphisms are obvious.

\begin{expl} \label{expl-conoper}
{\rm Let $P$ be the operad of little $m$-disks. The simplicial spaces
$\config(\RR^m)$ and $N\ms C_P$ are related by a chain of degreewise weak equivalences over $N\fin$.
This is fairly clear if we use the multipatch model for $\config(\RR^m)$, with the
standard metric on $\RR^m$. More precisely, in the multipatch model $\config(\RR^m)$ is $N\ms B$
for a certain topological category $\ms B$. There is a forgetful functor
$\ms B\to \ms C_P$ which induces a degreewise weak equivalence $N\ms B\to N\ms C_P$. \newline
Note in passing that the forgetful functor $\config^\loc(\RR^m)\to \config(\RR^m)$ is also a
degreewise homotopy equivalence.
}
\end{expl}

\begin{rem} {\rm The standard way to associate a category to an operad $P$ is to make a PROP from it.
See \cite{Voronov} and \cite{MSS} for details. The PROP is a category with a monoidal structure.
The operad can be recovered from the associated PROP (with the monoidal
structure). We emphasize that $\ms C_P$ is not the PROP associated to $P$, but rather
the comma category \emph{over the object $\uli 1$} in the PROP associated to $P$. The point that we are
trying to make in this section is that passing from $P$ to $\ms C_P$ is nevertheless in many cases a rather faithful process.
But there are some cases where it loses essential information, and here is one such case. \newline
Let $K$ be a topological monoid with unit. There is an operad $P=P_K$ in spaces such that $P(S)=K$ when $|S|=1$ and
$P(S)=\emptyset$ in all other cases. Set it up
in such a way that the multiplication on $K$ arising from the operad structure on $P$ agrees with
the prescribed multiplication on $K$. (The functor $K\mapsto P_K$ is left adjoint to the functor
from operads to monoids that we saw in example~\ref{expl-opertomon}.) For this $P$, the space of objects of $\ms C_P$
is identified with $K$. The map
\[  (\textrm{source,target})\co \mor(\ms C_P) \lra \ob(\ms C_P)\times\ob(\ms C_P) \]
turns out to be the map $K\times K\lra K\times K$ given by $(a,b)\mapsto (ab,a)$. If $K$ happens to be a
topological group, then it is a homeomorphism. To say it informally, for arbitrary $x,y\in \ob(\ms C_P)$ there is a unique
morphism $x\to y$. Consequently, $\ms C_P$ is equivalent to a category with one object and one morphism, and it has no information about the multiplication in $K$.}
\end{rem}

\begin{lem} \label{lem-opfib} Let $P$ be an operad in spaces such that $P(\uli 1)$ is weakly contractible. Then
$N\ms C_P$ is a fiberwise complete Segal space over $N\fin$.
\end{lem}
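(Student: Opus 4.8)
The plan is to verify the two defining conditions of Definition~\ref{defn-rezkfunctorover}: that $N\ms C_P$ and $N\fin$ are Segal spaces, and that the reference functor $\nu\co N\ms C_P\to N\fin$ satisfies $(\kappa_\nu)$. That $N\fin$ satisfies $(\sigma)$ is Example~(i) of Section~\ref{sec-topcat}. For $N\ms C_P$, I would note that over the component indexed by a morphism $f\co\uli k\to\uli\ell$ the target map $\mor(\ms C_P)\to\ob(\ms C_P)$ is the projection $P(\uli\ell)\times\prod_{i\in\uli\ell}P(f^{-1}(i))\to P(\uli\ell)$, hence a fibration (harmlessly assuming each $P(S)$ is fibrant); so $N\ms C_P$ satisfies $(\sigma)$ by Example~(iii) of the same section.

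The substantive step is to identify $(N\ms C_P)_1^{\heq}$ with the part of $(N\ms C_P)_1$ lying over $(N\fin)_1^{\heq}$; and $(N\fin)_1^{\heq}$, being the invertible-component part of a discrete nerve, is just the set of bijections in $\fin$. One inclusion is formal: because $\nu$ is a functor to a \emph{discrete} category, any $2$-simplex witnessing homotopy left (resp.\ right) invertibility of a morphism of $\ms C_P$ maps under $\nu$ to a $2$-simplex of $N\fin$ exhibiting the morphism $f$ it covers as left (resp.\ right) invertible in $\fin$; so homotopy invertibility forces $f$ to be a bijection (and conversely genuine inverses in $\fin$ make every bijection homotopy invertible in $N\fin$). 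For the reverse inclusion I would use the hypothesis that $P(\uli 1)$ is weakly contractible: if $f\co\uli k\to\uli k$ is a bijection then each $f^{-1}(i)$ is a singleton, so $\prod_i P(f^{-1}(i))$ is weakly contractible, and every path component of the space $P(\uli k)\times\prod_i P(f^{-1}(i))$ of morphisms over $f$ contains a point of the form $(p,1,\dots,1)$. By the operad identity $\lambda_f(p,1,\dots,1)=f\cdot p$ (the two permutation actions of $f$ on $P(\uli k)$ agree), such a morphism is an honest isomorphism of $\ms C_P$, with inverse the analogous morphism over $f^{-1}$; hence it is homotopy invertible, and since homotopy invertibility is a condition on $\pi_0(N\ms C_P)_1$ alone, so is every morphism over a bijection.

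Granting this, $(\kappa_\nu)$ reduces to bookkeeping. Since $(N\fin)_0$ is discrete, the square in $(\kappa_\nu)$ decomposes as a coproduct over the objects $\uli k$ of $\fin$, and it is homotopy cartesian iff, for each $k$, the induced map on $\uli k$-summands --- namely from $\coprod_{f\in\Sigma_k}\bigl(P(\uli k)\times\prod_i P(f^{-1}(i))\bigr)$, the part of $(N\ms C_P)_1^{\heq}$ with target in $P(\uli k)$, to $\Sigma_k\times P(\uli k)$, the corresponding summand of the (strict, hence homotopy) fiber product $(N\fin)_1^{\heq}\times_{(N\fin)_0}(N\ms C_P)_0$ --- is a weak equivalence; and on the summand indexed by $f$ this map is the projection collapsing the weakly contractible factor $\prod_i P(f^{-1}(i))$, so indeed a weak equivalence. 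The only genuine work lies in the middle paragraph, and there the one point requiring care is verifying that the distinguished morphisms $(p,1,\dots,1)$ over a bijection really are invertible in $\ms C_P$; the rest is formal manipulation with the discreteness of $\fin$ and the contractibility of $P(\uli 1)$.
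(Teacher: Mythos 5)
Your argument is correct and follows essentially the same route as the paper, whose proof is simply the assertion that, because $P(\uli 1)$ is weakly contractible, a morphism of $\ms C_P$ is homotopy invertible if and only if the underlying morphism in $\fin$ is a bijection, followed by the observation that the resulting square collapses a contractible factor. You make the same identification but supply the details the paper elides: the Segal condition on $N\ms C_P$, the formal direction using discreteness of $N\fin$, and the verification that $(p,1,\dots,1)$ over a bijection is an honest isomorphism so that homotopy invertibility (a $\pi_0$-condition) propagates across the contractible component.
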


\begin{proof} Since $P(\uli 1)$ is weakly contractible, a morphism $f\co x\to y$ in $\ms C_P$ is homotopy invertible if and only
if it determines an invertible morphism in $\fin$, in which case $\nu(x)=\nu(y)$. Hence the
space of homotopy invertible morphisms in $\ms C_P$ is
\[ \coprod_{\ell\ge 0}\, \prod_{\sigma\in\Sigma_\ell} P(\uli\ell)\times\prod_{i\in \ell} P(\uli 1) \]
which is weakly homotopy equivalent to
\[ \coprod_{\ell\ge 0}\, \prod_{\sigma\in\Sigma_\ell} P(\uli\ell). \]
Therefore condition ($\kappa_{\rel}$) is satisfied. \end{proof}

\bigskip
The mapping space $\map(P,Q)$ between two operads $P$ and $Q$ has $k$-simplices given by operad maps
\[  P\lra Q^{\Delta[k]} \]
where $Q^{\Delta[k]}$ is the operad defined by $Q^{\Delta[k]}(S)=\map(\Delta[k],Q(S))$ for a finite set $S$,
and $\map(\Delta[k],Q(S))$ is the space whose set of $\ell$-simplices is the set of maps of spaces
$\Delta[\ell]\times\Delta[k]\to Q(S)$.

By Berger and Moerdijk \cite[Expl~3.3.1]{BergerMoerdijk2003} there is a model structure on the category of operads in spaces in which a morphism $f\co P\to Q$ is a weak equivalence, or a fibration, if it is degreewise a weak equivalence, respectively fibration. This model structure is simplicial with mapping spaces as above, and so the derived mapping space $\rmap(P,Q)$ is the derived variant of $\map(P,Q)$ (see Appendix \ref{section-Rmap}).

\medskip
Let $P$ and $Q$ be operads in the category of spaces. We assume that $P(\uli 1)$ and $Q(\uli 1)$ are weakly contractible. For
simplicity we also assume that $Q$ is fibrant (which means that each of the spaces $Q(S)$ is fibrant).
The functor which to an operad $P$ associates the simplicial space $N\sC_P$ over $\fin$ preserves weak equivalences and so induces a map
\begin{equation} \label{eqn-passage}
\rmap(P,Q) \to \rmap_\fin(N\ms C_P,N\ms C_Q)
\end{equation}
Our main goal in the remainder of this section is to show:

\begin{thm} \label{thm-passage} The map~{\rm(\ref{eqn-passage})} is a weak homotopy
equivalence if $P(\uli 0)$, $P(\uli 1)$, $Q(\uli 0)$ and $Q(\uli 1)$ are weakly contractible.
\end{thm}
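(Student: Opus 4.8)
The plan is to exhibit both the source and the target of~(\ref{eqn-passage}) as models for the same space of homotopy coherent operad maps from $P$ to $Q$, compatibly with the natural comparison map, so that the theorem follows essentially by inspection. \emph{First, the choice of models.} By Lemma~\ref{lem-opfib} both $N\ms C_P$ and $N\ms C_Q$ are fiberwise complete Segal spaces over $N\fin$; since $Q$ is fibrant, $N\ms C_Q$ is fibrant in the fiberwise complete Segal model structure of Theorem~\ref{thm-fiberwisecpl}, so $\rmap_\fin(N\ms C_P,N\ms C_Q)$ is the simplicial mapping space over $N\fin$ out of a cofibrant replacement of $N\ms C_P$. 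On the operadic side, by the Berger--Moerdijk model structure \cite{BergerMoerdijk2003} we have $\rmap(P,Q)\simeq\map(P',Q)$ for a cofibrant operad $P'\xrightarrow{\simeq}P$, and since $P\mapsto N\ms C_P$ preserves weak equivalences, $N\ms C_{P'}\to N\ms C_P$ is a levelwise equivalence. I would take $P'$ of Boardman--Vogt ($W$-construction) type, so that maps $P'\to Q$ are given by an explicit tree-indexed totalization and so that $N\ms C_{P'}$ is obtained from $N\fin$ by attaching cells indexed by the generating operations of $P'$ --- in particular a legitimate cofibrant model for $N\ms C_P$ over $N\fin$.

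\emph{Second, the structural analysis, which is the core.} The task is to compare $\map(P',Q)$ with the mapping space over $N\fin$ from $N\ms C_{P'}$ to $N\ms C_Q$. Here one uses two facts. First, every morphism of $\fin$ factors into a bijection, a surjection, and an injection, and $\fin$ is generated under composition by bijections together with elementary surjections and injections (equivalently, one works with the partial-composition, i.e. $\circ_i$, presentation). Second, the fibres of $N\ms C_Q\to N\fin$ over $\uli k$ are the spaces $Q(\uli k)$, and by the Segal condition along this fibration --- either directly from the definition of $\ms C_Q$, or by iterating the configuration-category version of Corollary~\ref{cor-operad} --- the spaces of morphisms of $\ms C_Q$ decompose along the fibres of maps in $\fin$. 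Combining these, a map $N\ms C_{P'}\to N\ms C_Q$ over $N\fin$ is determined, up to a contractible space of choices, by maps on fibres $P'(\uli k)\to Q(\uli k)$ together with homotopies encoding compatibility with the $\Sigma_k$-actions and with the operations $\circ_i$; this is precisely the data of a homotopy coherent operad map $P'\to Q$, and the identification is natural in $P'$ and $Q$, hence matches $\map(P',Q)$.

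\emph{Third, where the hypotheses enter.} Weak contractibility of the unary parts $P(\uli 1)$, $Q(\uli 1)$ makes morphisms over $\id_{\uli 1}$ carry no information --- this is already the mechanism behind Lemma~\ref{lem-opfib} --- so no spurious unary coherence data appears. Weak contractibility of the $0$-ary parts $P(\uli 0)$, $Q(\uli 0)$ makes the morphisms of $\ms C_P$ and $\ms C_Q$ lying over the non-bijective injections of $\fin$ --- which encode filling or forgetting a subset of the inputs of an operation by means of $0$-ary operations --- unique up to a contractible choice; consequently a map over $N\fin$ automatically respects this restriction structure and contributes nothing beyond the operadic data, and conversely requires nothing less. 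Without these hypotheses $\fin$ would see either more or less than the operad, and~(\ref{eqn-passage}) would fail.

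I expect the real work to lie in the second step: carefully matching the homotopy coherent data on the two sides. A clean way to organise the bookkeeping is to express both mapping spaces as homotopy ends (totalizations) indexed by a common category of trees --- standard for $\map(P',Q)$, and obtained on the configuration-category side by iterating the Segal decomposition together with the structure of $\fin$ --- and then to compare the two indexing diagrams termwise, the contractibility hypotheses of the third step being exactly what forces each term of the comparison to be a weak equivalence. Finally, that the resulting identification is compatible with~(\ref{eqn-passage}) is immediate, since~(\ref{eqn-passage}) is induced by the same functor $P\mapsto N\ms C_P$ that is used throughout; so the comparison map is an equivalence.
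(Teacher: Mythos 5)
Your high-level strategy --- that both sides should encode homotopy coherent operad maps $P\to Q$, so that the comparison ought to be forced --- is the right intuition, and it is the intuition behind the paper's proof too. But your ``second step,'' which you yourself flag as the core, is not a proof; it is a statement of what you hope to be true. That gap is exactly where the real work of the theorem lies, and the way you propose to close it (``express both mapping spaces as homotopy ends indexed by a common category of trees, then compare the two indexing diagrams termwise'') does not describe a comparison that actually exists termwise. The subtlety is that the category $\simp(\fin)$ which indexes the Segal-space side and the category of trees $\dend$ which indexes the operadic side are genuinely different categories, and the functor $\varphi\co\simp(\fin)\to\dend$ relating them is very far from an equivalence. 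You cannot compare them ``termwise''; you have to show that restriction along $\varphi$ is fully faithful on derived mapping spaces, and that is a homotopy Kan extension statement, not a levelwise one.

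The paper handles this differently, and in two stages you do not engage with. First, it imports the theorem of Cisinski--Moerdijk and Bergner--Hackney (Theorem~\ref{thm-CisMoe}) as a black box, identifying $\rmap(P,Q)$ with $\rmap(N_dP,N_dQ)$ in the dendroidal-space model; this replaces your appeal to a Boardman--Vogt cofibrant replacement with a reference, and is a substantial input, not something that falls out of a $W$-construction computation. Second, the observation $\varphi^*(N_dP)=N\ms C_P$ (Lemma~\ref{lem-recog}) reduces everything to showing that precomposition with $\varphi$ gives a weak equivalence $\rmap(N_dP,N_dQ)\to\rmap(\varphi^*N_dP,\varphi^*N_dQ)$ (Proposition~\ref{prop-othertree}). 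That proposition is proved by a chain of reductions (first to the leafless trees $\dendroca$, using contractibility of $Q(\uli 0)$ and $Q(\uli 1)$ in Lemma~\ref{lem-rocared}, then to Proposition~\ref{prop-baad}), followed by a genuinely delicate combinatorial argument about homotopy right Kan extension along $\psi\co\simp(\fin)\to\dendroca$, involving the poset $\ms B^T$ of independent edge-sets, Lemma~\ref{lem-diago} about regular simplicial sets, and induction on the number of nodes. The contractibility hypotheses on $P(\uli 0),P(\uli 1),Q(\uli 0),Q(\uli 1)$ do enter repeatedly, but not as a blanket ``each term is forced to be a weak equivalence''; they are used in specific reduction steps and in a product decomposition of $N_dQ$ over trees (cf.\ equation~(\ref{eqn-newproduct}) in the proof of Proposition~\ref{prop-baad}). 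Your third step correctly identifies \emph{why} the hypotheses are needed morally, but misidentifies \emph{where} they do their work. In short: the outline is sound, but the proposal does not contain a proof of the one step it acknowledges is hard, and the suggested ``termwise'' comparison oversimplifies what turns out to be a derived Kan extension computation of real difficulty.
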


\subsection{Operads and dendroidal spaces}
Recall the concept of a dendroidal space from Moerdijk-Weiss\footnote{This is Ittay Weiss.} and Cisinski-Moerdijk \cite{MoerdijkWeiss,CisinskiMoerdijk1,CisinskiMoerdijk2,CisinskiMoerdijk3}. A dendroidal
space is a contravariant functor from a certain category $\dend$ to spaces.
(Cisinski-Moerdijk write $\Omega$ where we have $\dend$.)
An object $T$ of $\dend$ is a finite nonempty set $\epsilon(T)$ with a partial order $\le$
and a distinguished subset $\lambda(T)$ of the set of maximal elements of $\epsilon(T)$, the set of \emph{leaves}, so that the following conditions are
satisfied:
\begin{itemize}
\item $\epsilon(T)$ has a minimal element (called the \emph{root});
\item for each element $e$ of $\epsilon(T)$, the set $\{y\in \epsilon(T)~|~y\le e\}$ with the restricted ordering is linearly ordered.
\end{itemize}
The elements of $\epsilon(T)$ are also called \emph{edges} of the tree $T$. The elements of $\epsilon(T)\smin\lambda(T)$
can be called vertices.
But we tend to depict the vertices $v_e$ as little bullets attached at the upper end of each non-leaf edge $e$. The
\emph{outgoing edge} of a vertex $v_e$ is $e$. In drawing such trees, we try to ensure that the outgoing edge $e$ of a
vertex $v_e$ is situated immediately below $v_e$,
and that the immediate successors of that edge are attached to $v_e$ but above it. These are called the
\emph{incoming edges} of $v_e$\,. The following is an example where $\epsilon(T)$ has 13 elements and $\lambda(T)$ has 5 elements.
\[
\xymatrix@C=20pt@R=18pt{
&& \ar@{-}[dr] & \bullet \ar@{-}[d] & \ar@{-}[dl] & \ar@{-}[d] \\
\ar@{-}[dr]_-a & \bullet \ar@{-}[d]^-b &  \bullet \ar@{-}[dl]^-c & \bullet  \ar@{-}[dr]  & \ar@{-}[d]
& \ar@{-}[dl] \bullet \\
& \bullet \ar@{-}[dr]_d &&& \bullet \ar@{-}[dll] & \\
&& \bullet \ar@{-}[d] && \\
&  &&
}
\]
Such a tree $T$ freely generates a finite (colored) operad
in the category of sets. The set of colors is $\epsilon(T)$, while each vertex $v_e$
contributes a generating operation $\omega_e$ from the set of incoming edges of $v_e$ to the outgoing edge $e$.
Typically there are other operations obtained by composing some of the $\omega_e$\,. More precisely,
given edges $s_1,\dots,s_m$ and $t$ in $T$, an $m$-ary operation $\omega$ with source
$(s_1,\dots,s_m)$ and target $t$ exists if and only if $s_i\ge t$ for $i=1,\dots,m$ and the following conditions are satisfied:
\begin{itemize}
\item[-] (independence) whenever $s_i\le s_j$ for some $i,j\in\{1,2,\dots,m\}$, then $i=j$;
\item[-] (cut property) every edge path from some leaf of $T$ to $t$ contains one of the edges $s_i$
(by independence, not more than one).
\end{itemize}
In that case the operation $\omega$ is unique. This description makes it clear how operations are to be composed.
If there is an operation $\omega_1$ with source $(s_1,\dots,s_m)$ and target $t$, and if there
is an operation $\omega_0$ with source $(r_1,\dots,r_k)$ and target $s_i$ for some $i\in\{1,2,\dots,m\}$, then
$\omega_1\circ_i\omega_0$ is the unique operation with source $(s_1,\dots,s_{i-1},r_1,\dots,r_k,s_{i+1},\dots,s_m)$
and target $t$. \newline
In our example, there are 8 vertices and the associated generating operations are $0$-ary (three instances), $1$-ary
(one instance), $2$-ary (one instance) and $3$-ary (three instances). The possible sources for operations
with target $d$
are $(a)$, $(a,b)$, $(a,c)$ and $(a,b,c)$, up to (irrelevant) permutations.
The source $(a)$ for edge $d$ corresponds to the $1$-ary operation from edge $a$ to edge $d$ obtained by substituting the $0$-ary
generating operations corresponding to $v_b$ and $v_c$ in the $3$-ary generating operation corresponding to $v_d$\,. \newline
The set of morphisms in $\dend$ from a tree $T$ to another tree $T'$ is the set of
morphisms between the corresponding operads. In particular a morphism $T\to T'$
comes with an underlying map from the set of colors $\epsilon(T)$ to the
set of colors $\epsilon(T')$. By the observations that we have just made, a morphism in $\dend$ from $T$ to $T'$ is determined
by the underlying map $\epsilon(T)\to \epsilon(T')$. See also \cite{MoerdijkWeiss}.

\begin{expl} \label{expl-corolla} {\rm For each $n\ge 0$ there is an object in $\dend$, unique up to isomorphism,
which has just one vertex and $n$ leaves. These objects are called \emph{corollas}. The following pictures show a
corolla with zero leaves and a corolla with 3 leaves:
\[
\xymatrix@C=16pt@R=16pt{        && \ar@{-}[dr] & \ar@{-}[d] & \ar@{-}[dl] \\
\bullet \ar@{-}[d]&&& \bullet \ar@{-}[d] && \\
 &&&& &&
}
\]
For an object $T$ and vertex $v_e$ with $n$ incoming edges, there exists a morphism $f_e\co S\to T$
where $S$ is a corolla with $n$ leaves, such that $f_e$ takes the root of $S$ to $e$ and the leaves of $S$
to the incoming edges of $v_e$. This morphism is unique up to composition with an automorphism of $S$.
}
\end{expl}

\begin{expl} \label{expl-treedecomp} {\rm
A plain operad $Q$ in the category of spaces
determines a dendroidal space $N_dQ$, the \emph{operadic nerve}, such that $(N_dQ)_T$~, for an object $T$ in $\dend$,
is the space of operad maps from the (multi-colored, discrete) operad freely generated by $T$ to the
plain operad $Q$. 
We note that $(N_dQ)_T$ is a point when $T$ is the tree $\eta$ with one edge and no vertices; this reflects our assumption that $Q$ is plain.
Furthermore $(N_dQ)_S\cong Q(\uli n)$ when $S$ is a corolla with $n$ leaves. For general $T$ in $\dend$
and a selection of morphisms $f_e\co S_e\to T$ from corollas as in example~\ref{expl-corolla}, one for each $e\in \epsilon(T)\smin
\lambda(T)$, the induced maps $f_e^*$ produce a homeomorphism
\[  (N_dQ)_T\lra \prod (N_dQ)_{S_e}~. \]
}
\end{expl}

\medskip
Being a functor category, the category of dendroidal spaces has (at least) two canonical model structures having degreewise
weak equivalences (either by using degreewise fibrations or degreewise cofibrations). We refer to any of these model structures as the degreewise model structure.

There is another model structure on the category of dendroidal spaces, obtained from the degreewise one by left Bousfield localization, whose fibrant objects are the dendroidal spaces $X$ which are fibrant in the degreewise model structure, satisfy $X_\eta \simeq *$ and satisfy the Segal condition: the map
\[
X_T \to \prod_{v \in T} X_{S_v} \; ,
\]
where $v$ runs over the vertices of $T$, is a weak homotopy equivalence. A morphism (natural transformation) between fibrant dendroidal spaces is a weak equivalence if it is a degreewise weak equivalence; a morphism is a cofibration if it is cofibration in the degreewise model structure.

The nerve functor $N_d$ preserves weak equivalences, so it induces a canonical map
\begin{equation}\label{eqn-nerveOp}
\rmap(P,Q) \to \rmap(N_dP,N_dQ)
\end{equation}
for plain operads $P$ and $Q$. For the target derived mapping space, it does not matter which of the above model
structures is used.

\begin{thm}\emph{\cite[Thm 8.15]{CisinskiMoerdijk3}, \cite{CisinskiMoerdijk2}, \cite[Thm 1.1]{BergnerHackney}}\label{thm-CisMoe}
The map \emph{(\ref{eqn-nerveOp})} is a weak homotopy equivalence.
\end{thm}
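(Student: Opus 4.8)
The plan is to recognise $N_d$ as a functor modelling the right adjoint of a Quillen equivalence and then to deduce the statement formally. A right Quillen equivalence, once derived, is one half of an equivalence of the associated simplicial localisations, and any such equivalence is in particular homotopically fully faithful; since moreover $N_d$ preserves \emph{all} weak equivalences --- so that it already computes its own right-derived functor --- this identifies the comparison map~(\ref{eqn-nerveOp}) with an equivalence, for arbitrary plain $P$ and $Q$.

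The first step is to set up the adjunction $\tau_d\dashv N_d$, with $\tau_d$ the left adjoint (from dendroidal spaces to operads), realised as the left Kan extension along the functor that sends a tree $T$ to the discrete multicoloured operad it freely generates; since $N_d$ is by construction the nerve attached to that functor, this left adjoint exists. One then checks that $\tau_d\dashv N_d$ is a Quillen adjunction from the Berger--Moerdijk model structure on operads \cite{BergerMoerdijk2003} to the complete Segal dendroidal space model structure: $N_d$ carries (trivial) fibrations of operads to levelwise (trivial) Reedy fibrations, and --- by the tree-decomposition homeomorphism of Example~\ref{expl-treedecomp} --- the dendroidal space $N_dQ$ is a \emph{strict} Segal dendroidal space, which, being reduced, is automatically complete; hence $N_dQ$ is fibrant as soon as $Q$ is. That same homeomorphism exhibits $(N_dQ)_T$ as a finite product of the spaces $Q(\uli n)$, which is why $N_d$ preserves all weak equivalences (and for the target of~(\ref{eqn-nerveOp}) it is then immaterial, as remarked above, which of the dendroidal model structures one uses).

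The hard step is to promote this Quillen adjunction to a Quillen equivalence, and this is precisely the content of \cite{CisinskiMoerdijk2,CisinskiMoerdijk3} and \cite{BergnerHackney}. I would follow the route taken there, factoring the comparison through dendroidal sets: on the one hand, dendroidal sets with the operadic model structure are Quillen equivalent to simplicial operads, which is \cite[Thm~8.15]{CisinskiMoerdijk3} --- this is where the genuine labour lies, resting on a close analysis of normal monomorphisms and the skeletal (cellular) filtration of dendroidal sets together with an identification of a derived version of $\tau_d N_d$ with the Boardman--Vogt $W$-resolution; on the other hand, dendroidal sets are Quillen equivalent to complete Segal dendroidal spaces by a Joyal--Tierney type argument \cite{CisinskiMoerdijk2}, with \cite{BergnerHackney} supplying the comparisons among the Segal-type presentations. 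Composing the chain gives the Quillen equivalence between operads in spaces and complete Segal dendroidal spaces, after which the first paragraph applies. The principal obstacle throughout is this Quillen-equivalence step --- concretely, the homotopical control of cofibrant dendroidal objects and the behaviour of $\tau_d$ on them --- everything surrounding it being formal.
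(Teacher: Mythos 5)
The paper does not prove this theorem --- it states it as a direct citation of Cisinski--Moerdijk and Bergner--Hackney --- and your sketch accurately traces the route those citations take: a zigzag of Quillen equivalences through dendroidal sets whose composite derived functor is realised by $N_d$, combined with the observation that $N_d$ preserves all levelwise weak equivalences (by the product formula of Example~\ref{expl-treedecomp}) and hence computes its own right-derived functor, so that the comparison map on derived mapping spaces is an equivalence. Two small precisions: the cited literature gives a chain of Quillen equivalences rather than a single Quillen adjunction $\tau_d\dashv N_d$ between the two end categories, but since all one needs is that $N_d$ realises the equivalence of underlying $\infty$-categories this is immaterial; and the dendroidal model structures in play are the projective/levelwise and Segal ones of the paper, not Reedy, with ``complete Segal'' reducing to ``Segal'' because $X_\eta$ is a point.
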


\subsection{Dendroidal spaces and simplicial spaces over $\fin$} \label{subsec-dendroidal}
There is a close connection between dendroidal spaces and simplicial spaces with a
simplicial map to $N\fin$. We begin with the observation
that a simplicial space $X$ with a
simplicial map to a simplicial set $Z$ is the same thing as a contravariant functor
from $\simp(Z)$ to spaces. Here $\simp(Z)$ is a Grothendieck construction; it
has objects $(m,y)$ with $y\in Z_m$\,, and a morphism from $(m,y)$ to $(n,z)$ is a monotone map
$f\co [m]\to [n]$ such that $f^*z=y$.
We apply the observation with
$Z=N\fin$, writing $\simp(\fin)$ instead of $\simp(N\fin)$.
There is a functor
\[ \varphi\co\simp(\fin) \to \dend \] defined as follows. To an object $(p,S_*)$ of $\simp(\fin)$ where
\[ S_*=(S_0 \leftarrow S_1 \leftarrow S_2 \leftarrow \cdots \leftarrow S_p) \]
we associate the tree $T$ where $\epsilon(T)$ is the disjoint union of the $S_i$ and an
additional element $r$, with $\lambda(T)$ corresponding to $S_p$\,. The partial order on $\epsilon(T)$ is the
obvious one, where $r$ is the minimal element and $x\in S_i$ is $\le y\in S_j$ if
$i\le j$ and $y$ is in the preimage of $x$ under the composite map $S_j\to S_i$ in the string above. \newline
Composition with $\varphi\co\simp(\fin)\to \dend$
is again a functor $\varphi^*$ which takes us from dendroidal spaces, alias functors from $\dend^\op$ to spaces,
to simplicial spaces over $N\fin$, alias functors from $\simp(\fin)^\op$ to spaces.

\begin{lem} \label{lem-recog} For a plain operad $P$ in the category of spaces,
$\varphi^*(N_d P)$ is exactly $N\ms C_P$ as a simplicial space over $N\fin$. 
\end{lem}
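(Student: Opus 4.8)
The plan is to compute both sides of the claimed identity levelwise, exhibit a canonical homeomorphism between them, and then check that it intertwines the simplicial structure maps and the reference maps to $N\fin$; naturality in $P$ will be visible throughout.

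First I would fix an object $(p,S_*)$ of $\simp(\fin)$, with $S_* = (S_0 \xleftarrow{g_1} S_1 \xleftarrow{g_2} \cdots \xleftarrow{g_p} S_p)$, and unwind $\varphi^*(N_dP)$ there. The tree $T = \varphi(p,S_*)$ has edge set $\{r\}\sqcup S_0\sqcup\cdots\sqcup S_p$ and leaves $S_p$, so its vertices are $v_r$, with incoming edges the elements of $S_0$, and, for each $i\in S_{j-1}$ with $1\le j\le p$, a vertex $v_i$ with incoming edges the elements of $g_j^{-1}(i)\subseteq S_j$. The decomposition of Example~\ref{expl-treedecomp} applied to $N_dP$ then yields a homeomorphism
\[
(N_dP)_T \;\cong\; P(S_0)\times\prod_{j=1}^{p}\ \prod_{i\in S_{j-1}} P\bigl(g_j^{-1}(i)\bigr),
\]
the first factor recording the value of an operad map (out of the free operad on $T$) on the generating operation $\omega_r$, and the $(j,i)$-factor its value on $\omega_i$.

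Next I would unwind $N\ms C_P$: the fibre of $(N\ms C_P)_p\to (N\fin)_p$ over $S_*$ is the space of composable strings $x_0\xleftarrow{\phi_1}x_1\xleftarrow{\phi_2}\cdots\xleftarrow{\phi_p}x_p$ in $\ms C_P$ with $\phi_j$ over $g_j$. By the definition of $\ms C_P$, such a $\phi_j$ is a pair $\bigl(\xi_j,(\eta^j_i)_{i\in S_{j-1}}\bigr)$ with target $\xi_j\in P(S_{j-1})$ and $\eta^j_i\in P(g_j^{-1}(i))$, and with source $\lambda_{g_j}\bigl(\xi_j,(\eta^j_i)_i\bigr)\in P(S_j)$; composability forces $\xi_{j+1}=\lambda_{g_j}(\xi_j,(\eta^j_i)_i)$ and $x_0=\xi_1$. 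Hence the whole string is freely and continuously determined by $\xi_1\in P(S_0)$ together with the tuples $(\eta^j_i)$, $1\le j\le p$, $i\in S_{j-1}$, and the fibre over $S_*$ is homeomorphic to exactly the product above, $\xi_1$ matching the $P(S_0)$-factor and $\eta^j_i$ the $(j,i)$-factor. This gives the levelwise isomorphism $N\ms C_P\cong\varphi^*(N_dP)$; it is compatible with the projections to $(N\fin)_p$ since both sides sit over $(p,S_*)$ by construction, and it is plainly natural in $P$.

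It remains to see that the isomorphism is natural in $(p,S_*)$, i.e.\ intertwines the maps induced by morphisms of $\simp(\fin)$; since these are generated by degeneracies and faces it suffices to treat these. A degeneracy inserts an identity of $\ms C_P$ on one side, hence the operadic unit of $P$ on the other, matching the effect of the tree morphism that $\varphi$ assigns to it. The outer face $d_0$ forgets $x_0$, which on the product replaces $(\xi_1,(\eta^1_i))$ by its graft $\lambda_{g_1}(\xi_1,(\eta^1_i))=x_1$; this is exactly the effect of $N_dP$ on $\varphi(d_0)$, whose only non-obvious feature is that it sends the root generating operation of the smaller tree to the grafting of $\omega_r$ with the $\omega_i$, $i\in S_0$, in $T$. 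The outer face $d_p$ forgets $x_p$ and the tuple $(\eta^p_i)$, matching the subtree inclusion that deletes the leaf level $S_p$. The step needing honest bookkeeping is an inner face $d_j$ ($0<j<p$): on $N\ms C_P$ it composes $\phi_j$ with $\phi_{j+1}$, which grafts $\eta^j_i$ with $(\eta^{j+1}_k)_{k\in g_j^{-1}(i)}$ for each $i\in S_{j-1}$, whereas $\varphi(d_j)$ merges, over each such $i$, the vertex $v_i$ with the vertices above $g_j^{-1}(i)$, so that $N_dP$ applied to it performs precisely this grafting; equating the two answers is the defining compatibility of the maps $\lambda_f$ with operadic composition, together with associativity, applied to the nested preimages $g_{j+1}^{-1}(g_j^{-1}(i))$. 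I expect this inner-face identification to be the only genuinely fiddly point; everything else is a direct comparison of the two definitions.
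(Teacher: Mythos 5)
Your unwinding is correct and is precisely the verification the paper leaves implicit (the lemma is stated with an immediate \qed, treating it as a direct inspection of the two definitions). Your levelwise identification of the fibre over a simplex of $N\fin$ with $P(S_0)\times\prod_{j,i}P(g_j^{-1}(i))$, and the check that inner faces correspond to operadic grafting, are exactly the contents of that inspection.
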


By dint of this lemma and theorem~\ref{thm-CisMoe}, we have reduced theorem~\ref{thm-passage} to the
following proposition.

\begin{prop} \label{prop-othertree} If $P$ and $Q$ are operads in spaces such that $P(\uli 0)$, $P(\uli 1)$, $Q(\uli 0)$
and $Q(\uli 1)$ are weakly contractible, then the map
\[ \rmap(N_d P,N_d Q) \lra \rmap(\varphi^*(N_d P),\varphi^*(N_d Q)) \]
given by composition with $\varphi$ is a weak equivalence.
\end{prop}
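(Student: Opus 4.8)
The plan is to exploit the adjunction provided by left Kan extension along $\varphi$. The functor $\varphi^*$ is restriction along $\varphi\co\simp(\fin)\to\dend$, so it has a left adjoint $\varphi_!$; write $\hLK{\varphi}$ for the derived functor. First I would check that $\varphi^*$ carries a Segal dendroidal space to a simplicial space over $N\fin$ satisfying $(\sigma)$: the tree $\varphi(S_0\leftarrow\cdots\leftarrow S_p)$ is a graft of corollas, one for each of its vertices (the vertices at level $i$ are indexed by $S_i$ and have arities given by the fibres of $S_{i+1}\to S_i$), and substituting this decomposition, together with the analogous ones for the shorter trees $\varphi(S_i\leftarrow S_{i+1})$, into the dendroidal Segal condition recovers the spine condition over $N\fin$. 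Hence $\varphi^*$ descends to the localized homotopy theories --- dendroidal $\infty$-operads on the one side, Segal spaces over $N\fin$ on the other --- and retains a left adjoint there. Since $\varphi^*(N_d Q)=N\ms C_Q$ is in addition fiberwise complete over $N\fin$ by Lemma~\ref{lem-opfib} (this is where the hypothesis $Q(\uli 1)\simeq\ast$ is used), the derived mapping space into it is insensitive to the remaining localization, and the adjunction gives
\[
\rmap\big(\varphi^*(N_d P),\varphi^*(N_d Q)\big)~\simeq~\rmap\big(\hLK{\varphi}\,\varphi^*(N_d P),\;N_d Q\big),
\]
under which the map of the Proposition is identified with precomposition by the derived counit
\[
\varepsilon\co\hLK{\varphi}\,\varphi^*(N_d P)\lra N_d P .
\]
It therefore suffices to prove that $\varepsilon$ is a weak equivalence of dendroidal $\infty$-operads whenever $P(\uli 0)$ and $P(\uli 1)$ are weakly contractible; no further assumption on $Q$ is needed, the hypotheses in the statement being phrased symmetrically only for uniformity.

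Next I would reduce this to a statement about corollas. Both sides of $\varepsilon$ take the value $\ast$ on $\eta$, and a map of Segal dendroidal spaces is a weak equivalence as soon as it is one on every corolla, via the product decomposition $X_T\simeq\prod_v X_{S_v}$ (one also checks that $\hLK{\varphi}\,\varphi^*(N_d P)$ is again Segal dendroidal). On the corolla $C_n$ with $n$ leaves the target is $(N_d P)_{C_n}=P(\uli n)$ by Example~\ref{expl-treedecomp}, while the homotopy left Kan extension formula gives
\[
\big(\hLK{\varphi}\,\varphi^*(N_d P)\big)_{C_n}~\simeq~\hocolimsub{(s,\;C_n\to\varphi(s))\;\in\;C_n\downarrow\varphi}\;(N\ms C_P)_s ,
\]
a homotopy colimit over the comma category $C_n\downarrow\varphi$. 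So the whole problem comes down to identifying this homotopy colimit with $P(\uli n)$.

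This last identification is the crux, and the point where the hypotheses are genuinely used. An object of $C_n\downarrow\varphi$ is a string $S_0\leftarrow\cdots\leftarrow S_p$ of finite sets, equipped with a collapse of its associated tree onto the single corolla $C_n$; the freedom in such data consists of factoring maps of finite sets through larger sets --- which on the tree side inserts unary vertices --- and of adjoining elements with empty preimage --- which inserts nullary vertices. Under $N\ms C_P$ a unary vertex contributes a factor $P(\uli 1)$, and a nullary vertex a factor $P(\uli 0)$, to the relevant space, as one reads off from the description of the morphism spaces of $\ms C_P$; this is exactly the mechanism by which, in the Remark on $P=P_K$, the category $\ms C_P$ discards the monoid $P(\uli 1)$. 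When $P(\uli 0)$ and $P(\uli 1)$ are weakly contractible these sub-diagrams become homotopy constant, and I would then exhibit a final subcategory of $C_n\downarrow\varphi$ --- for instance the full subcategory on the tautological $0$-simplex $\uli n$ of $\simp(\fin)$, whose $\varphi$-image is $C_n$ --- and bound the contribution of the discarded objects by means of the contractibility of $P(\uli 0)$ and $P(\uli 1)$, so that the homotopy colimit collapses onto the single value $P(\uli n)$. Carrying out this cofinality argument --- describing the comma categories $C_n\downarrow\varphi$ concretely enough to control the homotopy colimit --- is the main obstacle of the proof; everything outside it is formal once the relevant model structures (or $\infty$-categories) are in place.
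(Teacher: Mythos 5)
Your proposal takes a formally dual route to the paper's. You work with the homotopy \emph{left} Kan extension $\hLK{\varphi}$ along $\varphi\co\simp(\fin)\to\dend$ and reduce the Proposition, via the lemma on derived adjunctions (lemma~\ref{lem:LK1}), to showing that the derived \emph{counit} $\hLK{\varphi}\varphi^*(N_dP)\to N_dP$ is a local weak equivalence, using the hypotheses on $P$. The paper instead reduces to the \emph{unit} of the derived adjunction on the $Q$-side: it first replaces $\dend$ by the leafless subcategory $\dendroca$ (lemma~\ref{lem-rocared}, which is where the contractibility of $Q(\uli 0)$, $Q(\uli 1)$ enters), then, after the $\beta/\psi$ manipulation, proves that $N_d^{rc}Q\to(\hRK{\psi}\psi^*)(N_d^{rc}Q)$ is a levelwise weak equivalence (proposition~\ref{prop-baad}). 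Both sides of the adjunction are legitimate in principle, and your opening reductions (using lemma~\ref{lem-opfib} to handle the localization, identifying the map with precomposition by the counit) are correct.

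There are, however, two genuine gaps. First, your reduction to corollas silently requires $\hLK{\varphi}\varphi^*(N_dP)$ to be a Segal dendroidal space, a claim you note ``one also checks'' but do not prove. This is not automatic: the Segal condition is a homotopy \emph{limit} condition, while $\hLK{\varphi}$ is a homotopy \emph{colimit} construction, and the two do not commute in general. Either you verify Segal-ness (which is itself roughly as hard as the Proposition), or you must check that $\varepsilon$ is a \emph{levelwise} equivalence on \emph{all} trees $T$, not just corollas. The paper sidesteps this entirely by working levelwise and inducting on the number of nodes of $T$.

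Second, and more seriously, the identification of $\bigl(\hLK{\varphi}\varphi^*(N_dP)\bigr)_{C_n}$ with $P(\uli n)$ is left entirely as a sketch, which you yourself flag as ``the main obstacle.'' This is where essentially all of the paper's technical content lives: the poset $\ms B^T$ of independent edge sets (definition~\ref{defn-blockers}), the product-decomposition lemma~\ref{lem-diago} on homotopy limits over products of simplex categories, lemma~\ref{lem-stem}, lemma~\ref{lem-orderinfl}, and the inductive argument in the proof of proposition~\ref{prop-baad}. Your proposed mechanism --- exhibit the one-object full subcategory on the tautological $0$-simplex $\uli n$ and argue cofinality, using contractibility of $P(\uli 0)$, $P(\uli 1)$ to ``bound the contribution'' of the rest --- is not a pure cofinality argument: the comma categories over that single object do not have contractible nerves for free, and the contractibility hypotheses must be interleaved with the combinatorics in a delicate way (this is precisely what the paper's node induction and lemma~\ref{lem-diago} do). Finally, by skipping the $\dendroca$ reduction you keep leaves around, so your comma category $C_n\!\downarrow\!\varphi$ is more unwieldy than the comma category $(\psi\!\downarrow\!T)$ the paper actually computes with, which it rewrites as $\simp$ of the poset $(\fin\!\searrow\!T)$. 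In short: the formal frame is sound and genuinely different from the paper's, but the proof is missing in exactly the place where the work is.
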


The proof of proposition~\ref{prop-othertree} begins with another reduction step
showing that $\dend$ can be replaced by a more user-friendly subcategory $\dendroca$.
That is to say, no essential information is lost if the dendroidal nerves $N_dP$, $N_dP$ are replaced
by their restrictions to $\dendroca$.

\begin{defn}
{\rm The category $\dend$ has a full subcategory $\dendca$ spanned by the objects $T$ where the set of
leaves $\lambda(T)$ is empty.
The category $\dendca$\,, as a category in its own right, is much easier to understand than $\dend$.
Its objects are finite posets $T$ (we no longer distinguish between $T$ and $\epsilon(T)$, the set of edges)
with a unique minimal element such that, for every $e\in T$, the set $\{y\in T~|~y\le e\}$ is
linearly ordered. A morphism $S\to T$ in $\dendca$ is a map $f\co S\to T$ of sets
such that $e_0\le e_1$ in $S$ implies $f(e_0)\le f(e_1)$ in $T$\,, and $e_0,e_1$ unrelated in $S$
implies that $f(e_0),f(e_1)$ are unrelated in $T$. Beware that such an $f$ need not be injective.
(For example, if $S$ and $T$ happen to be totally ordered, then any map $f\co S\to T$ which
respects $\le$ is a morphism.) We still find
it convenient to draw objects $T$ of $\dendca$ as 1-dimensional CW-spaces with edges and vertices; every edge has two vertices except
the root, which has just one. But the vertices in those drawings are admittedly redundant.
\newline
The category $\dendca$ has a subcategory $\dendroca$ which comprises all objects, but only those morphisms
which take root to root. \newline
The inclusion functor $\dendca\to\dend$ has a left adjoint which forgets the leaf set $\lambda(T)$, or replaces it by
the empty set. The inclusion functor $\dendroca\to\dendca$ also has a left adjoint given by $T\mapsto T_+$ where $T_+$~, as a poset,
is the same as $T$ with a new element added (not previously in $T$) which is decreed to be minimal and so
serves as the root of $T_+$. The unit morphisms of the adjunction are the maps $T\to T_+$ defined by the obvious inclusion on
edge sets. The counit morphisms $T_+\to T$ of the adjunction are defined by $e\mapsto e$ for $e\in T$, and root to root. We leave
to the reader the verification that these prescriptions do define morphisms.
}
\end{defn}

Let $N_d^{rc}P$ and $N_d^{rc}Q$ be the restrictions of $N_dP$ and $N_dQ$ to $\dendroca$, respectively.

\begin{lem} \label{lem-rocared} The map $\rmap(N_d P,N_d Q) \lra \rmap(N_d^{rc} P,N_d^{rc} Q)$ given by restriction is a
weak equivalence.
\end{lem}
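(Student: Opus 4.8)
The plan is to factor the inclusion $\dendroca\hookrightarrow\dend$ as $\dendroca\xrightarrow{\iota_2}\dendca\xrightarrow{\iota_1}\dend$ and to show separately that each of the two resulting restriction maps on derived mapping spaces between the operadic nerves is a weak equivalence. Write $r\colon\dend\to\dendca$ for the left adjoint of $\iota_1$ (which empties $\lambda(T)$, i.e.\ caps each leaf edge with a nullary vertex) and $s\colon\dendca\to\dendroca$ for the left adjoint of $\iota_2$, namely $T\mapsto T_+$. Throughout I would compute all derived mapping spaces with respect to the levelwise model structures; this is harmless because, as recalled before Theorem~\ref{thm-CisMoe}, the levelwise $\rmap(N_dP,N_dQ)$ agrees with the one in the Segal dendroidal model structure, and the intermediate terms on $\dendca$ only serve as stepping stones.

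The formal mechanism is the same in both steps. Since $r\dashv\iota_1$ in the category of trees, precomposition with $r$ defines a functor $r^{*}$ on presheaves that is \emph{left} adjoint to restriction $\iota_1^{*}$, with $\iota_1^{*}r^{*}\simeq\id$; both functors are Quillen for the levelwise (say injective) model structures, so this descends to a derived adjunction. Consequently, for presheaves $A,B$ on $\dend$ the restriction map $\rmap_\dend(A,B)\to\rmap_\dendca(\iota_1^{*}A,\iota_1^{*}B)$ is identified, under the adjunction equivalence $\rmap_\dendca(\iota_1^{*}A,\iota_1^{*}B)\simeq\rmap_\dend(r^{*}\iota_1^{*}A,B)$, with precomposition by the counit $c_A\colon r^{*}\iota_1^{*}A\to A$ (naturality of the counit gives this identification). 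Hence it suffices to prove that $c_{N_dP}$ is a weak equivalence; and, writing $N_d^{c}P:=\iota_1^{*}N_dP$, the second step reduces in the same way to showing that the analogous counit $s^{*}\iota_2^{*}(N_d^{c}P)\to N_d^{c}P$ coming from $s\dashv\iota_2$ is a weak equivalence.

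Both counits can be computed outright. Evaluated at a tree $T$, the counit $c_{N_dP}$ is the map $N_dP(\iota_1 rT)\to N_dP(T)$ induced by the unit morphism $T\to\iota_1 rT$, which is the identity on edge sets; since $\iota_1 rT$ has the same edge set as $T$ and its freely generated operad is that of $T$ together with one extra nullary generator for each leaf of $T$, this map is the projection $N_dP(T)\times P(\uli0)^{|\lambda(T)|}\to N_dP(T)$, hence a levelwise weak equivalence as soon as $P(\uli0)$ is weakly contractible. Likewise, the second counit at $T$ is the map $N_dP(T_+)\to N_dP(T)$ induced by the inclusion $T\hookrightarrow T_+$; as $T_+$ is $T$ with a single $1$-ary vertex adjoined below the root, its free operad is that of $T$ with one additional $1$-ary generator, so this map is the projection $N_dP(T)\times P(\uli1)\to N_dP(T)$, a weak equivalence once $P(\uli1)$ is weakly contractible. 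Both contractibility statements are part of the hypotheses of Proposition~\ref{prop-othertree} (note that only $P(\uli0)$ and $P(\uli1)$ are needed here, not $Q$), so composing the two steps yields the lemma.

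The part I expect to require the most care is the pair of free-operad computations in the last paragraph: correctly identifying exactly which nullary (resp.\ unary) generators are adjoined to the operad of $T$ when one stumps the leaves (resp.\ forms $T_+$), and recognizing the induced restriction maps $N_dP(\iota_1 rT)\to N_dP(T)$ and $N_dP(T_+)\to N_dP(T)$ as the stated projections. Everything else is formal manipulation of the adjunctions $r^{*}\dashv\iota_1^{*}$ and $s^{*}\dashv\iota_2^{*}$; the only other point deserving a remark is the model-structure bookkeeping, i.e.\ checking that the derived mapping spaces in play may indeed be computed levelwise.
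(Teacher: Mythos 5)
Your proof is correct and follows essentially the same route as the paper: both arguments use that the inclusion $\dendroca\to\dend$ has a left adjoint, so that left Kan extension along it is precomposition with that adjoint, and then reduce (via the counit criterion, Lemma~\ref{lem:LK1} in the paper, which you re-derive inline) to checking that the counit is a levelwise equivalence, the counit being a projection with fibers built from the arity-$0$ and arity-$1$ spaces. Factoring the inclusion through $\dendca$ into two stages, spelling out the counit computations, and instantiating the criterion at the source $N_dP$ (so that only $P(\uli 0)$ and $P(\uli 1)$ are needed --- indeed the correct instantiation, whereas the paper's proof nominally checks the counit at $N_dQ$, which is harmless under the symmetric hypotheses) are only minor refinements of the paper's one-step argument.
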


\begin{proof}[Proof of lemma~\ref{lem-rocared}]
Write $\iota \co \dendroca\to\dend$ for the inclusion and $X$ for $N_d P$.
By lemma \ref{lem:LK1}, it is enough to show that the derived counit
\[
\hLK{\iota} \RR {\iota}^* X \to X
\]
is a weak equivalence. Because $\iota$ has a left adjoint, which we denote here by $\kappa$, it is easy to see that $\hLK{\iota}$ agrees
with $\kappa^*$. Thus it suffices to show that
\[
\LL \kappa^* \RR {\iota}^* X \to X
\]
is a weak equivalence. Moreover, $\kappa^*$ and $\iota^*$ preserve weak equivalences, so it is enough that the non-derived counit
$\kappa^* \iota^* X \to X$ is a weak equivalence. This is true because of the assumption that $P(\uli 0)$ and $P(\uli 1)$ are
weakly contractible.
\end{proof}

\medskip
The category $\simp(\fin)$ has an endofunctor $\beta$ defined on objects by
\[  (S_0\leftarrow S_1\leftarrow\cdots\leftarrow S_p)~~\mapsto~~
(S_0\leftarrow S_1\leftarrow\cdots\leftarrow S_p\leftarrow\emptyset)~. \]
The composition $\varphi\beta$ lands in the subcategory $\dendroca$~, so that we have the following
commutative diagram:
\begin{equation} \label{eqn-betapsi}
\begin{tikzpicture}[descr/.style={fill=white}, baseline=(current bounding box.base)],
	\matrix(m)[matrix of math nodes, row sep=2.5em, column sep=2.5em,
	text height=1.5ex, text depth=0.25ex]
	{
	\simp(\fin) & \dend \\
	\simp(\fin) & \dendroca \\
	};
	\path[->,font=\scriptsize]
		(m-1-1) edge node [auto] {$\varphi$} (m-1-2);
	\path[->,font=\scriptsize]
		(m-2-1) edge node [auto] {$\psi$} (m-2-2);
	\path[->,font=\scriptsize]
		(m-2-1) edge node [left] {$\beta$} (m-1-1);
	\path[->,font=\scriptsize] 		
		(m-2-2) edge node [right] {$\iota=\textup{incl.}$} (m-1-2);
	\end{tikzpicture}
\end{equation}
This defines $\psi\co\simp(\fin)\to\dendroca$. Clearly composition with $\beta$ defines a weak
equivalence
\[  \rmap(\varphi^*(N_dP),\varphi^*(N_dQ))\lra \rmap(\beta^*\varphi^*(N_dP),\beta^*\varphi^*(N_dQ))). \]
(Here we are using the weak contractibility of $P(0)$ and $Q(0)$.)
Therefore the commutative diagram~(\ref{eqn-betapsi}) together with lemma~\ref{lem-rocared} and lemma \ref{lem:LK1}
(and the fact that $\psi^*$ preserves weak equivalences)
amount to a reduction of proposition~\ref{prop-othertree} to the following.

\begin{prop} \label{prop-baad} In the circumstances of proposition~\ref{prop-othertree}, the unit map
\[ N_d^{rc} Q \lra (\hRK{\psi} \psi^*)(N_d^{rc} Q) \]
is a weak equivalence, where $\hRK{\psi}$ is the homotopy right Kan extension along $\psi$.
\end{prop}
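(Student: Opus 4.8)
The plan is to check the unit map levelwise over $\dendroca$ and to evaluate the homotopy right Kan extension by the pointwise formula (Appendix~A). Using $\iota\psi=\varphi\beta$ together with Lemma~\ref{lem-recog} one has
\[
\psi^*(N_d^{rc}Q)\;=\;\psi^*\iota^*(N_dQ)\;=\;(\varphi\beta)^*(N_dQ)\;=\;\beta^*(N\ms C_Q).
\]
So, fixing a tree $T\in\dendroca$ and writing $\sA_T$ for the comma category whose objects are pairs consisting of an object $s$ of $\simp(\fin)$ and a morphism $h\co\psi(s)\to T$ of $\dendroca$ (morphisms as in the pointwise Kan extension formula), the value $(\hRK{\psi}\psi^*N_d^{rc}Q)(T)$ is the homotopy limit over $\sA_T$ of the diagram $(s,h)\mapsto N_d^{rc}Q(\psi(s))=(N\ms C_Q)(\beta s)$, and the unit is the map out of $N_d^{rc}Q(T)$ determined by the evident cone. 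It therefore suffices to show this canonical map is a weak equivalence for every $T$.

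The homotopy limit is brought under control by the product decomposition of Example~\ref{expl-treedecomp}. For an object $s=(S_0\leftarrow\cdots\leftarrow S_p)$ the tree $\psi(s)=\varphi(\beta s)$ has edge set $\{r\}\sqcup S_0\sqcup\cdots\sqcup S_p$ and
\[
N_d^{rc}Q(\psi(s))\;\cong\;\prod_e Q(\mathrm{ar}(e)),
\]
the product over all edges $e$ of $\psi(s)$, where $\mathrm{ar}(e)$ counts the edges immediately above $e$. The tree $\psi(s)$ consists of the branching data literally carried by $s$, together with the nullary vertices forced by the $\emptyset$ that $\beta$ appends and by branches of $s$ that die out, and the univalent vertices sitting wherever a structure map of $s$ has a singleton fibre; so every edge $e$ with $\mathrm{ar}(e)\in\{0,1\}$ contributes a factor $Q(\uli 0)$ or $Q(\uli 1)$, which is weakly contractible by hypothesis. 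Discarding these factors, the diagram $(s,h)\mapsto N_d^{rc}Q(\psi(s))$ is levelwise weakly equivalent over $\sA_T$ to the reduced diagram $(s,h)\mapsto\prod_{e:\,\mathrm{ar}(e)\ge 2}Q(\mathrm{ar}(e))$ recording only genuine branchings, and likewise $N_d^{rc}Q(T)\simeq\prod_{e':\,\mathrm{ar}_T(e')\ge 2}Q(\mathrm{ar}_T(e'))$.

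It then remains to identify the homotopy limit of the reduced diagram with $\prod_{e'}Q(\mathrm{ar}_T(e'))$, compatibly with the cone, and I would do this by d\'evissage on $T$ along its grafting decomposition $T=C\star(T_1,\dots,T_k)$ (Example~\ref{expl-treedecomp}). This yields $N_d^{rc}Q(T)=Q(\uli k)\times\prod_j N_d^{rc}Q(T_j)$ on the one side, and on the other a matching decomposition of $\sA_T$ and of the homotopy limit, the lower part of a string mapping to $T$ mapping onto $C$ (possibly elongated by univalent vertices and by dead nullary branches) with strings mapping to the $T_j$ grafted above. Modulo the contractible corrections assembled in the previous step, everything reduces to two inputs: the case $k\le 1$, where $T$ is a linear tree and every space occurring is a product of copies of $Q(\uli 0)$ and $Q(\uli 1)$ and hence contractible; and a single-variable computation of the homotopy limit of the diagram of $Q(\cdot)$'s indexed by the ways of padding a fixed $k$-ary corolla by univalent vertices (the relevant indexing category having contractible nerve), whose value is $Q(\uli k)$ exactly because $Q(\uli 1)$ is weakly contractible. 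Feeding these back through the induction gives the weak equivalence, and naturality of all the identifications in $T$ makes it compatible with the unit.

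The load-bearing step is this d\'evissage: matching the grafting decomposition of $N_d^{rc}Q(T)$ with that of $\sA_T$ and carrying out the single-variable padding computation. The subtlety is that $\sA_T$ is genuinely larger than a product over the vertices of $T$ --- a string can map onto $T$ through many collapses, truncations and paddings, and (already when $T$ has a single branching vertex) the naive subcategory of \emph{minimal} strings fails to be homotopy-cofinal --- so the weak contractibility of $Q(\uli 0)$ and $Q(\uli 1)$ must be used precisely to render all of these discrepancies homotopically invisible. What is left is the somewhat lengthy bookkeeping needed to make the fibrant/cofibrant manipulations of the homotopy limit line up with the combinatorics of the functors $\varphi$ and $\beta$.
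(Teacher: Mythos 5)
Your strategy follows the same overall lines as the paper's: evaluate $(\hRK{\psi}\psi^*)(N_d^{rc}Q)(T)$ by the pointwise formula over the comma category, use the product decomposition of $N_d^{rc}Q$ at a tree (example~\ref{expl-treedecomp}), invoke weak contractibility of $Q(\uli 0)$ and $Q(\uli 1)$ to kill unary and nullary contributions, and induct along the grafting decomposition $T=C\star(T_1,\dots,T_k)$. You also correctly flag the key obstruction: the comma category $\sA_T=(\psi\!\downarrow\!T)$ does not factor as a product over the grafted subtrees, since a single string $S_0\leftarrow\cdots\leftarrow S_p$ spreads simultaneously across all the $T_j$.

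Where your proposal falls short is in treating the resolution of that obstruction as ``somewhat lengthy bookkeeping.'' It is not; it is the mathematical content of the proof, and it requires two pieces that you have not supplied. First, the paper replaces the comma category $(\psi\!\downarrow\!T)\cong\simp((\fin\!\searrow\!T))$ by the much smaller $\simp(\ms B^T)$, where $\ms B^T$ is the poset of \emph{independent} subsets of edges of $T$ (definition~\ref{defn-blockers}). The functor $G^T$ on $\simp(\ms B^T)$ carries the relevant $Q$-values, and the comparison $\holim G^T\simeq\holim G^T w$ (lemma~\ref{lem-orderinfl}) is itself nontrivial. Second, and most importantly, once you decompose $\simp(\ms B^T)$ using the map $\prod_u\ms B^{T(u)}\to\ms B^T$, you are comparing a homotopy limit over $\simp(\prod_u Z(u))$ with the product of homotopy limits over the individual $\simp(Z(u))$. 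These are genuinely different, and the paper's lemma~\ref{lem-diago} --- a cofinality statement for the diagonal inclusion $\simp(\prod_j Z(j))\to\prod_j\simp(Z(j))$, proved via the nondegenerate-simplex subcategories and an analysis of the poset $\ms P(\Lambda)$ in the barycentric subdivision of $\prod_j\Delta^{\ell_j}$ --- is what makes this step work, together with the contractibility of the classifying spaces $|Z(j)|$. Without a lemma of this type, the equality ``$\holim$ of a product is the product of $\holim$'s'' that your d\'evissage presupposes is simply false as stated. You would need either to reproduce lemma~\ref{lem-diago} or to argue cofinality directly at the level of $\sA_T$, and neither is bookkeeping. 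In addition, the paper also relies on lemma~\ref{lem-stem} (restricting $\holim G^T$ to $\holim G^{T,e}$ for a ``cut'' edge $e$) to set up both the induction base and the identification $\holim G^{\kappa(T(u))}\simeq\holim G^{T(u)}$; your sketch does not make clear how you would otherwise handle the root-padding that $\kappa$ introduces. So the road map is right, but the proof proper is the part you have deferred.
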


\subsection{Sets of independent edges in a tree}
Technicalities apart, our proof of proposition~\ref{prop-baad} has three main ingredients.
One of them is lemma~\ref{lem:LK1} which we have already seen in action.
Another main ingredient, lemma~\ref{lem-diago}, will allow us to exploit the special property of $N_dQ$ expressed in
example~\ref{expl-treedecomp}. The third main ingredient is definition~\ref{defn-blockers} which extracts a poset from
any object of $\dendroca$.

\begin{lem} \label{lem-diago}
Let $Z(1),\dots,Z(m)$ be simplicial sets
and let $F$ be a contravariant functor from the product $\prod_{j=1}^m\simp(Z(j))$ to spaces. Suppose that for every
morphism
\[ (g_1,\dots,g_m)\co ((k_1,\sigma_1),\dots,(k_m,\sigma_m))
\lra ((\ell_1,\tau_1),\dots,(\ell_m,\tau_m)) \]
in $\prod_{j=1}^m\simp(Z(j))$ where each of the
monotone maps $g_j\co[k_j]\to [\ell_j]$
is surjective, the induced map
$F((\ell_1,\tau_1),\dots,(\ell_m,\tau_m))\to
F((k_1,\sigma_1),\dots,(k_m,\sigma_m))$
is a weak equivalence. Let
\[ \delta\co \simp(\prod_j Z(j))\to\prod_j\simp(Z(j)) \]
be the diagonal inclusion. Then the restriction map
$\holim~F \lra \holim~F\delta$ is a weak equivalence.
\end{lem}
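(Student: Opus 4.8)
The plan is to reduce both homotopy limits to homotopy limits over categories of \emph{non-degenerate} simplices, to recognise the comparison map as restriction along an explicit functor, and then to invoke Quillen's Theorem~A.

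First I would reduce the source. For a regular simplicial set $Y$, write $\simp^{nd}(Y)\subseteq\simp(Y)$ for the full subcategory on the non-degenerate simplices. The Eilenberg--Zilber lemma provides a functor $\rho_Y\co\simp(Y)\to\simp^{nd}(Y)$ sending a simplex $\sigma$ to its non-degenerate part $\sigma^\flat$, and regularity of $Y$ makes $\rho_Y$ \emph{left} adjoint to the inclusion $\iota_Y$; the unit $\id\Rightarrow\iota_Y\rho_Y$ has for components the canonical degeneracy morphisms $(k,\sigma)\to(\dim\sigma^\flat,\sigma^\flat)$, that is, morphisms of $\simp(Y)$ whose underlying monotone map is surjective. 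A left adjoint is homotopy initial (its slices have terminal objects); since $\iota_Y$ is here the right adjoint, $\iota_Y^\op$ is a left adjoint, hence homotopy initial, so restriction along $\iota_Y$ preserves homotopy limits. Applying this to the product, with $\iota:=\prod_j\iota_{Z(j)}$ and its left adjoint $\rho:=\prod_j\rho_{Z(j)}$, identifies $\holim\,F$ with $\holim_{\prod_j\simp^{nd}(Z(j))}\bar F$, where $\bar F:=\iota^*F$, the identification being restriction along $\iota$.

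Second I would use the hypothesis on $F$ to rewrite the target. The components of the unit of $\rho\dashv\iota$ are precisely morphisms of $\prod_j\simp(Z(j))$ all of whose coordinates are surjective on underlying monotone maps, hence are inverted by $F$; precomposing that natural transformation with $\delta$ and using that $F$ respects weak equivalences shows $\delta^*F\simeq(\rho\delta)^*\bar F$, compatibly with the structural maps. One checks (routinely) that the identifications so far carry the comparison map of the lemma, up to the equivalences just produced, to restriction of the \emph{arbitrary} diagram $\bar F$ along the functor
\[ \rho\delta\co \simp\big(\textstyle\prod_j Z(j)\big)\lra \textstyle\prod_j\simp^{nd}(Z(j)) \]
which carries a simplex $(k,(\sigma_j)_j)$ of $\prod_jZ(j)$ to the tuple of non-degenerate parts $(\sigma_j^\flat)_j$ of its coordinates. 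It therefore suffices to prove that $(\rho\delta)^\op$ is homotopy initial, and by Quillen's Theorem~A this comes down to showing that for each object $\vec\tau=((r_j,\tau_j))_j$ of $\prod_j\simp^{nd}(Z(j))$ the comma category $(\rho\delta)^\op/\vec\tau^\op$ is weakly contractible. Its objects are the simplices $(k,(\sigma_j)_j)$ of $\prod_jZ(j)$ equipped, for each $j$, with a realization of $\tau_j$ as a face of $\sigma_j^\flat$ — a ``category of simultaneous non-degenerate realizations'' of the tuple $\vec\tau$ — and morphisms are the simplicial operators of $\prod_jZ(j)$ compatible with these realizations; regularity of the $Z(j)$ is what keeps all of this inside the world of honest faces (faces of non-degenerate simplices being non-degenerate).

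I expect the first two steps to be formal, given the Eilenberg--Zilber adjunction and the facts about homotopy (co)finality and Kan extensions recalled in the appendix (lemma~\ref{lem:LK1} and its consequences); the one thing to watch there is that the identifications transport the structural comparison map to restriction along $\rho\delta$. The real work, and the main obstacle, is the contractibility of the comma categories in the third step. I would attack it by a last-vertex deformation of the type that shows $B\Delta\simeq\pt$: the endofunctor that degenerates each coordinate $\sigma_j$ once more at the extreme vertex (which does not change $\sigma_j^\flat$, so preserves the realization data) is connected by an obvious natural transformation to the identity functor, and, via a functorially chosen ``most degenerate'' common realization of $\vec\tau$ as the target of a second natural transformation, also to a constant functor; this forces the classifying space of the comma category to be contractible. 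The delicate points are the functorial construction of that common realization and the verification that both natural transformations respect all the face-realization data simultaneously — a uniformity issue because the $Z(j)$, unlike $\Delta$, need not be closed under joining on a vertex.
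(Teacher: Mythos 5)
Your first two reduction steps (Eilenberg--Zilber adjunctions and the unit natural transformation that reassembles the comparison as restriction of $\bar F$ along $\rho\delta$) match the paper's set-up word for word. After that the two arguments part ways, and the step where they part ways is exactly where your proposal has its gap. You reduce to showing $(\rho\delta)^\op$ is homotopy initial, i.e.\ that the comma categories $(\vec\tau\downarrow\rho\delta)$ are contractible -- objects are simplices $\sigma$ of $\prod_j Z(j)$ together with face inclusions $\tau_j\hookrightarrow\sigma_j^\flat$, the \emph{cofaces} of $\vec\tau$. Note that since every presheaf $\bar F$ on $\prod_j\simp^{nd}(Z(j))$ is of the form $\eta_B^*(\rho_B^*\bar F)$ for an $F$ satisfying the hypothesis, this reformulation is not a reduction: it is exactly \emph{equivalent} to the lemma, so all the original difficulty survives, concentrated in that one contractibility claim. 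Your sketch of that claim is where things go wrong. The proposed endofunctor ``degenerate the last vertex'' already fails to be natural with the obvious structure map on morphisms: if $h\colon[k]\to[k']$ is a face map with $h(k)<k'$, the extension $g\colon[k+1]\to[k'+1]$ with $g(k+1)=k'+1$ does not satisfy $\sigma_{k'}g=h\sigma_k$, so $g^*(s_{k'}\sigma')\neq s_k\sigma$; one is forced to take $g(k+1)=h(k)$, which makes the unit $\id\Rightarrow T$ natural but leaves $T$ with no discernible fixed point or target constant functor. The second natural transformation, to a ``functorially chosen most degenerate realization of $\vec\tau$'', is exactly the part you flag as delicate, and it is unclear such a thing exists: there is no canonical simplex $\sigma^0$ of $\prod_jZ(j)$ with $(\sigma^0_j)^\flat=\tau_j$ for all $j$ (the $\tau_j$ have different dimensions, and any choice of shuffle breaks functoriality). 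So the key contractibility claim is genuinely open in your write-up.

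The paper does something structurally different with the Kan-extension step. Rather than proving homotopy initiality of $\alpha=\rho_B\delta\eta_S$, it verifies that the unit $F\eta_B\to\hRK\alpha(F\delta\eta_S)$ is a \emph{pointwise} weak equivalence. The pointwise formula lands in the \emph{other} comma category, $(\alpha\downarrow\tau)$: its objects are the nondegenerate simplices of $\prod_jZ(j)$ whose $j$-th nondegenerate part is a \emph{face} of $\tau_j$, i.e.\ nondegenerate simplices of $\Delta[\ell_1]\times\cdots\times\Delta[\ell_m]$. This category is bounded and independent of the ambient $Z(j)$ -- it is the poset $\ms Q(\Lambda)$ of chains in $[\ell_1]\times\cdots\times[\ell_m]$, whose nerve is the barycentric subdivision of a product of simplices, hence a ball. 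The diagram over $\ms Q(\Lambda)$ is, however, not constant, so contractibility of $\ms Q(\Lambda)$ alone does not close the argument; the paper passes to the cofinal subposet $\ms P(\Lambda)$ (chains surjecting onto each factor), where the cone maps out of $F(\tau)$ are by \emph{surjective} monotone maps and hence weak equivalences by the hypothesis on $F$, and uses contractibility of $\ms P(\Lambda)$ together with that second application of the hypothesis to conclude. In short: you keep the hypothesis on $F$ only in the preliminary rewriting and then try to win by a pure cofinality argument over cofaces of $\vec\tau$; the paper works over faces of $\vec\tau$, where everything is an explicit finite poset, and uses the hypothesis on $F$ a second time to compensate for the fact that the relevant diagram is not constant. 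Your route is not known to be wrong, but the contractibility of $(\vec\tau\downarrow\rho\delta)$ is as hard as the lemma and your deformation argument does not establish it.
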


The proof is given in appendix~\ref{sec-diago}.
It is closely related to the fact that geometric realization, viewed as a
functor from simplicial sets to compactly generated Hausdorff spaces, commutes with products.
We will use the lemma in a case where
\[ F((k_1,\sigma_1),\dots,(k_m,\sigma_m))= \prod_{j=1}^m F_j(k_j,\sigma_j) \]
for functors $F_j$ from $\simp(Z(j))$ to spaces, $j=1,\dots,m$. We
need to understand $\holim~F\delta$ and we hope therefore that we can understand $\holim~F$. There may be a strong temptation here to
assume right away that $\holim~F$ is the product of factors $\holim~F_j$ for $j=1,\dots,m$, but a moment's consideration shows that instead
\[  \holim~F \cong \prod_{j=1}^m \holim~F_jp_j \]
where $p_j$ is the projection from $\prod_i\simp(Z(i))$ to the factor $\simp(Z(j))$. Another moment's consideration shows that
\[ \holim~F_jp_j \simeq \map(Z^\perp_j\,,\,\holim~F_j) \]
where $Z^\perp_j$ is the product of all classifying spaces $B\simp(Z(i))$ for
\[ i\in \{0,1,\dots,m\}\smin\{j\}. \]
It is well known that $B\simp(Z(i))\simeq|Z(i)|$ \cite[15.1.20]{Hirschhorn}.
So we are allowed to write
\[  \holim~F\simeq \holim~F\delta \simeq \prod_j \holim~F_j \]
after all, if we can show that $|Z(j)|$ is contractible, for $j=1,\dots,m$.
For us, $Z(j)$ is the nerve of a poset and there is typically no great difficulty in showing that it is contractible,
e.g. by exhibiting a maximal element in the poset.

\begin{defn} \label{defn-blockers} {\rm With an object $T$ of $\dendroca$ we associate a poset $\ms B^T$. The
elements of $\ms B^T$ are subsets $S$ of $T$ (formerly $\epsilon(T)$, the set of edges) which satisfy
\emph{independence}: for $s_0,s_1\in S$, if $s_0\le s_1$ then $s_0=s_1$.
(These subsets $S$ of $\epsilon(T)$ are exactly the sources of operations with target \emph{root}
in the operad associated to $T$.)
For elements $S,S'$ of $\ms B^T$ we write $S\le S'$ if there exists a map $w\co S\to S'$ such that
$w(s)\le s$ (no misprint) for all $s\in S$; the map $w$ is unique if it exists. The poset $\ms B^T$ has a
maximal element $\{\textup{root}\}$ and a minimal element $\emptyset$. \newline
The rule $T\mapsto \ms B^T$ is a covariant functor from $\dendroca$ to the category of posets.
}
\end{defn}

\begin{expl} \label{expl-blockers} {\rm Let $S_*=(S_0\leftarrow S_1\leftarrow\cdots\leftarrow S_{n-1}\leftarrow S_n)$ be an $n$-simplex
in the nerve of $\fin$, alias object of $\simp(\fin)$. Then $\psi(S_*)=:T$ is an object of $\dendroca$. The
sets $S_0,\dots,S_n$ can be viewed as subsets
of $T$ and as such they turn out to be elements of the poset $\ms B^T$. Moreover we have
\[  S_0\ge S_1\ge \dots \ge S_n \]
using the order in $\ms B^T$; this is an $n$-simplex in the nerve of $\ms B^T$. \newline
Conversely, for an object $T$ in $\dend$, any $n$-simplex
\[   S_0\ge S_1\ge \dots \ge S_n \]
in $\ms B^T$ determines an $n$-simplex in the nerve of $\fin$ once we take the trouble to choose a
total ordering on each of the sets $S_i$~.
}
\end{expl}

In preparation for the proof of proposition~\ref{prop-baad}, we unravel the meaning of $(\hRK{\psi} \psi^*)(N^{rc}_dQ)$.
Write $N$ for $N^{rc}_dQ$.
Write $N(T)$ for the value of $N$ at $T$,
where $T$ is an object of $\dendroca$.
The standard formula for $(\hRK{\psi} \psi^*)(T)$ is
\begin{equation} \label{eqn-unrav1}  \holimsub{(S_*,f)\textup{ in }(\psi\downarrow T)} N(\psi(S_*))~. \end{equation}
Here the homotopy inverse limit is taken over the comma category $(\psi\!\downarrow\! T)$ with objects which are pairs $(S_*,f)$,
where $S_*$ is an object of $\simp(\fin)$ and $f$ is a morphism in $\dendroca$ from $\psi(S_*)$ to $T$.
We want to simplify formula~(\ref{eqn-unrav1}) to
\begin{equation} \label{eqn-unrav2}  \holimsub{S_*\textup{ in }\simp(\ms B^T)} N(\psi^T(S_*)) \end{equation}
where $\psi^T\co \simp(\ms B^T)\to \dendroca$ is defined as follows.
An object of $\simp(\ms B^T)$ is a string
$S_0\ge S_1\ge \cdots \ge S_n$
in $\ms B^T$. This determines an object $\psi^T(S_*)$
in $\dendroca$ whose set of edges is the disjoint union
of the $S_i$ and a singleton for the root. (For example, the elements of $S_0$ become the edges just above
the root.) Reason for the simplification~(\ref{eqn-unrav1}) $\leadsto$ (\ref{eqn-unrav2}): there is a diagram of functors,
\begin{equation} \label{eqn-unrav3}
	\begin{tikzpicture}[descr/.style={fill=white}, baseline=(current bounding box.base)] ]
	\matrix(m)[matrix of math nodes, row sep=2.5em, column sep=2.5em,
	text height=1.5ex, text depth=0.25ex]
	{
	(\psi\downarrow T) & \simp(\ms B^T)   \\
	\simp(\fin)  & \dendroca \\
	};
	\path[dotted,->,font=\scriptsize]
		(m-1-1) edge node [auto] {$w$} (m-1-2);
	\path[->,font=\scriptsize]
		(m-2-1) edge node [auto] {$\psi$} (m-2-2)
		(m-1-1) edge node [left] {\textup{forget}} (m-2-1)
		(m-1-2) edge node [auto] {$\psi^T$} (m-2-2);
	\end{tikzpicture}
\end{equation}
commutative up to natural isomorphism, where $w$ is explained by Example \ref{expl-blockers} and the fact that $\ms B^S$ is functorial in $S \in \dendroca$. This determines a map
$w^*$ from~(\ref{eqn-unrav2}) to~(\ref{eqn-unrav1}).
It is shown in lemma~\ref{lem-orderinfl} that $w^*$ is a weak equivalence.

Occasionally we need to simplify beyond~(\ref{eqn-unrav2}). Let $e$ be an edge of $\,T$ with the property that every edge
$e'$ of $\,T$ satisfies $e'\ge e$ or $e'\le e$. Let
\begin{equation} \label{eqn-cutposetform}
 \ms B^{T,e}=\{S\in \ms B^T~|~S<\{e\}~\}.
\end{equation}
(For clarification we point out that an element $S\in \ms B^T$ satisfies $S<\{e\}$ if and only if every edge in $S$ is $>e$.
Therefore every element of $\ms B^T\smin \ms B^{T,e}$ is a singleton, i.e., as a subset of $T$ it has exactly one element.)
According to lemma~\ref{lem-stem}, the projection map from~(\ref{eqn-unrav2}) to
\begin{equation} \label{eqn-unrav4}  \holimsub{S_*\textup{ in }\simp(\ms B^{T,e})} N(\psi^T(S_*)) \end{equation}
is a weak equivalence.

\medskip
\begin{proof}[Proof of proposition~\ref{prop-baad}] Formula~(\ref{eqn-unrav2}) is the starting point.
The aim is therefore to show that for every $T$ in $\dendroca$, the comparison map
$N(T) \to \holim~N\psi^T$
is a weak homotopy equivalence. We proceed by induction on the number of nodes in $T$, where \emph{node}
means a vertex with more than one incoming edge.
\newline
For the induction beginning, suppose that $T$ has no nodes at all. Then $T$ is totally ordered.
Let $e$ be the maximal edge of $T$. Apply lemma~\ref{lem-stem} with that $e$. Then $\ms B^{T,e}$
has only one element, $\emptyset$. Therefore, by the lemma, $\holim~N\psi^T$ is weakly contractible.
The same is true for $N(T)$. \newline
For the induction step, suppose that $T$ has at least one node.
Let $U$ be the set of incoming edges of the node closest to the root of $T$. For each $u\in U$ let
$T(u)$ be the tree consisting of the edges of $T$ which are $\ge u$, with $u$ as the root of $T(u)$, and no leaves.
The inclusion $T(u)\to T$ is a morphism in $\dend$, but not in $\dendroca$. We can factorize $T(u)\to \kappa(T(u))\to T$
where $\kappa$ is the left adjoint of the inclusion $\dendroca\to \dend$ and $T(u)\to \kappa(T(u))$ is a unit morphism
associated with the adjunction. Now $\kappa(T(u))\to T$ is a morphism in $\dendroca$. \newline
There is a functor
\[  A\co \simp(\prod_{u\in U}\ms B^{T(u)}) \lra \simp(\ms B^T) \]
induced by a map of posets given by $(R^u)_{u\in U} \mapsto \coprod_u R^u$.
The functor $A$ is a full embedding. The composition
\[ N(T) \lra \holim~N\psi^T \to \holim~(N\psi^T A) \]
is a weak equivalence for the following reasons. The product formula of example~\ref{expl-treedecomp} and our assumption
that $Q(\uli 0)$ and $Q(\uli 1)$ are weakly contractible allow us to write
\begin{equation} \label{eqn-newproduct}
N(T) \xrightarrow{\; \simeq \;}  N(\kappa(c_{|U|}))\times \prod_u N(\kappa(T(u)))
\end{equation}
where $c_{|U|}$ is a corolla with $|U|$
leaves. (To see the connection between~(\ref{eqn-newproduct}) and the product formula of example~\ref{expl-treedecomp},
observe that the morphisms in $\dend$ used in ~\ref{expl-treedecomp} to set up the old product formula factor through the morphisms
in $\dendroca$ required here to set up the new product formula.) There is a similar product formula to simplify each
value of the functor $\psi^T A$. Namely,
\[ (N\psi^T A)((R^u_*)_{u\in S})
\simeq N\psi^T(R^U_{-1}) \times \prod_u N\psi^T(R^u_*) \]
where $R^u_*= (R^u_0\ge R^u_1\ge\cdots\ge R^u_k)$
is a $k$-simplex in $\simp(\ms B^{T(u)})$, with the same $k$ for all $u\in U$, and $R^U_{-1}\subset U$ denotes
the set of those edges in $U$ which are on the edge path from some
element of $\coprod_u R^u_0$ to the root of $T$. (We can view $R^U_{-1}$ as a $0$-simplex in $\ms B^T$.) Consequently, using
lemma~\ref{lem-diago} and observations following it, we can also write
\[  \holim~(N\psi^T A) \simeq N(\kappa(c_{|U|}))\times \prod_u \holim~N\psi^{T(u)}~. \]
Now the inductive assumption comes to our help because $\kappa(T(u))$ has fewer nodes than $T$:
\[
N(\kappa(T(u))) \xrightarrow{\; \simeq \;} \holim~N\psi^{\kappa(T(u))} \xrightarrow{\; \simeq \;} \holim~N\psi^{T(u)}~.
\]
(The second map is induced by the inclusion $\ms B^{T(u)}\to \ms B^{\kappa(T(u))}$, and it is a weak equivalence by lemma~\ref{lem-stem}.)
Therefore it remains only to show, for the induction step, that the prolongation or restriction map
\[  \holim~N\psi^T \lra \holim~(N\psi^TA) \]
is a weak equivalence. This is again a consequence of lemma~\ref{lem-stem}: apply the lemma with $e$ equal to
the edge just below the node nearest to the root. \end{proof}

\section{Conservatization} \label{sec-face}
\subsection{Preview}
In this section and the next we develop something which could be called a substitute for the Alexander trick
applicable to (some) configuration categories. It is a complicated substitute. The Alexander trick as we
understand it here concerns injective continuous maps from $D^m$ to $D^n$, where $m\le n$, which extend the standard embedding of $\partial D^m$
into $\partial D^n$. We say \emph{injective continuous map} in order to emphasize that we do not impose local
flatness conditions. Let $\imap_\partial(D^m,D^n)$ be the space of these injective continuous maps, with the compact-open topology.
The Alexander trick is a two-line shrinking argument showing that $\imap_\partial(D^m,D^n)$ is contractible.
An illuminating special case is the case $m=1$, $n=3$. (Knot theorists will remember that the knots and
especially the isotopies allowed in knot theory satisfy some local flatness conditions; therefore the existence of many
distinct knots does not imply that $\imap_\partial(D^1,D^3)$ has more than one path component.)

More specifically, in this section we break up the object space of $\config(D^m)$ and impose new conditions on morphisms which
express something resembling general position with respect to $0\in D^m$. The resulting Segal space is
$\varphi^*\config(D^m)$ which appears in proposition~\ref{prop-genpos}. (For the meaning of $\varphi$ see
definition~\ref{defn-fino}.) The main point is to show
that $\config(D^m)$ can be recovered from $\varphi^*\config(D^m)$ by a form of localization, a homotopy
invariant procedure. The name of the localization procedure
is \emph{conservatization}. Taken by itself this is not very hard to understand and it is interesting in its own right.
Therefore we begin with that.

\subsection{Conservative maps} In category theory, a functor is called \emph{conservative} if it takes non-isomorphisms
to non-isomorphisms. There is a similar concept for maps between simplicial spaces. (We re-discovered this for ourselves
and used the name \emph{face-local} for it in an earlier version of this article. We are grateful to Claudia Scheimbauer and Victoriya Ozornova for pointing out to us that the concept goes back at least as far as \cite{GalToKo} under the name \emph{conservative}.)

\begin{defn} {\rm Let $w\co X\to Z$ be a map of simplicial spaces. We say that $w$ is \emph{conservative},
or that \emph{$X$ is conservative over $Z$} if, for every monotone surjection $u\co [k]\to [\ell]$ in $\Delta$, the diagram
\[
	\begin{tikzpicture}[descr/.style={fill=white}, baseline=(current bounding box.base)] ]
	\matrix(m)[matrix of math nodes, row sep=2.5em, column sep=2.5em,
	text height=1.5ex, text depth=0.25ex]
	{
	X_\ell & X_k  \\
	Z_\ell  & Z_k \\
	};
	\path[->,font=\scriptsize]
		(m-1-1) edge node [auto] {$u^*$} (m-1-2)
		(m-2-1) edge node [auto] {$u^*$} (m-2-2)
		(m-1-1) edge node [left] {$w$} (m-2-1)
		(m-1-2) edge node [auto] {$w$} (m-2-2);
	\end{tikzpicture}
\]
is homotopy cartesian.
}
\end{defn}

\smallskip
\emph{Example.} In the case where $Z=\Delta[0]$, a simplicial space
$X$ over $Z$ is conservative if and only if all simplicial operators $X_s\to X_t$ are weak
equivalences.

\medskip
Let $\sC$ be a small (discrete) category and let $X$ be a fiberwise complete Segal space over the nerve $N\sC$,
with reference map $w\co X\to N\sC$.

\begin{lem} \label{lem-hensel} The following conditions on $w\co X\to N\sC$ are equivalent.
\begin{itemize}
\item[a)] An element $\gamma\in X_1$ is weakly invertible if and only if $w(\gamma)\in (N\sC)_1$ is an isomorphism.
\item[b)] $X$ is conservative over $N\sC$.
\end{itemize}
\end{lem}

\begin{proof} For an object $c$ of $\sC$, let $X(c)$ be the part of $X_0$ projecting to $c\in (N\sC)_0$. For a morphism $g$ in $\sC$,
let $X(g)$ be the part of $X_1$ projecting to $g\in (N\sC)_1$. Write $X(g)^\heq:=X(g)\cap X_1^\heq$~, where $X_1^\heq$
is the weakly invertible part of $X_1$. \newline
Assume a). Then for any invertible morphism $f\co c\to d$ in $\sC$ we have $X(f)=X(f)^\heq$. The fiberwise
completeness condition means that $d_0\co X(f)^\heq\to X(c)$ is a weak equivalence.
Therefore $d_0\co X(f)\to X(c)$ is a weak equivalence. It follows that $d_1\co X(f)\to X(c)$
is a weak equivalence. In particular this holds when $f=\id_c$\,.
Now take a $k$-simplex
\[
\sigma \quad = \quad \big( c_0 \xleftarrow{ \; f_1 \;} c_1 \xleftarrow{ \; f_2 \;} c_2 \xleftarrow{ \; \; \; \;} \cdots \xleftarrow{ \; f_k \;} c_k \big)
\]
in $N\sC$ and let $X(\sigma)$ be the part of $X_k$ projecting to $\sigma$. By the Segal condition
we can identify $X(\sigma)$ with the homotopy limit of
\[
X(c_0) \xleftarrow{ \; d_0 \;} X(f_1) \xrightarrow{ \; d_1 \;} X(c_1) \xleftarrow{ \; d_0 \;} X(f_2) \xrightarrow{ \; d_1 \;} X(c_2)
 \xleftarrow{ \; \; \; \;} \cdots \xleftarrow{ \; d_0 \;} X(c_k) \; .
\]
If one of the $f_i$ happens to be an identity morphism, then the corresponding arrows
$X(c_{i-1}) \leftarrow X(f_i)\to X(c_i)$ are weak equivalences as we have just shown. To rephrase this in a
more abstract manner, suppose that $\sigma=u^*\tau$ for some $\ell$-simplex $\tau$ in $N\sC$ and monotone
surjective $u\co [k]\to [\ell]$. Select a monotone right inverse $v\co [\ell]\to [k]$ for $u$, so that $uv=\id$.
Then the map $X(\sigma)\to X(\tau)$ obtained by restricting $v^*$ is a weak equivalence.
Consequently the map $X(\tau)\to X(\sigma)$ obtained by restricting $u^*$ is also a weak
equivalence. This is exactly property b). \newline
Now assume property b). Then for any object $c$ of $\sC$ the degeneracy map
$s_0$ from $X(c)$ to $X(\id_c)$ is a weak equivalence. It follows that $X(\id_c)$ is contained in $X_1^\heq$.
Next, let $f\co b\to c$ be an isomorphism in $\sC$ with inverse $g\co c\to b$.
Let $\bar f\in X(f)$. We need to show that $\bar f$ is weakly invertible. By fiberwise completeness,
there exists a weakly invertible $\bar g \in X(g)$ such that
$d_0\bar g$ and $d_1\bar f$ are in the same path component of $X(c)$. By the Segal property,
it follows that there exists $z\in X_2$ such that $d_0z$ is in the
path component of $\bar f$ and $d_2z$ is in the path component of $\bar g$. But $d_1z$\,,
which should be regarded as the composition of $d_0z$ and $d_2z$~, belongs to $X(\id_b)$
and therefore to $X_1^\heq$  by what we saw earlier.
It follows that $d_0z$ and with it $\bar f$ belong to $X_1^\heq$, too, by a two-out-of-three principle for weak
invertibility.
\end{proof}

\medskip
\emph{Example.} For any manifold $M$, the standard reference map from $\config(M)$ to $N\fin$ is conservative; and for any
manifold $M$ with boundary, the standard reference map from
$\config(M)$ to $N\finplus$ is conservative.

\subsection{A conservatization procedure}
We now describe a conservatization procedure which is universal in a derived sense. The input is a map $Y\to Z$ of simplicial
spaces, where $Z$ happens to be a simplicial \emph{set} viewed as a simplicial discrete space. The output of the
procedure is a commutative diagram of simplicial spaces
\begin{equation} \label{eqn-faceloc}
\begin{tikzpicture}[descr/.style={fill=white}, baseline=(current bounding box.base)],
	\matrix(m)[matrix of math nodes, row sep=2.5em, column sep=2.5em,
	text height=1.5ex, text depth=0.25ex]
	{
	 Y & Y^! & \Lambda Y \\
	  & Z & \\
	};
	\path[->,font=\scriptsize]
		(m-1-2) edge node [above] {$\simeq$} (m-1-1)
		(m-1-2) edge node [right] {} (m-1-3)
		(m-1-1) edge node [right] {} (m-2-2)
		(m-1-2) edge node [above] {} (m-2-2)
		(m-1-3) edge node [auto] {} (m-2-2);
\end{tikzpicture}
\end{equation}
where the map from $\Lambda Y$ to $Z$ is conservative. The map from
$Y^!$ to $Y$ is a degreewise weak equivalence. More precisely,
$Y\mapsto \Lambda Y$ and $Y\mapsto Y^!$ are endofunctors of the category of simplicial spaces over $B$. The
horizontal arrows in diagram~(\ref{eqn-faceloc}) are natural transformations.

\begin{defn} \label{defn-faceloc} {\rm The formula for $\Lambda Y$ is
\[ (\Lambda Y)_r:= \hocolimsub{[r]\to [k]\leftarrow [\ell]}  Y_\ell\times_{Z_\ell} Z_k \]
where the homotopy colimit is taken over the following category $\sE(r)$. An object is a diagram $[r]\to [k]\leftarrow [\ell]$
in $\Delta$ where the second arrow, from $[\ell]$ to $[k]$, is onto. A morphism is a commutative diagram
\[
\begin{tikzpicture}[descr/.style={fill=white}, baseline=(current bounding box.base)],
	\matrix(m)[matrix of math nodes, row sep=2.5em, column sep=2.5em,
	text height=1.5ex, text depth=0.25ex]
	{
	 {[r]} & {[k]} & {[\ell]} \\
	 {[r]} & {[k']} & {[\ell']} \\
	};
	\path[->,font=\scriptsize]
		(m-1-1) edge node [right] {} (m-1-2)
		(m-1-3) edge node [right] {} (m-2-3)
		(m-1-3) edge node [right] {} (m-1-2)
		(m-2-3) edge node [above] {} (m-2-2)
		(m-2-1) edge node [auto] {} (m-2-2)
		(m-1-2) edge node [auto] {} (m-2-2);
	\draw [double equal sign distance] (m-1-1) to (m-2-1) node [anchor=mid west] {$$};
\end{tikzpicture}
\]
in $\Delta$ (top row = source, bottom row = target). The reference map from $\Lambda Y$ to $Z$ is fairly obvious from
the definition of $\Lambda Y$~; it is the composition
\[ \hocolimsub{[r]\to [k]\leftarrow [\ell]}  Y_\ell\times_{Z_\ell} Z_k
\quad\lra \colimsub{[r]\to [k]\leftarrow [\ell]} Z_k \quad \lra \quad Z_r\,.
\]
The formula for $Y^!$ is
\[ (Y^!)_r= \hocolimsub{[r]\to [k]} Y_k = \hocolimsub{\rule{0mm}{4mm}[r]\to [k]\xleftarrow{=} [\ell]}  Y_\ell\times_{Z_\ell} Z_k\]
so that $Y^! \subset \Lambda Y$. For $r\ge 0$, the preferred map from $(Y^!)_r$ to $Y_r$ is the standard comparison map
from $\hocolim_{[r]\to [k]} Y_k$ to $Y_r\cong\colim_{[r]\to [k]} Y_k$.
}
\end{defn}

\begin{rem} \label{rem-locdetails} {\rm
For a simplicial set $Z$ let $\simp(Z)$
be the category of simplices of $Z$, as defined early on in section~\ref{subsec-dendroidal}.
A simplicial space $Y$ with a map to a simplicial set $Z$ is the same thing as a contravariant functor from $\simp(Z)$ to spaces.
We write $Y(z)$ for the value of that functor on the object $z\in Z_r$~. So $Y(z)$ is the part of $Y_r$ which is taken to
$z\in Z_r$ by the reference map $Y\to Z$.

For $x\in \simp(Z)$ let
$\sE(x)$ be the following category. An object of $\sE(x)$ is a diagram
$x\to y\leftarrow z$
in $\simp(Z)$ where the second arrow (from $z$ to $y$) is \emph{dominant}, i.e., the underlying morphism
in $\Delta$ is a (monotone) surjection (so that $z$ is a degeneracy of $y$). A morphism in $\sE(x)$ is a commutative
diagram in $\simp(Z) $ of the form
\[
\begin{tikzpicture}[descr/.style={fill=white}, baseline=(current bounding box.base)],
	\matrix(m)[matrix of math nodes, row sep=2.5em, column sep=2.5em,
	text height=1.5ex, text depth=0.25ex]
	{
	 x & y & z \\
	 x & y' & z' \\
	};
	\path[->,font=\scriptsize]
		(m-1-1) edge node [right] {} (m-1-2)
		(m-1-3) edge node [right] {} (m-2-3)
		(m-1-3) edge node [right] {} (m-1-2)
		(m-2-3) edge node [above] {} (m-2-2)
		(m-2-1) edge node [auto] {} (m-2-2)
		(m-1-2) edge node [auto] {} (m-2-2);
	\draw [double equal sign distance] (m-1-1) to (m-2-1) node [anchor=mid west] {$$};
\end{tikzpicture}
\]
where the rows qualify as objects of $\sE(x)$ (top row = source, bottom row = target).

Using this notation, we arrive at the following reformulation of definition~\ref{defn-faceloc}.
\begin{equation} \label{eqn-facelocsimp}  (\Lambda Y)(x) = \hocolimsub{x\to y\leftarrow z}  Y(z)
\end{equation}
for $x$ in $\simp(Z)$, where the homotopy colimit is taken over $\sE(x)$; in particular, $x\to y\leftarrow z$
describes an object of $\sE(x)$. In the same spirit
\[  Y^!(x) = \hocolimsub{ x\to y = z}  Y(z) \]
for $x$ in $\simp(Z)$.

For an object $x$ in $\simp(Z)$ let $\sE_0(x)$ be the full subcategory of $\sE(x)$ spanned by the objects
$x\to y\leftarrow z$ of $\sE(x)$ where both arrows are dominant. The inclusion $\sE_0(x)\to \sE(x)$ has a right adjoint.
It follows that the inclusion
\begin{equation} \label{eqn-facelocsimp2}  (\Lambda^\flat Y)(x)~:=~\hocolimsub{\twosub{x\to y\leftarrow z}{\textup{in }\sE_0(x)}}  Y(z)
\lra \hocolimsub{\twosub{x\to y\leftarrow z}{\textup{in }\sE(x)}}  Y(z)~~~=~~~(\Lambda Y)(x)
\end{equation}
is a weak equivalence. Indeed this inclusion has a preferred left inverse
\begin{equation} \label{eqn-facelocsimp3}
(\Lambda Y)(x)\lra (\Lambda^\flat Y)(x)
\end{equation}
(which is also a homotopy inverse) determined by the functor $\sE(x)\to \sE_0(x)$ right adjoint to the inclusion $\sE_0(x)\to \sE(x)$
and the counit morphisms of the adjunction. It follows that $\Lambda^\flat Y$ is again a contravariant functor on $\simp(Z)$: namely,
a morphism $f\co x_0\to x_1$ in $\simp(Z)$ determines a map $(\Lambda^\flat Y)(x_1)\to (\Lambda^\flat Y)(x_0)$ by pre-composing
and post-composing $f^*\co (\Lambda Y)(x_1)\to (\Lambda Y)(x_0)$ with the maps~(\ref{eqn-facelocsimp2}) for $x=x_1$
and~(\ref{eqn-facelocsimp3}) for $x=x_0$~, respectively. The maps~(\ref{eqn-facelocsimp3}) for arbitrary $x$ then amount
to a natural transformation $\Lambda Y\to \Lambda^\flat Y$ of contravariant functors on $\simp(Z)$ which is a degreewise
weak equivalence. So $\Lambda^\flat Y$ is an acceptable alternative to $\Lambda Y$. \newline
If $x$ happens to be a nondegenerate simplex of $Z$, then the only dominant
morphism in $\simp(Z)$ with source $x$ is $\id_x$\,. Then we get
\begin{equation} \label{eqn-facelocsimp4} (\Lambda^\flat Y)(x)~=~\hocolimsub{x\leftarrow z}  Y(z)
\end{equation}
where the homotopy colimit is taken over all dominant morphisms $x\leftarrow z$ (fixed target $x$) in $\simp(Z)$.
}
\end{rem}

\begin{lem} \label{lem-locadj1} $\Lambda Y$ is conservative over $Z$.
\end{lem}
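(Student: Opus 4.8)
First I would rewrite the face-locality condition in the presheaf language of Definition~\ref{defn-faceloc}. Since $Z$ is a simplicial \emph{set}, the fibre of $X_r\to Z_r$ over a vertex $z\in Z_r$ is exactly the space $X(z)$, so the square attached to the degeneracy $s_i$ is homotopy cartesian if and only if, for each $z\in Z_r$, the structure map $X(z)\to X(s_iz)$ associated to the degeneracy morphism $s_iz\to z$ of $\simp(Z)$ is a weak equivalence. Every dominant morphism of $\simp(Z)$ is a composite of elementary degeneracy morphisms, so by two-out-of-three the simplicial space $X$ over $Z$ is face-local if and only if $X(\gamma)\co X(x)\to X(x')$ is a weak equivalence for every dominant morphism $\gamma\co x'\to x$ in $\simp(Z)$. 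The task is then to check this when $X=\Lambda Y$.

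Next I would identify the map to be analysed. Fix a dominant $\gamma\co x'\to x$. Unravelling Definition~\ref{defn-faceloc} and using that homotopy colimits of simplicial spaces are computed degreewise (Lemma~\ref{lem:hocolims}), the structure map $(\Lambda Y)(x)\to(\Lambda Y)(x')$ is the map
\[
\hocolim_{\ms E(x)}\, Y(z)\ \lra\ \hocolim_{\ms E(x')}\, Y(z)
\]
induced by the precomposition functor $\gamma^{\sharp}\co\ms E(x)\to\ms E(x')$ recalled just before the statement, which sends $(x\to y\leftarrow z)$ to $(x'\to y\leftarrow z)$. Crucially $\gamma^{\sharp}$ leaves the term $z$ untouched, so the two diagrams $(-)\mapsto Y(z)$ agree after restriction along $\gamma^{\sharp}$, and it is enough to show that $\gamma^{\sharp}$ induces a weak equivalence on these homotopy colimits.

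The core of the proof is to compare $\ms E(x)$ with $\ms E(x')$. The functor $\gamma^{\sharp}$ is fully faithful — faithfulness is immediate, and fullness holds because $\gamma$ is an epimorphism in $\simp(Z)$, so commutativity of the relevant triangles over $x'$ is equivalent to commutativity over $x$ — and it identifies $\ms E(x)$ with the full subcategory of $\ms E(x')$ on the triples whose leg $x'\to y$ factors through $\gamma$. The plan is then to produce a functor $R\co\ms E(x')\to\ms E(x)$, using that $\gamma$ is a (surjective) degeneracy together with the hypothesis that the leg $z\to y$ is dominant: from $(x'\xrightarrow{a}y\xleftarrow{b}z)$ one pushes the leg $a$ forward past $\gamma$ and composes the dominant legs, obtaining an object of $\ms E(x)$ with a canonical morphism from the original triple which is the \emph{identity on the term $z$}. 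One checks that $R\gamma^{\sharp}$ is naturally isomorphic to the identity and that the natural transformation $\id\Rightarrow\gamma^{\sharp}R$ is the identity on the $z$-coordinate; hence after applying $z\mapsto Y(z)$ and passing to homotopy colimits, $\gamma^{\sharp}$ and $R$ become mutually inverse weak equivalences. (Equivalently, one can verify directly, via Quillen's Theorem~A, that the slice categories governing the homotopy-colimit comparison are contractible; see the Appendix and Lemma~\ref{lem:LK1}.) This gives that $(\Lambda Y)(x)\to(\Lambda Y)(x')$ is a weak equivalence, i.e. that $\Lambda Y$ is face-local over $Z$.

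I expect the main obstacle to be the construction of $R$ in the third paragraph, and in particular the verification that the ``push $a$ forward past $\gamma$'' operation is genuinely available inside $\simp(Z)$ and lands again on a triple of the required shape (with a dominant leg): a priori one cannot collapse a degeneracy at the level of individual simplices, so this is exactly the place where dominance of $\gamma$ and dominance of the leg $z\to y$ must be combined to do the work, and where the combinatorics of $\simp(Z)$ has to be handled with care. A secondary, more clerical, source of care is the variance bookkeeping in the homotopy-colimit comparison (slice versus coslice categories, and which functor is the cofinal one). The remaining ingredients — translating Definition~\ref{defn-faceloc}, the reformulation of face-locality, and the degreewise formation of homotopy colimits — are formal.
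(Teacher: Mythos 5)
Your overall plan — reformulate face-locality as the statement that every dominant morphism $\gamma\co x'\to x$ in $\simp(Z)$ induces a weak equivalence $(\Lambda Y)(x)\to(\Lambda Y)(x')$, then prove that the precomposition functor $\gamma^{\sharp}\co\ms E(x)\to\ms E(x')$ induces a weak equivalence of homotopy colimits of $z\mapsto Y(z)$ — is the correct one and coincides with the paper's. Your observation that $\gamma^{\sharp}$ is fully faithful (because dominant morphisms are epimorphisms in $\simp(Z)$) is also correct, though the paper does not use it.

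The gap is exactly where you suspected: the functor $R\co\ms E(x')\to\ms E(x)$ cannot be built by ``pushing $a$ forward past $\gamma$.'' In $\Delta$ (and hence in $\simp(Z)$), a surjection $\gamma\co[r']\to[r]$ and an arbitrary monotone map $a\co[r']\to[s]$ admit a factorization $a = a''\circ\gamma$ only when $a$ is constant on the fibres of $\gamma$, which need not hold. Dominance of the other leg $b\co z\to y$ is irrelevant to this obstruction: it lives entirely in the $x'\to y$ direction, and taking the epi--mono factorization of $a$ or passing to the nondegenerate core of $y$ does not help because what one needs is a morphism \emph{out of} $x$, and there is simply no canonical candidate. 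So as stated, $R$ is not well defined, and your proposed ``$R\gamma^{\sharp}\cong\id$ and $\id\Rightarrow\gamma^{\sharp}R$'' picture does not get off the ground.

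The paper avoids producing any functor between $\ms E(x')$ and $\ms E(x)$ directly. Instead it reduces, for each $x$ separately, to smaller indexing categories. First it passes to the full subcategory $\ms E_0(x)\subset\ms E(x)$ of triples where the leg $x\to y$ is dominant (the inclusion has a right adjoint, so the hocolims agree). Then it passes further to the full subcategory $\ms E_1(x)\subset\ms E_0(x)$ where, in addition, $y$ is a nondegenerate simplex; this inclusion has a left adjoint $\lambda$, and — the crucial point — precomposition with $\lambda$ is the identity on the $z$-coordinate, so the hocolims again agree. Now the Eilenberg--Zilber lemma enters: a dominant $x\to y$ with $y$ nondegenerate is unique, and $y$ is the nondegenerate core of $x$. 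Since $x'$ is a degeneracy of $x$, it has the \emph{same} nondegenerate core, and one obtains a literal isomorphism of categories $\ms E_1(x)\cong\ms E_1(x')$ commuting with the $z$-projection. The hocolims are then identified on the nose. This is the move your proposal is missing: rather than transporting objects across $\gamma$, one shrinks both indexing categories to something that depends only on the nondegenerate core of $x$, where the comparison becomes trivial.
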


\begin{proof} By remark~\ref{rem-locdetails}, it is enough to show that $\Lambda^\flat Y$ is conservative over $Z$.
For $x\in Z_r$ the category $\ms E_0(x)$ has a full subcategory $\ms E_1(x)$ consisting of the objects
$x\to y\leftarrow z$ where $y$ is a nondegenerate simplex. This condition determines
the arrow $x\to y$ because $x$ can be uniquely written in the form $g^*y$ for some nondegenerate simplex $y$ and
monotone surjection $g$. The argument also shows that the inclusion of $\ms E_1(x)$ in $\ms E_0(x)$ has a left adjoint,
say $\lambda$. This is given by passing from upper row to lower row in the following commutative diagram in $\simp(Z)$,
\[
\begin{tikzpicture}[descr/.style={fill=white}, baseline=(current bounding box.base)],
	\matrix(m)[matrix of math nodes, row sep=2.5em, column sep=2.5em,
	text height=1.5ex, text depth=0.25ex]
	{
	 x & y & z \\
	 x & y' & z \\
	};
	\path [-] ([xshift=0.9pt]m-1-1.south) edge node {} ([xshift=0.9pt]m-2-1.north);
	\path [-] ([xshift=-0.9pt]m-1-1.south) edge node {} ([xshift=-0.9pt]m-2-1.north);
	\path [-] ([xshift=0.9pt]m-1-3.south) edge node {} ([xshift=0.9pt]m-2-3.north);
	\path [-] ([xshift=-0.9pt]m-1-3.south) edge node {} ([xshift=-0.9pt]m-2-3.north);
	\path[->,font=\scriptsize]
		(m-1-1) edge node [right] {} (m-1-2);
	\path[->,font=\scriptsize] 		
		(m-1-3) edge node [right] {} (m-1-2);
	\path[->,font=\scriptsize] 		
		(m-2-3) edge node [right] {} (m-2-2);
	\path[->,font=\scriptsize]
		(m-2-1) edge node [auto] {} (m-2-2);
	\path[->,font=\scriptsize] 		
		(m-1-2) edge node [right] {} (m-2-2);
\end{tikzpicture}
\]
where the upper row represents a random object of $\ms E_0(x)$ and the rest of the diagram
is determined by the condition that $y'$ be nondegenerate and that $x\to y'$ be dominant. As
pre-composition with $\lambda\co \ms E_0(x)\to \ms E_1(x)$ does not affect the forgetful functor
\[  \big(x\to y\leftarrow z\big) \mapsto z \]
it follows that the inclusion
\[ \hocolimsub{\twosub{x\to y\leftarrow z}{\textup{in }\ms E_1(x)}}~Y(z) \quad
\vlra{2.0em}
\hocolimsub{\twosub{x\to y\leftarrow z}{\textup{in }\ms E_0(x)}}~Y(z)
\]
is also a weak equivalence \cite[Proposition A.4]{Dugger}.

The assignment $x\mapsto \ms E_1(x)$ is not contravariantly functorial on all of $\simp(Z)$,
but it is well behaved for any dominant morphism $w\to x$ in $\simp(Z)$.
In such a case it is clear that the induced functor $\ms E_1(x)\to \ms E_1(w)$ is an
isomorphism of categories, and consequently the resulting map
\[ \hocolimsub{\twosub{x\to y\leftarrow z}{\textup{in }\ms E_1(x)}}~Y(z) \quad
\vlra{2.0em}
\hocolimsub{\twosub{w\to y\leftarrow z}{\textup{in }\ms E_1(w)}}~Y(z)
\]
is a homeomorphism. By the commutativity of the diagram
\[
\begin{tikzpicture}[descr/.style={fill=white}, baseline=(current bounding box.base)],
	\matrix(m)[matrix of math nodes, row sep=2.5em, column sep=2.5em,
	text height=1.5ex, text depth=2.0ex]
	{
	{\hocolimsub{\ms E_0(x)} ...} &  {\hocolimsub{\ms E_0(w)}...} \\
	 {\hocolimsub{\ms E_1(x)}...} & {\hocolimsub{\ms E_1(w)}...} \\
	};
	\path[->,font=\scriptsize] 		
		(m-1-1) edge node [right] {} (m-1-2);
	\path[->,font=\scriptsize] 		
		(m-2-1.north) edge node [left] {$\simeq$} (m-1-1.south);
	\path[->,font=\scriptsize]
		(m-2-1) edge node [auto] {} (m-2-2);
	\path[->,font=\scriptsize] 		
		(m-2-2) edge node [right] {$\simeq$} (m-1-2);
\end{tikzpicture}
\]
this implies that the map $(\Lambda^\flat Y)(x)\to (\Lambda^\flat Y)(w)$ induced by the
dominant morphism  $w\to x$ in $\simp(Z)$ is a weak equivalence. \end{proof}

\begin{lem} \label{lem-locadj1plus} The preferred map $Y^!\to Y$ is a degreewise weak equivalence.
\end{lem}

\begin{proof} For $x\in Z_r$ let $\ms E_2(x)$ be the full subcategory of $\ms E(x)$
whose objects are the diagrams $x\to y\leftarrow z$
where the arrow from $z$ to $y$ is an identity in $\simp(Z)$, so that $y=z$. This is contravariantly functorial in $x$.
We defined $Y^!$ as a simplicial space over $Z$ by
\[ Y^!(x) :=\hocolimsub{\twosub{x\to y\leftarrow z}{\textup{in }\ms E_2(x)}}~Y(z). \]
Since $\ms E_2(x)$ has an initial object, the preferred map
from $Y^!(x)$ to $Y(x)$ is a weak equivalence. \end{proof}

\begin{lem} \label{lem-locadj2} If $Y$ is already conservative over $Z$, then the preferred map $Y^!\to \Lambda Y$ over $Z$ is a
degreewise weak equivalence.
\end{lem}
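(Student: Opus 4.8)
\emph{Proof sketch / plan.} The plan is to fix an object $x$ of $\simp(Z)$ and prove that the map $Y^!(x)\to(\Lambda Y)(x)$ induced by the inclusion $j\co\ms E_2(x)\hookrightarrow\ms E(x)$ is a weak equivalence; since $Z$ is a simplicial set, doing this for every $x$ is the same as showing $Y^!\to\Lambda Y$ is a degreewise weak equivalence. Write $x=g^*y_0$ for the unique nondegenerate simplex $y_0$ of $Z$ and monotone surjection $g$ (Eilenberg--Zilber), and let $p\co x\to y_0$ be the corresponding dominant morphism of $\simp(Z)$; throughout, the functor $(x\to y\leftarrow z)\mapsto Y(z)$ on $\ms E(x)$ is contravariant, so all homotopy colimits below are implicitly taken over opposite categories.

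First I would record three facts, none of which uses face-localness. (a)~The two reduction steps in the proof of Lemma~\ref{lem-locadj1} show that the inclusion $\ms E_1(x)\hookrightarrow\ms E(x)$ induces a weak equivalence $\hocolim_{\ms E_1(x)}Y(z)\xrightarrow{\simeq}(\Lambda Y)(x)$. (b)~As observed there, the objects of $\ms E_1(x)$ are the triples $x\xrightarrow{p}y_0\xleftarrow{q}z$ with $q$ dominant ($p$ forced), morphisms being maps $z\to z'$ over $y_0$; hence $(x\xrightarrow{p}y_0\xleftarrow{\id}y_0)$ is a terminal object of $\ms E_1(x)$, its inclusion is homotopy cofinal (the relevant comma categories are singletons), and the structure map $\beta\co Y(y_0)\to\hocolim_{\ms E_1(x)}Y(z)$ over that summand is a weak equivalence; composing with (a), $\beta$ is also the structure map of $(\Lambda Y)(x)$ over the object $(x\xrightarrow{p}y_0\xleftarrow{\id}y_0)$ of $\ms E(x)$. (c)~Dually, $\ms E_2(x)$ is the coslice $x\!\downarrow\!\simp(Z)$, with initial object $(x\xrightarrow{\id}x)$, so the corresponding structure map $\alpha\co Y(x)\to Y^!(x)$ is a weak equivalence (and a one-sided inverse to the map $Y^!\to Y$ from the paragraph before the lemma).

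Next I would use the unique morphism $\phi$ of $\ms E(x)$ from $j(x\xrightarrow{\id}x)=(x\xrightarrow{\id}x\xleftarrow{\id}x)$ to $(x\xrightarrow{p}y_0\xleftarrow{\id}y_0)$, which the forgetful functor $(x\to y\leftarrow z)\mapsto z$ sends to $p\co x\to y_0$. In the homotopy colimit $(\Lambda Y)(x)$, the morphism $\phi$ exhibits a homotopy between the structure map over $(x\xrightarrow{\id}x\xleftarrow{\id}x)$ and the composite of $p^*\co Y(y_0)\to Y(x)$ with the structure map over $(x\xrightarrow{p}y_0\xleftarrow{\id}y_0)$. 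Since $j$ respects forgetful functors, this reads, in the notation of (b),(c),
\[
\mu\circ\alpha\circ p^*\ \simeq\ \beta,\qquad\text{where }\mu\co Y^!(x)\to(\Lambda Y)(x)\text{ is the map in question.}
\]
Now I bring in the hypothesis: face-localness of $Y$ says exactly that every degeneracy operator induces a weak equivalence on the relevant fibres, and $p$ is a composite of codegeneracies, so $p^*\co Y(y_0)\to Y(x)$ is a weak equivalence. Then $\beta\simeq\mu\circ(\alpha\circ p^*)$ with $\beta$ and $\alpha\circ p^*$ weak equivalences, forcing $\mu$ to be a weak equivalence by two-out-of-three.

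The hard part will not be the mathematics but the bookkeeping: one must verify carefully that the three ``structure map'' weak equivalences of (a)--(c) are precisely the maps entering the homotopy produced by $\phi$ — that the cofinality equivalence out of the terminal object of $\ms E_1(x)$ agrees, under $\ms E_1(x)\hookrightarrow\ms E(x)$, with the structure map $\beta$ of $(\Lambda Y)(x)$, and that $\alpha$ is compatible with $j$ — and that the (contra)variance of $z\mapsto Y(z)$ on $\ms E(x)$ is tracked consistently. That compatibility check is the only delicate point.
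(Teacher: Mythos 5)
Your reduction to the structure-map identity $\mu\circ\alpha\circ p^*\simeq\beta$ is sound, and steps (a), (c) and the use of face-localness to see that $p^*\co Y(y_0)\to Y(x)$ is a weak equivalence are all correct. The gap is in (b). Because the functor $(x\to y\leftarrow z)\mapsto Y(z)$ is contravariant, the terminal object $t=(x\to y_0\xleftarrow{\id} y_0)$ of $\ms E_1(x)$ becomes an \emph{initial} object of the opposite category over which the homotopy colimit is formed, and the comma categories relevant to homotopy finality of $\{t\}\hookrightarrow\ms E_1(x)^{\op}$ are the morphism sets $\Hom_{\ms E_1(x)}(t,e)$, i.e.\ the sets of sections of the degeneracy $z\to y_0$; these are usually not singletons (already for $z=s_0y_0$ there are two), so the cofinality claim fails. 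Accordingly (b) is not a formal fact independent of face-localness, contrary to what you assert: for $Z=\Delta[0]$ the category $\ms E_1(x)$ is all of $\simp(\Delta[0])\cong\Delta$, and (b) would say that $Y_0\to \hocolim_{\Delta^{\op}}Y_\bullet$ is always a weak equivalence, which is false (take $Y=\Delta[1]$ viewed as a discrete simplicial space).

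Under the hypothesis of the lemma, (b) is true, but it needs an argument: by face-localness and two-out-of-three every morphism of the $\ms E_1(x)$-diagram (including the sections) induces a weak equivalence on values, and since $\ms E_1(x)$ has a terminal object its nerve is contractible, so each structure map, in particular $\beta$, is a weak equivalence. If you supply that, your proof goes through; but note that this repaired step is itself a homotopy-colimit comparison carrying essentially all the weight of the lemma. The paper's own proof is more direct and avoids singling out $y_0$: the inclusion $\ms E_2(x)\hookrightarrow\ms E(x)$ has a left adjoint whose unit morphisms have dominant underlying maps $z\to v$, so face-localness sends them to weak equivalences, and the standard comparison for an inclusion admitting an adjoint (as already used in the proof of Lemma~\ref{lem-locadj1}) then gives the statement in one step.
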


\begin{proof} For fixed $x$ in $\simp(Z)$, the inclusion of $\ms E_2(x)$ in $\ms E(x)$ has a left adjoint.
A morphism in $\ms E(x)$ given by a commutative diagram
\[
\begin{tikzpicture}[descr/.style={fill=white}, baseline=(current bounding box.base)],
	\matrix(m)[matrix of math nodes, row sep=2.5em, column sep=2.5em,
	text height=1.5ex, text depth=0.25ex]
	{
	 x & y & z \\
	 x & u & v \\
	};
	\path[->,font=\scriptsize]
		(m-1-1) edge node [right] {} (m-1-2);
	\path[->,font=\scriptsize]
		(m-1-1) edge node [right] {$id$} (m-2-1);
	\path[->,font=\scriptsize]
		(m-1-3) edge node [right] {} (m-2-3);
	\path[->,font=\scriptsize] 		
		(m-1-3) edge node [right] {} (m-1-2);
	\path[->,font=\scriptsize] 		
		(m-2-3) edge node [above] {$b$} (m-2-2);
	\path[->,font=\scriptsize]
		(m-2-1) edge node [auto] {} (m-2-2);
	\path[->,font=\scriptsize] 		
		(m-1-2) edge node [auto] {$a$} (m-2-2);
\end{tikzpicture}
\]
in $\simp(Z)$ is a unit morphism for the adjunction if and only if the arrows labeled $a$ and $b$ are isomorphisms
(alias identities). Then the arrow from $z$ to $v$ must be dominant, and so the induced map
$Y(v)\to Y(z)$ is a weak equivalence by assumption on $Y$. It follows
that the inclusion
\[
\hocolimsub{\twosub{x\to y\leftarrow z}{\textup{in }\ms E_2(x)}}~Y(z) \quad
 \vlra{2.0em}
\hocolimsub{\twosub{x\to y\leftarrow z}{\textup{in }\ms E(x)}}~Y(z)
\]
is a degreewise weak equivalence.
\end{proof}

\begin{lem} \label{lem-locadj3} For any simplicial space $Y$ over $Z$,  the inclusion $Y^!\to \Lambda Y$ induces a
degreewise weak equivalence from $\Lambda(Y^!)$ to $\Lambda(\Lambda Y)$.
\end{lem}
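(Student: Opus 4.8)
The plan is to evaluate $\Lambda(j)$ degreewise and to recognise it as a map of homotopy colimits induced by an inclusion of indexing categories which is homotopy cofinal. Here $j\co Y^!\to\Lambda Y$ is the inclusion, which at an object $x$ of $\simp(Z)$ is the map $\hocolim_{\ms E_2(x)}Y(z)\to\hocolim_{\ms E(x)}Y(z)$ coming from the full embedding $\ms E_2(x)\hookrightarrow\ms E(x)$ of Definition~\ref{defn-faceloc}. Fixing $x$ and unwinding the definitions of $\Lambda$ and of $(-)^!$,
\[
\Lambda(Y^!)(x)=\hocolimsub{(x\to y\leftarrow z)\in\ms E(x)}\ \hocolimsub{(z\to w)\in\ms E_2(z)}Y(w),\qquad \Lambda(\Lambda Y)(x)=\hocolimsub{(x\to y\leftarrow z)\in\ms E(x)}\ \hocolimsub{(z\to w'\leftarrow w)\in\ms E(z)}Y(w).
\]
By the Fubini theorem for homotopy colimits (a homotopy colimit of homotopy colimits over a diagram of categories is a homotopy colimit over the Grothendieck construction) each side becomes a single homotopy colimit, $\Lambda(Y^!)(x)\simeq\hocolim_{\ms E^{(1)}(x)}Y$ and $\Lambda(\Lambda Y)(x)\simeq\hocolim_{\ms E^{(2)}(x)}Y$, where an object of $\ms E^{(1)}(x)$ is a chain $x\to y\leftarrow z\to w$ with $z\to y$ dominant, an object of $\ms E^{(2)}(x)$ is a chain $x\to y\leftarrow z\to w'\leftarrow w$ with $z\to y$ and $w\to w'$ both dominant, and in either case the diagram sends an object to $Y$ evaluated on its terminal vertex. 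Under these identifications $\Lambda(j)(x)$ is induced by the full embedding $\iota\co\ms E^{(1)}(x)\hookrightarrow\ms E^{(2)}(x)$ which inserts an identity arrow, $(x\to y\leftarrow z\to w)\mapsto(x\to y\leftarrow z\to w\xleftarrow{=}w)$. So it suffices to prove that $\iota$ is homotopy cofinal.

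\textbf{The cofinality, and the main obstacle.} To prove $\iota$ homotopy cofinal I would analyse the comma categories $\iota\!\downarrow\!\beta$ for objects $\beta=(x\to y\leftarrow z\to w'\leftarrow w)$ of $\ms E^{(2)}(x)$, using the ``collapse'' construction already exploited in the proof of Lemma~\ref{lem-locadj2}: the object $x\to y\leftarrow z\to w'$ of $\ms E^{(1)}(x)$, together with the morphism of $\ms E^{(2)}(x)$ to $\beta$ built, on the last two arrows, from the identity of $w'$ and the dominant arrow $w\to w'$ — exactly the morphism used for the fibrewise adjunction $\ms E_2(z)\rightleftarrows\ms E(z)$ in Lemma~\ref{lem-locadj2} — should be cofinal in $\iota\!\downarrow\!\beta$, making $\iota\!\downarrow\!\beta$ weakly contractible. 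The main obstacle is precisely carrying this through: the assignments $z\mapsto\ms E_2(z)$ and $z\mapsto\ms E(z)$ are only contravariantly functorial in $z$, so the Grothendieck constructions $\ms E^{(1)}(x),\ms E^{(2)}(x)$, the diagrams on them, and the comma-category analysis must all be set up with the correct handedness and checked to be natural over $\simp(Z)$ (so that the identifications above hold on the level of simplicial spaces and commute with the simplicial operators of $Z$) — the same sort of care exercised in the proofs of Lemmas~\ref{lem-locadj1} and~\ref{lem-locadj2}. Note the contrast with Lemma~\ref{lem-locadj2}, where the fibrewise maps $j_z\co Y^!(z)\to\Lambda Y(z)$ are weak equivalences only when $Y$ is face-local; here no hypothesis on $Y$ is needed, because the ambient homotopy colimit over the chains $x\to y\leftarrow z$ — present in $\ms E^{(1)}(x)$ and $\ms E^{(2)}(x)$ but not in the bare fibres $\ms E_2(z),\ms E(z)$ — supplies exactly the room that the collapse construction requires. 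Once $\iota$ is known to be homotopy cofinal, $\Lambda(j)(x)=\hocolim_{\ms E^{(1)}(x)}Y\to\hocolim_{\ms E^{(2)}(x)}Y$ is a weak equivalence for every $x$, and hence $\Lambda(j)\co\Lambda(Y^!)\to\Lambda(\Lambda Y)$ is a degreewise weak equivalence, which is the claim. (Face-locality of the two sides, Lemma~\ref{lem-locadj1}, and the fact that $\Lambda$ and $(-)^!$ preserve degreewise weak equivalences, enter only through the Fubini step.)

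\textbf{An alternative.} If one prefers a less computational route, one can instead observe that $\Lambda$ is, up to weak equivalence, fibrant replacement in the left Bousfield localization of simplicial spaces over $Z$ at the degeneracy maps, that $\Lambda Y$ is face-local and hence a fibrant object, and that $j$ is a local equivalence — being, through the weak equivalence $Y^!\to Y$, the localization map $Y\to\Lambda Y$ — so that $\Lambda(j)$ is a genuine weak equivalence by the standard fact that fibrant replacement carries local equivalences to weak equivalences. The hands-on argument above nonetheless fits the surrounding development better.
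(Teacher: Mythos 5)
Your overall plan coincides with the paper's: first a Fubini step rewriting each side of $\Lambda(j)(x)\co\Lambda(Y^!)(x)\to\Lambda(\Lambda Y)(x)$ as a single homotopy colimit over a Grothendieck-type indexing category, then an argument that the functor between the two indexing categories induces an equivalence of homotopy colimits. The paper carries out the second step concretely, by a chain of four small reductions, each restricting the indexing data (arrows $x\leftarrow z$, $z\to z'$, $z'\leftarrow z''$ in turn) to a sub-poset whose inclusion has a left or right adjoint compatible with the diagram, until the comparison map becomes a literal identity. (It also first replaces the outer $\ms E(x)$ by the less functorial $\ms E_1(x)$, a simplification you omit but which makes the bookkeeping manageable.) You propose instead to establish homotopy cofinality of $\iota\co\ms E^{(1)}(x)\to\ms E^{(2)}(x)$ in one shot. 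That is a sensible reorganization and is morally equivalent to what the paper does; but you explicitly flag it as "the main obstacle" and only sketch the comma-category analysis. So the key step — the actual verification, including getting the variances of the Grothendieck constructions and comma categories straight — is missing. What you have is a correct plan, not a proof; filling the gap would be roughly the paper's argument.

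The proposed alternative is circular as stated. Claiming that $\Lambda$ "is, up to weak equivalence, fibrant replacement in the left Bousfield localization" and that $j$, through the zigzag $Y\xleftarrow{\simeq}Y^!\to\Lambda Y$, is "the localization map $Y\to\Lambda Y$" presupposes exactly what Lemmas~\ref{lem-locadj1},~\ref{lem-locadj2} and~\ref{lem-locadj3} are jointly used to prove, namely that the explicit $\Lambda$ of Definition~\ref{defn-faceloc} agrees with the abstract derived left adjoint / fibrant replacement. In particular, "fibrant replacement carries local equivalences to weak equivalences" unpacks to derived idempotence of $\Lambda$, which (after the identification $\Lambda(Y^!)\simeq\Lambda(Y)$) is precisely the content of Lemma~\ref{lem-locadj3}. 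You acknowledge the hands-on route fits better; I would go further and say the alternative cannot be used here at all without first proving this lemma by other means.
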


\begin{proof} Let $x$ be an object in $\simp(Z)$.
We want to show that the inclusion of $(\Lambda(Y^!))(x)$ in  $(\Lambda(\Lambda Y))(x)$ is a weak
equivalence. Since $\Lambda(Y^!)$ and $\Lambda(\Lambda Y)$ are both conservative over $Z$, there is no loss of generality
in assuming that $x$ is nondegenerate. In forming $\Lambda(Y^!)(x)$ from  $Y^!$ and in the outside application of $\Lambda$
to form $(\Lambda(\Lambda(Y)))(x)$ from $\Lambda(Y)$, we can replace the indexing category
$\ms E(x)$ by the less functorial $\ms E_1(x)$; see the
proof of lemma~\ref{lem-locadj1}. Then we are looking at the inclusion of
\[ \hocolimsub{x\leftarrow z} Y^!(z)
=\hocolimsub{x\leftarrow z} \hocolimsub{z\to z'} Y(z')~, \]
an acceptable substitute for $(\Lambda(Y^!))(x)$, in
\[  \hocolimsub{x\leftarrow z} (\Lambda Y)(z)
=\hocolimsub{x\leftarrow z} \hocolimsub{z\to z'\leftarrow z''} Y(z'')~,
\]
an acceptable substitute for $(\Lambda(\Lambda Y))(x)$. All arrows
pointing from right to left are dominant.
These double homotopy colimits can be replaced by the weakly equivalent
\[ \hocolimsub{x\leftarrow z\to z'} Y(z') \quad \textup{and} \;
\hocolimsub{x\leftarrow z\to z'\leftarrow z''} Y(z''),
\]
respectively.
The weak homotopy type of the left-hand expression is not affected if
we add the condition (under the hocolim sign) that $z\to z'$ be an identity morphism, because the inclusion
of the full subcategory (of the appropriate indexing category)
defined by that condition has a right adjoint. By the same reasoning, the weak
homotopy type of the right-hand expression is not affected if
we add the condition (under the hocolim sign) that $z\to z'$ be dominant.
Now we are looking at the inclusion
\[  \hocolimsub{x\leftarrow z}  Y(z)
\quad  \vlra{2.0em} \hocolimsub{x\leftarrow z\to z'\leftarrow z''} Y(z'')
\]
(where all arrows $\leftarrow$ and $\rightarrow$ are dominant)
obtained by viewing diagrams $x\leftarrow z$ as diagrams of the form $x\leftarrow z\to z'\leftarrow z''$
where $z''=z'=z$. Note that in the right-hand side under the hocolim sign, the simplices $z$, $z'$ and $z''$
are degeneracies of the nondegenerate $x$ (on the understanding that a degeneracy of $x$ need not be distinct from $x$).
Therefore another simplification can be
made here that does not affect the weak homotopy type. Namely, we can assume that the arrow $z\to z'$ is an identity.
(The inclusion of the subcategory defined by that condition has a left adjoint which replaces
a diagram $x\leftarrow z\to z'\leftarrow z''$ by
\[
x \vlla{1.0em} z' \xvlra{1.0em}{=} z' \vlla{1.0em} z^{''}
\]
To make sense
of the prescription, observe that the arrow from $z$ to $x$ in the diagram
$x\leftarrow z\to z'\leftarrow z''$ must factor uniquely through the arrow from $z$ to $z'$ in the same diagram.)
Now we are looking at the inclusion
\[ \hocolimsub{x\leftarrow z}  Y(z)
\;
\vlra{2.0em}
\hocolimsub{x\leftarrow z\leftarrow z''} Y(z'')
\]
where all arrows under the hocolim signs are dominant.
We make yet another simplification under the hocolim sign on the right
that does not affect the weak homotopy type. Namely, we can assume that the arrow from $z$ to $x$ is an identity.
(The inclusion of the full subcategory defined by that condition has a left adjoint which takes a diagram
$x\leftarrow z\leftarrow z''$ to $x\leftarrow x\leftarrow z''$.) After that last simplification, our comparison
map is an identity map. As such it is a weak equivalence. \end{proof}

\medskip
Lemmas~\ref{lem-locadj1},~\ref{lem-locadj1plus},~\ref{lem-locadj2} and~\ref{lem-locadj3} justify the statement that $\Lambda$
is a derived left adjoint for the inclusion of the full subcategory of the conservative simplicial spaces over $Z$
in the category of all simplicial spaces over $Z$.

\begin{rem} \label{rem-pinought} {\rm In diagram~(\ref{eqn-faceloc}), the map from $\pi_0 Y^!_0\cong \pi_0 Y_0$ to $\pi_0((\Lambda Y)_0)$
determined by the horizontal arrows is surjective. This can be seen from the description
\[  (\Lambda Y)_0 \simeq \hocolimsub{[0]\to[0]\leftarrow[\ell]} Y_\ell\times_{Z_\ell} Z_0~. \]
The indexing category is identified with $\Delta$ and the hocolim can be interpreted as geometric realization.
}
\end{rem}

\begin{defn} {\rm Let $X$ and $Y$ be simplicial spaces over $Z$, a simplicial set. A map $g\co X\to Y$ over $Z$
is a \emph{conservatization map} if $Y$ is conservative over $Z$ and
the map $\Lambda g\co \Lambda(X)\to \Lambda(Y)$ induced by $g$ is a degreewise weak equivalence.
}
\end{defn}

\subsection{Property beta} \label{subsec-trans}
Let $\sC$ be a small (discrete) category and put $Z=N\sC$.
In this section we give some conditions which ensure that the Segal property and fiberwise completeness
are preserved under conservatization $\Lambda$ relative to $Z$.

\begin{lem} \label{lem-faceloccplt} If $Y$ is a fiberwise complete Segal space over $Z=N\sC$ and if $\Lambda Y$ is a Segal space, then
$\Lambda Y$ is also fiberwise complete over $Z$.
\end{lem}

\begin{proof} Let $f\co b\to c$ be an isomorphism in $\sC$. If $f$ is an identity morphism, then
$d_1, d_0 : (\Lambda Y)(f) \to (\Lambda Y)(b)$ are weak equivalences since $\Lambda Y$ is conservative. It suffices to show that
\begin{itemize}
\item[($*$)] $d_0, d_1 : (\Lambda Y)(f) \to (\Lambda Y)(b)$ are weak equivalences, even if $f$ is not the identity, just an isomorphism.
\end{itemize}
Indeed, $(*)$ implies that for any $\bar{f}$ in $(\Lambda Y)(f)$ we can find $\bar{g}$ in $\Lambda Y(g)$ where $g = f^{-1}$ such that $\bar{f}$ and $\bar{g}$ are weakly composable (in $\Lambda Y$). Then $\bar{g} \bar{f}$ is in $\Lambda Y(\id_b)$ and is therefore weakly invertible by the special case of $(*)$ when $f = \id$. This implies that $\bar{f}$ has a weak left inverse. Similarly, $\bar{f}$ has a weak right inverse. It follows that $(\Lambda Y)^{\heq}(f) = (\Lambda Y)(f)$, and so $(*)$ implies that $\Lambda Y$ is fiberwise complete over $Z$.

Now we prove ($*$). Take $\bar{c}$ in $\Lambda Y(c)$ and consider the homotopy fiber of $$d_1 \co (\Lambda Y)(f) \to (\Lambda Y)(c)$$ over $\bar{c}$. We claim there exists $\bar{f}$ in that homotopy fiber which is weakly invertible as an element of $\Lambda Y (f)$. By remark \ref{rem-pinought}, we can assume that $\bar{c}$ is in $Y(c)$ and, consequently, it is sufficient to search for a weakly invertible $\bar{f}$ in the homotopy fiber over $\bar{c}$ of $d_1 \co Y(f) \to Y(c)$. This is easy because $Y$ is fiberwise complete and $f$ is an isomorphism.

Let $\bar{b}$ be the source of $\bar{f}$ in $\Lambda Y(b)$. Composition with $\bar{f}$ is a weak equivalence from the homotopy fiber over $\bar{b}$ of $d_1 \co (\Lambda Y)(id_b) \to (\Lambda Y)(b)$ to the homotopy fiber over $\bar{c}$ of $d_1 \co (\Lambda Y)(f) \to (\Lambda Y)(c)$. Since the first homotopy fiber is weakly contractible, so is the second.

This argument shows that $d_1$ in $(*)$ is a weak equivalence. A similar argument works for $d_0$.
\end{proof}

\medskip

Again let $\sC$ be a small (discrete) category and put $Z=N\sC$. Let $Y$ be a fiberwise
complete Segal space over $Z$, with reference map $w\co Y\to Z$. For $y \in Y_0$, we denote by $(Y\downarrow y)$ the Segal space whose space of $n$-simplices is the homotopy fiber of the ultimate target map $Y_{n+1} \to Y_0$ over $y$. It is a Segal space over $Z$ by the composite $Y_{n+1} \to Y_{n} \to Z_n$ where the first map is $d_0$.

\begin{defn} \label{defn-transp} {\rm We say that $w\co Y\to Z$ has
\emph{property beta} if, for $f\in Y_1$ with source $x=d_0f\in Y_0$ and target $y=d_1f\in Y_0$
such that $w(f)\in Z_1=(N\sC)_1$ is an identity morphism, the map
$\Lambda(Y\downarrow x) \to \Lambda(Y\downarrow y)$
induced by $f$ is a weak equivalence in degree 0.
}
\end{defn}

\begin{thm} \label{thm-transp} Under conditions as in definition~\emph{\ref{defn-transp}}, if $Y\to Z$ has property beta,
then $\Lambda Y$ is a fiberwise complete Segal space over $Z$.
\end{thm}

\begin{proof} The following general principle (gpr) will be used several times. Let $\sA$ be a small category, let $F_1$ and $F_2$ be
functors from $\sA$ to spaces and let $\nu\co F_1\to F_2$ be a natural transformation. If for
every morphism $g\co a\to b$ in $\sA$ the square
\[
	\begin{tikzpicture}[descr/.style={fill=white}, baseline=(current bounding box.base)] ]
	\matrix(m)[matrix of math nodes, row sep=2.5em, column sep=2.5em,
	text height=1.5ex, text depth=0.25ex]
	{
	F_1(a) & F_2(a)  \\
	F_1(b)  & F_2(b) \\
	};
	\path[->,font=\scriptsize]
		(m-1-1) edge node [auto] {$\nu$} (m-1-2)
		(m-2-1) edge node [auto] {$\nu$} (m-2-2)
		(m-1-1) edge node [left] {$g_*$} (m-2-1)
		(m-1-2) edge node [auto] {$g_*$} (m-2-2);
	\end{tikzpicture}
\]
is homotopy cartesian, then for every object $a$ in $\sA$ the square
\[
	\begin{tikzpicture}[descr/.style={fill=white}, baseline=(current bounding box.base)] ]
	\matrix(m)[matrix of math nodes, row sep=2.5em, column sep=2.5em,
	text height=1.5ex, text depth=0.25ex]
	{
	F_1(a) & F_2(a)  \\
	\hocolim~F_1  & \hocolim~F_2 \\
	};
	\path[->,font=\scriptsize]
		(m-1-1) edge node [auto] {$\nu$} (m-1-2)
		(m-2-1) edge node [auto] {$\nu$} (m-2-2)
		(m-1-1) edge node [left] {} (m-2-1)
		(m-1-2) edge node [auto] {} (m-2-2);
	\end{tikzpicture}
\]
is homotopy cartesian. This statement appears in \cite{Puppe}, and the proof there uses quasi-fibration theory.

Now select $\sigma\in Z_r$ corresponding to a diagram
\[
c_0 \xleftarrow{ \; f_1 \;} c_1 \xleftarrow{ \; f_2 \;} c_2 \xleftarrow{ \; \; \; \;} \cdots \xleftarrow{ \; \; \; \;} c_{r-1} \xleftarrow{ \; f_r \;} c_r
\]
in $\sC$; recall that $Z=N\sC$. We want to investigate the homotopy fibers of the map
\begin{equation} \label{eqn-beta1} (\Lambda Y)(\sigma) \lra (\Lambda Y)(d_r\sigma) \end{equation}
induced by $d_r$~, using (gpr).

Assume to begin with that $\sigma$ is nondegenerate. (Then
$d_r\sigma\in Z_{r-1}$ and $f_r\in Z_1$ are also nondegenerate.) We can use the alternative definition
\[  (\Lambda Y)(\sigma) = \hocolimsub{q\co [k]\to [r]} Y(q^*\sigma) \]
of~(\ref{eqn-facelocsimp4}), where $q$ runs through the surjective monotone maps with fixed target
$[r]$ and variable source $[k]$. These maps $q$ are the objects of a small category $\sD_r$ where the
morphisms are commutative triangles. We can write
\[ \sD_r\cong \sD_{r-1}\times\sD_0~, \]
roughly by restricting
a monotone surjection $[k]\to [r]$ to the preimages of the subsets $[r-1]$ and $\{r\}$ of $[r]$.
This leads to a projection functor $\sD_r\to \sD_{r-1}$.
Therefore~(\ref{eqn-beta1}) can be recast in the form
\begin{equation} \label{eqn-beta2}
\hocolimsub{(g,h)\textup{ in }\sD_{r-1}\times\sD_0} Y((g\sqcup h)^*\sigma) \lra
\hocolimsub{g\textup{ in }\sD_{r-1}} Y(g^*d_r\sigma)
\end{equation}
where $g\sqcup h$ denotes the juxtaposition (an object in $\sD_r$) corresponding to $q$.
Next, we can write the hocolim on the left as a hocolim taken over $\sD_{r-1}$
using the homotopy left Kan extension along the projection $\sD_{r-1}\times \sD_0\to \sD_{r-1}$.
After a routine simplification we obtain the description
\begin{equation} \label{eqn-beta3}
\hocolimsub{g\textup{ in }\sD_{r-1}} \hocolimsub{h\in \sD_0} Y((g\sqcup h)^*\sigma) \lra
\hocolimsub{g\textup{ in }\sD_{r-1}} Y(g^*d_r\sigma).
\end{equation}
Fix $g:[k]\to [r-1]$ in $\sD_{r-1}$ for the time being. For $h\co [\ell]\to [0]$ in $\sD_0$ write $\id_0\sqcup h\co [\ell+1]\to [1]$
for the map taking $0$ to $0$ and all other elements to $1$. There is a commutative square
\begin{equation} \label{eqn-prebeta4}
\begin{tikzpicture}[descr/.style={fill=white}, baseline=(current bounding box.base)] ]
	\matrix(m)[matrix of math nodes, row sep=2.5em, column sep=2.5em,
	text height=1.5ex, text depth=0.25ex]
	{
	{\hocolimsub{h\in \sD_0} \rule{0mm}{6mm} Y((g\sqcup h)^*\sigma)} & {\hocolimsub{h\in \sD_0} \rule{0mm}{6mm} Y(g^*d_r\sigma)}  \\
	{\hocolimsub{h\in \sD_0} \rule{0mm}{6mm} Y((\id_0\sqcup h)^* f_r)} & {\hocolimsub{h\in \sD_0} \rule{0mm}{6mm} Y(c_{r-1})}  \\
	};
	\path[->,font=\scriptsize]
		(m-1-1) edge node [auto] {} (m-1-2)
		(m-2-1) edge node [auto] {} (m-2-2)
		(m-1-1) edge node [left] {} (m-2-1)
		(m-1-2) edge node [auto] {} (m-2-2);
	\end{tikzpicture}
\end{equation}
whose arrows are induced by the monotone maps in
\[
\begin{tikzpicture}[descr/.style={fill=white}, baseline=(current bounding box.base)] ]
	\matrix(m)[matrix of math nodes, row sep=2.5em, column sep=3.5em,
	text height=1.5ex, text depth=0.25ex]
	{
	{[k+\ell+1]} & {[k]}  \\
	{[\ell+1] } & {[0]}  \\
	};
	\path[<-,font=\scriptsize]
		(m-1-1) edge node [auto] {$x\mapsto x$} (m-1-2)
		(m-2-1) edge node [auto] {$x\mapsto x$} (m-2-2)
		(m-1-1) edge node [left] {$x\mapsto x+k$} (m-2-1)
		(m-1-2) edge node [auto] {$x\mapsto x+k$} (m-2-2);
	\end{tikzpicture}
\]
The square (\ref{eqn-prebeta4}) can be seen to be homotopy cartesian by applying (gpr) to each column, using the fact that $Y$ is a Segal space. (For the left-hand column, set $F_1(h) = Y((g \sqcup h)^*\sigma)$ and $F_2(h) = Y((\id_0 \sqcup h)^*f_r$ in (gpr), for $h$ in $\sD_0^\op$. For the right-hand column, we are looking at constant functors on $\sD_0^\op$.) The classifying space of $\sD_0$ is contractible, so that the right-hand column simplifies and we get a homotopy cartesian square
\begin{equation} \label{eqn-beta4}
	\begin{tikzpicture}[descr/.style={fill=white}, baseline=(current bounding box.base)] ]
	\matrix(m)[matrix of math nodes, row sep=2.5em, column sep=2.5em,
	text height=1.5ex, text depth=0.25ex]
	{
	{\hocolimsub{h\in \sD_0} \rule{0mm}{6mm} Y((g\sqcup h)^*\sigma)}& Y(g^*d_r\sigma)  \\
	{\hocolimsub{h\in \sD_0} \rule{0mm}{6mm} Y((\id_0\sqcup h)^* f_r)}  & Y(c_{r-1}) \;.  \\
	};
	\path[->,font=\scriptsize]
		(m-1-1) edge node [auto] {} (m-1-2)
		(m-2-1) edge node [auto] {} (m-2-2)
		(m-1-1) edge node [left] {} (m-2-1)
		(m-1-2) edge node [auto] {} (m-2-2);
	\end{tikzpicture}
\end{equation}

It follows that the top horizontal homotopy fiber over $\bar x \in Y(g^*d_r\sigma)$ is identified with the lower horizontal homotopy fiber over the image $x\in Y(c_{r-1})$ , which is further identified with
\[ \hocolimsub{h\in \sD_0} \hofiber_x[\,Y((\id_0\sqcup h)^*f_r) \to Y(c_{r-1})\,]. \]
since homotopy colimits are stable under homotopy base change. In other words, it is the part of $\Lambda(Y\downarrow x)_0$ projecting to the 0-simplex $f_r$ in $N(\finplus\downarrow c_{r-1})$.

Now we return to ~(\ref{eqn-beta3}). Letting $g$ vary in square (\ref{eqn-beta4}), the resulting map between horizontal homotopy fibers (corresponding to a morphism from $g$ to $g^\prime$ in $\sD_{r-1}$, say) takes the form $\Lambda(Y\downarrow x)_0 \to \Lambda(Y\downarrow x^\prime)_0$ (restricted to the parts over $f_r$) induced by a morphism $x \to x^\prime$ which covers the identity of $c_{r-1}$ in $\finplus$. Property beta means that
this map is a weak equivalence. This means that we can apply (gpr) once more, now to the functors
\[
F_1(g) = {\hocolimsub{h\in \sD_0} \rule{0mm}{6mm} Y((g\sqcup h)^*\sigma)} \quad \mbox{ and } \quad F_2(g) = Y(g^*d_r\sigma)
\]
for $g \in \sD_{r-1}^\op$. Therefore the homotopy fiber of~(\ref{eqn-beta3}) or
equivalently of~(\ref{eqn-beta1}) over $\bar x$ can again be identified with the part of $\Lambda(Y\downarrow x)$
projecting to the 0-simplex $f_r$ in $N(\sC \downarrow c_{r-1})$. Since this argument works for $\sigma\in (N\sC)_r$
as well as for $f_r\in (N\sC)_1$, it follows that the square
\begin{equation} \label{eqn-segaldia}
	\begin{tikzpicture}[descr/.style={fill=white}, baseline=(current bounding box.base)] ]
	\matrix(m)[matrix of math nodes, row sep=2.5em, column sep=2.5em,
	text height=1.5ex, text depth=0.25ex]
	{
	(\Lambda Y)(\sigma) & (\Lambda Y)(d_r\sigma)  \\
	(\Lambda Y)(f_r)   & (\Lambda Y)(c_{r-1})  \\
	};
	\path[->,font=\scriptsize]
		(m-1-1) edge node [auto] {$d_r$} (m-1-2)
		(m-2-1) edge node [auto] {$d_1$} (m-2-2)
		(m-1-1) edge node [left] {$(d_0)^{r-1}$} (m-2-1)
		(m-1-2) edge node [auto] {$(d_0)^{r-1}$} (m-2-2);
	\end{tikzpicture}
\end{equation}
is homotopy cartesian.

\emph{The case where $\sigma$ is degenerate.} If it so happens that $f_r$ is an identity morphism,
then the two horizontal arrows in diagram~(\ref{eqn-segaldia}) are weak equivalences since $\Lambda Y$
is conservative over $Z$. Then diagram~(\ref{eqn-segaldia}) is certainly homotopy cartesian.
Next, assume that $f_r$ is not an identity morphism. There exist a monotone surjective map $v\co [r]\to [q]$
and a \emph{nondegenerate} $q$-simplex $\tau$ corresponding to a diagram
\[
b_0 \xleftarrow{ \; g_1 \;} b_1 \xleftarrow{ \; g_2 \;} b_2 \xleftarrow{ \; \; \; \;} \cdots \xleftarrow{ \; \; \; \;} b_{q-1} \xleftarrow{ \; g_q \;} b_q
\]
in $\sC$ such that $\sigma=v^*\tau$. The commutative square~(\ref{eqn-segaldia}) is the target of a map (natural transformation)
induced by $v$; the source is a similar square
\begin{equation} \label{eqn-segaldia2}
	\begin{tikzpicture}[descr/.style={fill=white}, baseline=(current bounding box.base)] ]
	\matrix(m)[matrix of math nodes, row sep=2.5em, column sep=2.5em,
	text height=1.5ex, text depth=0.25ex]
	{
	(\Lambda Y)(\tau) & (\Lambda Y)(d_q\tau)  \\
	(\Lambda Y)(g_q)  & (\Lambda Y)(b_{q-1})  \\
	};
	\path[->,font=\scriptsize]
		(m-1-1) edge node [auto] {$d_q$} (m-1-2)
		(m-2-1) edge node [auto] {$d_1$} (m-2-2)
		(m-1-1) edge node [left] {$(d_0)^{q-1}$} (m-2-1)
		(m-1-2) edge node [auto] {$(d_0)^{q-1}$} (m-2-2);
	\end{tikzpicture}
\end{equation}
which we already know to be homotopy cartesian.
That map from square~(\ref{eqn-segaldia2}) to square~(\ref{eqn-segaldia}) is a termwise weak equivalence because
$\Lambda Y$ is conservative over $Z$.
It follows that square~(\ref{eqn-segaldia}) is homotopy cartesian, too. ---
Therefore square~(\ref{eqn-segaldia}) is homotopy cartesian for all $\sigma\in (NZ)_r$. This means that
$\Lambda Y$ has the Segal property.
\end{proof}

There is a canonical map from
$\Lambda(Y\downarrow x)$ to $(\Lambda Y\downarrow x)$ which is in reality a diagram of the form
\[
\Lambda(Y\downarrow x) \xleftarrow{\; \simeq \;} \Lambda(Y^!\downarrow x^!)  \xrightarrow{\; \simeq \;} \Lambda(\Lambda Y\downarrow x^!)  \xleftarrow{\; \simeq \;} (\Lambda Y\downarrow x^!)^! \xrightarrow{\; \simeq \;} (\Lambda Y\downarrow x^!).
\]
The $(-)^!$ notation is explained near diagram~(\ref{eqn-faceloc}). We are assuming, without loss of generality, that $x\in Y_0$ is the
image of $x^!\in Y^!_0$\,. The second arrow is induced by the inclusion
$Y^!\to \Lambda Y$. The third arrow is a weak equivalence because $(\Lambda Y\downarrow x^!)$
is already conservative over $Z$. The proof above shows that the decisive second arrow is a weak equivalence in degree $0$:
the homotopy fiber of (\ref{eqn-beta3}) is identified with $(\Lambda Y \downarrow x)_0$ when $r = 1$. Hence
the sentence \emph{Therefore the homotopy fiber of~\emph{(\ref{eqn-beta3})}\dots} just before (\ref{eqn-segaldia}) implies the following.


\begin{prop}
For any $x\in Y_0$ the canonical map from $\Lambda(Y\downarrow x)$ to $(\Lambda Y\downarrow x)$ is a weak
equivalence in degree $0$.
\end{prop}

\subsection{Examples and applications} The examples in this (sub)section
culminate in the proof of proposition~\ref{prop-genpos} below.

\begin{defn} \label{defn-fino} {\rm Let $\fino$ be the following category. An object is an object $[k]$ of $\finplus$
together with a distinguished element $a\in\uli k\subset[k]$ and a choice of color $c$, black or white. (It follows that $k>0$.
Sometimes we think of the color $c$ as a property of the distinguished element $a$, which we may accordingly call a white hole or a black hole.)
The color choice \emph{white} is only allowed if $a=k \in \uli k$.
A morphism from $([k],a_0,c_0)$ to $([\ell],a_1,c_1)$ is a morphism $[k]\to[\ell]$ in $\finplus$ subject to the following conditions:
\begin{itemize}
\item[{}] $f(a_0)=a_1$~;
\item[{}] if $c_1=$ white, then $c_0=$ white and $f(x)\ne a_1$ whenever $x\ne a_0$.
\end{itemize}
Composition of morphisms is as in $\finplus$\,, so that
there is a forgetful functor
\[ \varphi\co \fino\to \finplus\,. \]
There is another functor $\delta\co \fino \to \finplus$
which takes an object $([k],a,c)$ to $[k]$ if $c=$ black, and to
$[k-1]$ if $c=$ white. A morphism
from $([k],a,c)$ to $([\ell],a',c')$
in $\fino$ determines a morphism in $\finplus$ from $\delta([k],a,c)$ to $\delta([\ell],a',c')$
forgetfully and by restriction where applicable. The functors $\delta$ and $\varphi$ are related
by a natural transformation $\delta\to \varphi$ which can be regarded as an inclusion.
(In other words there is a standard morphism $\delta([k],a,c)\to \varphi([k],a,c)$ which
is the identity of $[k]$ if $c=$ black, and the inclusion of $[k-1]$ in $[k]$
if $c=$ white.) \newline
The functor $\delta$ has a left adjoint $\alpha\co \finplus\to \fino$ which identifies $\finplus$ with the full
subcategory of $\fino$ consisting of the white objects. It is given on objects
by $\alpha([k]) = ([k+1],k+1,\textup{white})$. We have $\delta\alpha=\id$. \newline
\emph{Mnemonic}: $\varphi$ forgets, $\alpha$ adds, $\delta$ deletes.
}
\end{defn}

\begin{rem} {\rm The functor $\delta$ makes $N\fino$ into a fiberwise complete Segal space over $N\finplus$.
(But the functor $\varphi$ does not make $N\fino$ into a fiberwise complete Segal space
over $N\finplus$.)}
\end{rem}

\begin{rem} \label{rem-suck} {\rm The functor $\delta\co N\fino\to N\finplus$ does not make
$N\fino$ conservative over $N\finplus$\,. This is related to lemma~\ref{lem-hensel}. There are a few morphisms in $\fino$
which are not isomorphisms, but turn into isomorphisms on applying $\delta$. These are the morphisms of the form
$([k],k,\textup{white}) \to ([k-1],a,\textup{black})$
where the underlying map $f$ is surjective and $f^{-1}(a)$ has exactly two elements.
}
\end{rem}

\medskip
Let $M$ be a manifold with boundary. We write $\varphi^*\config(M)$ for the pullback
of $\config(M)\to N\finplus$ along the map $\varphi\co N\fino\to N\finplus$.

It may help to think of objects (alias $0$-simplices) of $\varphi^*\config(M)$ in the following way.
An object is a pair consisting essentially of an ordered configuration in $M_-=M\smin\partial M$
and a point $p$ in $M_-$\,. We want a clear decision on whether $p$ is in the configuration or not.
Therefore the two possible answers correspond to a splitting of the object space into two disjoint
topological summands, the \emph{black} summand and the \emph{white} summand. Specifically,
if an object of $\varphi^*\config(M)$ is taken to an object in $\fino$ of the form $([k],a,\textup{black})$,
then we are dealing with an ordered configuration of $k$ points in $M_-$ and a point
$p\in M_-$ which is \emph{in} the configuration and as such occupies the number $a\in \uli k$\,. If the
underlying object in $\fino$ has the form $([k],k,\textup{white})$
then we are dealing with an ordered configuration of $k-1$ points in $M_-$ (corresponding to labels $1,2,\dots,k-1$)
and a point $p\in M_-$ which is not in the configuration. 

\begin{cor} The composition $\varphi^*\config(M)\to N\fino\xrightarrow{\delta} N\finplus$
makes $\varphi^*\config(M)$ into a fiberwise complete Segal space over $N\finplus$. \emph{(We tend to write
$\delta_*\varphi^*\config(M)$ for the simplicial space $\varphi^*\config(M)$ if we have this
specific reference map to $N\finplus$ in mind.)}
\end{cor}

\begin{proof} Since $\config(M)$
is a fiberwise complete Segal space over $N\finplus$\,, it follows that $\varphi^*\config(M)$
is a fiberwise complete Segal space over $N\fino$. But $N\fino$ is a fiberwise complete Segal space
over $N\finplus$ by means of $\delta$. \end{proof}

\begin{rem} \label{rem-rw} {\rm Proposition~\ref{prop-genpos} below is an analogue for configuration categories of an easy
observation concerning \emph{ordered configuration spaces} which we saw in \cite{Wahl}, attributed to Randal-Williams.
Let $M$ be an $m$-manifold without boundary for simplicity, $p\in M$ and $k\ge 1$.
Then $\emb(\,\uli k\,,M)$ is a manifold $A$. It has a submanifold $C$, closed as a subset of $A$,
consisting of the embeddings $\uli k\to M$ which have $p$ in their image.
The submanifold $C$ has a trivializable normal disk bundle with fiber $D_p$~, where $D_p\subset M$ is a compact
disk about $p$. This leads to a homotopy pushout square
\[
	\begin{tikzpicture}[descr/.style={fill=white}, baseline=(current bounding box.base)] ]
	\matrix(m)[matrix of math nodes, row sep=2.5em, column sep=2.5em,
	text height=1.5ex, text depth=0.25ex]
	{
	\partial D_p\times C & C  \\
	A\smin C  & A  \\
	};
	\path[->,font=\scriptsize]
		(m-1-1) edge node [auto] {\textup{proj.}} (m-1-2)
		(m-2-1) edge node [auto] {} (m-2-2)
		(m-1-1) edge node [left] {} (m-2-1)
		(m-1-2) edge node [auto] {} (m-2-2);
	\end{tikzpicture}
\]
which upon decoding turns into
\begin{equation} \label{eqn-rwpix}
	\begin{tikzpicture}[descr/.style={fill=white}, baseline=(current bounding box.base)] ]
	\matrix(m)[matrix of math nodes, row sep=2.5em, column sep=2.5em,
	text height=1.5ex, text depth=0.25ex]
	{
	 \partial D_p\times \coprod_{j=1}^k \emb(\,\uli k\smin j\,,M\smin p) & \coprod_{j=1}^k  \emb(\,\uli k\smin j\,,M\smin p)  \\
	\emb(\,\uli k\,,M\smin p)  & \emb(\,\uli k\,,M) \;.  \\
	};
	\path[->,font=\scriptsize]
		(m-1-1) edge node [auto] {\textup{proj.}} (m-1-2)
		(m-2-1) edge node [auto] {\textup{incl.}} (m-2-2)
		(m-1-1) edge node [left] {} (m-2-1)
		(m-1-2) edge node [auto] {$\textup{extend by }j\mapsto p$} (m-2-2);
	\end{tikzpicture}
\end{equation}
(The square is commutative up to a preferred homotopy, and as such it is a homotopy pushout square.
We have written $M\smin p$ for $M\smin\{p\}$ and the like.)
This means roughly that the ordered configuration spaces of $M$ can be obtained, up to weak equivalence, from the ordered
configuration spaces of $M\smin p$ by a homotopy pushout construction (which also involves $\partial D_p$).
}
\end{rem}

We return to the more general situation where $M$ is an $m$-manifold with boundary.
There is a canonical map
\[ u\co \delta_*\varphi^*\config(M) \lra \config(M) \]
over $N\finplus$. (Equivalently, $u$ can be described as a map $\varphi^*\config(M)\to \config(M)$ which covers
$\delta\co N\fino\to N\finplus$.) Here is a description using the particle model. (This means that a differentiable
structure on $M$ is not required, but compactness of $\partial M$ is required.) On \emph{white} configurations
the map acts by deleting the last point (label $k$) in any ordered configuration of $k$ elements. On \emph{black}
configurations it acts forgetfully, forgetting the distinguished element.

We also need a map $v$ from $\delta_*\varphi^*\config(M)$ to the part of $\config(M)$ lying above the
simplicial subset of $N\finplus$ generated by the $0$-simplex $[1]$, where $[1]$ is viewed as an object of $\finplus$.
The map is given by associating to each ordered configuration in sight (with additional data)
the sub-configuration of cardinality 1 determined by the distinguished label. Note that the part of $\config(M)$ lying above
the $0$-simplex $[1]$ and its degeneracies is, up to degreewise weak equivalence, a constant
simplicial space with constant term $M_-$ or $M$. Therefore we think of $(u,v)$ as a simplicial map over $N\finplus$
from $\delta_*\varphi^*\config(M)$ to $\config(M)\times M$. Here $\config(M)\times M$ is a simplicial space over $N\finplus$
because it has a projection map to $\config(M)$, and $\config(M)$ is a simplicial space over $N\finplus$ in the usual way.

\begin{prop} \label{prop-genpos} The map $(u,v)\co \delta_*\varphi^*\config(M) \to \config(M)\times M$ so defined is a
conservatization map over $N\finplus$.
\end{prop}

\begin{proof} For $p\in M_-$ let $\delta_*\varphi^{*|p}\config(M)$ be the degreewise homotopy fiber of the map $v$ over $p$.
This can also be defined directly like $\delta_*\varphi^*\config(M)$, except that all configurations (with additional data)
have their distinguished element, whether
black or white, at $p$, and morphisms of configurations respect that.
It is easy to show that the statement about $(u,v)$ being a conservatization map over $N\finplus$ is equivalent
to the following: \emph{for every $p$, the map
\[   u_p\co \delta_*\varphi^{*|p}\config(M) \to \config(M) \]
obtained by restricting $u$ is a conservatization map over $N\finplus$.} We now turn to the proof of that,
writing $Q$ for $\delta_*\varphi^{*|p}\config(M)$ when it is convenient and keeping $p$ fixed.
Since $\config(M)$ is conservative over $N\finplus$, we obtain a map $\Lambda(u_p)$ from
$\Lambda Q$ to $\Lambda\config(M)$ alias $\config(M)$.
We want to show that it is a degreewise weak equivalence. We proceed in four steps.
\begin{itemize}
\item[(i)] $\Lambda(u_p)$ is a weak equivalence in degree 0.
\item[(ii)] It follows easily from (i) that $Q$ has property $\beta$.
\item[(iii)] It follows easily from (i), (ii) or proof of (ii), theorem~\ref{thm-transp} and remark~\ref{rem-pinought}
that $\Lambda(u_p)$ in (i) determines a weak equivalence in degree 1.
\item[(iv)] It follows easily from (i), (iii), (ii) and theorem~\ref{thm-transp} that $\Lambda(u_p)$ in (i) is a weak equivalence in all
degrees.
\end{itemize}
Of these statements, (i) is the only one which requires a longer argument. We postpone that and turn to
the proofs of (ii), (iii) and (iv) modulo (i).

For statement (ii), pick an element $x\in Q_0$\,. If $x$ projects to an
object of the form $([k],k,\textup{white})$ in $\fino$\,, then $Q\downarrow x$
is identified with $\config(U)$ for some open subset $U$ of $M\smin p$\,. This follows from
formula~(\ref{eqn-formulamorbd}). The open subset $U$ is a tubular neighborhood of $\partial M\cup T$ where $T$ is a
subset of $M_-\smin p$ with $k-1$ elements. Therefore $Q\downarrow x$ is
conservative over $N\finplus$ and $\Lambda(Q\downarrow x)$ in degree 0 is identified with $\config(U)$ in degree 0.
If $x$ projects to an object of the form $([k],a,\textup{black})$ in $\fino$\,, then $Q\downarrow x$ is identified with
$\delta_*\varphi^{*|p}\config(U)$ for some open subset $U$ of $M$
which contains $p$. The open
subset $U$ is a tubular neighborhood of $\partial M\cup T$ where $T$ is a subset of $M_-$ with $k$ elements,
and $p\in T$. Therefore, by (i), the degree 0 part of $\Lambda(Q\downarrow x)$ is again identified with
$\config(U)$ in degree $0$. With that, it is easy to verify that property $\beta$ holds. (The
nontrivial instances of property $\beta$ are those where we look at
transport maps $\Lambda(Q\downarrow x)\to \Lambda(Q\downarrow y)$ induced by elements $f\in Q_1$
with $d_0f=x$ and $d_1f=y$~, covering elements $g\in (N\fino)_1$ such that $\delta(g)$ is an identity morphism in $\finplus$ but
$g$ itself is not invertible in $\fino$\,. These special morphisms $g$ in $\fino$ are among those which were
highlighted in remark~\ref{rem-suck}.)

For statement (iii), we observe that in proving (ii) we saw already that for every $x\in Q_0$ the
map from $\Lambda(Q\downarrow x)$ to $\config(M)\downarrow u_p(x)$ induced by $u_p$ is a weak
equivalence in degree 0. By (ii) and theorem~\ref{thm-transp}, it is allowed to write that map in the form
\[  \Lambda Q\downarrow x \to  \config(M) \downarrow u_p(x)\,. \]
By remark~\ref{rem-pinought}, it does not matter here whether we say $x\in Q_0$ or $x\in (\Lambda Q)_0$.
Therefore we can say that $\Lambda(u_p)\co \Lambda Q\to \config(M)$
induces maps of the over categories, $\Lambda Q\downarrow x\to \config(M)\downarrow \Lambda(u_p)(x)$,
which are always weak equivalences in degree $0$. Combining this with (i), we obtain (iii).

By (ii) and theorem~\ref{thm-transp}, we know that $\Lambda Q$ is a Segal space. Therefore
the map $\Lambda(u_p)$ is a weak equivalence in all degrees, since it is a weak equivalence in degrees 0 and 1
by (i) and (iii). This proves (iv).

It remains to establish (i). Unraveling the definition of $\Lambda Q$ in degree 0, and the part of that
projecting to $[k]\in (N\finplus)_0$, we see a homotopy colimit
\begin{equation} \label{eqn-rwhope1}
\hocolimsub{\tau} Q(\tau).
\end{equation}
Here $\tau$ runs through $\simp(N(\sC_k))=\simp(N\sC_k)$ where $\sC_k$ is the subcategory of $\fino$ consisting
of the morphisms which are taken to the identity of $[k]$ by $\delta$, and the objects which are
taken to $[k]$. That means:
\begin{itemize}
\item[-] the objects $([k+1],k+1,\textup{white})$ and $([k],a,\textup{black})$
for arbitrary $a\in \uli k$\,;
\item[-] apart from identity morphisms, only the morphisms
\[ f_a:([k+1],k+1,\textup{white}) \lra  ([k],a,\textup{black}) \]
where $f_a\co [k+1]\to [k]$ is the unique map which is based, surjective, takes $k+1$ to $a$ and is
order-preserving on $[k]$.
\end{itemize}
Let $\simp^1(N\sC_k)$ be the full subcategory of $\simp(N\sC_k)$
whose objects are the strings of length $\le 1$; that is, elements of $(N\sC_k)_0$ and $(N\sC_k)_1$\,.
The inclusion
\[ \simp^1(N\sC_k)\to \simp(N\sC_k) \]
admits a canonical retraction functor (left inverse)
\[ R\co \simp(N\sC_k) \lra \simp^1(N\sC_k)\,. \]
Namely, an object $\tau$ of $\simp(N\sC_k)$ is a string of morphisms in $\sC_k$~; if it consists
of identity morphisms only, it obviously determines an element $R(\tau)$ of $(N\sC_k)_0$~; otherwise it will
have exactly one non-identity morphism, which we declare to be $R(\tau)\in (N\sC_k)_1$\,. This is
useful because we can write
\[ Q(\tau)~\simeq~Q(R(\tau)) \]
(by a chain of natural weak equivalences),
so that the expression~(\ref{eqn-rwhope1}) turns into
\begin{equation} \label{eqn-rwhope2}
\hocolimsub{\tau\textup{ in }\simp(N\sC_k)} Q(R(\tau)).
\end{equation}
Moreover the functor $R$ is homotopy terminal so that the map
\begin{equation} \label{eqn-rwhope3}
\hocolimsub{\tau\textup{ in }\simp(N\sC_k)} Q(R(\tau))
\lra \hocolimsub{\lambda\textup{ in }\simp^1(N\sC_k)} Q(\lambda)
\end{equation}
induced by $R$ (think $\lambda=R(\tau)$) is a weak equivalence. After
these fairly abstract preparations, we must come to terms with
the target in~(\ref{eqn-rwhope3}). The objects of $\simp^1(N\sC_k)$
come in three types:
\begin{itemize}
\item[(1)] $([k+1],k+1,\textup{white})\in (N\sC_k)_0$~;
\item[(2)] $([k],a,\textup{black})\in (N\sC_k)_0$ for some $a\in \uli k$~;
\item[(3)] $\big(f_a:([k+1],k+1,\textup{white})\to([k],a,\textup{black})\big)\in (N\sC_k)_1$ for some $a$
in $\uli k$, where $f_a$ was defined earlier.
\end{itemize}
The values of $Q$ on these types are as follows.
\begin{itemize}
\item[$(1)_v$] $\emb(\uli k\,,\,M_-\smin p)$ in case (1). More precisely, we should think of embeddings
from $\{1,2,\dots,k,k+1\}$ to $M_-$ which take $k+1$ to $p$\,.
\item[$(2)_v$] $\emb(\uli k\smin a,\,M_-\smin p)$ in case (2). More precisely, we should think of embeddings
from $\{1,2,\dots,k\}$ to $M_-$ which take $a$ to $p$\,.
\item[$(3)_v$] $\partial D_p\times \emb(\uli k\smin a,\,M_-\smin p)$ in case (3).
More precisely, we should think of morphisms in $\config(M_-)$ whose source is
an embedding $e_1$ as in $(1)_v$ and whose target
is an embedding $e_2$ as in $(2)_v$\,. The underlying map
of these morphisms is assumed to be $f_a$ and the underlying homotopy from $e_1$ to $e_2f_a$ is
assumed to be stationary on $k+1$.
\end{itemize}
Therefore the target in~(\ref{eqn-rwhope3}) is precisely the homotopy pushout model
of the ordered configuration space $\emb(\uli k\,,\,M_-)$ that we saw in remark~\ref{rem-rw}, specifically in diagram~(\ref{eqn-rwpix}).
(Just replace $M$ by $M_-$ there.)
So our map from~(\ref{eqn-rwhope1}) to the part of $\config(M_-)$ projecting to $[k]\in (N\finplus)_0$
is a weak equivalence because diagram~(\ref{eqn-rwpix}) is a homotopy pushout square. \end{proof}

\medskip
\emph{Remark.} After writing up a first version of proposition~\ref{prop-genpos} for the special case $M=D^m$~, we learned
that Ricardo Andrade was already aware of something similar for arbitrary $M$. The added value in our formulation, compared to his, is mainly in the emphasis on conservatization (a form of localization).

\section{Shadowing} \label{sec-shadow}
\subsection{Black box formulation} The main result of this section complements the special case $M=D^m$ of proposition~\ref{prop-genpos}.
Together, the two give us a homotopical description of $\config(D^m)$ in terms of $\config(D^m_\circ)$,
where $D^m_\circ$ is the punctured disk $D^m\smin\{0\}$. To express this in a more formal manner: proposition~\ref{prop-genpos},
specialized to $M=D^m$, gives a homotopical description of $\config(D^m)$ in terms of $\varphi^*\config(D^m)$. The
main result of this section is a homotopical description of $\varphi^*\config(D^m)$ in terms of $\alpha^*\varphi^*\config(D^m)$,
where $\alpha\co \finplus\to \fino$ comes from definition~\ref{defn-fino}.
Since $\varphi\alpha$ is the functor from $\finplus$ to $\finplus$ given by $[k]\mapsto [k+1]$ on objects, etc., it is clear
that $\alpha^*\varphi^*\config(D^m)$ can be identified with $\config(D^m_\circ)$, for example by a simplicial map from $\config(D^m_\circ)$ to
$\alpha^*\varphi^*\config(D^m)$ which is a degreewise weak equivalence.

We begin with a black box formulation, proposition~\ref{prop-shadowbox} below. The technicalities inside the
box are summarized in lemma~\ref{lem-shadow} further down. After the proof of lemma~\ref{lem-shadow} we will deduce
proposition~\ref{prop-shadowbox} from lemma~\ref{lem-shadow}.

Our advice to readers who need an incentive for reading this long section is this. Read the statement of
proposition~\ref{prop-shadowbox}. Read definition~\ref{defn-exitindex} and remark~\ref{rem-exitindex}. Then turn to
section~\ref{sec-knottynew} for applications of proposition~\ref{prop-shadowbox}.
Then come back to this section~\ref{sec-shadow}. 

\begin{prop} \label{prop-shadowbox} There are endofunctors $F_0$, $F_1$, $F_2$ on the category of
simplicial spaces over $N\fino$ and natural transformations
\[  F_0 \Leftarrow F_1 \Rightarrow F_2, \]
such that
\begin{itemize}
\item[(i)] each $F_j$ is a homotopy functor, i.e., preserves degreewise weak equivalences between simplicial spaces over $N\fino$;
\item[(ii)] $F_0=\id$ and $F_2=E\alpha^*$ where
$E$ is a homotopy functor from simplicial spaces over $N\finplus$ to simplicial spaces over $N\fino$;
\item[(iii)] for $E$ in \emph{(ii)}, the composition $\alpha^*E$ is weakly equivalent to the identity functor on the category of simplicial spaces over $\finplus$\,;
\item[(iv)] the natural transformations specialize to (degreewise) weak equivalences
\[
F_0(Y) \xleftarrow{\; \simeq \;} F_1(Y) \xrightarrow{\; \simeq \;} F_2(Y)
\]
in the cases $Y=\varphi^*\config(D^m)$ and $Y=\delta^*\config(D^m)$, for arbitrary $m\ge 0$.
\end{itemize}
\end{prop}

\emph{Extract for now:} The proposition gives a zigzag of weak equivalences
relating $Y=\varphi^*\config(D^m)$ to $E(\alpha^*\varphi^*\config(D^m))\simeq E(\config(D^m_\circ))$.

\subsection{Shadowing: the setup}
Let $\sigma$ be an $r$-simplex in $N\fino$\,, which we may also write in the form of a diagram
\begin{equation} \label{eqn-sigmadiagram}
([k_0],a_0,c_0) \leftarrow ([k_1],a_1,c_1) \leftarrow ([k_2],a_2,c_2) \leftarrow \cdots \leftarrow ([k_r],a_r,c_r).
\end{equation}
If we wish to emphasize $\sigma$ more than $r$, as we often do,
then we use $I(\sigma)$ as alternative notation for the poset $[r]$
and write $\sigma\co I(\sigma)^\op \to \fino$\,.

\begin{defn} \label{defn-exitindex} {\rm Let $s$ be the maximum of the $i\in \{0,1,\dots,r\}$ such that $c_i=\textup{black}$,
if such an $i$ exists; if no such $i$ exists, set $s=-1$.
For each non-distinguished and non-zero element $z$ in one of the sets $[k_j]$, we can speak of
the \emph{exit index} of $z$, an element of $\{-1,0,2,\dots,s\}$.
This is the maximum of the integers $i$ such that $i<j$ and $c_i=\textup{black}$ and $z$ is mapped to
the distinguished element $a_i\in [k_i]$ under the appropriate composition of arrows in diagram~(\ref{eqn-sigmadiagram}), if such an
$i$ exists. If no such $i$ exists, then the exit index of $z$ is $-1$.
}
\end{defn}

\begin{rem} \label{rem-exitindex} {\rm
Let $Y$ be a simplicial space over $N\fino$. Write $Y(\sigma)$ for the part of $Y_r$ projecting to $\sigma\in (N\fino)_r$.
If $Y=\varphi^*\config(D^m)$ or $Y=\delta^*\config(D^m)$, then $Y(\sigma)$
is weakly equivalent to a product of terms specified by the exit index. In more detail:
\begin{itemize}
\item[-] Let $s$ be as in definition~\ref{defn-exitindex}. For each $i\in\{-1,\dots,s\}$ there is one factor $Y(\sigma^i)$.
The simplex $\sigma^i\in (N\fino)_r$ is obtained from $\sigma$ by selecting from each set $[k_j]$ in the diagram description of $\sigma$
only those elements which have exit index $i$, as well as the element $0$ and the distinguished element $a_j$. (The selected elements
need to be renumbered in accordance with the total ordering already in place.) There is an obvious map
\[ Y(\sigma) \to \prod_{i=-1}^s Y(\sigma^i) \]
by restriction of configurations; it is claimed to be a weak equivalence.
\end{itemize}
Here is a sketch proof of that. If $s=-1$,
there is nothing to do; therefore assume $s\ge 0$.
We start by making a product decomposition with two factors:
\begin{equation} \label{eqn-prodtwofac}
Y(\sigma) \xrightarrow{ \; \simeq \;} Y(\sigma^{0,1,2,\dots,s})\times Y(\sigma^{-1}).
\end{equation}
As already explained, $\sigma^{-1}$ is the $r$-simplex in $N\fino$
obtained from $\sigma$ by selecting from each set $[k_j]$ in the diagram
description~(\ref{eqn-sigmadiagram}) of $\sigma$ only those elements which have exit index $-1$, as well as $0$ and $a_j$.
Equivalently: we select from each set $[k_j]$ in the diagram
description of $\sigma$ only those elements which are \emph{not} mapped to the distinguished element $a_0\in [k_0]$,
as well as the distinguished element $a_j$. The $r$-simplex $\sigma^{0,1,2,\dots,s}$ in $N\fino$ is obtained
from $\sigma$ by selecting from each set $[k_j]$ in the diagram description of $\sigma$ only those elements which \emph{are}
mapped to $a_0\in [k_0]$, as well as the element $0$. The map~(\ref{eqn-prodtwofac}) is given by restriction to
subconfigurations.

To show that it is a weak equivalence, we view it as a map over the ordered configuration space $Y([k_0])$. Indeed,
$Y(\sigma)$ is a space over $Y([k_0])$ via the ultimate target map. The target of~(\ref{eqn-prodtwofac}) is a space
over $Y([k_0])$ because we can first project to $Y(\sigma^{-1})$ and then apply the ultimate target map from there
to $Y([k_0])$. In this way,~(\ref{eqn-prodtwofac}) is a map over $Y([k_0])$. By formula~(\ref{eqn-formulamorbd}),
the map induced by~(\ref{eqn-prodtwofac}) on homotopy fibers, over any chosen point in  $Y([k_0])$, is a weak equivalence.
Therefore~(\ref{eqn-prodtwofac}) itself is a weak equivalence.

We now repeat the argument with $Y(\sigma^{0,1,2,\dots,s})$ in place of $Y(\sigma)$, assuming $s>0$.
Let us write $[\ell_0],\dots,[\ell_r]$ for the sets in the diagram description of $\sigma^{0,1,2,\dots,s}$; note that $[\ell_0]=[1]$ and that all nonzero elements in any of $[\ell_j]$ are mapped to $1\in [\ell_0]$.
We get a product decomposition
\begin{equation}  \label{eqn-prodtwofacagain}
Y(\sigma^{0,1,2,\dots,s})  \xrightarrow{ \; \simeq \;} Y(\sigma^{1,2,\dots,s})\times Y(\sigma^0).
\end{equation}
To define the $r$-simplex $\sigma^{1,2,\dots,s}$ in $N\fino$,
we select from each set $[\ell_j]$ in the diagram description of $\sigma^{0,1,2,\dots,s}$ (where $j>1$)
only those elements which are mapped to the distinguished element of $[\ell_1]$, as well as the element $0$.
The map~(\ref{eqn-prodtwofacagain}) as such is given by restriction to subconfigurations. We can view it as a map
over $Y([\ell_1])$. Formula~(\ref{eqn-formulamorbd}) tells us that the map induced by~(\ref{eqn-prodtwofacagain}) on homotopy fibers, over any chosen point in  $Y([\ell_1])$, is a weak equivalence.
Therefore~(\ref{eqn-prodtwofacagain}) itself is a weak equivalence. And so on.

Furthermore, if $Y=\varphi^*\config(D^m)$, then $Y(\sigma^i)$ has another description, up to weak equivalence,
as the preimage in $\config(D^m_\circ)$, configuration category of the \emph{punctured} disk, of
some simplex of degree $r-i-1$ in $N\finplus$. This gives an idea of how $Y=\varphi^*\config(D^m)$
can be recovered from $\config(D^m_\circ)$. But the question remains
how these product decompositions of the various $Y(\sigma)$ might be compatible under the simplicial operators.
We have not tried to formulate a conceptual answer to that question, but we suggest that lemma~\ref{lem-shadow} below
is either a technical answer or a way to work around the question. In any case our constructions in the remainder of the section are guided by
the observations above concerning the exit index. We use the exit index to expand $\sigma$ into more complicated diagrams in $\fino$
of a typically 2-dimensional and 3-dimensional appearance. The typically 2-dimensional expansion makes the filtration by exit index
explicit and the typically 3-dimensional
expansion also makes the subquotients explicit.
}
\end{rem}  

\begin{defn} \label{defn-Vr} {\rm With the $r$-simplex $\sigma$ in $N\fino$ we associate a poset
$V(\sigma)$ containing $I(\sigma)^\op$ as a full sub-poset. As a set, $V(\sigma)$ is the set of all pairs $(x,y)$ where $y\in \{0,1,\dots,r\}$ and
\begin{itemize}
\item[] either $x\in \{0,1,\dots,r\}$ and $\sigma(x)$ has color black, or $x=-1$.
\end{itemize}
The order relation is given by $(x_0,y_0)\le (x_1,y_1)$ if and only if $x_0 \le x_1$ and $y_0 \ge y_1$~, nota bene. The dependence of $V(\sigma)$ on $\sigma$
is weak; we only need to know how many of the $x\in \{0,1,\dots,r\}$ are taken to black objects by $\sigma$.
We identify the full sub-poset of $V(\sigma)$ consisting of the pairs $(-1,y)$ with $I(\sigma)^\op$.
}
\end{defn}

\begin{expl} {\rm
If $I(\sigma)=\{0,1,2,3\}$
and $\sigma(0)$, $\sigma(1)$ are black while $\sigma(2)$ and $\sigma(3)$ are white,
then $V(\sigma)$ is identified with a subset of the plane $\RR^2$, with the ordering where $(a,b)\le (c,d)$ if and only if $a\le c$ and $b\ge d$, as in the following picture:
\[
\begin{tikzpicture}[descr/.style={fill=white}, baseline=(current bounding box.base)]
\matrix[column sep=3.0mm, row sep=3.0mm]{
        \node (l4) {$\bullet$}; & \node (c4) {$\bullet$};  & \node (r4) {$\bullet$}; \\
        \node (l3) {$\bullet$}; & \node (c3) {$\bullet$};  & \node (r3) {$\bullet$};  \\
        \node (l2) {$\bullet$}; & \node (c2) {$\bullet$};  & \node (r2) {$\bullet$};  \\
        \node (l1) {$\bullet$}; & \node (c1) {$\bullet$};  & \node (r1) {$\bullet$}; \\
    };
    \foreach \y [remember=\y as \lasty (initially 1)] in {2,...,4}{
    \path[draw, ->] (l\y)--(l\lasty);
    \path[draw, ->] (c\y)--(c\lasty);
    \path[draw, ->] (r\y)--(r\lasty);
    }
    \foreach \x in {1,...,4}{
    \path[draw, ->] (l\x)--(c\x);
    \path[draw, ->] (c\x)--(r\x);
    }
\end{tikzpicture}
\]
The left-hand column in the picture corresponds to the subset $I(\sigma)^\op$ of $V(\sigma)$.
}
\end{expl}

\begin{defn} \label{defn-sigmaV} {\rm Continuing in the notation of definition~\ref{defn-Vr}, we construct a preferred
extension $\sigma^V\co V(\sigma) \to \fino$
of $\sigma\co I(\sigma)^\op \to \fino$.

The idea is that $\sigma^V(x,y)$ for $(x,y)\in V(\sigma)\smin I(\sigma)^\op$
is the quotient of $\sigma(y)$ obtained by identifying all elements of $\sigma(y)$
whose exit index is less than $x$ with the base point 0. See remark~\ref{rem-exitindex}.
More precisely, the relationship $(x,y)\ge (-1,y)$ which holds in $V(\sigma)$ must induce a morphism in
$\fino$ from $\sigma(y)=\sigma^V(-1,y)$ to $\sigma^V(x,y)$. That morphism will be color-preserving: $\sigma^V(x,y)$ has the
same color as $\sigma(y)$. As a map of sets it will be surjective,
and it will be order-preserving and injective on the complement of the preimage of the base point $0$.
It remains only to say in each case what the preimage of 0 is.
\begin{itemize}
\item[-] In the case $x=-1$, the preimage of $0$ consists of $0$ only. (The morphism is an identity morphism.)
\item[-] In the cases where $0\le x\le y$, the preimage of 0 consists of all elements of $\sigma(y)$ which are not mapped to the distinguished element of $\sigma(x)$
under the map $\sigma(y)\to \sigma(x)$ induced by $x\le y$ in $I(\sigma)$.
\item[-] In the cases where $x\ge y$, the preimage of 0 consists of all elements of $\sigma(y)$ except for the
distinguished element.
\end{itemize}
This description of $\sigma^V$ so far specifies what $\sigma^V$ does on objects, and on some of the
morphisms in $V(\sigma)$. (In the drawing of $V(\sigma)$ just above, these are the horizontal morphisms with
source in the left-hand column, and the vertical morphisms in the left-hand
column.) There is a unique way to extend the
partial definition of $\sigma^V$ so that we have a functor from all of $V(\sigma)$ to $\fino$.
}
\end{defn}

\begin{expl} \label{expl-Vfeat} {\rm Suppose that $\sigma=([k],a,c)$ is a $0$-simplex of $N\fino$, alias object of $\fino$,
where $c=\textup{black}$. Then $I(\sigma)=[0]$
and $V(\sigma)$ is a totally ordered poset with 2 elements. The extension $\sigma^V$ of
$\sigma$ is the unique morphism from $([k],a,c)$ to $([1],1,c)$ such that
the preimage of $\{1\}$ is exactly $\{a\}$.

Suppose that $\sigma$ is the $1$-simplex determined by the morphism
\[
([7],3,\textup{black}) \xlongleftarrow{f}  ([5],5,\textup{white})
\]
where $f$ is defined by
\[  \begin{pmatrix} 0\mapsto 0 \\ 1,4\mapsto 4 \\ 2,5\mapsto 3 \\ 3\mapsto 6  \end{pmatrix}.  \]
(Note that $f(0)=0$ and $f(5)=3$ are compulsory.) Then $\sigma^V$ is the following diagram in $\fino$\,.
\[
	\begin{tikzpicture}[descr/.style={fill=white}, baseline=(current bounding box.base)] ]
	\matrix(m)[matrix of math nodes, row sep=4.5em, column sep=7em,
	text height=1.5ex, text depth=0.25ex]
	{
	([5],5,\textup{white}) & ([2],2,\textup{white})\\
	([7],3,\textup{black}) &  ([1],1,\textup{black}) \\
	};
	\path[->,font=\scriptsize]
		(m-1-2) edge node [auto] {${\footnotesize\begin{pmatrix} 0\mapsto 0 \\ 1,2\mapsto1 \end{pmatrix}}$} (m-2-2)
		(m-1-1) edge node [below] {${\footnotesize\begin{pmatrix} 0,1,3,4\mapsto 0 \\ 2\mapsto 1 \\ ~5\mapsto 2\end{pmatrix}}$} (m-1-2)
		(m-1-1) edge node [left] {$f$} (m-2-1)
		(m-2-1) edge node [below] {$\footnotesize\begin{pmatrix}  \ne 3\mapsto 0 \\ 3\mapsto 1 \end{pmatrix}$} (m-2-2);
	\end{tikzpicture}
\]
(The appearance of $[2]$ in the top right-hand term reflects the fact that the distinguished element $3$ in the target of $f$ has preimage of cardinality $2$.)

Suppose that $\sigma$ is the $2$-simplex determined by the string of morphisms
\[
([5],4,\textup{black})  \xlongleftarrow{g}  ([7],3,\textup{black}) \xlongleftarrow{f} ([5],5,\textup{white})
\]
in $\fino$~, where $f$ is defined as above and $g$ is defined by
\[ \begin{pmatrix} 0,2\mapsto 0 \\ 1\mapsto 1 \\ 4,7\mapsto 2 \\ 5\mapsto 3 \\ 3,6\mapsto 4 \end{pmatrix} \]
so that $gf$ is given by
\[  \begin{pmatrix} 0\mapsto 0 \\ 1,4\mapsto 2 \\ 2,3,5 \mapsto 4 \end{pmatrix}. \]
Then $\sigma^V$ is the following commutative diagram in $\fino$\,.
\[
	\begin{tikzpicture}[descr/.style={fill=white}, baseline=(current bounding box.base)] ]
	\matrix(m)[matrix of math nodes, row sep=4.5em, column sep=7em,
	text height=1.5ex, text depth=0.25ex]
	{
	([5],5,\textup{white}) & ([3],3,\textup{white}) & ([2],2,\textup{white}) \\
	([7],3,\textup{black}) & ([2],1,\textup{black}) & ([1],1,\textup{black}) \\
	([5],4,\textup{black}) & ([1],1,\textup{black}) & ([1],1,\textup{black}) \\
	};
	\path[->,font=\scriptsize]
		(m-1-1) edge node [below] {${\footnotesize \begin{pmatrix} 0,1,4\mapsto 0 \\ 2\mapsto1 \\ 3\mapsto 2
			\\ 5\mapsto 3 \end{pmatrix}}$} (m-1-2)
		(m-1-2) edge node [below] {${\footnotesize\begin{pmatrix} 0,2\mapsto 0 \\ 1\mapsto 1  \\ 3\mapsto 2
			\end{pmatrix}}$} (m-1-3)
		(m-2-1) edge node [below] {${\footnotesize\begin{pmatrix} 0,1,2,4,5,7\mapsto 0\\3\mapsto 1 \\ 6\mapsto 2  \end{pmatrix}}$} (m-2-2)
		(m-2-2) edge node [below] {${\footnotesize\begin{pmatrix} \ne 1 \mapsto 0 \\ 1\mapsto 1  \end{pmatrix}}$} (m-2-3)
		(m-3-1) edge node [below] {${\footnotesize\begin{pmatrix} \ne4 \mapsto 0 \\ 4\mapsto 1 \end{pmatrix}}$} (m-3-2)
		(m-3-2) edge node [below] {} (m-3-3)
		(m-1-1) edge node [left] {$f$} (m-2-1)
		(m-2-1) edge node [left] {$g$} (m-3-1)
		(m-1-2) edge node [below] {$$} (m-2-2)
		(m-2-2) edge node [below] {$$} (m-3-2)
		(m-1-3) edge node [below] {$$} (m-2-3)
		(m-2-3) edge node [below] {$$} (m-3-3);
	\end{tikzpicture}
\]
(The unlabeled vertical arrows are determined by labeled vertical ones and the horizontal ones. The appearance of $[3]$ in top row,
middle column reflects the fact that the distinguished element $4$ in the target of $gf$
has preimage of cardinality $3$. The appearance of $[2]$ in top row,
right-hand column reflects the fact that the distinguished element $3$ in the target of $f$
has preimage of cardinality $2$. The appearance of $[2]$ in middle row,
middle column reflects the fact that the distinguished element $4$ in the target of $g$
has preimage of cardinality $2$. )

Both examples illustrate the following general feature. In a diagram of type $\sigma^V$ (a diagram in $\fino$),
the horizontal morphisms are always surjective; they become injective as well as order-preserving if the preimage of $0$ is discarded.
The last example illustrates another general feature, not explicit in the definitions but easy to
explain from the definitions. For all vertical arrows, except possibly some in the left-hand column,
the preimage of $\{0\}$ is always $\{0\}$.

The last example illustrates yet another general feature. If there are positions $(x,y)\in V(\sigma)$ where $x \le y$, then
they are always occupied by the object $([1],1,\textup{black})$ of $\fino$\,. Horizontal arrows induced by $(x-1,y)\le (x,y)$ in
$V(\sigma)$ where $x=y$ are of type
\emph{everything maps to 0 except the distinguished element}. Vertical arrows induced by $(x,y+1)\le (x,y)$ in $V(\sigma)$
where $x=y$ are of type \emph{everything maps to the distinguished element except 0}, as we have already noted.
}
\end{expl}

\begin{defn} \label{defn-Wr} {\rm Here we define a poset $W(\sigma)$ depending on $\sigma\co I(\sigma)^\op \to \fino$\,.
An object in $W(\sigma)$ is a triple of integers $(t,x,y)$ such that
\begin{itemize}
\item[] both $(t,y)$ and $(x,y)$ qualify as elements of $V(\sigma)$;
\item[] $t\le x$;
\item[] $x\ge 0$.
\end{itemize}
The order relation is given by $(t_0,x_0,y_0)\le (t_1,x_1,y_1)$ iff $y_0\ge y_1$ and
$x_1\ge x_0$ and $t_1\ge t_0$\,, nota bene.
}
\end{defn}

\begin{defn} \label{defn-sigmaW} {\rm Continuing in the notation of definition~\ref{defn-Wr},
we construct a functor $\sigma^W\co W(\sigma) \to\fino$. It will land
in the full subcategory of $\fino$ spanned by the white objects.

The idea is this: for an object $(t,x,y)$ of $W(\sigma)$, we define $\sigma^W(t,x,y)$ as the
\emph{kernel} of the morphism $\sigma^V(t,y)\to \sigma^V(x,y)$ induced by the relationship $(t,y)\le (x,y)$ in the
poset $V(\sigma)$.
For a more precise definition we say that there will be
a morphism
\[ j_{t,x,y}\co \sigma^W(t,x,y) \to \sigma^V(t,y) \]
in $\fino$, natural in the variable $(t,x,y)\in W(\sigma)$, such that the underlying map of sets is injective.
Now we describe or define the image of $j_{t,x,y}$~, a subset of $\sigma^V(t,y)$.
It consists of the distinguished element of $\sigma^V(t,y)$ and all elements of $\sigma^V(t,y)$
which are mapped to the base point $0$ in $\sigma^V(x,y)$.

The color of $\sigma^W(t,x,y)$
is always white by definition, and the map $j_{t,x,y}$ is order-preserving if we omit
the distinguished element in the source.

Having defined $\sigma^W$ on objects, we need to say what it does on morphisms. Briefly, we make that
decision by saying that $j=\{j_{t,x,y}\}$ shall be a natural transformation between functors on $W(\sigma)^\op$. But there are a few
things to verify. Suppose that we have a relationship
$(t_0,x_0,y_0)\le (t_1,x_1,y_1)$ in $W(\sigma)$. This leads to a commutative square
\[
	\begin{tikzpicture}[descr/.style={fill=white}, baseline=(current bounding box.base)] ]
	\matrix(m)[matrix of math nodes, row sep=2.5em, column sep=2.5em,
	text height=1.5ex, text depth=0.25ex]
	{
	\sigma^V(t_0,y_0)  & \sigma^V(x_0,y_0)  \\
	\sigma^V(t_1,y_1)  & \sigma^V(x_1,y_1)  \\
	};
	\path[->,font=\scriptsize]
		(m-1-1) edge node [auto] {} (m-1-2)
		(m-2-1) edge node [auto] {} (m-2-2)
		(m-1-1) edge node [left] {} (m-2-1)
		(m-1-2) edge node [auto] {} (m-2-2);
	\end{tikzpicture}
\]
in $\fino$\,. It follows that we get an induced map from the underlying set of $\sigma^W(t_0,x_0,y_0)$
to the underlying set of $\sigma^W(t_1,x_1,y_1)$. This takes base point to base point
and distinguished element to distinguished element. But we want it to be a morphism of white
objects, and so we need to verify that non-distinguished elements of $\sigma^W(t_0,x_0,y_0)$ are mapped to
non-distinguished elements of $\sigma^W(t_1,x_1,y_1)$. This follows from a diagram chase in the
same commutative square.
}
\end{defn}

\begin{defn} \label{defn-sigmaVW} {\rm Let $VW(\sigma)$ be the categorical mapping cylinder
of the functor
\[ (t,x,y)\mapsto(t,y) \]
from $W(\sigma)$ to $V(\sigma)$. So $VW(\sigma)$ is a poset whose object set is the
disjoint union of $V(\sigma)$ and $W(\sigma)$; these two are full sub-posets, but there are additional relations
$(t,x,y) \le (t,y)$ for every $(t,x,y)\in W(\sigma)$. (There is a more general definition of categorical mapping cylinders in
appendix~\ref{sec-ext}.) The functors $\sigma^V\co V(\sigma) \to \fino$ and $\sigma^W\co W(\sigma) \to \fino$
and the natural transformation $j$ together make up a functor $\sigma^{VW}\co VW(\sigma) \to \fino$\,.

Finally, let $VW^\circ(\sigma)$ be the full sub-poset of $VW(\sigma)$ which is the preimage of the white subcategory of $\fino$
under $\sigma^{VW}$. Informally, it consists of $W(\sigma)$ and the white rows of $V(\sigma)$.
}
\end{defn}

\begin{lem} \emph{(Shadowing lemma.)} \label{lem-shadow} Let $Y:=\varphi^*\config(D^m)$ or let $Y:=\delta^*\config(D^m)$,
any $m\ge 0$. For any simplex $\sigma$ in $N\fino$~,
the restriction maps
\[
	\begin{tikzpicture}[descr/.style={fill=white}, baseline=(current bounding box.base)] ]
	\matrix(m)[matrix of math nodes, row sep=2.5em, column sep=2.5em,
	text height=1.5ex, text depth=0.25ex]
	{
	\rmap_\fino(V(\sigma),Y)   & \rmap_\fino(VW(\sigma),Y)  \\
	\rmap_\fino(I(\sigma)^\op,Y) & \rmap_\fino(VW^\circ(\sigma),Y) \\
	};
	\path[->,font=\scriptsize]
		(m-1-2) edge node [auto] {} (m-1-1)
		(m-1-1) edge node [left] {} (m-2-1)
		(m-1-2) edge node [auto] {} (m-2-2);
	\end{tikzpicture}
\]
are all weak equivalences. The reference functor $VW(\sigma) \to \fino$ is $\sigma^{VW}$.
The other reference functors to $\fino$ are appropriate restrictions of $\sigma^{VW}$.
\end{lem}

We have suppressed the nerve symbol $N$, for example by writing $V(\sigma)$ instead of $NV(\sigma)$.
Note that $\rmap_\fino(I(\sigma)^\op,Y)$ is identified with $Y(\sigma)$, the part of $Y_r$ projecting to $\sigma\in (N\fino)_r$\,.
We give some examples before turning to the proof.

\begin{expl} {\rm If $\sigma\co I(\sigma)^\op \to \fino$ lands in the white subcategory to begin with, then
$I(\sigma)^\op$ is equal to $V(\sigma)$ and also to $VW(\sigma)$ and $VW^\circ(\sigma)$, since
$W(\sigma)$ is empty. This makes the shadowing lemma easy to verify in this special case.
(Incidentally, this is one of our main reasons for making $W(\sigma)$, $V(\sigma)$ etc.~dependent on $\sigma$.)

A slightly more challenging case is the case of a $0$-simplex $\sigma\co I(\sigma)^\op \to \fino$ given by a black object,
say $([4],3,\textup{black})$. Then $V(\sigma)$ has two elements: $(-1,0)$ and $(0,0)$. Also, $W(\sigma)$ has two elements:
$(-1,0,0)$ and $(0,0,0)$. Therefore $VW(\sigma)$ has four elements. The functor $\sigma^{VW}$ is the commutative diagram
\[
	\begin{tikzpicture}[descr/.style={fill=white}, baseline=(current bounding box.base)] ]
	\matrix(m)[matrix of math nodes, row sep=6.5em, column sep=7em,
	text height=1.5ex, text depth=0.25ex]
	{
	([4],4,\textup{white}) & ([1],1,\textup{white})\\
	([4],3,\textup{black}) &  ([1],1,\textup{black}) \\
	};
	\path[->,font=\scriptsize]
		(m-1-2) edge node [auto] {} (m-2-2)
		(m-1-1) edge node [left] {${\footnotesize\begin{pmatrix}0\mapsto 0\\ 1\mapsto 1\\ 2\mapsto 2\\
3\mapsto 4 \\ 4\mapsto 3  \end{pmatrix}}$} (m-2-1)
		(m-1-1) edge node [left] {} (m-1-2)
		(m-2-1) edge node [below] {${\footnotesize\begin{pmatrix} \ne 3\mapsto 0 \\ 3 \mapsto 1 \end{pmatrix}}$} (m-2-2);
	\end{tikzpicture}
\]
in $\fino$\,.
The maps in lemma~\ref{lem-shadow} are then: restriction of derived lifts defined on the entire square to lower row of the square,
further restriction from lower row to lower left-hand term, and restriction of derived lifts defined on the entire square
to the top row of the square. It is easy to check manually that these restriction maps are weak equivalences. But do remember
that there are two cases to check: $Y=\varphi^*\config(D^m)$ and $Y=\delta^*\config(D^m)$.
}
\end{expl}

\subsection{Lifting lemmas for configuration categories}
These lifting lemmas will be needed in the proof of lemma~\ref{lem-shadow}.

\smallskip
Let $\finplus^{[1]^\op}$ be the category of functors from the poset $[1]^\op$ to $\finplus$\,. Let
\[ \finplus^{[1]^\op,\textup{mon}}\subset \finplus^{[1]^\op} \]
be the full subcategory whose objects are the functors $[1]^\op\to \finplus$ taking $0<1$ to
an injective morphism in $\finplus$\,. For a Segal space $Z$ over $N\finplus$ let
\[ Z^{\Delta[1],\textup{mon}} \subset  Z^{\Delta[1]} \]
be the preimage of $N(\finplus^{[1]^\op,\textup{mon}})$ under the
map from $Z^{\Delta[1]}$ to $N(\finplus^{[1]^\op})$ induced by the
reference map from $Z$ to $N\finplus$\,.

\begin{lem} \label{lem-monlift} For $Z=\config(M)$, the commutative square of Segal spaces
\[
	\begin{tikzpicture}[descr/.style={fill=white}, baseline=(current bounding box.base)] ]
	\matrix(m)[matrix of math nodes, row sep=2.5em, column sep=7.5em,
	text height=1.5ex, text depth=0.25ex]
	{
	Z^{\Delta[1],\textup{mon}}  & Z  \\
	N(\finplus^{[1]^\op,\textup{mon}})  & N\finplus  \\
	};
	\path[->,font=\scriptsize]
		(m-1-1) edge node [auto] {\textup{target operator $d_1$}} (m-1-2)
		(m-2-1) edge node [auto] {\textup{target operator $d_1$}} (m-2-2)
		(m-1-1) edge node [left] {} (m-2-1)
		(m-1-2) edge node [auto] {} (m-2-2);
	\end{tikzpicture}
\]
is degreewise homotopy cartesian.
\end{lem}

\begin{proof} In simplicial degree 0, this means the following. For every injective morphism $f$
in $\finplus$ with target $\ell$~, the target operator $d_1$ restricts to a weak equivalence from the
part of $Z$ which projects to $f\in (N\finplus)_1$ to the part of $Z$ which projects
to $[\ell]\in (N\finplus)_0$~. This is easy to confirm by looking at the homotopy fibers of
the target operator $d_1$ (from $Z_1$ to $Z_0$) and using formula~(\ref{eqn-formulamorbd}).

In simplicial degree 1, we should start with a commutative square
\[
	\begin{tikzpicture}[descr/.style={fill=white}, baseline=(current bounding box.base)] ]
	\matrix(m)[matrix of math nodes, row sep=2.5em, column sep=2.5em,
	text height=1.5ex, text depth=0.25ex]
	{
	{[}k_0{]} & {[}k_1{]}  \\
	{[}\ell_0{]} & {[}\ell_1{]} \\
	};
	\path[->,font=\scriptsize]
		(m-2-2) edge node [auto] {} (m-2-1)
		(m-1-2) edge node [above] {$h$} (m-1-1)
		(m-1-1) edge node [left] {$g$} (m-2-1)
		(m-1-2) edge node [auto] {} (m-2-2);
	\path[dotted,->,font=\scriptsize]
		(m-1-2) edge node [auto] {} (m-2-1);
	\end{tikzpicture}
\]
in $\finplus$ where the vertical arrows are injective. This can be viewed as a map from a
simplicial set $L$ generated by two 2-simplices to $N\finplus$. To show: the restriction map
from the space of derived lifts $L\to Z$ of the specified map $L\to N\finplus$ to the space of derived lifts of
the lower horizontal edge is a weak equivalence. This breaks up into two steps since $L$ is the pushout of two copies
of $\Delta[2]$ along a common edge. For the lower triangle alias 2-simplex in $L$~, we can use
the case of simplicial degree 0 which has already been established to extend from the lower edge to the unique inner horn.
Then lemma~\ref{lem-innerhorn}
can be applied. For the upper triangle, we can again use
the case of simplicial degree 0 which has already been established to extend from the dotted (diagonal) edge to the
horn consisting of diagonal edge and vertical edge. Using explicit models, such as the particle model, we can say that
$g$ has been lifted to a morphism $\bar g\in Z_1$\,. Write $s$ and $t$ for source and target of the morphism $\bar g$.
It remains to show that composition with $\bar g$
is a weak equivalence from the part of $\hofiber_s[d_1\co Z_1\to Z_0]$
projecting to $h\in (N\finplus)_1$ to the part of $\hofiber_t[d_1\co Z_1\to Z_0]$
projecting to $gh\in (N\finplus)_1$. This is again easy to verify using formula~(\ref{eqn-formulamorbd}).
Finally, a commutative square of Segal spaces which is homotopy cartesian in simplicial degrees 0 and 1 is
automatically homotopy cartesian in all higher degrees. \end{proof}

\medskip
For the next lemma we introduce more terminology. A diagram of based sets and based maps
\[
A \xlongrightarrow{f} B \xlongrightarrow{g} C
\]
is \emph{skew-exact} (se) if $f$ is injective and $g$ induces a bijection from the quotient
$B/\im(f)$ to the quotient $C/\im(gf)$. (This implies that $g$ is surjective. It does not imply
that $gf$ is the zero map.) Any based map $h\co A\to C$ of based sets admits an essentially unique factorization
$h=gf$ such that $g$ and $f$ together form a skew-exact diagram. An explicit solution is
\[
A \xlongrightarrow{f} A\vee \big(C/\im(h)\big) \xlongrightarrow{g} C
\]
where $f$ is the inclusion of the first wedge summand, $g$ agrees with $h$ on the first wedge summand and
$g$ agrees with the inclusion $C\smin\im(h)\to C$ on the nonzero elements of the second wedge summand.
The content of lemma~\ref{lem-factlift} below is roughly that skew-exact factorizations
of morphisms in $\finplus$ can be lifted uniquely to factorizations in $\config(M)$; that is, for a morphism $\bar h$ in $\config(M)$
projecting to $h\in (N\finplus)_1$, any skew-exact factorization $h=fg$ has an essentially
unique lift to a factorization of $\bar h$ in $\config(M)$. But once again we need an internal formulation.

A commutative diagram of based sets and based maps
\begin{equation} \label{eqn-batchmor}
	\begin{tikzpicture}[descr/.style={fill=white}, baseline=(current bounding box.base)] ]
	\matrix(m)[matrix of math nodes, row sep=2.5em, column sep=2.5em,
	text height=1.5ex, text depth=0.25ex]
	{
	A_1 & B_1 & C_1  \\
	A_0 & B_0 & C_0 \\
	};
	\path[->,font=\scriptsize]
		(m-1-1) edge node [auto] {$f_1$} (m-1-2)
		(m-1-2) edge node [auto] {$g_1$} (m-1-3)
		(m-2-1) edge node [auto] {$f_0$} (m-2-2)
		(m-2-2) edge node [auto] {$g_0$} (m-2-3)
		(m-1-1) edge node [auto] {} (m-2-1)
		(m-1-2) edge node [auto] {} (m-2-2)
		(m-1-3) edge node [auto] {$v$} (m-2-3);
	\end{tikzpicture}
\end{equation}
with skew-exact rows is \emph{admissible} if the arrow labeled $v$ takes the complement of $\im(g_1f_1)$ to the
complement of $\im(g_0f_0)$.

\begin{expl} {\rm For $(t,x,y)\in W(\sigma)$ such that $x\ge t+1$ the diagram
\[
	\begin{tikzpicture}[descr/.style={fill=white}, baseline=(current bounding box.base)] ]
	\matrix(m)[matrix of math nodes, row sep=2.5em, column sep=2.5em,
	text height=1.5ex, text depth=0.25ex]
	{
	 & \varphi\sigma^W(t,x,y)  \\
	\varphi\sigma^V(t+1,y) & \varphi\sigma^V(t,y) \\
	};
	\path[->,font=\scriptsize]
		(m-1-2) edge node [auto] {$j$} (m-2-2)
		(m-2-2) edge node [auto] {} (m-2-1);
	\end{tikzpicture}
\]
is skew-exact. Informally, the horizontal arrow is the quotient map
\[ \frac{\sigma(y)}{ \{\textup{exit index $<t+1$}\}} \quad\longleftarrow\quad \frac{\sigma(y)}{ \{\textup{exit index $<t$}\}}
\]
while the vertical arrow is injective, with image consisting of the elements of $\sigma(y)$ having exit index $<x$, plus the
distinguished element. Similarly, for $(t,x,y)\in W(\sigma)$ such that $x\ge t+1$ the diagram
\[
	\begin{tikzpicture}[descr/.style={fill=white}, baseline=(current bounding box.base)] ]
	\matrix(m)[matrix of math nodes, row sep=2.5em, column sep=2.5em,
	text height=1.5ex, text depth=0.25ex]
	{
	 & \delta\sigma^W(t,x,y)  \\
	\delta\sigma^V(t+1,y)  & \delta\sigma^V(t,y) \\
	};
	\path[->,font=\scriptsize]
		(m-1-2) edge node [auto] {$j$} (m-2-2)
		(m-2-2) edge node [auto] {} (m-2-1);
	\end{tikzpicture}
\]
is skew-exact. Furthermore, we can let $y$ vary in these diagrams and we obtain admissible transformations
between skew-exact diagrams.
}
\end{expl}

Let $\finplus^{[2]^\op,\textup{se}}\subset \finplus^{[2]^\op}$
be the subcategory whose objects are the skew-exact functors $[2]^\op\to \finplus$
and whose morphisms are the admissible natural transformations.
For a Segal space $Z$ over $N\finplus$ let
\[  Z^{\Delta[2],\textup{se}} \subset Z^{\Delta[2]} \]
be the preimage of $N(\finplus^{[2]^\op,\textup{se}})\subset N(\finplus^{[2]^\op})$ under the
evident reference map.

\begin{lem} \label{lem-factlift} For $Z=\config(M)$, the commutative square of Segal spaces
\[
	\begin{tikzpicture}[descr/.style={fill=white}, baseline=(current bounding box.base)] ]
	\matrix(m)[matrix of math nodes, row sep=2.5em, column sep=7.5em,
	text height=1.5ex, text depth=0.25ex]
	{
	Z^{\Delta[2],\textup{se}}  & Z^{\Delta[1]}  \\
	N(\finplus^{[2],\textup{se}}) & N(\finplus^{[1]})  \\
	};
	\path[->,font=\scriptsize]
		(m-1-1) edge node [auto] {\textup{compos.~operator $d_1$}} (m-1-2)
		(m-2-1) edge node [auto] {\textup{compos.~operator $d_1$}} (m-2-2)
		(m-1-1) edge node [left] {} (m-2-1)
		(m-1-2) edge node [auto] {} (m-2-2);
	\end{tikzpicture}
\]
is degreewise homotopy cartesian.
\end{lem}

\begin{proof} In simplicial degree 0, this means the following. For every se diagram
\[
[k_0] \xlongleftarrow{f_1} [k_1] \xlongleftarrow{f_2} [k_2]
\]
in $\finplus$, the operator $d_1$ restricts to a weak equivalence
from the part of $Z_2$ which projects to $(f_1,f_2)\in (N\finplus)_2$ to the part of
$Z_1$ which projects to $f_1f_2\in (N\finplus)_1$\,. The verification can be made easier
by fixing a lift $y\in Z_0$ of $[k_0]$ and working relative to that.
It is allowed to use the particle model. So let $\Phi_s$ be
the space of pairs of morphisms $(\omega_1,\omega_2)$ in the particle model of $\config(M)$
such that the target of $\omega_1$ is $y$ and the source of $\omega_1$ is the target of $\omega_2$
(and $\omega _1, \omega_2$ lift $f_1,f_2$ respectively). Let $\Phi_t$ be the space of morphisms
$\tau$ such that the target of $\tau$ is $y$ (and $\tau$ lifts $f_1f_2$). There is a map $\Psi_s\to \Psi_t$ given by $(\omega_1,\omega_2)\mapsto
(\omega_1\omega_2)$. We need to show that it is a weak equivalence. This follows easily from
formula~(\ref{eqn-formulamorbd}).

We turn to the case of simplicial degree 1. This requires more of an effort.
Let $\kappa\co \Delta[1] \to \Delta[2]$ be the simplicial map which takes the standard generator in the
source to $d_1$ of the standard generator in the target. Write $L:=\Delta[2]\times\Delta[1]$ and write
$K\subset L$ for the union of $\im(\kappa)\times\Delta[1]$ and $\Delta[2]\times\partial\Delta[1]$, where $\partial\Delta[1]\subset\Delta[1]$)
is generated by the 0-simplices in $\Delta[1]$.
Suppose given a simplicial map
\[ e\co L\to N\finplus \]
which represents a morphism in $\finplus^{[2],\textup{se}}$.
We can think of this as a commutative diagram
\[
	\begin{tikzpicture}[descr/.style={fill=white}, baseline=(current bounding box.base)] ]
	\matrix(m)[matrix of math nodes, row sep=2.5em, column sep=2.5em,
	text height=1.5ex, text depth=0.25ex]
	{
	{[}k_0{]} & {[}k_1{]} & {[}k_2{]}  \\
	{[}\ell_0{]} & {[}\ell_1{]} & {[}\ell_2{]} \\
	};
	\path[<-,font=\scriptsize]
		(m-1-1) edge node [auto] {$f_1$} (m-1-2)
		(m-1-2) edge node [auto] {$f_2$} (m-1-3)
		(m-2-1) edge node [auto] {$g_1$} (m-2-2)
		(m-2-2) edge node [auto] {$g_2$} (m-2-3);
	\path[->,font=\scriptsize]		
		(m-1-1) edge node [auto] {$h_0$} (m-2-1)
		(m-1-2) edge node [auto] {$h_1$} (m-2-2)
		(m-1-3) edge node [auto] {$h_2$} (m-2-3);
	\end{tikzpicture}
\]
We have to show that the restriction map
\[  \rmap_\finplus(L,Z) \lra \rmap_\finplus(K,Z) \]
is a weak equivalence. (Here $L$ has reference map $e$ to $N\finplus$\,.)
We choose a lift $y\in Z_0$ of $[\ell_0]\in (N\finplus)_0$ and work relative to that. Write $\Phi_s$ and $\Phi_t$
for the homotopy fibers of $\rmap_\finplus(L,Z)$ and $\rmap_\finplus(K,Z)$ over $y$, respectively.
As in the proof of lemma~\ref{lem-monlift}  we can reduce to a situation where $M=U$ is the disjoint union of an open
collar and a tubular neighborhood of a configuration representing $y$. This redefines $Z$ as $\config(U)$, and so
$Z$ has a weakly ternminal object $y$. Let $L'\subset L$ be the simplicial set obtained
from $L$ by deleting all simplices containing
the vertex corresponding to $[\ell_0]$. Let $K'=L'\cap K$.
Then we have a commutative diagram
\begin{equation} \label{eqn-heavybat}
	\begin{tikzpicture}[descr/.style={fill=white}, baseline=(current bounding box.base)] ]
	\matrix(m)[matrix of math nodes, row sep=2.5em, column sep=2.5em,
	text height=1.5ex, text depth=0.25ex]
	{
	\Phi_s & \Phi_t  \\
	\rmap_{\finplus\downarrow[\ell_0]}(L,Z) & \rmap_{\finplus\downarrow[\ell_0]}(K,Z) \\
	\rmap_{\finplus\downarrow[\ell_0]}(L',Z) & \rmap_{\finplus\downarrow[\ell_0]}(K',Z) \\
	};
	\path[->,font=\scriptsize]
		(m-1-1) edge node [auto] {} (m-1-2)
		(m-2-1) edge node [auto] {} (m-2-2)
		(m-3-1) edge node [auto] {} (m-3-2);
	\path[->,font=\scriptsize]		
		(m-1-1) edge node [auto] {\textup{forget}} (m-2-1)
		(m-2-1) edge node [auto] {\textup{restrict}} (m-3-1)
		(m-1-2) edge node [auto] {\textup{forget}} (m-2-2)
		(m-2-2) edge node [auto] {\textup{restrict}} (m-3-2);
	\end{tikzpicture}
\end{equation}
where the four vertical arrows are weak equivalences. Therefore it remains only to
show that the restriction map
\begin{equation} \label{eqn-lightbat} \rmap_{\finplus\downarrow[\ell_0]}(L',Z) \to \rmap_{\finplus\downarrow[\ell_0]}(K',Z)
\end{equation}
is a weak equivalence.
Here is a drawing of $L'$ with labels showing the reference map to $N\finplus$~:
\[
	\begin{tikzpicture}[descr/.style={fill=white}, baseline=(current bounding box.base)] ]
	\matrix(m)[matrix of math nodes, row sep=1.5em, column sep=3.0em,
	text height=1.5ex, text depth=0.25ex]
	{
	\bullet & & \bullet  \\
	& \bullet & \\
	& & \bullet  \\
	& \bullet & \\
	};
	\path[->,font=\scriptsize]
		(m-1-3) edge node [above] {$f_1f_2$} (m-1-1)
		(m-1-3) edge node [above] {$f_2$} (m-2-2)
		(m-2-2) edge node [above] {$f_1$} (m-1-1)
		(m-2-2) edge node [left] {$h_1$} (m-4-2)
		(m-1-3) edge node [auto] {$h_2$} (m-3-3)
		(m-3-3) edge node [auto] {$g_2$} (m-4-2);
	\path[dotted,->,font=\scriptsize]
		(m-1-3) edge node [auto] {} (m-4-2);
	\end{tikzpicture}
	\]
The 2-simplex with edges labeled $f_1$, $f_2$ and $f_1f_2$ and the 1-simplices labeled $h_2$ and $g_2$ generate $K'$.
The restriction map~(\ref{eqn-lightbat}) is a
composition of two restriction maps, which will be described as we deal with them. (We show that both are weak equivalences.)
We start by deleting the $1$-simplex labeled $h_1$ and the 2-simplex that contains it
(but we keep the unlabeled dotted edge for now).
The associated restriction map is a weak equivalence because of admissibility, that is, because $h_1$ takes the complement of $\im(f_2)$
to the complement of $\im(g_2)$, which in turn is identified with a subset of $[\ell_0]$. (Therefore $h_1$ has an essentially unique lift on the
complement of $\im(f_2)$; the lift on $\im(f_2)$ is prescribed by the lift of the dotted arrow, and the two partial lifts don't interfere with each other.) Next we delete the unlabeled dotted edge and the 2-simplex containing it
(but we do not delete the edges $g_2$ and $h_2$).
The associated restriction map is a weak equivalence because it corresponds to a relative inner horn inclusion. \end{proof}

\subsection{Shadowing: the proofs} \label{subsec-shadowproofs} The proof of lemma~\ref{lem-shadow} comes in three parts,
corresponding to the three arrows in the diagram. We can assume that $\sigma(0)$ is a black object of $\fino$ since the
case where $\sigma$ lands in the white subcategory has already been discussed. This gives more uniformity in the drawings,
where drawings are needed. Since $\sigma$ is fixed, we shall write $I(\sigma)=I_r=\{0,1,\dots,r\}$ and $V:=V(\sigma)$,
$W:=W(\sigma)$, and so on.

\begin{proof}[Proof of lemma~\ref{lem-shadow}: restricting from $V$ to $I_r^\op$]
The nerve of $V$ has a simplicial subset $L$ which contains all $0$-simplices of $NV$
and is generated by nondegenerate $2$-simplices in $NV$~,
as indicated in the following picture.

\[
\begin{tikzpicture}[descr/.style={fill=white}, baseline=(current bounding box.base)]
\matrix[column sep=3.0mm, row sep=3.0mm]{
        \node (06) {$\bullet$}; & \node (16) {$\bullet$};  & \node (26) {$\bullet$}; & \node (36) {$\bullet$}; \\
        \node (05) {$\bullet$}; & \node (15) {$\bullet$};  & \node (25) {$\bullet$}; & \node (35) {$\bullet$};  \\
        \node (04) {$\bullet$}; & \node (14) {$\bullet$};  & \node (24) {$\bullet$}; & \node (34) {$\bullet$}; \\
        \node (03) {$\bullet$}; & \node (13) {$\bullet$};  & \node (23) {$\bullet$}; & \node (33) {$\bullet$};  \\
        \node (02) {$\bullet$}; & \node (12) {$\bullet$};  & \node (22) {$\bullet$}; & \node (32) {$\bullet$};  \\
        \node (01) {$\bullet$}; & \node (11) {$\bullet$};  & \node (21) {$\bullet$}; & \node (31) {$\bullet$}; \\
    };
    \foreach \y [remember=\y as \lasty (initially 1)] in {2,...,6}{
    \foreach \x [remember=\x as \lastx (initially 0)] in {1,...,3}{
	    \path[draw, ->, densely dotted] (\lastx\y)--(\lastx\lasty);
	    \path[draw, ->, densely dotted] (\lastx\y)--(\x\lasty);
	    \path[draw, ->, densely dotted] (\lastx\y)--(\x\y);
	    }
    }
    \foreach \x [remember=\x as \lastx (initially 0)] in {1,...,3}{
	    \path[draw, densely dotted, ->] (\lastx1)--(\x1);
    }
    \foreach \y [remember=\y as \lasty (initially 1)] in {2,...,6}{
	    \path[->, densely dotted, font=\tiny ]
		(3\y) edge node [left] {} (3\lasty);
    }

\end{tikzpicture}
\]
By repeated application of corollary~\ref{cor-innerhorn}, the inclusion $L\to NV$ is a weak equivalence
in the setting of Segal spaces.
Therefore and since $Y$ is a Segal space, we may replace
$\rmap_\finplus(V,Y)$ by $\rmap_\finplus(L,Y)$ without changing the weak homotopy
type. Let
\[ K=L\cap N(I_r^\op) \subset NV \]
which corresponds to the left-hand vertical chain in the above picture.
The inclusion $K\to N(I_r^\op)$ is a weak equivalence
in the setting of Segal spaces. In this way, our restriction map simplifies to
\begin{equation}
\rmap_\fino(L,Y) \lra \rmap_\fino(K,Y)
\end{equation}
and it is induced by the inclusion $K\to L$. Next we
write the inclusion $K\to L$ as a composition of inclusions of simplicial subsets,
\[ K=K(0)\subset K(1)\subset \cdots \subset K(s-1)\subset K(s)=L
\]
so that $K(j)$ is obtained from $K(j-1)$ by attaching one more nondegenerate
$2$-simplex (gluing along an edge or along a horn, union of two edges). The recommended order in which these $2$-simplices should be attached is as follows:
 \[
\begin{tikzpicture}[descr/.style={fill=white}, baseline=(current bounding box.base)]
\matrix[column sep=4.0mm, row sep=4.0mm]{
        \node (06) {$\bullet$}; & \node (16) {$\bullet$};  & \node (26) {$\bullet$}; & \node (36) {$\bullet$}; \\
        \node (05) {$\bullet$}; & \node (15) {$\bullet$};  & \node (25) {$\bullet$}; & \node (35) {$\bullet$};  \\
        \node (04) {$\bullet$}; & \node (14) {$\bullet$};  & \node (24) {$\bullet$}; & \node (34) {$\bullet$}; \\
        \node (03) {$\bullet$}; & \node (13) {$\bullet$};  & \node (23) {$\bullet$}; & \node (33) {$\bullet$};  \\
        \node (02) {$\bullet$}; & \node (12) {$\bullet$};  & \node (22) {$\bullet$}; & \node (32) {$\bullet$};  \\
        \node (01) {$\bullet$}; & \node (11) {$\bullet$};  & \node (21) {$\bullet$}; & \node (31) {$\bullet$}; \\
    };
    \foreach \y [remember=\y as \lasty (initially 1)] in {2,...,6}{
    \foreach \x [remember=\x as \lastx (initially 0)] in {1,...,3}{
	    \path[draw, ->, densely dotted] (\lastx\y)--(\lastx\lasty);
	    \path[draw, ->, densely dotted] (\lastx\y)--(\x\lasty);
	    \path[draw, ->, densely dotted] (\lastx\y)--(\x\y);
	    }
    }
    \foreach \x [remember=\x as \lastx (initially 0)] in {1,...,3}{
	    \path[draw, densely dotted, ->] (\lastx1)--(\x1);
    }
    \foreach \y [remember=\y as \lasty (initially 1)] in {2,...,6}{
	    \path[->, densely dotted, font=\tiny ]
		(3\y) edge node [left] {} (3\lasty);
    }

    \node[text width=0.1cm] at (-1,1.4) {{\tiny 9}};
    \node[text width=0.1cm] at (-1, 0.6) {{\tiny 7}};
    \node[text width=0.1cm] at (-1,-0.2) {{\tiny 5}};
    \node[text width=0.1cm] at (-1,-1.0) {{\tiny 3}};
    \node[text width=0.1cm] at (-1,-1.8) {{\tiny 1}};

    \node[text width=0.1cm] at (-0.6,-0.6) {{\tiny 4}};
    \node[text width=0.1cm] at (-0.6,-1.4) {{\tiny 2}};

    \node[text width=0.1cm] at (-0.2,-1.8) {{\tiny 11}};
    \node[text width=0.1cm] at (0.1,-1.4) {{\tiny 12}};
    \node[text width=0.1cm] at (0.6,-1.8) {{\tiny 21}};

    \node[text width=0.1cm] at (0.9,1.8) {{\tiny 30}};

\end{tikzpicture}
\]
It suffices to show that the restriction
\[
\rmap_\fino(K(j),Y) \lra \rmap_\fino(K(j-1),Y)
\]
is a weak equivalence, for $j=1,\dots,s$. Let $Z=\config(D^m)$. Note that we must allow $Y=\varphi^*Z$ and
$Y=\delta^*Z$.

\emph{Case 1.} These are the cases where $j$ is odd and the inclusion $K(j-1)\hookrightarrow K(j)$
is a relative inner horn inclusion. We can use lemma~\ref{lem-innerhorn}.

\emph{Case 2.} These are the cases where $j$ is odd but exceptional: in the above drawing, the cases
$j=1,11,21$. Here it is a good idea to factor the inclusion $K(j-1)\hookrightarrow K(j)$ into two,
\[   K(j-1) \hookrightarrow  K'(j-1) \hookrightarrow K(j)~,  \]
where $K'(j-1)$ is obtained from $K(j-1)$ by attaching only the $d_2$-face of the new 2-simplex in $K(j)$.
This $d_2$-face is represented by a horizontal edge in the above picture. The inclusion
$K'(j-1) \hookrightarrow K(j)$ is again a relative inner horn inclusion. For the other inclusion,
$K(j-1) \hookrightarrow K'(j-1)$, we need to show that
\[  d_0\co Z_1 \lra Z_0 \]
(the source operator)
restricts to a weak equivalence between the preimages of certain elements in $(N\finplus)_1$ and $(N\finplus)_0$~, respectively.
The elements in $(N\finplus)_1$ that we should be looking at have the form
\[
[1]�\xlongleftarrow{g} [k]
\]
where $g$ is a based map such that $g^{-1}(1)$ has exactly one element. (Use the observations on general features
at the end of example~\ref{expl-Vfeat}. There is no need to make a careful distinction between the cases
$Y=\varphi^*\config(D^m)$ and $Y=\delta^*\config(D^m)$ here because we are concerned with the black part of $\fino$\,.)
The corresponding element in $(N\finplus)_0$
that we should be looking at is the source $[k]$. Since the part of $Z_0$ projecting to $[1]\in (N\finplus)_0$
is weakly contractible, formula~(\ref{eqn-formulamorbd}) gives us a description of the part of $Z_1$ projecting to
$g\in (N\finplus)_1$\,. With that description, the verification is easy.

\emph{Case 3.} These are the cases where $j$ is even. We attach a 2-simplex along an edge,
the $d_1$ face of the $2$-simplex. Therefore the task is to show that
\[   d_1\co Z_2 \lra   Z_1  \]
restricts to a weak equivalence between the preimages of certain elements in $(N\finplus)_2$ and $(N\finplus)_1$~, respectively.
More precisely, we are going to show that
\begin{equation} \label{eqn-shadowcheck}
	\begin{tikzpicture}[descr/.style={fill=white}, baseline=(current bounding box.base)] ]
	\matrix(m)[matrix of math nodes, row sep=2.5em, column sep=2.5em,
	text height=1.5ex, text depth=0.25ex]
	{
	Z_2 & Z_1  \\
	Z_0 & Z_0 \\
	};
	\path[->,font=\scriptsize]
		(m-1-1) edge node [auto] {$d_1$} (m-1-2)
		(m-1-1) edge node [auto] {$d_1d_1$} (m-2-1)
		(m-1-2) edge node [auto] {$d_1$} (m-2-2);
		\draw [double equal sign distance] (m-2-1) to (m-2-2) node [anchor=mid west] {$$};
	\end{tikzpicture}
\end{equation}
restricts to a homotopy cartesian square on the preimages of certain elements in $(N\finplus)_2$, $(N\finplus)_1$ and $(N\finplus)_0$.
The elements in $(N\finplus)_2$ that we should be looking at have the form
\[
[m] \xlongleftarrow{f_1} [k]  \xlongleftarrow{f_2} [\ell]
\]
where $f_2$ is surjective, order preserving and injective away from the preimage of the base point,
and $f_1(z)=0$ implies $z=0$. (These properties were highlighted earlier, at the end of example~\ref{expl-Vfeat}.
It may help to re-draw the arrow $f_1$ as a vertical arrow.)
The element in $(N\finplus)_1$ that we should be looking at is the composition
\[
[m] \xleftarrow{\quad f_1f_2 \quad} [\ell]
\]
The element in $(N\finplus)_0$ that we should be looking at is $[m]$. (Note that $f_1$ and $f_2$
can be reconstructed from $f_1f_2$\,.) Again, the verification is easy since we can use formula~(\ref{eqn-formulamorbd})
to describe the vertical homotopy fibers in~(\ref{eqn-shadowcheck}). \end{proof}

\medskip
\begin{proof}[Proof of lemma~\ref{lem-shadow}: restricting from $VW$ to $V$]
Let $q\co W \to V$ be the map of posets defined by $(t,x,y)\mapsto (t,y)$, as seen in
definition~\ref{defn-sigmaVW}. Let $e$ from $W \times[1]$ to $VW$ be the map of posets defined by $((t,x,y),0)\mapsto (t,x,y)$ and
$((t,x,y),1)\mapsto(t,y)$. According to corollary~\ref{cor-innerhorn},
the nerve of $VW$ is weakly equivalent (details as in that corollary) to the
mapping cylinder of
\[ NW \to NV \;. \]
Since the mapping cylinder was defined as a pushout, and the pushout also
turns out to be a homotopy pushout (with a convenient choice of model structure which has the prescribed weak equivalences: the injective one),
it follows that we have a homotopy pullback square
\[
	\begin{tikzpicture}[descr/.style={fill=white}, baseline=(current bounding box.base)] ]
	\matrix(m)[matrix of math nodes, row sep=2.5em, column sep=2.5em,
	text height=1.5ex, text depth=0.25ex]
	{
	\rmap_{\fino}(VW,Y) & \rmap_{\fino}(W\times[1],Y)  \\
	\rmap_{\fino}(V,Y) & \rmap_{\fino}(W,Y) \\
	};
	\path[->,font=\scriptsize]
		(m-1-1) edge node [auto] {$e^*$} (m-1-2)
		(m-1-1) edge node [auto] {\textup{res.}} (m-2-1)
		(m-2-1) edge node [auto] {$q^*$} (m-2-2)
		(m-1-2) edge node [auto] {$(-,1)^*$} (m-2-2);
	\end{tikzpicture}
\]
(where $W$ in the right-hand term maps to $N\fino$ by $W\to V\to N\fino$).
So it only remains to be shown that the map in the right-hand column is a weak equivalence.
Using definition~\ref{defn-rezkadj} and putting $Z=\config(D^m)$, we may write that map in the form
\[
\rmap_{\finplus^{[1]}}(W,Z^{\Delta[1],\textup{mon}}) \lra \rmap_{\finplus}(W,Z)\,.
\]
(The reference functors from $W$ to $\finplus^{[1]}$ and $\finplus$ depend on
whether we are dealing with $Y=\varphi^*Z$ or $Y=\delta^*Z$.)
It is the map given by post-composition with the target operator from $Z^{\Delta[1],\textup{mon}}$ to $Z$.
It is a weak equivalence by lemma~\ref{lem-monlift}. \end{proof}

\medskip
\begin{proof}[Proof of lemma~\ref{lem-shadow}: restricting from $VW$ to $VW^\circ$]
Let $s$ be the maximum of the $i\in \{0,1,\dots,r\}$ such that $\sigma(i)$ is black.
There is the following practical description of $VW$: it is a product $U\times I_r^\op$
where $U$ is the poset of all pairs $(t,x)$ such that $t\in \{-1,0,1,\dots,s\}$ and
$x\in \{0,1,2,\dots,s,s+1\}$ and $t\le x$. The ordering on $U$ is given by $(t_0,x_0)\le (t_1,x_1)$
if and only if $t_0\le t_1$ and $x_0\le x_1$\,. In this description, $V\subset VW$ corresponds to the
set of all triples $(t,x,y)$, or better $((t,x),y)$, where $x=s+1$. (This description of $VW$ and $U$ is practical
when it comes to drawing $U$. It would be somewhat misleading if we were interested in the
naturality properties with respect to $\sigma$ as a variable. But it was agreed that $\sigma$ is fixed
in these proofs.)

Here is a sketch of $U$ which at the same time indicates a simplicial subset
$L_U$ of $NU$ generated by nondegenerate $2$-simplices:
 \[
\begin{tikzpicture}[descr/.style={fill=white}, baseline=(current bounding box.base)]
\matrix[column sep=4.0mm, row sep=4.0mm]{
        \node (05) {$\bullet$}; & \node (15) {$\bullet$};  & \node (25) {$\bullet$}; & \node (35) {$\bullet$}; & \node (45) {$\bullet$};  \\
        \node (04) {$\bullet$}; & \node (14) {$\bullet$};  & \node (24) {$\bullet$}; & \node (34) {$\bullet$}; & \node (44) {$\bullet$}; \\
        \node (03) {$\bullet$}; & \node (13) {$\bullet$};  & \node (23) {$\bullet$}; & \node (33) {$\bullet$}; & \\
        \node (02) {$\bullet$}; & \node (12) {$\bullet$};  & \node (22) {$\bullet$}; & & \\
        \node (01) {$\bullet$}; & \node (11) {$\bullet$};  & & & \\
    };

\foreach \x [remember=\x as \lastx (initially 0)] in {1,...,4}{
	    \path[draw, ->] (\lastx5)--(\x5);
}
\foreach \y [remember=\y as \lasty (initially 1)] in {1,...,4}{
\foreach \x [remember=\x as \lastx (initially 0)] in {1,...,\y}{
	    \path[draw, ->] (\lastx\y)--(\x\y);
}
}
\foreach \y [remember=\y as \lasty (initially 1)] in {2,...,5}{
	    \path[draw, ->] (0\lasty)--(0\y);
	    \path[draw, ->] (1\lasty)--(1\y);
}
\foreach \y [remember=\y as \lasty (initially 2)] in {3,...,5}{
	    \path[draw, ->] (2\lasty)--(2\y);
}
\foreach \y [remember=\y as \lasty (initially 3)] in {4,...,5}{
	    \path[draw, ->] (3\lasty)--(3\y);
}
\path[draw, ->] (44)--(45);
\path[draw, ->] (01)--(12); \path[draw, ->] (02)--(13); \path[draw, ->] (03)--(14); \path[draw, ->] (04)--(15);
\path[draw, ->] (12)--(23); \path[draw, ->] (13)--(24); \path[draw, ->] (14)--(25);
\path[draw, ->] (23)--(34); \path[draw, ->] (24)--(35);
\path[draw, ->] (34)--(45);
\end{tikzpicture}
\]
Let $K_U\subset L_U$ be the simplicial subset generated by the nondegenerate 1-simplices and 2-simplices
in the next sketch:
\[
\begin{tikzpicture}[descr/.style={fill=white}, baseline=(current bounding box.base)]
\matrix[column sep=4.0mm, row sep=4.0mm]{
       	& & & & \node (45) {$\bullet$};  \\
        \node (04) {$\bullet$}; & \node (14) {$\bullet$};  & \node (24) {$\bullet$}; & \node (34) {$\bullet$}; & \node (44) {$\bullet$}; \\
        \node (03) {$\bullet$}; & \node (13) {$\bullet$};  & \node (23) {$\bullet$}; & \node (33) {$\bullet$}; & \\
        \node (02) {$\bullet$}; & \node (12) {$\bullet$};  & \node (22) {$\bullet$}; & & \\
        \node (01) {$\bullet$}; & \node (11) {$\bullet$};  & & & \\
    };

\foreach \y [remember=\y as \lasty (initially 1)] in {1,...,4}{
\foreach \x [remember=\x as \lastx (initially 0)] in {1,...,\y}{
	    \path[draw, ->] (\lastx\y)--(\x\y);
}
}
\foreach \y [remember=\y as \lasty (initially 1)] in {2,...,4}{
	    \path[draw, ->] (0\lasty)--(0\y);
	    \path[draw, ->] (1\lasty)--(1\y);
}
\foreach \y [remember=\y as \lasty (initially 2)] in {3,...,4}{
	    \path[draw, ->] (2\lasty)--(2\y);
}
\foreach \y [remember=\y as \lasty (initially 3)] in {4,...,4}{
	    \path[draw, ->] (3\lasty)--(3\y);
}
\path[draw, ->] (44)--(45);
\path[draw, ->] (01)--(12); \path[draw, ->] (02)--(13); \path[draw, ->] (03)--(14);
\path[draw, ->] (12)--(23); \path[draw, ->] (13)--(24);
\path[draw, ->] (23)--(34);
\end{tikzpicture}
\]
Let $L\subset NVW$ consist of all simplices which are
taken to simplices of $L_U$ under the projection, and
let $K\subset L$ be the preimage of $K_U$\,. The first and most important thing we want to show here is that the restriction map
\begin{equation} \label{eqn-atomic} \rmap_\fino(VW,Y) \lra \rmap_\fino(K,Y) \end{equation}
is a weak equivalence. ---
By repeated use of corollary~\ref{cor-innerhorn}, the restriction map
\[
\rmap_\fino(VW,Y) \lra  \rmap_\fino(L,Y)
\]
is a weak equivalence.
Therefore we ought to show that the restriction map
\[  \rmap_\fino(L,Y) \lra \rmap_\fino(K,Y) \]
is a weak equivalence. To that end we write the inclusion $K\hookrightarrow L$ as a composition of
inclusions
\[ K=K(0)\subset K(1)\subset \cdots \subset K(v-1)\subset K(v)=L
\]
so that $K(j)$ is obtained from $K(j-1)$ by attaching the preimage of one more nondegenerate
$2$-simplex in $L_U$\,, for $j=1,2,\dots,v$. The recommended order in which these 2-simplices of $L_U$
should be called up is indicated in the following picture:
 \[
\begin{tikzpicture}[descr/.style={fill=white}, baseline=(current bounding box.base)]
\matrix[column sep=4.0mm, row sep=4.0mm]{
        \node (05) {$\bullet$}; & \node (15) {$\bullet$};  & \node (25) {$\bullet$}; & \node (35) {$\bullet$}; & \node (45) {$\bullet$};  \\
        \node (04) {$\bullet$}; & \node (14) {$\bullet$};  & \node (24) {$\bullet$}; & \node (34) {$\bullet$}; & \node (44) {$\bullet$}; \\
        \node (03) {$\bullet$}; & \node (13) {$\bullet$};  & \node (23) {$\bullet$}; & \node (33) {$\bullet$}; & \\
        \node (02) {$\bullet$}; & \node (12) {$\bullet$};  & \node (22) {$\bullet$}; & & \\
        \node (01) {$\bullet$}; & \node (11) {$\bullet$};  & & & \\
    };

\foreach \x [remember=\x as \lastx (initially 0)] in {1,...,4}{
	    \path[draw, ->] (\lastx5)--(\x5);
}
\foreach \y [remember=\y as \lasty (initially 1)] in {1,...,4}{
\foreach \x [remember=\x as \lastx (initially 0)] in {1,...,\y}{
	    \path[draw, ->] (\lastx\y)--(\x\y);
}
}
\foreach \y [remember=\y as \lasty (initially 1)] in {2,...,5}{
	    \path[draw, ->] (0\lasty)--(0\y);
	    \path[draw, ->] (1\lasty)--(1\y);
}
\foreach \y [remember=\y as \lasty (initially 2)] in {3,...,5}{
	    \path[draw, ->] (2\lasty)--(2\y);
}
\foreach \y [remember=\y as \lasty (initially 3)] in {4,...,5}{
	    \path[draw, ->] (3\lasty)--(3\y);
}
\path[draw, ->] (44)--(45);
\path[draw, ->] (01)--(12); \path[draw, ->] (02)--(13); \path[draw, ->] (03)--(14); \path[draw, ->] (04)--(15);
\path[draw, ->] (12)--(23); \path[draw, ->] (13)--(24); \path[draw, ->] (14)--(25);
\path[draw, ->] (23)--(34); \path[draw, ->] (24)--(35);
\path[draw, ->] (34)--(45);

    \node[text width=0.1cm] at (1.0,1.4) {{\tiny 2}};
    \node[text width=0.1cm] at (0.2,1.4) {{\tiny 4}};
    \node[text width=0.1cm] at (-0.6,1.4) {{\tiny 6}};
    \node[text width=0.1cm] at (-1.4,1.4) {{\tiny 8}};

    \node[text width=0.1cm] at (1.4, 1) {{\tiny 1}};
    \node[text width=0.1cm] at (0.6, 1) {{\tiny 3}};
    \node[text width=0.1cm] at (-0.2, 1) {{\tiny 5}};
    \node[text width=0.1cm] at (-1, 1) {{\tiny 7}};

\end{tikzpicture}
\]
Then it remains to show that the restriction map
\[  \rmap_\fino(K(j),Y) \lra \rmap_\fino(K(j-1),Y) \]
is a weak equivalence, for $j=1,2,\dots,v$. There are two types of $j$ to be considered:
type I, which in the above picture comprises $j=2,4,6,8$, and type II, which in the above
picture comprises $j=1,3,5,7$.
For $j$ of type I we can rely on lemma~\ref{lem-factlift}. In more detail,
for $j$ of type I there is a pushout square of simplicial sets
\begin{equation} \label{eqn-typeI}
	\begin{tikzpicture}[descr/.style={fill=white}, baseline=(current bounding box.base)] ]
	\matrix(m)[matrix of math nodes, row sep=2.5em, column sep=2.5em,
	text height=1.5ex, text depth=0.25ex]
	{
	K(j-1) & K(j)  \\
	\Delta[1] \times \Delta[r] & \Delta[2] \times \Delta[r] \\
	};
	\path[->,font=\scriptsize]
		(m-1-1) edge node [auto] {} (m-1-2)
		(m-2-1) edge node [auto] {} (m-1-1)
		(m-2-1) edge node [auto] {} (m-2-2)
		(m-2-2) edge node [auto] {} (m-1-2);
	\end{tikzpicture}
\end{equation}
This leads to a homotopy pullback square
\[
	\begin{tikzpicture}[descr/.style={fill=white}, baseline=(current bounding box.base)] ]
	\matrix(m)[matrix of math nodes, row sep=2.5em, column sep=2.5em,
	text height=1.5ex, text depth=0.25ex]
	{
	\rmap_\fino(K(j-1),Y) & \rmap_\fino(K(j),Y)  \\
	\rmap_\fino(\Delta[1]\times\Delta[r],Y) & \rmap_\fino(\Delta[2]\times\Delta[r],Y) \\
	};
	\path[<-,font=\scriptsize]
		(m-1-1) edge node [auto] {} (m-1-2)
		(m-2-1) edge node [auto] {} (m-1-1)
		(m-2-1) edge node [auto] {} (m-2-2)
		(m-2-2) edge node [auto] {} (m-1-2);
	\end{tikzpicture}
\]
and it suffices to show that the lower horizontal arrow is a weak equivalence.
This is a special case of lemma~\ref{lem-factlift} because we are dealing with a
very particular simplicial map from $\Delta[2]\times\Delta[r]$ to $N\fino$.
When $j$ is of type II we can rely on proposition~\ref{prop-innerhorn}.

To recapitulate, the restriction map~(\ref{eqn-atomic}) is a weak equivalence. Let $K^\circ=K\cap NVW^\circ$. The arguments which we employed to show that the map~(\ref{eqn-atomic}) is a
weak equivalence show also that the restriction map
\begin{equation} \label{eqn-atomic2} \rmap_\fino(NVW^\circ,Y) \lra \rmap_\fino(K^\circ,Y)
\end{equation}
is a weak equivalence. In order not to interrupt the flow of ideas, we have isolated
a sketch of that in remark~\ref{rem-otherres} below. --- Let $J_U\subset K_U$ be the simplicial subset generated by the nondegenerate 1-simplex which
connects the vertices $(s,s)$ and $(s,s+1)$. Let $H_U\subset J_U$ be the simplicial subset generated by the single vertex $(s,s+1)$.
Let $J\subset K$ and $H\subset K$ be the preimages of $J_U$ and $H_U$, respectively.
Let $J^\circ=J\cap K^\circ$ and $H^\circ=H\cap K^\circ$. Then we obtain
a commutative diagram of restriction maps
\[
	\begin{tikzpicture}[descr/.style={fill=white}, baseline=(current bounding box.base)] ]
	\matrix(m)[matrix of math nodes, row sep=2.5em, column sep=2.5em,
	text height=1.5ex, text depth=0.25ex]
	{
	\rmap_\fino(VW,Y) & \rmap_\fino(VW^\circ,Y)  \\
	\rmap_\fino(K,Y) & \rmap_\fino(K^\circ,Y) \\
	\rmap_\fino(J,Y) & \rmap_\fino(J^\circ,Y) \\
	\rmap_\fino(H,Y) & \rmap_\fino(H^\circ,Y) \\	
	};
	\path[->,font=\scriptsize]
		(m-1-1) edge node [auto] {} (m-1-2)
		(m-2-1) edge node [auto] {} (m-2-2)
		(m-3-1) edge node [auto] {} (m-3-2)
		(m-4-1) edge node [auto] {} (m-4-2);
	\path[->,font=\scriptsize]		
		(m-1-1) edge node [auto] {$\simeq$} (m-2-1)
		(m-1-2) edge node [auto] {$\simeq$} (m-2-2)
		(m-2-1) edge node [auto] {} (m-3-1)
		(m-2-2) edge node [auto] {} (m-3-2)
		(m-3-1) edge node [auto] {} (m-4-1)
		(m-3-2) edge node [auto] {} (m-4-2);
	\end{tikzpicture}
\]
Since $K$ is the pushout of $K^\circ \leftarrow J^\circ\to J$, the square in the middle of the diagram is
homotopy cartesian. The vertical arrows in the lower part of the diagram are weak equivalences,
for example by lemma~\ref{lem-monlift}.
Therefore in order to finish we only have to show that the
lower horizontal arrow in that diagram is a weak equivalence. But this is true by inspection. Note that $H\cong I_r^\op$ and
$H^\circ\cong I_r^\op\smin I_s^\op$~, and that $\sigma^{VW}$ takes all vertices of $H$ not in $H^\circ$
to the object $([1],1,\textup{black})$. \end{proof}

\begin{rem} \label{rem-otherres} {\rm The arguments which
gave us that~(\ref{eqn-atomic}) is a weak equivalence are also applicable to~(\ref{eqn-atomic2}).
Namely, let $U'\subset U$ be the full sub-poset consisting of all $(t,x)$ where $x\le s$. Then
$NVW^\circ$ is the union of $N(U\times(I_r\smin I_s)^\op)$ and $N(U'\times I_r^\op)$ inside
$N(U\times I_r^\op)=NVW$. In a wisely chosen model structure on simplicial spaces, this implies
that $NVW^\circ$ is the homotopy pushout of
\[  N(U\times(I_r\smin I_s)^\op)  \longleftarrow N(U'\times(I_r\smin I_s)^\op)
\longrightarrow  N(U'\times I_r^\op). \]
A similar homotopy pushout decomposition is available for $L^\circ:=L\cap NVW^\circ$. It follows that
the restriction map in $\rmap_\fino(-,Y)$
determined by the inclusion $L^\circ\to NVW^\circ$ is a weak equivalence.
Therefore we only need to investigate the restriction map in $\rmap_\fino(-,Y)$
determined by the inclusion of $K^\circ$ in $L^\circ$.
If $s=r$, then there is nothing to prove: $K^\circ=L^\circ$.
If $s<r$, then in the diagram which replaces diagram~(\ref{eqn-typeI}), the
simplicial set $\Delta[r-s-1]$ replaces $\Delta[r]$.
}
\end{rem}

\smallskip
\begin{proof}[Proof of proposition~\ref{prop-shadowbox}] We take $q=2$. For a simplicial space $Y$ over $N\fino$
and $\sigma\in (N\fino)_r$ let the part of $F_j(Y)$ which is taken to $\sigma$ by the reference map
$F_j(Y)\to N\fino$ be defined as follows ($j=0,1,2$).
\begin{itemize}
\item[$j=0$]: $\rmap_\fino(N(I(\sigma)^\op),Y)$.
\item[$j=1$]: $\rmap_\fino(N(VW(\sigma)^\op),Y)$.
\item[$j=2$]: $\rmap_\fino(N(VW^\circ(\sigma)^\op),Y)$.
\end{itemize}
We leave it to the reader to specify the face and degeneracy operators in $F_j(Y)$. Obviously they
should be compatible with the face and degeneracy operators in $N\fino$\,.
The natural transformations $F_0\Leftarrow F_1\Rightarrow F_2$ are the restriction maps of lemma~\ref{lem-shadow}.

(i) We observe that $\rmap_\fino(N(I(\sigma)^\op),Y)$ is identified with the part
of $Y$ which projects to $\sigma\in (N\fino)_r$ since $N(I(\sigma)^\op)$ is free on one generator.
In that sense, $F_0=\id$. 

(ii) We need to argue that $F_2=E\alpha^*$ for some homotopy
functor $E$ from simplicial spaces over $N\finplus$ to simplicial spaces over $N\fino$\,. This is clear since
$\sigma^W$ maps $VW^\circ(\sigma)^\op$ to the white subcategory of $\fino$~, which is the image of the full embedding
$\alpha\co \finplus\to \fino$\,.

(iii) Let $Z$ be a simplicial space over $N\finplus$\,.
The part of $E(Z)$ over $\sigma\in (N\fino)_r$ is by construction
\[ E(Z)(\sigma)=\rmap_\finplus(N(VW^\circ(\sigma)^\op),Z) \]
where we use $\alpha^{-1}\sigma^{VW}$ as the reference functor from $VW^\circ(\sigma)^\op$ to $\finplus$. If $\sigma=\alpha\tau$
for some $\tau\in (N\finplus)_r$, then $VW^\circ(\sigma)\cong I(\sigma)$ and $\alpha^{-1}\sigma^{VW}$ agrees with $\tau$ under this
identification of the source categories. Therefore
\[ \alpha^*E(Z)(\tau)= E(Z)(\alpha\tau)=\rmap_\finplus(N(VW^\circ(\alpha\tau)^\op),Z) \cong Z(\tau). \]

(iv) We need to show that the natural transformations $F_0\Leftarrow F_1\Rightarrow F_2$
specialize to weak equivalences when evaluated on $Y=\varphi^*\config(D^m)$ or $Y=\delta^*\config(D^m)$.
But that is the content of lemma~\ref{lem-shadow}.
\end{proof}

\section{Spaces of long knots and higher analogues} \label{sec-knottynew}
\subsection{Statement of results}
Let $D^m_\circ$ be the punctured disk, $D^m_\circ=D^m\smin \{0\}$.
In this section, we prove

\begin{thm}[(Alexander trick for configuration categories)]\label{thm-alexanderk}
For any $k>0$ or $k = \infty$, and for all integers $m,n\ge 0$, the restriction map
\[
\rmap_{\finplus}(\config(D^m;k),\config(D^n)) \lra
\rmap_{\finplus}(\config(D^m_\circ;k),\config(D^n))
\]
is a weak homotopy equivalence. Therefore the space $\rmap_{\finplus}^\partial(\config(D^m;k),\config(D^n))$
is weakly contractible.
\end{thm}

Before we go into the proof, we deduce from this result some of the main theorems stated in the introduction. Let $\imap_\partial(D^m,D^n)$ be the space of injective continuous maps from $D^m$ to $D^n$, subject to the usual boundary conditions,
with the compact-open topology (cf.~remark~\ref{rem-imap}).

\begin{thm}\label{thm:disks} 
When $n - m$ is at least $3$, there is a homotopy cartesian square
\begin{equation} \label{eqn-alexanderfacto}
\begin{tikzpicture}[descr/.style={fill=white}, baseline=(current bounding box.base)],
	\matrix(m)[matrix of math nodes, row sep=2.5em, column sep=2.5em,
	text height=1.5ex, text depth=0.25ex]
	{
	\emb_\partial(D^m,D^n) & \imap_\partial(D^m,D^n) \\
	\imm_\partial(D^m,D^n) & \Omega^m \rmap_{\fin}(\config(\RR^m), \config(\RR^n)) \\
	};
	\path[->,font=\scriptsize]
		(m-1-1) edge node [auto] {$\textup{incl.}$} (m-1-2);
	\path[->,font=\scriptsize]
		(m-2-1) edge node [auto] {} (m-2-2);
	\path[->,font=\scriptsize]
		(m-1-1) edge node [left] {} (m-2-1);
	\path[->,font=\scriptsize] 		
		(m-1-2) edge node [right] {} (m-2-2);
	\end{tikzpicture}
\end{equation}
where $\imap_\partial(D^m,D^n)$ is contractible by the Alexander trick.  The lower horizontal map factors through the Smale-Hirsch map $\imm_\partial(D^m,D^n) \to \Omega^m V_{m,n}$ where $V_{m,n}$ is the space of linear injective maps from $\RR^m$ to $\RR^n$. Moreover, all the maps in the square are $m$-fold loop maps.
\end{thm}
\begin{proof}
The right vertical map is the composite
\begin{equation*}
\imap_\partial(D^m,D^n) \to \rmap_{\fin}^{\partial}(\config^\loc(D^m), \config^\loc(D^n)) \to \Omega^m \rmap_{\fin}(\config(\RR^m), \config(\RR^n)) \;.
\end{equation*}
Viewing the middle space as a functor on the variable $D^m$ (i.e. on the category of manifolds with boundary diffeomorphic to $S^{m-1}$) the second map is the  homotopy sheafification of that functor with respect to usual covers (cf. \cite[Proposition 7.6]{BoavidaWeiss}). This is easy to deduce from the formula for the homotopy sheafification, and the equivalence $\config^\loc(U;k) \simeq \config(\RR^m;k)$ which holds for every manifold $U$ of the form $C \amalg \RR^m$ where $C$ is a boundary collar. It follows that the second map in the display is a weak equivalence, as $F$ is a homotopy $\ms J_1$-sheaf by proposition \ref{prop-localconfigsheafbdry}. The first map in the display can itself be described as the composite
\[
\imap_\partial(D^m,D^n) \to \rmap_{\finplus}^\partial(\config(D^m),\config(D^n))
\to \rmap_{\fin}^{\partial}(\config^\loc(D^m),\config^\loc(D^n))
\]
where the left-hand map is a weak equivalence by theorem \ref{thm-alexanderk}.

It follows that square (\ref{eqn-alexanderfacto}) is related to the square of corollary \ref{cor-mainbdry} by termwise weak equivalences, and so it is homotopy cartesian.
\end{proof}

\begin{thm}\label{thm:disksk}
There is a homotopy fiber sequence
\begin{equation} \label{eqn-mainfacto}
\begin{tikzpicture}[descr/.style={fill=white}, baseline=(current bounding box.base)],
	\matrix(m)[matrix of math nodes, row sep=2.5em, column sep=2.5em,
	text height=1.5ex, text depth=0.25ex]
	{
	T_k \emb_\partial(D^m,D^n) & \\
	T_1 \emb_\partial(D^m,D^n) & \Omega^m \rmap_{\fin}(\config(\RR^m;k), \config(\RR^n)) \\
	};
	\path[->,font=\scriptsize]
		(m-2-1) edge node [auto] {} (m-2-2);
	\path[->,font=\scriptsize]
		(m-1-1) edge node [left] {} (m-2-1);
	\end{tikzpicture}
\end{equation}
of $m$-fold loop spaces.
\end{thm}

\begin{proof}
That this is a homotopy fiber sequence follows from theorems \ref{thm-alexanderk} and \ref{thm-mainbdry}. The horizontal
arrow is clearly a map of $m$-fold loop spaces. Therefore the term in the upper left automatically
acquires a structure of $m$-fold loop space as the homotopy fiber of the horizontal arrow. This promotes the homotopy fiber
sequence to one of $m$-fold loop spaces. (This solution is admittedly somewhat tautological and may be found disappointing. 
See however remark~\ref{rem:disksk} below.)
\end{proof}

\begin{rem} \label{rem:disksk} {\rm
Theorem \ref{thm:disksk} and its proof also imply that, for each $k$, the map $\emb_\partial(D^m,D^n) \to  T_k \emb_\partial(D^m,D^n)$ is an $E_m$-algebra map. Indeed we can think of it
as the map of horizontal homotopy fibers determined by the commutative square of $E_m$-algebras
\[
\begin{tikzpicture}[descr/.style={fill=white}, baseline=(current bounding box.base)],
	\matrix(m)[matrix of math nodes, row sep=2.5em, column sep=2.5em,
	text height=1.5ex, text depth=0.25ex]
	{
	\emb_\partial(D^m,D^n) & \imap_\partial(D^m,D^n)   \\
	T_1 \emb_\partial(D^m,D^n) & \Omega^m \rmap_{\fin}(\config(\RR^m;k), \config(\RR^n)) \; . \\
	};
\path[->,font=\scriptsize]
		(m-1-1) edge node [auto] {} (m-1-2);
	\path[->,font=\scriptsize]
		(m-2-1) edge node [auto] {} (m-2-2);
	\path[->,font=\scriptsize]
		(m-1-1) edge node [left] {} (m-2-1);
	\path[->,font=\scriptsize] 		
		(m-1-2) edge node [right] {} (m-2-2);
	\end{tikzpicture}
\]
}
\end{rem}

\subsection{The cases where $k=\infty$.} This is about the proof of theorem~\ref{thm-alexanderk} in the cases $k=\infty$.
Let $Z^{(m)}=\config(D^m)$.

\begin{lem} \label{lem-shadowboxapp} The map
\[ \rmap_\fino(\varphi^*Z^{(m)},\delta^*Z^{(n)}) \lra \rmap_\finplus(\alpha^*\varphi^*Z^{(m)},\alpha^*\delta^*Z^{(n)}) \]
induced by $\alpha^*$ is a weak equivalence.
\end{lem}

\begin{proof}
In the diagram
\begin{equation*}
\begin{tikzpicture}[descr/.style={fill=white}, baseline=(current bounding box.base)],
	\matrix(m)[matrix of math nodes, row sep=2.5em, column sep=2.0em,
	text height=1.5ex, text depth=0.25ex]
	{
	\rmap_\fino(\varphi^*Z^{(m)},\delta^*Z^{(n)}) & \rmap_\finplus(\alpha^*\varphi^*Z^{(m)},\alpha^*\delta^*Z^{(n)}) \\
	\rmap_\finplus(\alpha^* E \alpha^*\varphi^*Z^{(m)},\alpha^* E \alpha^*\delta^*Z^{(n)}) & \rmap_\finplus(E\alpha^*\varphi^*Z^{(m)},E\alpha^*\delta^*Z^{(n)}) \\
	};
	\path[->,font=\scriptsize]
		(m-1-1) edge node [auto] {} (m-1-2);
	\path[->,font=\scriptsize]
		(m-2-2) edge node [auto] {} (m-2-1);
	\path[->,font=\scriptsize] 		
		(m-1-2) edge node [right] {} (m-2-2);
	\end{tikzpicture}
\end{equation*}
the composition of the first two arrows and the composition of the last two arrows are weak equivalences by proposition \ref{prop-shadowbox}.
\end{proof}

\begin{lem} \label{lem-faceboxapp} The map
\[ \rmap_\finplus(Z^{(m)},Z^{(n)}) \lra \rmap_\fino(\delta_*\varphi^*Z^{(m)},Z^{(n)}) \]
given by pre-composition with the map $\delta_*\varphi^*Z^{(m)}\to Z^{(m)}$ of proposition~\ref{prop-genpos}
is a weak equivalence.
\end{lem}

\begin{proof} This follows from proposition~\ref{prop-genpos} since $Z^{(n)}$ is conservative over $N\finplus$\,. \end{proof}

\medskip
\begin{proof}[Proof of theorem~\ref{thm-alexanderk}, cases $k=\infty$] Noting that
\[  \rmap_\fino(\delta_*\varphi^*Z^{(m)},Z^{(n)})= \rmap_\fino(\varphi^*Z^{(m)},\delta^*Z^{(n)}) \]
we may compose the weak equivalences in lemmas~\ref{lem-shadowboxapp} and~\ref{lem-faceboxapp}. The result is a
weak equivalence
\[  \rmap_\finplus(Z^{(m)},Z^{(n)}) \lra \rmap_\finplus(\alpha^*\varphi^*Z^{(m)},Z^{(n)}). \]
If we use the identification $\alpha^*\varphi^*Z^{(m)}\simeq \config(D^m_\circ)$, over $N\finplus$\,, then we
recognize this map as the map given by restriction from $\config(D^m)$ to $\config(D^m_\circ)$. \end{proof}

\begin{rem}\label{aroneturchin} {\rm
Arone-Turchin \cite{AroneTurchin} and \cite{Turchin2} have a description of
\[
\overline{\emb}_{\partial}(D^m, D^n) := \hofiber \bigl( \emb_{\partial}(D^m, D^n) \hookrightarrow \imm_{\partial}(D^m, D^n) \bigr)
\]
in operadic terms: in their language, it is the space of derived maps between two infinitesimal bimodules over $E_m$ associated with $D^m$ and $D^n$. This works when $n - m$ is at least three. Dwyer-Hess \cite{DwyerHess1} and Turchin \cite{Turchin} show that, in the case $m = 1$, this space of derived infinitesimal bimodule maps has an $(m+1)$-fold delooping given by the space of derived maps from $E_m$ to $E_n$. Dwyer-Hess \cite{DwyerHess2} also announced an extension of this result for arbitrary $m \leq n$. The combination of these results of Arone-Turchin and Dwyer-Hess is in good agreement with theorem \ref{thm:disks}. Indeed, it shows that the vertical homotopy fibers in square (\ref{eqn-alexanderfacto}) have the same weak homotopy type. However, we have shifted the emphasis from trying to describe the homotopy fiber directly to constructing a \emph{homotopy fiber sequence} by means of a rather geometric map -- the lower horizontal map in square (\ref{eqn-alexanderfacto}). This can be important in some applications.

On the other hand, the Dwyer-Hess result is a theorem about fairly general operads and as such it has a different scope and applicability from our result. Moreover, the Arone-Turchin result admits a homological variant and one may wonder whether a homological Dwyer-Hess theorem exists to complement it.
}
\end{rem}

\subsection{The cases where $k<\infty$.}
Let $\finplusk$ be the full subcategory of $\finplus$ determined
by the objects $\uli\ell$ where $\ell\le k$. Let
\[ \iota_k : \finok \hookrightarrow \fino \]
be the inclusion of the full subcategory which is the preimage of $\finplusk\subset\finplus$ under the functor $\delta\co \fino \to \finplus$. This full subcategory of $\fino$ has objects $([\ell],a,c)$ where $\ell\le k$ if $c=$ black and $\ell\le k+1$ if $c=$ white. By construction the functors $\delta\co \fino\to \finplus$ and $\alpha\co \finplus\to \fino$ have truncated analogues:
\[ \finok \to \finplusk~, \qquad \finplusk \to \finok\,. \]
By contrast there is no \emph{forgetful} functor
(of type $\varphi$) from $\finok$ to $\finplusk$. This is not a serious problem. There is a truncated version of
proposition \ref{prop-genpos} which reads as follows: the base change of the map
\[
\delta_* \varphi^* \config(M) \to \config(M) \times M
\]
along $\finplusk \to \finplus$ is still a conservatization map. This map may also be described as
\[
\delta_* \iota_k^*\varphi^* \config(M) \to \config(M; k) \times M \; .
\]

The truncated version of the shadowing lemma holds since if $\sigma \in \finok$ then $\sigma^{VW} \in \finok$. Therefore the truncated version of proposition \ref{prop-shadowbox} also holds with $\fino$ replaced by $\finok$ and $\finplus$ replaced by $\finplusk$.

\appendix
\section{Derived mapping spaces and homotopy Kan extensions}
The standard constructions (e.g. derived functors) one performs in homotopy theory only depend on the notion of
weak equivalence. Nevertheless, it is often the case in practice that model structures are available and using them
simplifies matters considerably. They are helpful as a way of calculation, but not essential to the statements of results,
and so we tried to avoid overemphasising their role (by relocating them to this Appendix, for example).

\subsection{Derived mapping spaces}\label{section-Rmap}

If $\sC$ is a simplicial model category, then the mapping space functor $\map(-,-) : \sC^{\op} \times \sC \to \sS$ is a right
Quillen functor (it preserves fibrations and trivial fibrations), and so admits a right derived functor,
$\rmap(-,-) : \sC^{\op} \times \sC \to \sS$. To compute $\rmap(X,Y)$, take a (functorial) cofibrant
replacement $X^{c} \to X$ and a (functorial) fibrant replacement $Y \to Y^{f}$ and declare
\[
\rmap(X,Y) = \map(X^c, Y^f)
\]
However, the weak homotopy type of $\rmap(X,Y)$ only depends on the class of weak equivalences of $\sC$ and a model structure is not needed to define it \cite{DwyKa2}. As such, all simplicial model structures on $\sC$ sharing the same notion of weak equivalence compute the correct derived mapping space. We record here some of its main properties:

\begin{enumerate}
\item $\pi_0 \map(X,Y) \cong \mor_{\mathsf{Ho}(\sC)}(X,Y)$
\item $\rmap(-,-)$ respects weak equivalences in each variable
\item If $F : \sC \leftrightarrows \sD : G$  is a Quillen pair, then there is a weak equivalence
\[
\rmap(\LL F(X), Y) \simeq \rmap(X, \RR G(Y))
\]
where $\LL F$ denotes the left derived functor of $F$ and $\RR G$ the right derived functor of $G$ (cf. section \ref{A:Kanext}).
\item If $I$ is a small category and $F : I \to \sC$ a functor, then
\[
\rmap(\hocolimsub{i \in I} F(i), Y) \simeq \holimsub{i \in I} \rmap(F(i), Y)
\]
and
\[
\rmap(X, \holimsub{i \in I} F(i)) \simeq \holimsub{i \in I} \rmap(X, F(i))
\]
\end{enumerate}

\subsection{Homotopy Kan extensions}\label{A:Kanext}
Let $j\co \sC \to \sD$ be a functor between small categories (possibly enriched over spaces). It induces, by
precomposition with $j$, a functor $j^*\co\PSh(\sD) \to \PSh(\sC)$. (Here $\PSh(\sC)$ indicates
\emph{presheaves on $\sC$} and so denotes the category of contravariant functors from $\sC$ to $\sS$.)
Then $j^*$ has both a left and a right adjoint
\[
j^* : \PSh(\sD) \leftrightarrows \PSh(\sC) : j_* \quad \quad j_! : \PSh(\sC) \leftrightarrows
\PSh(\sD) : j^*
\]

The functor $\LK{j}$ (respectively, $\RK{j}$) is usually called the left (respectively, right) Kan extension along $j$.

Let us focus on the \emph{left} Kan extension. If we choose the projective model structure on
$\PSh(\sC)$ and $\PSh(\sD)$, then $(\LK{j}, j^*)$ becomes a Quillen pair. Indeed, $j^*$ preserves fibrations and $\LK{j}$ preserves (generating) cofibrations since, being a left adjoint, $\LK{j}$ satisfies
\[ \LK{j}(\mor_{\sC}(-,c) \times K)~\cong~\mor_{\sD}(-,j(c)) \times K \]
for any object $c$ of $\sC$ and object $K$ of $\sS$. As a consequence, for $F$ in $\PSh(\sC)$ and
$G$ in $\PSh(\sD)$ the map
\begin{equation}\label{eq:lkan}
\rmap(\hLK{j}F, G) \to \rmap(F,\RR j^*G)
\end{equation}
obtained by applying $\RR j^*$
and then precomposing with the derived unit
\[ \epsilon : \id \to (\RR j^*)(\hLK{j}) \]
evaluated at $F$, is a weak equivalence of spaces.
In other words, the pair $(\hLK{j}\,, \RR j^*)$ forms a derived
adjunction, i.e. a simplicial adjunction between the $\sS$-categories associated to $\PSh(\sC)$ and $\PSh(\sD)$ by Dwyer-Kan localization \cite{DwyKa2}.

Because $j^*$ preserves weak equivalences, $j^*$ coincides both with its left derived functor $\LL j^*$ and with its right derived functor $\RR j^*$. Then the map~(\ref{eq:lkan}) takes the form
\begin{equation}\label{eq:lkan2}
\rmap(\hLK{j}F, G) \to \rmap(F,j^*G)
\end{equation}
for $F$ in $\PSh(\sC)$ and $G$ in $\PSh(\sD)$.

\begin{lem}\label{lem:LK1}
Fix $E \in \PSh(\sD)$. The map
\begin{equation}\label{eqn-ff}
\rmap(E,G) \lra \rmap(j^*E,j^*G)
\end{equation}
given by composition with $j$ is a weak equivalence for any $G \in \PSh(\sD)$ if and only if the derived counit
$
\hLK{j} j^*E \to E
$
is a weak equivalence. Dually, fixing $G \in \PSh(\sD)$, the map (\ref{eqn-ff}) is a weak equivalence for every $E \in \PSh(\sD)$ if and only if the derived unit
$
G \to \hRK{j} j^*G
$
is a weak equivalence.
\end{lem}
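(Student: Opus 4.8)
\textbf{Proof proposal for Lemma \ref{lem:LK1}.}

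The plan is to deduce this from the derived adjunction $(\hLK{j}, \RR j^* = j^*)$ recorded in equation (\ref{eq:lkan2}) by a formal triangle-identity argument, exactly as one proves the corresponding statement for ordinary adjunctions. First I would fix $E, G \in \PSh(\sD)$ and consider the commutative triangle relating the three maps in sight: the map (\ref{eqn-ff}) given by composition with $j$, the derived-adjunction equivalence $\rmap(\hLK{j}j^*E, G) \xrightarrow{\simeq} \rmap(j^*E, j^*G)$ from (\ref{eq:lkan2}), and the map $\rmap(E,G) \to \rmap(\hLK{j}j^*E, G)$ obtained by precomposition with the derived counit $c\co \hLK{j}j^*E \to E$. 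The key point, which I would verify by unwinding the definition of the derived unit/counit of the Dwyer--Kan adjunction, is that this triangle commutes up to homotopy: composition with $j$ equals (up to the fixed equivalence (\ref{eq:lkan2})) composition with the derived counit $c$. Granting that, the map (\ref{eqn-ff}) is a weak equivalence if and only if $c^*\co \rmap(E,G) \to \rmap(\hLK{j}j^*E, G)$ is.

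Next I would apply the Yoneda-type principle that a map $f\co A \to B$ in $\PSh(\sD)$ induces a weak equivalence $\rmap(B,G) \to \rmap(A,G)$ for \emph{all} $G$ if and only if $f$ is itself a weak equivalence. The ``if'' direction is immediate since $\rmap(-,-)$ respects weak equivalences in each variable (property (2) of Section \ref{section-Rmap}); the ``only if'' direction follows by taking $G$ to be a fibrant replacement of $A$ (or by testing against representables and using that $\rmap$ detects weak equivalences of cofibrant--fibrant objects). Applying this with $f = c\co \hLK{j}j^*E \to E$ gives: (\ref{eqn-ff}) is a weak equivalence for all $G$ iff the derived counit $\hLK{j}j^*E \to E$ is a weak equivalence. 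This is the first assertion.

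For the dual statement I would run the symmetric argument, now fixing $G$ and letting $E$ vary. Using again the derived adjunction (\ref{eq:lkan2}) one identifies the map (\ref{eqn-ff}), as a natural transformation of functors of $E$, with the map $\rmap(E, G) \to \rmap(E, \hRK{j}j^*G)$ induced by the derived unit $G \to \hRK{j}j^*G$ (here one uses that $j^* = \RR j^* = \hRK{j}^{\mathrm{left}}$ has $\hRK{j}$ as its right adjoint, so $\rmap(j^*E, j^*G) \simeq \rmap(E, \hRK{j}j^*G)$). Then the same Yoneda principle, in its covariant-in-the-second-variable form (a map $A \to B$ in $\PSh(\sD)$ induces $\rmap(E,A)\to\rmap(E,B)$ a weak equivalence for all $E$ iff $A\to B$ is a weak equivalence), finishes the proof.

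The main obstacle is the bookkeeping in the first paragraph: making precise that ``composition with $j$'' agrees, under the derived adjunction isomorphism, with ``precomposition with the derived counit.'' In the strict (non-derived) setting this is just a triangle identity, but here one must be slightly careful because the derived functors are only defined up to zigzags of weak equivalences, so the commutativity of the relevant triangle holds only up to coherent homotopy. The clean way around this is to cite the fact (from \cite{DwyKa2}) that $(\hLK{j}, j^*)$ underlies a genuine simplicial adjunction of the associated $\sS$-categories, for which the triangle identities hold on the nose; everything else is then formal.
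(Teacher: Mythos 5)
Your argument is correct and follows essentially the same route as the paper: factor the map (\ref{eqn-ff}) as precomposition with the derived counit followed by the adjunction equivalence (\ref{eq:lkan}) with $F=j^*E$, then conclude by the Yoneda-type criterion, and dualize for the unit statement. You merely make explicit two points the paper leaves implicit (the homotopy-commutativity of the triangle, handled via the Dwyer--Kan simplicial adjunction, and the ``for all $G$'' Yoneda step), which is fine.
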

\begin{proof}
The map in question factors as
\[ \rmap(E,G) \lra \rmap(\hLK{j} j^*E,G) \]
(composition with the derived counit $\hLK{j} j^*E\to E$) followed by the adjunction
morphism (\ref{eq:lkan}) with $F:=j^*E$. This proves that the first two statements are equivalent. The equivalence
involving the unit is dual.
\end{proof}

\section{Homotopy theory of (fiberwise) complete Segal spaces} \label{sec-model}

\subsection{Simplicial spaces}
The category $s \sS$ of simplicial spaces, being a functor category, has two standard simplicial model structures with
degreewise weak equivalences. These are the projective (with degreewise fibrations) and the injective (with degreewise cofibrations) model
structures. It is well-known that the identity functor induces a Quillen equivalence between the two. Thus whenever we refer to the
standard model structure on simplicial spaces we mean either one of the two,
unless for some particular reason we find it convenient to pick one of these.

Let $s \sS_{/B}$ denote the category of simplicial spaces over a fixed simplicial space $B$. A morphism is a simplicial
map $X \to Y$ over $B$. This category is enriched in $\sS$ by setting $\map_{B}(X, Y)$ to be the pullback (taken in $\sS$) of
\[
* \rightarrow \map(X,B) \leftarrow \map(X,Y)
\]
The left-hand map selects the reference map $X \to B$ and the right-hand map is given by post-composition with the
reference map $Y \to B$.

\begin{prop}
There is a simplicial model structure on $s \sS_{/B}$ in which a map over $B$ is a weak
equivalence/fibration/cofibration if it is so in $s\sS$.
\end{prop}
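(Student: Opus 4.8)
The plan is to obtain this as an instance of the standard slice (over-category) construction for model categories, followed by a routine verification that the simplicial enrichment $\map_B(-,-)$ defined above is compatible with that structure. No new homotopy-theoretic input will be needed.

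First I would recall the classical fact that for any model category $\sM$ and any object $B$, the over-category $\sM_{/B}$ carries a model structure in which a morphism is a weak equivalence, fibration, or cofibration precisely when its image under the forgetful functor $U\co \sM_{/B}\to\sM$ is one (this is classical; see for instance Hirschhorn or Goerss--Jardine). The verification is formal: $\sM_{/B}$ has all small limits and colimits, with limits computed as in $\sM$, with terminal object $\id_B$, and with colimits created by $U$ (being a left adjoint, with right adjoint $X\mapsto(X\times B\to B)$, $U$ preserves all colimits); the two-out-of-three and retract axioms are immediate since they only mention $U$; a lifting problem in $\sM_{/B}$ is the same as a lifting problem in $\sM$ equipped with an extra commuting triangle to $B$ which the solution automatically respects; and functorial factorizations are obtained by factoring $Uf$ in $\sM$ and equipping the intermediate object with the evident reference map to $B$. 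Applying this with $\sM=s\sS$, in either the projective or the injective model structure, yields the underlying model category structure asserted in the proposition.

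Next I would check that $s\sS_{/B}$ is a \emph{simplicial} model category. For this one sets the tensor of $K\in\sS$ with $(X\xrightarrow{p}B)$ to be $K\otimes X\to\ast\otimes X=X\xrightarrow{p}B$, and the cotensor $(X\xrightarrow{p}B)^{K}$ to be the pullback of $X^{K}\to B^{K}\leftarrow B$, the map $B\to B^{K}$ being induced by $K\to\ast$. A direct computation with the defining pullback for $\map_B$ shows the hom-tensor adjunction isomorphisms $\map_B(K\otimes X,Y)\cong\map(K,\map_B(X,Y))\cong\map_B(X,Y^{K})$, so that $s\sS_{/B}$ is tensored, cotensored and enriched over $\sS$. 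It then remains to establish the pushout--product axiom (SM7). Given a cofibration $i\co A\to X$ in $s\sS_{/B}$ and a cofibration $j\co K\to L$ of simplicial sets, the pushout--product map $A\otimes L\cup_{A\otimes K}X\otimes K\to X\otimes L$ lies over $B$, and applying $U$ to it yields exactly the corresponding pushout--product map in $s\sS$, since $U$ preserves tensors by the definition of $\otimes$ on the slice and preserves the relevant pushout. As cofibrations and trivial cofibrations in $s\sS_{/B}$ are by construction detected by $U$, and $s\sS$ satisfies SM7, the pushout--product map in $s\sS_{/B}$ is a cofibration, trivial whenever $i$ or $j$ is; this is SM7.

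The main obstacle, such as it is, is purely bookkeeping: arranging the tensor and cotensor on the slice so that the hom-tensor adjunctions hold on the nose, after which every remaining axiom, SM7 included, is transported along the forgetful functor from the already-known simplicial model structure on $s\sS$.
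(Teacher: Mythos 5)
Your proof is correct and takes essentially the same approach as the paper, which proves this proposition by simply citing Hirschhorn for the standard overcategory (slice) model structure; you have filled in the routine verification of the slice model axioms and of SM7 for the enrichment that the paper delegates to that reference.
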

\begin{proof}
That this forms a model structure is immediate from the definitions. (And it is left proper and cellular \cite[Theorem IV.4.1.6]{Hirschhorn}.) It also satisfies (SM7) \cite[IX.9.1.5]{Hirschhorn} as one can verify by using the description of the mapping spaces above and the corresponding property for $\sS$, and so it is a simplicial model category.
\end{proof}

Given two objects $X$ and $Y$ in $s \sS_{/B}$, we denote by $\RR \map_{B}(X,Y)$ the derived mapping space with respect
to this model structure; it is weakly equivalent to the homotopy pullback of
\[
* \to \rmap(X,B) \leftarrow \rmap(Y,B)
\]
where $\RR \map$ in the diagram refers to the derived mapping spaces formed in $s\sS$ (with degreewise weak equivalences).

\subsection{Homotopy theory of fiberwise complete Segal spaces}
There are three important notions of weak equivalence for simplicial spaces over a fixed simplicial space $B$ that
we want to consider. We say that a map $f\co X \to Y$ of simplicial spaces over $B$ is
\begin{enumerate}
\item a \emph{degreewise} weak equivalence if $f_n\co X_n \rightarrow Y_n$ is a weak equivalence in spaces, for each $n \ge 0$;
\item a \emph{Dwyer-Kan} equivalence if it is fully faithful, i.e.
\[
\mor^h_X(x,y) \rightarrow \mor^h_Y(f(x), f(y))
\]
 is a weak equivalence of spaces, and $\mathsf{Ho}(f)\co \mathsf{Ho}(X) \rightarrow \mathsf{Ho}(Y)$ is essentially
 surjective (see section~\ref{sec-topcat});
\item a \emph{local} weak equivalence if $\rmap_{B}(f, Z)$ is a weak equivalence for every fiberwise complete Segal space $Z\to B$.
\end{enumerate}

\begin{thm} \label{thm-fiberwisecpl} Fix a Segal space $B$. There is a left proper, simplicial model structure on the category of
simplicial spaces over $B$ which is uniquely determined by the following data.
\begin{itemize}
\item An object $p\co X \to B$ is \emph{fibrant} if it is fiberwise complete and $p$ is a
fibration in $\sSp$.
\item A map $X \to Y$ over $B$ is a cofibration if it is a cofibration in $\sSp$\,.
\end{itemize}
A map $X \to Y$ between Segal spaces over $B$ is a weak equivalence if and only if it is a Dwyer-Kan equivalence.
More generally, a map $f\co X \to Y$ between any two simplicial spaces over $B$ is a weak equivalence if it is a local weak equivalence.
\end{thm}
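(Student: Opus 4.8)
The plan is to obtain this model structure as a left Bousfield localization of the slice model structure on $s\sS_{/B}$ recorded in the previous proposition. That slice structure is combinatorial, simplicial, and left proper (slices of left proper model categories are left proper, and $\sSp$ is left proper), so by the standard existence theorem for left Bousfield localizations \cite{Hirschhorn} and its combinatorial refinements, the localization $L_S(s\sS_{/B})$ at any small set $S$ of maps exists and is again combinatorial, simplicial, and left proper. Its cofibrations agree with those of $s\sS_{/B}$, hence with the cofibrations of $\sSp$; its weak equivalences are the $S$-local equivalences; and its fibrant objects are the maps $p\co X\to B$ which are fibrations in $\sSp$ and are, in addition, $S$-local.

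The decisive point is to choose $S$ so that the $S$-local objects are exactly the fiberwise complete Segal spaces over $B$. I would take $S=S_\sigma\cup S_\kappa$, where $S_\sigma$ consists of the spine inclusions $\mathrm{Sp}[n]\hookrightarrow\Delta[n]$ for $n\ge 2$, viewed as maps in $s\sS_{/B}$ along each $n$-simplex of $B$, and $S_\kappa$ consists, for the nerve $E$ of the free-standing isomorphism and each reference map $e\co E\to B$, of the map $(\Delta[0],e\circ i)\to(E,e)$ in $s\sS_{/B}$ induced by a vertex $i\co\Delta[0]\to E$. An injective-fibrant $p\co X\to B$ is $S_\sigma$-local precisely when $X$ is a Segal space --- here one uses that $B$ is already a Segal space, so that the fiberwise Segal condition forces the Segal condition on $X$ --- and, given that, $p$ is $S_\kappa$-local precisely when the square $(\kappa_p)$ of Definition~\ref{defn-rezkfunctorover} is homotopy cartesian, i.e. when $p$ is a fiberwise complete Segal space over $B$. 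Checking the $S_\kappa$ statement is where the real work lies: one must compute $\rmap_B((E,e),X)$ and $\rmap_B((\Delta[0],e\circ i),X)$, using $\rmap(E,B)\simeq B_1^{\heq}$ for the Segal space $B$, and match the comparison map between them to $(\kappa_p)$; for this the essential input is the explicit description of the fibrations between (not necessarily complete) Segal spaces in Rezk's model structure on $\sSp$ --- the new technical result announced at the start of this appendix --- and I expect this to be the main obstacle. Granting it, the fibrant objects of $L_S(s\sS_{/B})$ are exactly the fibrations $X\to B$ that are fiberwise complete Segal spaces over $B$, and a map $f$ is an $S$-local equivalence if and only if $\rmap_B(f,Z)$ is a weak equivalence for every fiberwise complete Segal space $Z\to B$; that is, the $S$-local equivalences are precisely the local weak equivalences, which is the final assertion of the statement.

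It remains to pin down the weak equivalences between Segal spaces over $B$, which I would do by the fiberwise analogue of Rezk's argument. The fibrant replacement of a Segal space $W$ over $B$ in $L_S(s\sS_{/B})$ is, up to levelwise weak equivalence, the fiberwise Rezk completion $W\to\widehat W$ of section~\ref{sec-topcatFC}: it is a Dwyer-Kan equivalence onto a fiberwise complete Segal space, and it is a local weak equivalence since the fiberwise completion is a homotopy left adjoint to the inclusion of the fiberwise complete Segal spaces. A weak equivalence in $L_S(s\sS_{/B})$ between fibrant objects is a levelwise weak equivalence (a general fact about Bousfield localizations: an $S$-local equivalence between $S$-local objects is a weak equivalence of the underlying model structure), and a levelwise weak equivalence between fiberwise complete Segal spaces is in particular a Dwyer-Kan equivalence; so two-out-of-three along the completion maps $W\to\widehat W$ shows that a local weak equivalence between Segal spaces over $B$ is a Dwyer-Kan equivalence. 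Conversely, $\widehat{(-)}$ takes Dwyer-Kan equivalences to levelwise weak equivalences, hence to local weak equivalences, and the maps $W\to\widehat W$ are local weak equivalences, so a Dwyer-Kan equivalence between Segal spaces over $B$ is a local weak equivalence.

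Finally, a model structure is uniquely determined by its class of cofibrations together with its class of fibrant objects, so the two displayed conditions characterize $L_S(s\sS_{/B})$; left properness and the simplicial enrichment have already been produced by the localization, which completes the plan.
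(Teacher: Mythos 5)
Your route differs from the paper's: you build the model structure directly as a left Bousfield localization of the slice Segal model structure at $S_\sigma\cup S_\kappa$ and then characterize its fibrant objects by hand, whereas the paper introduces a second model structure --- the slice over $B$ of Rezk's complete Segal space model structure --- and proves that it has the same cofibrations and the same fibrant objects as the desired one, so the two coincide by Joyal's uniqueness lemma (Lemma~\ref{lem-cof+fib}). That comparison reduces everything to a characterization of fibrations in Rezk's model structure, which is what the paper calls its new technical result. Either route is a reasonable organization.

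The proposal has a genuine gap, however, sitting exactly at the spot you label ``the main obstacle'' and then treat as granted. For the direction ``Dwyer-Kan implies local weak equivalence'' you appeal to the statement that the fiberwise Rezk completion $\widehat{(-)}$ takes Dwyer-Kan equivalences to levelwise weak equivalences. Unwinding this, it is precisely the claim that a Dwyer-Kan equivalence between two fiberwise complete Segal spaces over $B$ is a levelwise weak equivalence --- the heart of Proposition~\ref{prop:fc=R}, which the paper proves by a comparison of the $(\kappa_f)$ homotopy pullback squares for source and target, an application of Lemma~\ref{lem-hococomp}, and a separate argument on $\pi_0$ of the object spaces. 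This is not formal. Moreover, citing ``the fiberwise completion is a homotopy left adjoint'' to see that $W\to\widehat W$ is a local weak equivalence is circular, because the appendix containing the theorem you are proving is precisely where those properties of the fiberwise completion are to be established. The companion step --- the identification of the $S_\kappa$-local fibrant objects with the fiberwise complete Segal spaces over $B$ --- is likewise waved at rather than done, and requires its own Rezk-style computation of $\rmap(E,-)$ relative to $B$ (note also that if $B$ is not levelwise fibrant one must check that honest maps $e\co E\to B$ suffice to detect the localization). These two steps are the real content of the theorem; the Bousfield-localization scaffolding around them is sound.
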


In theorem~\ref{thm-fiberwisecpl} it is permitted to take $B$ equal to the terminal object, $B_n=*$ for all $n\ge 0$.
In that case the theorem describes
a model category structure on the category of simplicial spaces. This is Rezk's model category structure.
In order to prove this theorem, we use the following observation.

\begin{lem}\label{lem-cof+fib}
A model structure is uniquely determined by its class of fibrant objects and its class of cofibrations.
\end{lem}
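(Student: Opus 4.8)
Suppose $\ms M$ and $\ms M'$ are two model structures on one and the same underlying category, sharing the same cofibrations and the same class of fibrant objects; the plan is to deduce $\ms M=\ms M'$. The first move is a formal reduction to the single claim that $\ms M$ and $\ms M'$ have the same weak equivalences. Indeed, in either structure the trivial fibrations are exactly the maps with the right lifting property against all cofibrations, so they already agree; granting that the weak equivalences agree, the trivial cofibrations (the cofibrations which are weak equivalences) agree, and then the fibrations (the maps with the right lifting property against all trivial cofibrations) agree too.

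Write $\mathcal{TF}$ for the common class of trivial fibrations. I would next observe that $\ms M$ and $\ms M'$ have the same cofibrant objects --- those $X$ with $\emptyset\to X$ a cofibration --- hence the same \emph{fibrant-cofibrant} objects, and that any factorisation of a fold map $A\sqcup A\to A$ as a cofibration followed by a map in $\mathcal{TF}$ (a \emph{very good cylinder object} for $A$) serves as such simultaneously for both structures; such factorisations exist by the factorisation axiom of either one. Therefore, for $A$ cofibrant and $Z$ fibrant --- notions common to $\ms M$ and $\ms M'$ --- the standard results of Quillen's homotopy calculus (\cite{Quillen}, see also \cite{Hirschhorn}) that left homotopy through a good cylinder object is an equivalence relation on $\mor(A,Z)$, is independent of the chosen cylinder object, coincides with right homotopy, and computes $\mor_{\mathsf{Ho}(\ms M)}(A,Z)$, all depend only on input (cofibrations, maps in $\mathcal{TF}$, and the fibrancy of $Z$) that is visible to both structures. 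Hence the set $\pi(A,Z):=\mor(A,Z)/(\textup{homotopy})$ and its composition are the same whether computed in $\ms M$ or in $\ms M'$; and since every object of $\mathsf{Ho}(\ms M)$ is isomorphic to the image of a fibrant-cofibrant object, a morphism $g$ of $\mathsf{Ho}(\ms M)$ is invertible iff $\mor_{\mathsf{Ho}(\ms M)}(g,Z)$ is a bijection for every fibrant-cofibrant $Z$.

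It remains only to pin down the weak equivalences from this shared data. Given $f\co X\to Y$, pick cofibrant replacements $q_X\co QX\to X$, $q_Y\co QY\to Y$ in the common class $\mathcal{TF}$ and lift $f$ to $Qf\co QX\to QY$ with $q_Y Qf=f q_X$. Since $\mathcal{TF}$ consists of weak equivalences in both structures, the two-out-of-three property shows that $f$ is a weak equivalence iff $Qf$ is. As $Qf$ is a map between cofibrant objects, it is a weak equivalence iff it becomes invertible in $\mathsf{Ho}(\ms M)$, iff --- by the preceding paragraph --- the induced map $\pi(QY,Z)\to\pi(QX,Z)$ is a bijection for every fibrant-cofibrant $Z$. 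The last condition refers to $\ms M$ only through its cofibrations and fibrant objects, so it is equally a criterion for $f$ to lie in the weak equivalences of $\ms M'$; thus the two classes of weak equivalences coincide, and the reduction of the first paragraph completes the argument.

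Most of this is routine model-category bookkeeping; the single load-bearing point, and in effect the whole content of the lemma, is that for $A$ cofibrant and $Z$ fibrant the hom-set $\mor_{\mathsf{Ho}(\ms M)}(A,Z)$ is computed by homotopy classes using nothing beyond a very good cylinder object (a cofibration followed by a trivial fibration) --- this is what forces the homotopy category, and with it the weak equivalences, to depend only on the cofibrations and the fibrant objects. It is worth stressing that one never has to compare the trivial cofibrations or the fibrations of $\ms M$ and $\ms M'$ by hand; their agreement comes out as a corollary, not as a hypothesis.
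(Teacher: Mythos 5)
Your argument is correct and is essentially the Joyal argument given in the paper: very good cylinder objects (cofibration followed by trivial fibration) depend only on the cofibrations, so the homotopy relation, and with it $\mor_{\mathsf{Ho}}(A,Z)$ for $A$ cofibrant and $Z$ fibrant, is the same in both structures, which forces the weak equivalences to coincide. You execute the endgame somewhat more carefully than the printed version, restricting the test objects $Z$ to be fibrant--cofibrant and detouring through cofibrant replacements $Qf$, whereas the paper's one-line identification of $\mor_{\mathsf{Ho}(\sC)}(A,X)$ with left-homotopy classes tacitly also needs the target to be fibrant; your phrasing closes that small gap.
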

\begin{proof}
The following argument is due to Joyal (unpublished). Suppose $\sC$ and $\sC^{\prime}$ are two model structures on the same underlying category possessing the same classes of cofibrations and fibrant objects. Recall that a cylinder object $A \otimes I$ is a factorisation $A \amalg A \hookrightarrow A \otimes I \overset{\sim}{\twoheadrightarrow} A$ of the fold map by a cofibration followed by a trivial fibration. Since $\sC$ and $\sC^{\prime}$ have the same class of cofibrations (and hence also the same class of trivial fibrations), $A \otimes I$ is a cylinder object for $\sC$ if and only if it is a cylinder object for $\sC^{\prime}$. This implies that two maps in $\sC$ are left homotopic if and only if they are left homotopic in $\sC^{\prime}$.

Assume without loss of generality that $A$ is cofibrant. Because the morphism set in the homotopy category $\Hom_{\mathsf{Ho}(\sC)}(A,X)$ can be described as the set of equivalence classes of $\Hom_{\sC}(A,X)$ under the equivalence relation of left homotopy (see, for example, \cite[7.4 - 7.5]{Hirschhorn}), it follows that the identity map induces an equivalence
$
\mathsf{Ho}(\sC) \to \mathsf{Ho}(\sC^{\prime})
$
of homotopy categories. This shows $\sC$ and $\sC^{\prime}$ also have the same classes of weak equivalences, and so the model structures coincide.
\end{proof}

We define two model structures on $\sSp_{/B}$ with the same set of cofibrations and show that they agree. In view of the lemma above, in order to do that we only need to verify that both have the same set of fibrant objects. This will give the model structure of Theorem \ref{thm-fiberwisecpl}.

\begin{prop} There is a simplicial model structure on $\sSp_{/B}$ which is uniquely determined by the following properties.
\begin{itemize}
\item An object $X \to B$ is fibrant if it is a fibration in the complete Segal space model structure
\item A map is a cofibration if it is a cofibration in $\sSp$.
\end{itemize}
Moreover, a map $X \to Y$ over $B$ such that $X$ and $Y$ are Segal spaces is a weak equivalence if and only if it is a Dwyer-Kan equivalence.
\end{prop}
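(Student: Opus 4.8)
The plan is to obtain this model structure as the overcategory (slice) model structure built from Rezk's complete Segal space model structure on $\sSp$, which I will write $\sSp^{\mathrm{CSS}}$. Recall that $\sSp^{\mathrm{CSS}}$ is a left Bousfield localization of the standard model structure on simplicial spaces, so it has the \emph{same} cofibrations, namely the monomorphisms (i.e. the cofibrations in $\sSp$), and its fibrant objects are exactly the complete Segal spaces. By the general existence theorem for slice model structures \cite{Hirschhorn}, applied now to $\sSp^{\mathrm{CSS}}$ rather than to $\sSp$, the category $\sSp_{/B}$ carries a model structure in which a map over $B$ is a weak equivalence, fibration, or cofibration precisely when the underlying map is one in $\sSp^{\mathrm{CSS}}$. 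Since $\sSp^{\mathrm{CSS}}$ is a simplicial model category, so is this slice, and the induced simplicial enrichment is the functor $\map_B(-,-)$ defined earlier in this appendix (one verifies the SM7 axiom by observing that a pushout-product map over $B$ has the same underlying map in $\sSp$ and therefore the required lifting properties).

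First I would check that this model structure has exactly the cofibrations and the fibrant objects named in the statement. Its cofibrations are, by construction, the cofibrations of $\sSp^{\mathrm{CSS}}$, which are the cofibrations of $\sSp$. Its fibrant objects are the objects $p\co X\to B$ for which the unique morphism in $\sSp_{/B}$ from $p$ to the terminal object $\id_B$ is a fibration; but that morphism, viewed in $\sSp$, is just $p$ itself, so the fibrant objects are precisely the maps $X\to B$ that are fibrations in $\sSp^{\mathrm{CSS}}$. Uniqueness of a model structure with these two data then follows immediately from lemma~\ref{lem-cof+fib}. For the last sentence of the proposition, note that a map $f\co X\to Y$ over $B$ is a weak equivalence in $\sSp_{/B}$ if and only if its underlying map is a weak equivalence in $\sSp^{\mathrm{CSS}}$, a condition that ignores the reference maps to $B$; by Rezk's theorem \cite{Rezk}, a map between Segal spaces is a weak equivalence in $\sSp^{\mathrm{CSS}}$ if and only if it is a Dwyer-Kan equivalence, which is exactly the asserted equivalence.

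The point requiring the most care here is not the existence of the model structure, which is a formal consequence of the slice construction applied to $\sSp^{\mathrm{CSS}}$, but the bookkeeping that reconciles the various descriptions in play: matching the slice's cofibrations with those of $\sSp$ (using that left Bousfield localization does not alter cofibrations), matching its fibrant objects with the $\sSp^{\mathrm{CSS}}$-fibrations over $B$ (using that $\id_B$ is terminal in $\sSp_{/B}$), and matching its enrichment with $\map_B$. These are routine once made precise; the genuinely substantial input is Rezk's identification of weak equivalences between Segal spaces, which we quote rather than reprove. (The comparison of this model structure with the second one constructed on $\sSp_{/B}$, and hence the proof of Theorem~\ref{thm-fiberwisecpl}, is where the real work of the appendix lies, and is carried out separately.)
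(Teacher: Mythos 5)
Your proof takes exactly the route the paper intends: realize this model structure as the slice of Rezk's complete Segal space model structure, read off cofibrations and fibrant objects from the slice construction (using that Bousfield localization preserves cofibrations and that $\id_B$ is terminal), invoke Rezk's identification of weak equivalences between Segal spaces with Dwyer-Kan equivalences, and appeal to lemma~\ref{lem-cof+fib} for uniqueness. The paper itself describes this model structure as "obtained by taking the overcategory model structure of Rezk's model structure on complete Segal spaces," so your argument and the intended one coincide.
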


We refer to this as the \emph{Rezk model structure}, since it is obtained by taking the overcategory model structure of Rezk's model structure on complete Segal spaces. A fibrant object in this model structure shall be called \emph{Rezk fibrant}.

\begin{prop}
There is a simplicial model structure on $\sSp_{/B}$ which is uniquely determined by the following properties.
\begin{itemize}
\item An object $p : X \to B$ is fibrant if $X$ is fiberwise complete over $B$ and $p$ is a fibration in the Segal space model structure.
\item A map is a cofibration if it is a cofibration in $\sSp$.
\end{itemize}
Moreover, a map $f: X \to Y$ over $B$ is weak equivalence if
\[
\map_B(f, Z) : \RR \map_{B}(Y, Z) \rightarrow \RR \map_{B}(X,Z)
\]
is a weak equivalence of spaces for every fibrant $Z$.
\end{prop}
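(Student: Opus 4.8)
The plan is to construct this model structure as an iterated left Bousfield localization and then read off the weak equivalences and the fibrant objects from general theory. First I would start from Rezk's Segal space model structure on $\sSp$ --- the left Bousfield localization of the injective model structure at the spine inclusions $\mathrm{Sp}[n]\hookrightarrow\Delta[n]$. Since $B$ is a Segal space it is fibrant there, so the overcategory model structure on $\sSp_{/B}$ in which fibrations, cofibrations and weak equivalences are created in the Segal space model structure (which again exists by \cite{Hirschhorn}) is simplicial and left proper, has the cofibrations of $\sSp$ as its cofibrations, and has as its fibrant objects exactly the maps $p\co X\to B$ that are fibrations in the Segal space model structure; note $X$ is then automatically a Segal space, being the source of a fibration over the fibrant object $B$. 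Now I would localize this model structure once more, at a set $T$ of maps serving as a \emph{fiberwise} analogue of Rezk's completeness map, chosen precisely so that a Segal-space fibration $X\to B$ is $T$-local if and only if $X$ satisfies condition $(\kappa_f)$ over $B$. This localization exists by the standard machinery \cite{Hirschhorn}, is again simplicial and left proper with unchanged cofibrations, and by the defining property of a left Bousfield localization a map $f$ is a weak equivalence in it precisely when $\rmap_B(f,Z)$ is a weak equivalence for every fibrant (equivalently, $T$-local) object $Z$ --- the description of the weak equivalences in the statement.

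Next I would identify the fibrant objects of the localized structure: such an object is fibrant there iff it is fibrant in the overcategory of the Segal space model structure --- i.e.\ $p\co X\to B$ is a fibration in the Segal space model structure --- and $X$ is $T$-local, which by construction of $T$ means that $X$ is fiberwise complete over $B$. These are exactly the fibrant objects named in the statement, and the cofibrations are as named, so Lemma~\ref{lem-cof+fib} gives the asserted uniqueness and completes the construction. Comparing with the ``Rezk model structure'' of the previous Proposition, whose cofibrations agree and whose fibrant objects are the fibrations $X\to B$ in the complete Segal space model structure, one then obtains that the two model structures coincide, which is the passage to Theorem~\ref{thm-fiberwisecpl}.

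The main obstacle is the construction and analysis of the family $T$: one must produce an actual set of maps over $B$ whose locality among Segal-space fibrations $X\to B$ is equivalent to the square in $(\kappa_f)$ being homotopy cartesian. This is where the key input enters, namely a characterization of which maps between Segal spaces (not assumed complete) are fibrations in Rezk's complete Segal space model structure: the condition on such a fibration $p\co X\to B$, beyond being a Reedy fibration of Segal spaces, turns out to be exactly that $X_1^{\heq}\to X_0\times^h_{B_0}B_1^{\heq}$ is a weak equivalence, so that fibrations over a Segal space differ from fibrations over its completion precisely by fiberwise completeness of the source. Establishing this identification of fibrations --- a delicate argument with the subspaces $X_1^{\heq}$ of homotopy invertible $1$-simplices, degeneracies, and homotopy pullbacks, together with Rezk's completion functor --- is the real work; granted it, the two descriptions of the fibrant objects agree and everything else is formal.
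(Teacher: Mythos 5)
Your overall route is the paper's intended one: existence via a left Bousfield localization of the over-category of the Segal space model structure on $\sSp_{/B}$, the description of the weak equivalences as the defining property of such a localization, and uniqueness from Lemma~\ref{lem-cof+fib}. The formal skeleton is fine (modulo the small point, shared with the paper, that a Segal space in the sense of condition ($\sigma$) alone need not be fibrant in the Segal space model structure, so "$B$ fibrant" tacitly assumes Reedy/injective fibrancy). However, the one non-formal step is exactly the one you leave undone: you must exhibit an actual set $T$ of maps over $B$ and prove that a Segal-space fibration $X\to B$ is $T$-local if and only if ($\kappa_f$) holds. Without that, neither the identification of the fibrant objects nor the stated characterization of the weak equivalences is established, so as written the proposal has a genuine gap at its center.

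The gap is fillable by Rezk's completeness machinery rather than by any analysis of fibrations. Take $T$ to consist of the vertex inclusions $\Delta[0]\to E$, regarded as maps over $B$ via all simplicial maps $x\co E\to B$, where $E$ is the nerve of the free-standing isomorphism as in the proof of Proposition~\ref{prop:Rezkfibrant}; this is a set. For a Segal fibration $p\co X\to B$, the space $\RR\map_B(E,X)$ taken at $x$ is the homotopy fiber of $\rmap(E,X)\to\rmap(E,B)$ over $x$, while $\RR\map_B(\Delta[0],X)$ at the corresponding vertex is the homotopy fiber of $X_0\to B_0$; Rezk's identification $\rmap(E,W)\simeq W_1^{\heq}$ for Segal spaces $W$, applied compatibly to $X$ and $B$ via restriction along $\Delta[1]\to E$, converts $T$-locality at all $x$ into the assertion that $X_1^{\heq}\to X_0\times^h_{B_0}B_1^{\heq}$ is a weak equivalence, i.e.\ precisely the homotopy-cartesianness of the ($\kappa_f$) square. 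By contrast, the statement you call the "real work" --- the identification of which maps between (not necessarily complete) Segal spaces are fibrations in Rezk's complete Segal space model structure --- is not needed for this proposition at all: it is the content of the subsequent comparison of the two model structures (Rezk fibrant equals fc fibrant), which the paper proves afterwards via Propositions~\ref{prop:Rezkfibrant}, \ref{prop:fc=R} and \ref{prop:HHfib}; directing the effort there conflates the present existence statement with that later step of Theorem~\ref{thm-fiberwisecpl}.
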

\begin{proof}
Starting with the Segal space model structure on the category of simplicial spaces (i.e. whose fibrant objects are the Segal spaces),
take the corresponding model structure on the overcategory of simplicial spaces over $B$. The claimed model structure is
obtained as a left Bousfield localization at the set of morphisms
$$
\{ \Delta[0] \to E \xrightarrow{f} B \}_{f} \;
$$
in the category of simplicial spaces over $B$, where $E$ denotes the nerve of the groupoid with two
objects $x,y$ and two non-identity isomorphisms $x \to y$ and $y \to x$.
\end{proof}

We refer to this as the \emph{fiberwise complete Segal space} model structure. In order to make a distinction
with the other model structures, a fibrant object in this model structure shall be called \emph{fc fibrant}.

\begin{prop}\label{prop:fibrationSegal}
Let $B$ be a Segal space. A map $p : X \rightarrow B$ is a fibration in the Segal space model structure if and only if $X$ is a Segal space and $p$ is a degreewise fibration.
\end{prop}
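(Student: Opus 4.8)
The plan is to realise the Segal space model structure as a left Bousfield localization and then read off its fibrations with fibrant target. Recall that this model structure is the left Bousfield localization of a standard model structure $\sM$ on $\sSp$ — we take the \emph{projective} one, whose fibrations are exactly the levelwise fibrations and which is left proper because $\sS$ is — at the set of spine inclusions $\xi_n\hookrightarrow\Delta[n]$, $n\ge 2$. It is left proper and combinatorial, shares its cofibrations with $\sM$, has strictly more weak equivalences, and its fibrant objects are precisely the Segal spaces, since condition ($\sigma$) is exactly $\{\xi_n\}$-locality computed with derived mapping spaces. In particular a Segal space $B$ is fibrant in this structure.

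The ``only if'' direction is then formal. A left Bousfield localization keeps the cofibrations of $\sM$ and enlarges the class of weak equivalences, so it has at least as many trivial cofibrations as $\sM$ and hence at most as many fibrations; thus a fibration $p\co X\to B$ in the Segal space model structure is in particular a fibration in $\sM$, i.e.\ a levelwise fibration. If moreover $B$ is fibrant, then the composite $X\to B\to\ast$ is again a fibration in the Segal space model structure, so $X$ is fibrant, i.e.\ a Segal space. For the ``if'' direction I would invoke the standard description of fibrations in a left proper left Bousfield localization: for $p\co X\to B$ with $B$ fibrant in the localization, $p$ is a fibration there if and only if $p$ is a fibration in $\sM$ and the square comparing $p$ to its fibrant replacements in the localization is a homotopy pullback in $\sM$ (Hirschhorn). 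When $X$ is itself already fibrant in the localization — that is, already a Segal space — one may take the identity maps $X\to X$ and $B\to B$ as the fibrant replacements, so that square is trivially a homotopy pullback; hence a levelwise fibration $p\co X\to B$ between Segal spaces is a fibration in the Segal space model structure. This proves the proposition.

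I expect the main obstacle to be bookkeeping rather than homotopy-theoretic depth: one must make sure that ``$X$ is a Segal space'' coincides with ``fibrant in the Segal space model structure'' and that ``$p$ is a levelwise fibration'' coincides with ``a fibration in the model structure $\sM$ being localized'', which is the reason for localizing the projective standard structure rather than the injective/Reedy one (whose fibrations form the strictly smaller class of Reedy fibrations, and a levelwise fibration between Reedy-fibrant simplicial spaces need not be Reedy). If one prefers to stay with the injective/Reedy structure, an extra argument identifying the two fibration notions for the maps at hand would be required. An alternative to quoting the localization theorem is to verify directly that a levelwise fibration $p\co X\to B$ between Segal spaces has the right lifting property against the trivial cofibrations of the localization: against those coming from $\sM$ this is exactly the hypothesis that $p$ is a levelwise fibration, while against the classes witnessing the localization it reduces, by adjunction, to the statement that $\map(\Delta[n],X)\to\map(\xi_n,X)\times_{\map(\xi_n,B)}\map(\Delta[n],B)$ is a trivial fibration — and this holds because $X$ and $B$ each satisfy condition ($\sigma$).
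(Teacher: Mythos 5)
Your argument is correct and is essentially the paper's: the paper likewise treats the Segal space model structure as a left Bousfield localization of a levelwise-fibration model structure, deduces that $X$ is fibrant (hence a Segal space) from the composite fibration $X\to B\to \ast$, and then quotes Hirschhorn's result that a map between fibrant objects of a left proper localization is a fibration there if and only if it is a fibration in the underlying structure (\cite[3.3.16]{Hirschhorn}), which is the content of your fibrant-replacement square argument in the case where both objects are already fibrant. Your explicit choice of the projective underlying structure makes precise a point the paper leaves implicit; only the sketched ``direct lifting'' alternative at the end is shaky---the trivial cofibrations of a left Bousfield localization are not generated by the localizing maps together with the old trivial cofibrations, which is exactly why Hirschhorn's theorem has content---but your main argument does not rely on it.
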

\begin{proof}
If $f : X \rightarrow B$ is a fibration and $B$ is fibrant, then $X$ is necessarily fibrant since
$X \rightarrow B \rightarrow *$ is a composite of fibrations, hence a fibration. Thus, $X$ is fibrant in the
Segal space model structure, i.e. it is a Segal space. The statement now follows from \cite[3.3.16]{Hirschhorn}.
\end{proof}

\begin{prop}\label{prop:Rezkfibrant}
Suppose $p: X \to B$ is Rezk fibrant. Then
\begin{enumerate}
\item $X$ is a Segal space and $p$ is a degreewise fibration
\item $X$ is fiberwise complete over $B$.
\end{enumerate}
\end{prop}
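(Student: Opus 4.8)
The plan is to use the fact that Rezk's complete Segal space model structure on $\sSp$ is a left Bousfield localization of the Segal space model structure --- concretely, the localization at the map $\Delta[0]\to E$, where $E$ is the nerve of the groupoid with two objects and a unique isomorphism between them, a Segal space being complete precisely when it is local with respect to this map \cite{Rezk}. A Rezk fibrant object $p\co X\to B$ is by definition a fibration in the complete Segal space model structure. Since a left Bousfield localization leaves the cofibrations unchanged but enlarges the class of weak equivalences, every fibration in the complete Segal space model structure is in particular a fibration in the Segal space model structure; as $B$ is a Segal space, Proposition~\ref{prop:fibrationSegal} then yields assertion~(1): $X$ is a Segal space and $p$ is a levelwise fibration.

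For assertion~(2) I would invoke the explicit description of the generating trivial cofibrations of the localized model structure \cite{Hirschhorn}: apart from those of the Segal space model structure, they are the pushout-product maps of $\Delta[0]\to E$ with the generating cofibrations of $\sSp$. The map $p$ has the right lifting property against all of these, so, by the pushout-product adjunction for the (cartesian, hence simplicial) model structure on simplicial spaces, the pullback-hom map
\[
X^{E}\lra X^{\Delta[0]}\times_{B^{\Delta[0]}}B^{E}=X\times_{B}B^{E}
\]
has the right lifting property against the generating cofibrations of $\sSp$; that is, it is a trivial fibration of simplicial spaces, in particular a levelwise weak equivalence. Here $X^{E}$ denotes the internal hom of simplicial spaces, which computes the correct homotopy invariant because $X$ and $B$ are fibrant in $\sSp$ (being Segal spaces, respectively a fibration over one) while $\Delta[0]\to E$ is a cofibration.

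It remains to read off the fiberwise completeness condition. By \cite{Rezk}, for any Segal space $W$ the space of maps $E\to W$, i.e.\ $(W^{E})_{0}$, is naturally weakly equivalent to $W_{1}^{\heq}$, compatibly with the map to $W_{0}=(W^{\Delta[0]})_{0}$ induced by $\Delta[0]\to E$. Applying this with $W=X$ and $W=B$ and taking $0$-th spaces in the levelwise weak equivalence above --- using that $X_{0}\to B_{0}$ is a fibration, so that the relevant strict pullback models the homotopy pullback --- one obtains that $X_{1}^{\heq}\to X_{0}\times^{h}_{B_{0}}B_{1}^{\heq}$ is a weak equivalence, which is exactly condition $(\kappa_{f})$ of Section~\ref{sec-topcatFC} with $f=p$; hence $X$ is fiberwise complete over $B$. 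I expect the main obstacle to be not the localization-theoretic input, which is routine given \cite{Hirschhorn}, but the careful bookkeeping of fibrancy needed so that the internal homs and strict pullbacks above compute the intended homotopy invariants, together with extracting Rezk's identification of $W^{E}$ in the precise form needed for a Segal space $W$ that is not assumed complete.
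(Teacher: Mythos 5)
Your proof follows the same route as the paper: part (1) is word-for-word the paper's argument (Bousfield localization preserves cofibrations, hence a CSS fibration is a Segal-space fibration, then apply Proposition~\ref{prop:fibrationSegal}), and part (2) likewise rests on the trivial cofibration $\Delta[0]\to E$ in Rezk's model structure and the observation that $p$, being a fibration, lifts against it. The paper compresses the remainder into the phrase ``(homotopy) right lifting property''; your pushout-product argument and Rezk's identification $(W^{E})_0\simeq W_1^{\heq}$ are the correct, standard way to unpack that phrase, so the two proofs agree in substance with yours simply spelling out the bookkeeping.
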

\begin{proof}
Since the complete Segal space model structure is a Bousfield localization of the Segal space model structure,
a fibration in the complete model structure is in particular a fibration in the Segal space model structure. So,
by proposition \ref{prop:fibrationSegal}, X is a Segal space.

As before, let $E$ denote the nerve of the groupoid with two objects $x,y$ and two non-identity isomorphisms
$x \to y$ and $y \to x$. The map $\Delta[0] \to E$ selecting either $x$ or $y$ is by construction a trivial
cofibration in the (injective) complete Segal space model structure and $X \to B$, being a fibration,
satisfies the right lifting property with respect to $\Delta[0] \to E$. That is to say, $X$ is fiberwise
complete over $B$.
\end{proof}

It remains to show the converse, the statement that if $X \in \sSp_{/B}$ is fc fibrant then it is Rezk fibrant.

\begin{prop}\label{prop:fc=R}
Let $B$ be a Segal space. If $f\co X \to B$ is \emph{fc} fibrant, then $f$ is degreewise equivalent (over $B$) to a Rezk fibrant object $g\co Y \to B$.
\end{prop}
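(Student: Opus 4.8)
The plan is to replace $f$ by a fibrant object of the Rezk model structure by means of a factorization, and then to check that the comparison map is a \emph{levelwise} equivalence, not merely a Dwyer--Kan equivalence. So suppose $f\co X\to B$ is fc fibrant; by Proposition~\ref{prop:fibrationSegal} this says exactly that $X$ is a Segal space, $f$ is a levelwise fibration, and $X$ is fiberwise complete over $B$. Using the factorization axiom of the Rezk model structure on $\sSp_{/B}$, write $f$ as a composite $X\xrightarrow{j}Y\xrightarrow{g}B$ in which $j$ is a trivial cofibration and $g$ is a fibration. Then $g\co Y\to B$ is Rezk fibrant, so by Proposition~\ref{prop:Rezkfibrant} the space $Y$ is again a Segal space, $g$ is a levelwise fibration, and $Y$ is fiberwise complete over $B$. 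Everything now comes down to showing that $j\co X\to Y$ is a levelwise weak equivalence.

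Being a trivial cofibration in the Rezk model structure, $j$ is in particular a weak equivalence there, that is --- forgetting the reference maps to $B$ --- a weak equivalence between the Segal spaces $X$ and $Y$ in Rezk's complete Segal space model structure on $\sSp$; hence $j$ is a Dwyer--Kan equivalence. Since both $X\to B$ and $Y\to B$ satisfy condition~($\kappa_f$), we are reduced to the fiberwise analogue of Rezk's theorem that a Dwyer--Kan equivalence between ordinary complete Segal spaces is a levelwise equivalence: \emph{a Dwyer--Kan equivalence $j\co X\to Y$ between simplicial spaces over $B$, each of which is fiberwise complete over $B$, is automatically a levelwise weak equivalence.} Granting this, $j$ is levelwise and $g\co Y\to B$ is the desired Rezk fibrant object.

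I expect this last statement to be the main obstacle. One reduces it, via the Segal condition --- which writes $X_r$ (resp.\ $Y_r$) as a homotopy limit of copies of $X_1$ over $X_0$ (resp.\ of $Y_1$ over $Y_0$) --- together with the fact that a Dwyer--Kan equivalence induces weak equivalences on all the morphism spaces $\mor^h(\cdot,\cdot)$, to the single claim that $j_0\co X_0\to Y_0$ is a weak equivalence. For this, condition~($\kappa_f$) is exactly what is needed: it identifies $X_1^{\heq}$ with the homotopy pullback $X_0\times^h_{B_0}B_1^{\heq}$ and $Y_1^{\heq}$ with $Y_0\times^h_{B_0}B_1^{\heq}$, compatibly with $j$ and with the degeneracy operator $s_0$, which, via the target operator $d_1$, realizes $X_0$ as a homotopy retract of $X_1^{\heq}$ (and likewise for $Y$). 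One then argues as in the absolute case --- either by exploiting the $B$-module structure on $X$ and $Y$ directly, or, perhaps more cleanly, by passing to the Rezk completion $B\to\widehat B$: fiberwise completeness over $B$ is precisely the assertion that the commutative square with vertices $X$, $\widehat X$, $B$, $\widehat B$ (horizontal maps the completion maps, vertical maps the reference maps) is homotopy cartesian, so $j$ is the homotopy base change along $B\to\widehat B$ of the induced map $\widehat X\to\widehat Y$ of \emph{ordinary} complete Segal spaces (being fiberwise complete over the complete base $\widehat B$ forces $\widehat X$ and $\widehat Y$ to be complete), which is levelwise by Rezk's theorem. In either approach the only delicate point is keeping track of the bases over which the various homotopy pullbacks are taken.
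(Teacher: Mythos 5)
Your setup is exactly the paper's: factoring $f$ in the Rezk model structure on $\sSp_{/B}$ as $X\xrightarrow{j}Y\xrightarrow{g}B$, invoking Proposition~\ref{prop:Rezkfibrant} to see $Y$ is a Segal space fiberwise complete over $B$, and reducing---via the Segal condition and fully faithfulness of the Dwyer--Kan equivalence $j$---to showing that $j_0\co X_0\to Y_0$ is a weak equivalence. The difficulty is that neither of your two sketches for that final step closes. The ``completion'' route is circular as stated: the assertion that for a fiberwise complete $X\to B$ the square with vertices $X,\widehat X,B,\widehat B$ is homotopy cartesian is, in the nontrivial direction, precisely the claim under proof. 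Setting $X':=B\times^h_{\widehat B}\widehat X$, the comparison $X\to X'$ is a Dwyer--Kan equivalence between simplicial spaces fiberwise complete over $B$, and asking it to be levelwise is the issue at hand. The ``retract'' route needs, beyond $X_1^\heq\simeq X_0\times^h_{B_0}B_1^\heq$ and $Y_1^\heq\simeq Y_0\times^h_{B_0}B_1^\heq$, the input that $j$ restricts to a weak equivalence $X_1^\heq\to Y_1^\heq$; but with those identifications $X_1^\heq\to Y_1^\heq$ is the homotopy base change of $j_0$ along $B_1^\heq\to B_0$, so the retract argument would derive ``$j_0$ is a weak equivalence'' from a statement that itself \emph{follows} from $j_0$ being a weak equivalence---the implication runs the wrong way.

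What the paper does instead is to exploit that $X$ is fiberwise complete over $Y$ (not merely over $B$; both being fiberwise complete over $B$ forces this). It then considers the $2\times 3$ diagram with rows $X_1^\heq\to X_0\times_\heq X_0\to X_0$ and $Y_1^\heq\to Y_0\times_\heq Y_0\to Y_0$, where $X_0\times_\heq X_0\subset X_0\times X_0$ is the union of components on which the two coordinates are weakly equivalent. The left square is homotopy cartesian because $j$ is fully faithful, and the outer rectangle is the $(\kappa_{j})$ square, hence homotopy cartesian by fiberwise completeness of $X$ over $Y$. Lemma~\ref{lem-hococomp}---a two-out-of-three for homotopy cartesian squares, applicable because $Y_1^\heq\to Y_0\times_\heq Y_0$ is $\pi_0$-surjective---then forces the right-hand square to be homotopy cartesian, which identifies each ``weak-equivalence class'' $X_0\times_\heq\{x\}$ with $Y_0\times_\heq\{j_0(x)\}$. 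A short explicit argument using essential surjectivity of $j$ then shows $\pi_0 X_0\to\pi_0 Y_0$ is a bijection, finishing the proof. Your sketch omits both the switch to fiberwise completeness of $X$ over $Y$ and the terminal $\pi_0$ check; these are the real content of the lemma.
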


\begin{proof}
By the axioms of a model category, one can factor $f$ as
\[
X \xrightarrow{i} Y \xrightarrow{p} B
\]
where $i$ is a trivial cofibration and $p$ is a fibration in the complete Segal space model structure. By Proposition \ref{prop:Rezkfibrant},
the simplicial space $Y$ is necessarily a Segal space which is fiberwise complete over $B$. Thus $i$ is a Dwyer-Kan equivalence because in the complete Segal space model structure weak equivalences between Segal spaces are Dwyer-Kan equivalences.

Since $X$ and $Y$ are both fiberwise complete over $B$, it follows that $X$ is fiberwise complete over $Y$. Therefore it is enough to show that
if $i\co X\to Y$ is a Dwyer-Kan equivalence of Segal spaces and fiberwise complete, then $i$ is a degreewise equivalence. Given that $i$ is a
Dwyer-Kan equivalence, it suffices to show that the map $i_0\co X_0\to Y_0$ is a weak equivalence. \newline
For that we introduce the subspace
\[  X_0\times_\heq X_0\subset X_0\times X_0 \]
consisting of all pairs $(x,x')$ with $x,x'\in X_0$ such that $x$ is weakly equivalent to $x'$ in the Segal space $X$\,.
By definition, $X_0\times_\heq X_0$
is a union of path components of $X_0\times X_0$\,. Similarly we introduce $Y_0\times_\heq Y_0$~, a union of path components
of $Y_0\times Y_0$\,. In the commutative diagram
\[
	\begin{tikzpicture}[descr/.style={fill=white}, baseline=(current bounding box.base)] ]
	\matrix(m)[matrix of math nodes, row sep=2.5em, column sep=3.5em,
	text height=1.5ex, text depth=0.25ex]
	{
	X_1^\heq  & X_0\times_\heq X_0 & X_0  \\
	Y_1^\heq  & Y_0\times_\heq Y_0 & Y_0  \\
	};
	\path[->,font=\scriptsize]
		(m-1-1) edge node [auto] {$(d_0,d_1)$} (m-1-2)
		(m-1-2) edge node [auto] {\textup{2nd proj.}} (m-1-3)
		(m-2-1) edge node [auto] {$(d_0,d_1)$} (m-2-2)
		(m-2-2) edge node [auto] {\textup{2nd proj.}} (m-2-3)
		(m-1-1) edge node [auto] {$i_1$} (m-2-1)
		(m-1-2) edge node [auto] {$i_0 \times i_0$} (m-2-2)
		(m-1-3) edge node [auto] {$i_0$} (m-2-3);
	\end{tikzpicture}
\]
the outer rectangle is homotopy cartesian because $i$ is fiberwise complete and the left-hand square is homotopy
cartesian because $i$ is a Dwyer-Kan equivalence. Moreover the map $Y_1^\heq \to Y_0\times_\heq Y_0$ in the diagram
induces a surjection on $\pi_0$ by construction. It follows that the right-hand square of the diagram is homotopy cartesian;
see lemma~\ref{lem-hococomp} below.

Now choose $x\in X_0$ and hence $y=i_0(x)\in Y_0$.
The horizontal homotopy fiber over $x$ in the right-hand square, which we denote by $X_0 \times_{\heq} \{x\}$, is identified with the subspace of $X_0$ comprising all $x'\in X_0$ which are weakly equivalent to $x$. The horizontal
homotopy fiber over $y=i_0(x)$ in the right-hand square, which we denote by $Y_0 \times_{\heq} \{y\}$, is identified with the subspace of $Y_0$ comprising all $y'\in Y_0$ which are weakly equivalent to $y$. Since the right-hand square is homotopy cartesian, $i_0$ induces a weak equivalence $X_0 \times_\heq \{x\} \to Y_0 \times_\heq \{y\}$. So it remains only to show that the map $\pi_0(X_0)\to \pi_0(Y_0)$ induced by $i_0$ is
a bijection.

If it is not injective, then there are $x,x'$ in distinct path components of $X_0$ such that $y=i_0(x)$ and
$y'=i_0(x')$ are in the same path component of $Y_0$\,. Then $y$ and $y'$ are weakly equivalent. Therefore $x$ and $x'$ are weakly
equivalent (because $i$ is a Dwyer-Kan equivalence). Since $X_0 \times_\heq \{x\} \to Y_0 \times_\heq \{y\}$ is a weak
equivalence, $y$ and $y'$ are in distinct path components of $Y_0$ we get a contradiction. To show that the map $\pi_0(X_0)\to \pi_0(Y_0)$ induced by $i_0$
is surjective, choose $y'\in Y_0$\,. Since $i$ is a Dwyer-Kan equivalence,
there exists $x\in X_0$ such that $y=i(x)$ is weakly equivalent to $y'$. Since $X_0 \times_\heq \{x\} \to Y_0 \times_\heq \{y\}$ is a weak
equivalence, the path component of $y'$ is in the image of the map $\pi_0(X_0)\to \pi_0(Y_0)$.
\end{proof}

\begin{lem} \label{lem-hococomp} Let
\[
	\begin{tikzpicture}[descr/.style={fill=white}, baseline=(current bounding box.base)] ]
	\matrix(m)[matrix of math nodes, row sep=2.5em, column sep=2.5em,
	text height=1.5ex, text depth=0.25ex]
	{
	A & B & C  \\
	A^\prime & B^\prime & C^\prime  \\
	};
	\path[->,font=\scriptsize]
		(m-1-1) edge node [auto] {} (m-1-2)
		(m-1-2) edge node [auto] {} (m-1-3)
		(m-2-1) edge node [auto] {} (m-2-2)
		(m-2-2) edge node [auto] {} (m-2-3)
		(m-1-1) edge node [auto] {} (m-2-1)
		(m-1-2) edge node [auto] {} (m-2-2)
		(m-1-3) edge node [auto] {} (m-2-3);
	\end{tikzpicture}
\]
be a commutative diagram of spaces. Suppose that the left-hand square
 is homotopy cartesian, that the outer rectangle
 is also homotopy cartesian, and that the map from $A^{\prime}$ to $B^{\prime}$
in the diagram induces a surjection on $\pi_0$. Then the right-hand square
is also homotopy cartesian.
\end{lem}

\begin{proof} Choose $y\in B^{\prime}$ with image $z\in C^{\prime}$. It is enough to show that the map
\[  \hofiber_y\begin{bmatrix} B\\ \downarrow \\ B^{\prime} \end{bmatrix}
\lra \hofiber_z\begin{bmatrix} C\\ \downarrow \\ C^{\prime} \end{bmatrix} \]
determined by the diagram is a weak equivalence. Since $A^{\prime}\to B^{\prime}$ induces a surjection on $\pi_0$, we can assume that $y\in B^{\prime}$
is the image of some $x\in A^{\prime}$. Then our map of homotopy fibers becomes part of a larger diagram
\[  \hofiber_x\begin{bmatrix} A\\ \downarrow \\ A^{\prime} \end{bmatrix}\lra \hofiber_y\begin{bmatrix} B\\ \downarrow \\ B^{\prime} \end{bmatrix}
\lra \hofiber_z\begin{bmatrix} C\\ \downarrow \\ C^{\prime} \end{bmatrix}. \]
The composite map is a weak equivalence (since the outer rectangle is homotopy cartesian) and so is the left-hand map (since the left-hand square is homotopy cartesian). We complete the argument by invoking the two-out-of-three principle for weak equivalences. \end{proof}

\begin{prop}\cite[3.3.15]{Hirschhorn}\label{prop:HHfib}
Let $M$ be a model category and $L_{S} M$ the (left) Bousfield localization of $M$ with respect to a class $S$ of maps in $M$. If $f : X \to B$ is a fibration in $M$, $g : Y \to B$ a fibration in $L_{S} M$, and $h : X \to Y$ is a weak equivalence in $M$ that makes the triangle
\[
	\begin{tikzpicture}[descr/.style={fill=white}, baseline=(current bounding box.base)] ]
	\matrix(m)[matrix of math nodes, row sep=1.8em, column sep=1.5em,
	text height=1.5ex, text depth=0.25ex]
	{
	X & & Y \\
	  & B & \\
	};
	\path[->,font=\scriptsize]
		(m-1-1) edge node [auto] {$h$} (m-1-3);
	\path[->,font=\scriptsize]
		(m-1-1) edge node [left] { $f$ \,} (m-2-2);
	\path[->,font=\scriptsize]
		(m-1-3) edge node [right] { \, $g$} (m-2-2);
	\end{tikzpicture}
\]
commute, then $f$ is also a fibration in $L_{S} M$.
\end{prop}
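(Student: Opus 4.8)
Since this is Hirschhorn's Proposition~3.3.15, the plan is to reproduce its short, essentially formal argument. First I would isolate the two facts about $L_S M$ that make everything go: a left Bousfield localization has the same cofibrations as $M$, and its weak equivalences are the $S$-local equivalences; hence a trivial cofibration in $L_S M$ is precisely a cofibration of $M$ that happens to be an $S$-local equivalence, and every weak equivalence of $M$ is such. Since, in any model category, a map is a fibration exactly when it has the right lifting property against all trivial cofibrations, to prove that $f\co X\to B$ is a fibration in $L_S M$ it suffices to solve, over $B$, every lifting problem against a map $i\co A\to A'$ which is a cofibration of $M$ and an $S$-local equivalence.

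Next I would introduce an auxiliary factorization. Factor $h\co X\to Y$, as a map over $B$ in $M$, as $X\xrightarrow{j}Z\xrightarrow{q}Y$ with $j$ a trivial cofibration and $q$ a trivial fibration of $M$; then the structure map $Z\to B$ is the composite $g\circ q$, which is a fibration of $M$ because $g$ is a fibration of $M$ (every fibration of $L_S M$ is one) and $q$ is a trivial fibration. Because $j$ is a trivial cofibration over $B$ and $f$ a fibration over $B$, lifting $\id_X$ against $f$ along $j$ yields a retraction $\rho\co Z\to X$ over $B$ with $\rho j=\id_X$ and $f\rho=g q$. This $\rho$ is the only piece of extra data needed.

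Now, given a square with downward maps $u\co A\to X$ and $v\co A'\to B$ and horizontal map $i$, I would first compose $u$ with $h$ to obtain a lifting problem for $i$ against $g$: this commutes since $g h=f$, and it has a solution $w\co A'\to Y$ with $wi=hu$ and $gw=v$ because $g$ is a fibration of $L_S M$ and $i$ a trivial cofibration of $L_S M$. The square with top edge $ju$, bottom edge $w$, left edge $i$ and right edge $q$ then commutes (both composites equal $hu$ on the nose, using $qj=h$), so since $q$ is a trivial fibration of $M$ and $i$ a cofibration of $M$ there is a lift $\ell\co A'\to Z$ with $\ell i=ju$ and $q\ell=w$. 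Setting $\tilde w:=\rho\ell\co A'\to X$, one checks $\tilde w i=\rho j u=u$ and $f\tilde w=(g q)\ell=g w=v$, so $\tilde w$ solves the original problem; hence $f$ is a fibration in $L_S M$.

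No genuine difficulty arises, but two points deserve care. The first is the bookkeeping of the ``over $B$'' condition: one must verify that $g\circ q$ really is the structure map of $Z$ over $B$ and that each auxiliary lift ($\rho$, and the lift $\ell$) can be taken over $B$, which is legitimate because $f$, $g$ and $Z\to B$ are all fibrations of $M$. The second, and the only place where it matters that $L_S M$ is a \emph{Bousfield localization} rather than an arbitrary model structure, is the identification of the trivial cofibrations of $L_S M$ with the cofibrations of $M$ that are $S$-local equivalences; without it the reduction in the first paragraph — and hence the whole argument — breaks down.
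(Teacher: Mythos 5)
Your argument is correct, and it is complete. Note, though, that the paper does not prove this statement at all -- it simply cites Hirschhorn (Prop.\ 3.3.15) -- so the comparison is with the standard argument there, which is a retract argument: after producing your section--retraction pair $(j,\rho)$ over $B$, one observes that $f$ is a retract of the map $g\circ q$ in the category of maps, that $q$, being a trivial fibration of $M$, is also a trivial fibration of $L_SM$ (trivial fibrations are detected by the right lifting property against cofibrations, and the cofibrations of $M$ and $L_SM$ coincide), hence $g\circ q$ is a fibration of $L_SM$, and fibrations are closed under retracts. Your three-step lifting (first against $g$, then against $q$, then compose with $\rho$) is exactly this retract argument unrolled, so the two proofs have the same content; yours avoids invoking closure of fibrations under retracts at the cost of a little diagram bookkeeping. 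Two small remarks: the fact that $g\circ q$ is a fibration of $M$ is never actually used in your proof (the lift $\rho$ only needs $j$ to be a trivial cofibration of $M$ and $f$ a fibration of $M$), and your worry about taking the lifts ``over $B$'' is vacuous -- $\rho$ satisfies $f\rho=gq$ automatically because that is the lifting square, and $\ell$ need not be compatible with $B$ at all, since the final verification $f\tilde w=gq\ell=gw=v$ only uses $q\ell=w$.
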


\begin{cor}
An object $X \to B$ is Rezk fibrant if and only if it is \emph{fc} fibrant.
\end{cor}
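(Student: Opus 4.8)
The plan is to deduce this corollary by assembling the two preceding propositions, using Proposition~\ref{prop:fibrationSegal} to recognise fibrations; no genuinely new argument is needed.

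First I would dispatch the implication ``Rezk fibrant $\Rightarrow$ \emph{fc} fibrant''. Let $p\co X\to B$ be Rezk fibrant. Proposition~\ref{prop:Rezkfibrant} then gives that $X$ is a Segal space, that $p$ is a levelwise fibration, and that $X$ is fiberwise complete over $B$. By Proposition~\ref{prop:fibrationSegal} the first two statements together say exactly that $p$ is a fibration in the Segal space model structure on $\sSp_{/B}$; combined with fiberwise completeness, this is the defining property of an \emph{fc} fibrant object, so $p$ is \emph{fc} fibrant.

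For the converse, ``\emph{fc} fibrant $\Rightarrow$ Rezk fibrant'', I would invoke Proposition~\ref{prop:fc=R}: given $p\co X\to B$ \emph{fc} fibrant, there are a Rezk fibrant object $g\co Y\to B$ and a levelwise weak equivalence $h\co X\to Y$ over $B$, so that the triangle with legs $h$, $g$, $p$ commutes. Since $p$ is \emph{fc} fibrant, $X$ is a Segal space and $p$ is a levelwise fibration, so Proposition~\ref{prop:fibrationSegal} shows that $p$ is a fibration in the Segal space model structure; and $h$, being a levelwise weak equivalence, is in particular a weak equivalence in that model structure. Now I would apply Proposition~\ref{prop:HHfib} with $M$ the Segal space model structure on $\sSp_{/B}$ and $L_SM$ its left Bousfield localization, the complete Segal space model structure: here $f=p$ is a fibration in $M$, the map $g\co Y\to B$ is a fibration in $L_SM$ since $Y$ is Rezk fibrant, and the commuting triangle above furnishes the hypothesis on $h$. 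The conclusion of Proposition~\ref{prop:HHfib} is that $p$ is a fibration in the complete Segal space model structure, i.e.\ $p$ is Rezk fibrant.

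The only point needing a little care — and the closest thing to an obstacle — is the overcategory bookkeeping: that ``the Segal space model structure'' and ``the complete Segal space model structure'' on $\sSp_{/B}$ really are related by a left Bousfield localization, and that a levelwise weak equivalence over $B$ is a weak equivalence in the former, so that Proposition~\ref{prop:HHfib} applies verbatim. Both follow from the standard facts that overcategory model structures are created by the forgetful functor to $\sSp$ and that levelwise weak equivalences persist under any left Bousfield localization of the standard model structure on simplicial spaces. With the corollary in hand, Lemma~\ref{lem-cof+fib} identifies the Rezk and the \emph{fc} model structures on $\sSp_{/B}$ — they already share the same class of cofibrations — which is the model structure asserted in Theorem~\ref{thm-fiberwisecpl}.
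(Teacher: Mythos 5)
Your proof is correct and follows essentially the same route as the paper: Proposition~\ref{prop:Rezkfibrant} for the implication ``Rezk fibrant $\Rightarrow$ fc fibrant'', and Propositions~\ref{prop:fc=R} and~\ref{prop:HHfib} for the converse. The only remark worth making is that the overcategory bookkeeping you flag can be sidestepped entirely by applying Proposition~\ref{prop:HHfib} in $\sSp$ itself (with the Segal space model structure as $M$, its completion as $L_SM$, and the commuting triangle over $B$ supplied by Proposition~\ref{prop:fc=R}), which is how the statement of that proposition is set up.
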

\begin{proof}
A Rezk fibrant object is \emph{fc} fibrant by Proposition \ref{prop:Rezkfibrant}. The converse is a consequence of Propositions \ref{prop:fc=R} and \ref{prop:HHfib}.
\end{proof}


\section{Extension lemmas for maps to Segal spaces}\label{sec-ext}
We begin with a lemma which is well known and describes a
fundamental relationship between $\infty$-categories and Segal spaces. As usual let $\Delta[k]$ be the simplicial set
freely generated by one element $w$ in degree $k$.
For $i\in \{0,1,2,\dots,k\}$ let $\Lambda^i[k]$ be the $i$-th horn of $\Delta[k]$, the simplicial subset of
$\Delta[k]$ generated by the faces $d_jw$ for $j\ne i$. In the cases where $0<i<k$ we say that $\Lambda^i[k]$ is
an \emph{inner horn}. --- Let $g\co X\hookrightarrow X'$ be a map of simplicial sets which fits into a pushout square
\[
	\begin{tikzpicture}[descr/.style={fill=white}, baseline=(current bounding box.base)] ]
	\matrix(m)[matrix of math nodes, row sep=2.5em, column sep=2.5em,
	text height=1.5ex, text depth=0.25ex]
	{
	\Lambda^i[k] & \Delta[k]  \\
	X & X' \\
	};
	\path[right hook->,font=\scriptsize]
		(m-1-1) edge node [auto] {} (m-1-2);
	\path[->,font=\scriptsize]
		(m-1-1) edge node [auto] {} (m-2-1)
		(m-2-1) edge node [auto] {$g$} (m-2-2)
		(m-1-2) edge node [auto] {} (m-2-2);
	\end{tikzpicture}
\]
where $i\in \{1,2,\dots,k-1\}$. We may say that $g$ is a \emph{relative inner horn inclusion}.

\begin{lem} \label{lem-innerhorn} Let $g\co X\to X'$ be a relative inner horn inclusion.
Let $Z$ be a simplicial space which happens
to be a Segal space. Then the restriction map
\[  g^*\co \rmap(X',Z) \to \rmap(X,Z) \]
is a weak equivalence. \emph{(The derived mapping spaces are to be formed in a model structure
on the category of simplicial spaces where the weak equivalences are defined degreewise)}.
\end{lem}

\begin{proof} We can specify the model structure on the category of simplicial spaces in such a way that the
pushout square defining $g$ becomes a homotopy pushout square. Then the corresponding square of
derived mapping spaces (with target $Z$) becomes a homotopy pullback square. Therefore we have reduced
the proof to the situation where $g$ is the inner horn inclusion $\Lambda^i[k]\to \Delta[k]$. Next,
let $L[k]$ be the simplicial subset of $\Delta[k]$ generated by the 1-simplices $u_j^*w$ where $u_j\co [1]\to [k]$ are the monotone maps
defined near display~(\ref{eqn-nerve}). It is easy to see that the inclusion $L[k]\to \Lambda^i[k]$ is a
composition of relative inner horn inclusions. Since these are relative inner horn inclusions in which
the added simplex has dimension less than $k$,
we may assume per induction that the restriction
\[  \rmap(\Lambda^i[k],Z) \to \rmap(L[k],Z) \]
is a weak equivalence. Then it remains to show that the restriction
\begin{equation} \label{eqn-otherSegal}
\rmap(\Delta[k],Z)\to \rmap(L[k],Z)
\end{equation}
is a weak equivalence. Again, $L[k]$ can be described as the colimit and homotopy colimit of a diagram
involving $k-1$ copies of $\Delta[0]$ and $k$ copies of $\Delta[1]$. On applying $\rmap(-,Z)$ this
identifies $\rmap(L[k],Z)$ with the homotopy limit of a diagram involving $k-1$ copies of $\rmap(\Delta[0],Z)$ and $k$ copies of $\rmap(\Delta[1],Z)$.
Using this and using $\rmap(\Delta[j],Z)\simeq Z_j$ for all $j$, specifically $j=0,1$ and $j=k$, we
see that the Segal property of $Z$ is equivalent to the statement that~(\ref{eqn-otherSegal}) is a weak equivalence for
all $k\ge 2$. \end{proof}

\medskip
There is an \emph{internal} variant of lemma~\ref{lem-innerhorn}. To formulate it (proposition~\ref{prop-innerhorn}),
we require one more definition.

\begin{defn} \label{defn-rezkadj} {\rm Let $A,B,Z$ be simplicial spaces where $Z$ is a Segal space, and $B$ shall
be considered as a variable. We look for a simplicial space $Z^A$ which solves the adjunction problem
\[  \rmap(B,Z^A) \simeq \rmap(A\times B,Z). \]
(Form the derived mapping spaces in a model category structure
on the category of simplicial spaces where the weak equivalences are degreewise weak equivalences.)
Rezk \cite{Rezk} defines
\[  (Z^A)_r := \rmap(A\times\Delta[r],Z). \]
It turns out that $Z^A$ is again a Segal space, and there is indeed a zigzag of weak equivalences, natural in $B$,
relating $\rmap(B,Z^A)$ to $\rmap(A\times B,Z)$.
}
\end{defn}

\begin{prop} \label{prop-innerhorn} With the assumptions and notation of lemma~\ref{lem-innerhorn},
the restriction map $Z^{X'} \lra Z^X$ is a weak equivalence of Segal spaces.
\end{prop}

\begin{proof} Since $Z^{X'}$ and $Z^X$ are Segal spaces, it suffices to show that the restriction map is a
weak equivalence in degrees 0 and 1. The degree 0 case is
lemma~\ref{lem-innerhorn}. For the degree 1 case, we have to show that the restriction map
\[  \rmap(X'\times\Delta[1],Z) \lra \rmap(X\times\Delta[1],Z) \]
is a weak equivalence. It is an exercise to show that the inclusion
of $X\times\Delta[1]$ in $X'\times\Delta[1]$ is a composition of three relative inner horn extensions.
Therefore lemma~\ref{lem-innerhorn} can be applied one more time. \end{proof}

\medskip
Let $F\co \sA\to \sB$ be a functor between small categories. Let $\cyl(F)$ be the categorical
mapping cylinder of $F$. This is a small category whose set of objects is the disjoint union of $\ob(\sA)$ and $\ob(\sB)$. The morphisms are defined in such a way that $\sA$ and $\sB$ are full subcategories by means of the standard inclusions of
$\ob(\sA)$ and $\ob(\sB)$ into $\ob(\sA)\sqcup \ob(\sB)$, and in addition, there is one distinguished
morphism $s_x\co x\to F(x)$ for every object $x$ in $\sA$. The morphisms $s_x$ are subject to relations $s_y\circ v= F(v)\circ s_x$ whenever
$v\co x\to y$ is a morphism in $\sA$, so that the following diagram in $\cyl(F)$ is commutative by definition:
\[
	\begin{tikzpicture}[descr/.style={fill=white}, baseline=(current bounding box.base)] ]
	\matrix(m)[matrix of math nodes, row sep=2.5em, column sep=2.5em,
	text height=1.5ex, text depth=0.25ex]
	{
	x & y  \\
	F(x) & F(y) \\
	};
	\path[->,font=\scriptsize]
		(m-1-1) edge node [auto] {$v$} (m-1-2)
		(m-1-1) edge node [auto] {$s_x$} (m-2-1)
		(m-2-1) edge node [auto] {$F(v)$} (m-2-2)
		(m-1-2) edge node [auto] {$s_y$} (m-2-2);
	\end{tikzpicture}
\]
In other words, a functor from $\cyl(F)$ to another small category $\sC$ is the same thing
as a triple consisting of a functor $G_0\co \sA\to \sC$, a functor $G_1\co \sB\to \sC$ and a natural transformation from
$G_0$ to $G_1F$.

The ordinary mapping cylinder $\cyl(NF)$ of the map of nerves $NF\co N\sA\to N\sB$ is a simplicial set: the pushout of
\[ N\sA\times\Delta[1] \xleftarrow{x\mapsto (x,1)} N\sA \xrightarrow{NF} N\sB\,. \]
We can think of it as a simplicial subset of the nerve $N(\cyl(F))$, but the inclusion is typically not an isomorphism.

\smallskip
\emph{Example.} Let $F$ be the inclusion of posets $[0]\to [1]$. Then $\cyl(F)$ is isomorphic (as a category) to the
poset $[2]$, and so $N(\cyl(F))$ is identified with the simplicial set $\Delta[2]$.
But $\cyl(NF)$ is clearly a simplicial set which is generated by $1$-simplices. The inclusion
$\cyl(NF)\to N(\cyl(F))$ can be identified with the inner horn inclusion $\Lambda^i[2]\to \Delta[2]$.

\begin{cor}[to lemma~\ref{lem-innerhorn}] \label{cor-innerhorn}
Let $Z$ be a simplicial space which happens
to be a Segal space. Then the restriction map
\[  \rmap(N(\cyl(F)),Z) \to \rmap(\cyl(NF),Z) \]
is a weak equivalence. \emph{(The derived mapping spaces are to be formed in a model structure
on the category of simplicial spaces where the weak equivalences are defined degreewise)}.
\end{cor}

\begin{proof} We are going to show that the inclusion $\cyl(NF)\to N(\cyl(F))$ is an iteration of relative
inner horn inclusions. The nondegenerate
simplices of $N(\cyl(F))$ which are not in $\cyl(NF)$ come in two distinct types.
\begin{itemize}
\item[I.] Strings of $r$ composable non-identity morphisms in $\cyl(F)$, where $r\ge 1$ and precisely
one of the $r$ morphisms has source in $\sA$ and target in $\sB$, and that one is not of the form $s_x$~.
\item[II.] Strings of $r$ composable non-identity morphisms in $\cyl(F)$, where $r\ge 2$ and
precisely one of the $r$ morphisms has source in $\sA$ and target in $\sB$, and that one is of the form $s_x$\,,
and it is not the last morphism of the string in the sense of composition.
\end{itemize}
For a type II nondegenerate $r$-simplex $\sigma$, let $w(\sigma)\in\{1,2,\dots,r-1\}$ describe the position of
the target of the special morphism in the string which has the form $s_x$\,. For example $w(\sigma)=1$ in the case of the
$5$-simplex sketched here:
\[
c_0 \xleftarrow{\quad \quad} c_1 \xleftarrow{\; \textup{special} \;} c_2  \xleftarrow{\quad \quad} c_3  \xleftarrow{\quad \quad} c_4  \xleftarrow{\quad \quad} c_5
\]
There is a bijection from type II simplices to type I simplices given by
$\sigma\mapsto d_{w(\sigma)}\sigma$\,.
Therefore the idea emerges that we can attach $\sigma$ and $d_{w(\sigma)}\sigma$ in one single
relative inner horn extension. To make this work, we need to ensure that the other faces $d_i\sigma$, where
$i\ne w(\sigma)$, have already been attached in previous steps. It is clear that these other faces $d_i\sigma$ are again
of type II (or degenerate) in all cases except possibly when $i=w(\sigma)+1$. Therefore we can proceed as follows.
\begin{itemize}
\item[-] Start by attaching all pairs $(\sigma,d_{w(\sigma)}\sigma)$ where $|\sigma=2|$.
\item[-] Next attach all pairs $(\sigma,d_{w(\sigma)})$ where $|\sigma|=3$ and $w(\sigma)=2$,
then all pairs $(\sigma,d_{w(\sigma)})$ where $|\sigma|=3$ and $w(\sigma)=1$.
\item[-] Next attach all pairs $(\sigma,d_{w(\sigma)})$ where $|\sigma|=4$ and $w(\sigma)=3$,
then all pairs $(\sigma,d_{w(\sigma)})$ where $|\sigma|=4$ and $w(\sigma)=2$,
then all pairs $(\sigma,d_{w(\sigma)})$ where $|\sigma|=4$ and $w(\sigma)=1$.
\item[-] And so on. 
\end{itemize}
\end{proof}


\section{Some postponed lemmas and proofs} \label{sec-diago}
\begin{proof}[Proof of lemma~\ref{lem-diago}]
A standard formula for homotopy limits (see e.g. the last pages of \cite{DwyKa1}) identifies $\holim~F\delta$ with the homotopy limit of
$\RR\delta_*(F\delta)$ alias $\RR\delta_*\delta^*F$\,,
where $\RR\delta_*$ is the homotopy right Kan extension along $\delta$.
So it suffices to show that the unit natural transformation
\begin{equation} \label{eqn-diago1}  F  \lra \RR\delta_*\delta^*F  \end{equation}
between contravariant functors on $\prod_j \simp(Z(j))$
is a weak equivalence. To make that natural transformation explicit, let
\[  \tau=((\ell_1,\tau_1),(\ell_2,\tau_2),\dots,(\ell_m,\tau_m)) \]
be an object of $\prod_j\simp(Z(j))$. We extract a string of integers
\[  \Lambda=(\ell_1,\dots,\ell_m) \]
from it. The formula for $\RR\delta_*\delta^*F$ applied to $\tau$ is
\[  \holimsub{(\delta \downarrow \tau)} F\delta\circ\beta_\tau \]
where $\beta_\tau$ is the forgetful functor from the comma category $(\delta\!\downarrow\!\tau)$
to the source of $\delta$.
The small category $(\delta\!\downarrow\!\tau)$ is isomorphic to another small category
$\ms Q(\Lambda)$ which depends only on the string of integers $\Lambda$.
An object of $\ms Q(\Lambda)$ is a pair $(k,u)$ consisting of some $k\ge 0$ and a map
\[ u\co [k]\lra [\ell_1]\times [\ell_2]\times[\ell_3]\times\cdots\times [\ell_m] \]
whose coordinates $u_i\co [k]\to [\ell_i]$ are monotone.
A morphism from $(k,u)$ to $(k',u')$ is a monotone map $g\co [k]\to[k']$ such that $u'g=u$.
We identify $\ms Q(\Lambda)$ with $(\delta\!\downarrow\!\tau)$.
The forgetful functor $\beta_\tau$ then takes an object $(k,u)$ of $\ms Q(\Lambda)$ to the object
\[  (k,(u_1^*\tau_1,u_2^*\tau_2,\dots,u_m^*\tau_m))~ \]
of $\simp(\prod_j Z(j))$, where $u_i\co [k]\to[\ell_i]$ is the $i$-th coordinate of $u$. The maps
\[  F(\tau)=F((\ell_1,\tau_1),\dots,(\ell_m,\tau_m)) \lra F((k,u_1^*\tau_1),\dots,(k,u_m^*\tau_m))  \]
induced by $u=(u_1,\dots,u_m)$ determine a map
\begin{equation} \label{eqn-diago2} F(\tau) \lra \holimsub{(k,u)\in \ms Q(\Lambda)} F((k,u_1^*\tau_1),\dots,(k,u_m^*\tau_m))
\end{equation}
and this is the explicit form of~(\ref{eqn-diago1}). Therefore it remains to show that~(\ref{eqn-diago2}) is a weak
equivalence for every choice of $\tau$.

Let $\ms P(\Lambda)\subset \ms Q(\Lambda)$ be the full subcategory consisting of the
objects $(k,u)$ where each of the maps $u_i\co [k] \to [\ell_i]$ is onto. We show:
\begin{itemize}
\item[(1)] $|\ms P(\Lambda)|$ is contractible;
\item[(2)] the inclusion $e_\Lambda\co \ms P(\Lambda)\to \ms Q(\Lambda)$ is homotopy terminal.
\end{itemize}
Here is a sketch proof of (1). If $\Lambda=(0,0,\dots,0)$, then $\ms P(\Lambda)=\ms Q(\Lambda)$
is isomorphic to $\simp(\Delta[0])$ which is contractible. In the remaining
cases we can assume without loss of generality that the integers $\ell_1,\ell_2,\dots,\ell_m$ are \emph{all} positive (drop zero coordinates if not).

We now argue by induction on $m$, the length of $\Lambda$. There is a functor $V$ from
$\ms P(\Lambda)$ to the poset of proper subsets of $\{1,\dots,m\}$, as follows. To determine $V(k,u)$ for an object
$(k,u)$ of $\ms P(\Lambda)$, find the minimum $t_u$ of the $t\in [k]$ such that
\[ u(t)\ne (0,0,\dots,0)\in [\ell_1]\times\cdots\times[\ell_m] \]
and let $V(k,u)$ be the subset of $\{1,\dots,m\}$ consisting of elements $j$ such that $u_j(t_u)=0$. Next, for
a proper subset $S$ of $\{1,\dots,m\}$, we look at the comma category $(V\downarrow S)$. This contains
$V^{-1}(S)$, the fiber of $V$ over $S$. That fiber is isomorphic to
$\simp(\Delta[0])\times\ms P(\Lambda-\Bar\chi_S)$, where $\Bar\chi_S$ is the characteristic function of the complement of $S$. Moreover the inclusion of $V^{-1}(S)$ in $(V\downarrow S)$ admits a left adjoint.
Therefore the classifying
spaces of $\ms P(\Lambda-\Bar\chi_S)$ and $(V\downarrow S)$ are weakly equivalent, and so, by inductive assumption,
the classifying space of $(V\downarrow S)$ is contractible. Since
this holds for every $S$, we can apply Quillen's theorem A to deduce that $V$ induces a weak equivalence
from the classifying space of $\ms P(\Lambda)$ to the classifying space of the poset of proper subsets
of $\{1,2,\dots,m\}$. The latter is an $(m-1)$-simplex. ---
For the proof of (2) we observe that, for an object $(k,u)$ of $\ms Q(\Lambda)$, the
comma category $((k,u)\downarrow e_\Lambda)$ is isomorphic to a product of $k$ factors
which have the form $\ms P(\Lambda')$; more precisely, $\Lambda'$ runs through the vectors $u(j)-u(j-1)$
where $j=1,2,\dots,k$. In this way, (2) follows from (1).

Returning to the map~(\ref{eqn-diago2}), we have now earned the right to replace $\ms Q(\Lambda)$ by $\ms P(\Lambda)$
in the target. After that we can replace $F((k,u_1^*\tau_1),\dots,(k,u_m^*\tau_m))$ in the target
by the constant expression $F((\ell_1,\tau_1),\dots,(\ell_m,\tau_m))$ since the maps $u_1,\dots,u_m$ induce a natural weak
equivalence
\[ F((\ell_1,\tau_1),\dots,(\ell_m,\tau_m)) \lra F((k,u_1^*\tau_1),\dots,(k,u_m^*\tau_m)). \]
Here we use our special assumption on $F$.
After these adjustments, the homotopy limit in the target is the space of maps from $|\ms P(\Lambda)|$
to $F((\ell_1,\tau_1),\dots,(\ell_m,\tau_m))$. Since $|\ms P(\Lambda)|$ is contractible, this completes the proof. \end{proof}

\begin{lem} \label{lem-orderinfl} Let $F=N\circ\psi^T$. The map $w^*\co \holim~F \to \holim~Fw$
determined by~\emph{(\ref{eqn-unrav3})} is a weak equivalence.
\end{lem}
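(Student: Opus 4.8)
The plan is to identify the map in~(\ref{eqn-trickycomp}) with the one induced by $w=\simp(Nq)$, where $q\co(\fin\!\searrow\!T)\to\ms B^T$ is the functor $(S,g)\mapsto g(S)$ from the paragraph before the lemma, and to prove it is a weak equivalence. The functor $w$ is not homotopy cofinal in general, so the argument has to use the particular form of $G^T$: by construction $G^T(S_0\ge\cdots\ge S_n)=N^{rc}_dQ(\widehat{S_*})$, where $\widehat{S_*}$ is the tree in $\dendroca$ with edge set $\{\mathrm{root}\}\sqcup\coprod_iS_i$, so first $G^T(w\eta)$ depends on an object $\eta$ of $\simp((\fin\!\searrow\!T))$ only through $w\eta$, and second $N^{rc}_dQ$, being a presheaf on $\dendroca$, carries canonical isomorphisms along the tree automorphisms that permute the elements of a single $S_i$ --- so $G^T$ cannot tell apart the objects of $(\fin\!\searrow\!T)$ lying over a fixed object of $\ms B^T$. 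By Lemma~\ref{lem:LK1} (applied to the opposite of $w$) it is enough to show that the derived unit $G^T\to\hRK{w}(w^*G^T)$ is a levelwise weak equivalence.

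So I would fix a simplex $\xi=(S_0\ge\cdots\ge S_n)$ of $N\ms B^T$ and examine $(\hRK{w}w^*G^T)(\xi)$, the homotopy limit of $(\eta,\phi)\mapsto G^T(w\eta)$ over the comma category $\ms D$ of pairs $(\eta,\phi\co\xi\to w\eta)$. The projection $\ms D\to(\xi\!\downarrow\!\simp(N\ms B^T))$ is a Grothendieck fibration onto a category with initial object $(\xi,\id)$, and the integrand is pulled back along it; the Fubini identities for homotopy limits (of the kind used after Lemma~\ref{lem-diago}) then reduce the computation to a homotopy limit over the fibre, once one accounts for the extra cells of $N(\fin\!\searrow\!T)$ coming from the morphisms of $(\fin\!\searrow\!T)$ that $Nq$ sends to identities. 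The point is that these extra cells promote the discrete set of liftings of $\xi$ along $Nq$ to the classifying space of the chaotic (hence contractible) groupoid on that set; since $G^T$ is constant on this groupoid by the second structural fact, and since $B\simp$ of a contractible nerve is contractible (Lemma~\ref{lem-simpclass}), the canonical map $G^T(\xi)\to(\hRK{w}w^*G^T)(\xi)$ is a weak equivalence. Equivalently, after passing to derived mapping spaces out of left Kan extensions of the terminal presheaf, both $\holim~G^T$ and $\holim~G^Tw$ get identified with $\rmap_{\dendroca}$ of the same object into $N^{rc}_dQ$, using that $N^{rc}_dQ$ is local with respect to the Segal maps and to the maps contracting the corollas with zero and one leaves.

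The step I expect to be the main obstacle is making that reduction rigorous: one has to verify that $\ms D$ is a Grothendieck fibration of the asserted shape, and --- crucially --- that the higher cells of $N(\fin\!\searrow\!T)$ assemble over a fixed $\xi$ into precisely the nerve of the ordering groupoid, so that the product $G^T(\xi)^{m}$ that one would get by treating the fibre as discrete (with $m$ the number of liftings) is genuinely collapsed to $G^T(\xi)$. As in Example~\ref{expl-blockers}, the weak contractibility of $Q(\uli 0)$ and $Q(\uli 1)$ is what lets the redundant tree structure be discarded, and it is also what makes $G^T$ explicit through the product formula of Example~\ref{expl-treedecomp}; once the comma category has been replaced by $\simp$ of a contractible nerve on which the integrand is constant, the conclusion follows.
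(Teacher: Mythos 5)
Your opening reduction is legitimate: if the derived unit $G^T\to \hRK{w}(w^*G^T)$ were a levelwise weak equivalence, then applying $\holim$ would give the lemma, since $\holim G^Tw$ is computed by $\holim$ over $\simp(\ms B^T)$ of that right Kan extension. But from there the argument has a genuine gap, and it sits exactly where you yourself flag it. First, a setup problem: with the paper's conventions (contravariant functors, and the formula used for $\hRK{\psi}$ before Proposition~\ref{prop-baad}), the comma category computing $(\hRK{w}w^*G^T)(\xi)$ consists of pairs $(\eta,\,w\eta\to\xi)$, not $(\eta,\,\xi\to w\eta)$; with your orientation there is no unit map into the holim at all, and with the correct orientation the base of your projection is the slice $\simp(\ms B^T)_{/\xi}$, which has a \emph{terminal} rather than an initial object, so the step ``evaluate at the initial object of the base'' is unavailable. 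Second, the Fubini reduction along that projection does not land on the ordering groupoid: the categorical fibres of $w$ (equivalently of the projection on comma categories) are \emph{discrete} sets of liftings, so the fibrewise holim is a product $G^T(z)^{\#\textup{lifts}}$; the chaotic groupoid you want to exploit only enters through objects of the comma category lying over \emph{degenerate} simplices of the base --- precisely the ``extra cells'' you defer --- so the reduction as stated computes the wrong thing and the real work remains.

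More seriously, the key claim that the integrand is ``constant on a contractible nerve'' is false for the full comma category: besides liftings of degeneracies of $\xi$, it contains liftings of proper faces of $\xi$, and the face operators of $G^T$ are not weak equivalences. For instance, for $T$ the tree with root $r$ and two incomparable edges $a,b$ above it, and $\xi=(\{a,b\}\ge\{r\})$, the face map $G^T(\xi)\to G^T(\{r\})$ is a map from something equivalent to $Q(\uli 2)$ to a weakly contractible space. The constancy you invoke (via $Q(\uli 0),Q(\uli 1)\simeq *$ and the product formula) is only available on the part of the comma category where the structure map to $\xi$ is \emph{surjective} (degeneracy-like), and to use it you must (a) justify replacing the whole comma construction by that dominant part and (b) prove that part is contractible. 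That is exactly the content of the paper's proof: it introduces the subcategory $\ms A_y$ of objects with surjective comparison map $g$, proves $|\ms A_y|$ is contractible by an explicit interleaving endofunctor $W$ together with a zigzag of natural transformations to a constant functor, notes that on $\ms A_y$ the comparison maps become weak equivalences after applying $G^T$ (this is where $Q(\uli 1)\simeq *$ enters), and then assembles a homotopy inverse $\holim G^Tw\to \holim_y\holim(G^Tw\varphi_y)\simeq \holim_y G^T(y)=\holim G^T$, thereby avoiding any levelwise cofinality claim inside the comma categories. Your sketch contains neither the dominance restriction nor a contractibility argument for it, so the ``main obstacle'' you name is not a technical verification left to the reader --- it is the lemma itself.
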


\begin{proof} For a simplicial set $X$, let $\nsimp(X)\subset \simp(X)$ be the full subcategory whose objects are the
nondegenerate simplices of $X$. If $X$ is regular (faces of nondegenerate simplices are nondegenerate), then
the inclusion $\nsimp(X)\to \simp(X)$ has a left adjoint $\rho$. In particular,
the inclusion $\nsimp(\ms B^T)\to \simp(\ms B^T)$ has a left adjoint $\rho$ and
$F$ takes the unit morphisms of the adjunction to weak equivalences. Let $F_1$ be the
restriction of $F$ to $\nsimp(\ms B^T)$. Then there is a
commutative diagram
\[
	\begin{tikzpicture}[descr/.style={fill=white}, baseline=(current bounding box.base)] ]
	\matrix(m)[matrix of math nodes, row sep=2.5em, column sep=2.5em,
	text height=1.5ex, text depth=0.25ex]
	{
	\holim~F & \holim~Fw   \\
	\holim~F_1 & \holim~F_1\rho w \\
	};
	\path[->,font=\scriptsize]
		(m-1-1) edge node [auto] {$w^*$} (m-1-2)
		(m-2-1) edge node [auto] {$\simeq$} (m-1-1)
		(m-2-1) edge node [auto] {$(\rho w)^*$} (m-2-2)
		(m-2-2) edge node [auto] {$\simeq$} (m-1-2);
	\end{tikzpicture}
\]
with vertical arrows induced by the unit morphisms of the adjunction.
The lower horizontal arrow is a weak equivalence
because $\rho w$ is homotopy terminal (an exercise).
\end{proof}

\begin{lem} \label{lem-stem} Let $\psi^{T,e}$ be the restriction of $\psi^T$ to $\simp(\ms B^{T,e})$.
The restriction map $\holim~\psi^T \to \holim~\psi^{T,e}$
is a weak equivalence. \emph{(See~(\ref{eqn-unrav2}) and (\ref{eqn-cutposetform}) for notation.)}
\end{lem}
\begin{proof}
We introduce two endofunctors
$E_0$ and $E_1$ of $\simp(\ms B^T)$ and two natural transformations
$\id\Rightarrow E_0 \Leftarrow E_1$\,.
The functor $E_0$ takes an $n$-simplex
\[  R_0\ge R_1\ge \cdots\ge R_n \]
to the $n+1$-simplex
$S_0\ge S_1\ge \cdots \ge S_n \ge S_{n+1}$
where $S_i=R_i$ for $i\le n$ and $S_{n+1}=\emptyset$.
The functor $E_1$ takes the same $n$-simplex $R_0\ge R_1\ge \cdots\ge R_n$ to
\[  S_r\ge S_{r+1}\ge \cdots \ge S_n\ge S_{n+1} \]
where $S_i=R_i$ for $i\le n$ and $S_{n+1}=\emptyset$ as before, while
$r\in \{0,1,\dots,n,n+1\}$ is chosen minimally so that $S_r\in \ms B^{T,e}$.
The natural transformations $\id\Rightarrow E_0\Leftarrow E_1$ are obvious. They induce weak equivalences
\[  \psi^T~\leftarrow~\psi^T E_0~\rightarrow~\psi^T E_1 \]
by inspection. The functors $E_0$ and $E_1$ take $\simp(\ms B^{T,e})$ to itself, and
$E_1$ takes all of $\simp(\ms B^T)$ to $\simp(\ms B^{T,e})$. It follows that the pullback map
$\holim~\psi^T\to \holim~\psi^{T,e}$
has a homotopy inverse, given by the pullback map
\[ \holim~\psi^{T,e}\lra \holim~\psi^{T,e}E_1 \]
where $\holim~\psi^{T,e}E_1=\holim~\psi^T E_1\simeq \holim~\psi^T E_0 \simeq \holim~\psi^T$.
\end{proof}

\emph{Acknowledgements.} 
We are indebted to Victor Turchin and Peter Teichner for helpful comments on earlier versions of this paper and to Geoffroy Horel for sharing his knowledge in model categories with us.

\end{document}